% Version from Jan. 21, 2018 with revisions by MM
% Log: MM: Dec. 1, 2018 
% Todos: 
 \pdfoutput=1
\documentclass[titlepage,a4paper,12pt]{amsart} %automatically loads amsmath and amsthm
\usepackage[foot]{amsaddr}
\usepackage{amssymb}
\usepackage{t1enc}
\usepackage[latin1]{inputenc}
\usepackage[english]{babel}

\usepackage[dvipdfmx]{graphicx}
\usepackage{bmpsize,caption,wrapfig}

\usepackage{mathrsfs,xcolor} %for mathscr

\usepackage{amsmath,pslatex}

\usepackage{hyperref}
\usepackage[left=3cm,top=3cm,right=3cm,bottom=3cm,dvips]{geometry}
\numberwithin{equation}{section}

\newtheorem{thm}{Theorem}[section]
\newtheorem{prop}[thm]{Proposition}
\newtheorem{lem}[thm]{Lemma}
\newtheorem{cor}[thm]{Corollary}
\theoremstyle{definition}
\newtheorem{defn}[thm]{Definition}
\newtheorem{rem}[thm]{Remark}
\newtheorem{ex}[thm]{Example}

\renewcommand{\tilde}{\widetilde}

\newcommand{\bone}{\mathbf 1}

\DeclareMathOperator\ext{ext}
\DeclareMathOperator\cent{cent}
\DeclareMathOperator\length{length}
\DeclareMathOperator\dist{dist}
\DeclareMathOperator\conv{conv}

\DeclareMathOperator\E{E}
\DeclareMathOperator\argmin{argmin}
\DeclareMathOperator\argmax{argmax}

\DeclareMathOperator\Ad{Ad}
\DeclareMathOperator\ad{ad}
\DeclareMathOperator\id{id}

\DeclareMathOperator\diag{diag}

\DeclareMathOperator\tr{tr}

\DeclareMathOperator\Stab{Stab}
\DeclareMathOperator\Symp{Symp}
\DeclareMathOperator\Ham{Ham}
\DeclareMathOperator\PU{PU}

\DeclareMathOperator\SU{SU}
\DeclareMathOperator\SO{SO}

\DeclareMathOperator\bd{bd}

\def\X{\mathcal X}
\def\O{\mathcal O}

\def\W{\mathcal W}

\def\su{{\mathfrak su}}

\def\k{{\mathfrak k}}

\def\t{{\mathfrak h}}

\def\z{{\mathfrak z}}

\def\g{{\mathfrak g}}
\def\p{{\mathfrak p}}

\def\C{\mathbb{C}}
\def\R{\mathbb{R}}

\def\Z{\mathbb{Z}}

\linespread{1.1}
\setcounter{tocdepth}{1}

\begin{document}

\title{Hofer's metric in compact Lie groups}
\author{Gabriel Larotonda} 
\email{glaroton@dm.uba.ar}
\address[G. Larotonda]{Departamento de Matem\'atica, FCEyN-UBA, and Instituto Argentino de Matem\'atica, CONICET. Buenos Aires, Argentina.}
%\thanks{.....}
%\date{\today}
\author{Mart\'in Miglioli}
\email{martin.miglioli@gmail.com}
\address[M. Miglioli]{Instituto Argentino de Matem\'atica, CONICET. Buenos Aires, Argentina.}
\subjclass[2020]{58B20, 53D20 (primary), and 53C22, 58D05 (secondary)} 
\thanks{The authors were supported by grant PICT-2019-040602010 (ANPCyT)}
\keywords{Hofer's metric, compact Lie group, Hamiltonian action, moment map, moment polytope, Finsler length structure, geodesic, coadjoint orbit, commuting Hamiltonians, symplectic energy.}

% 53D20  	Momentum maps; symplectic reduction, en "Dif geometry"
%58B20 Riemannian, Finsler and other geometric structures
%53C22 Geodesics (in Global differential Geometry)
%58D05 Groups of diffeomorphisms and homeomorphisms as manifold (in Global Analysis)

\begin{abstract}
In this article we study the Hofer geometry of a compact Lie group $K$ which acts by Hamiltonian diffeomorphisms on a symplectic manifold $M$. Generalized Hofer norms on the Lie algebra of $K$ are introduced and analyzed with tools from group invariant convex geometry, functional and matrix analysis. Several global results on the existence of geodesics and their characterization in finite dimensional Lie groups $K$ endowed with bi-invariant  Finsler metrics are proved. We relate the conditions for being a geodesic in the group $K$ and in the group of Hamiltonian diffeomorphisms. These results are applied to obtain necessary and sufficient conditions on the moment polytope of the momentum map, for the commutativity of the Hamiltonians of geodesics. Particular cases are studied, where a generalized non-crossing of eigenvalues property of the Hamiltonians hold. 
\end{abstract}

\maketitle

\tableofcontents

\section{Introduction}

The subject of this paper is the metric geometry of compact Lie groups $K$: we are interested in the geometry of such a group when it is provided with a Finsler bi-invariant metric, not necessarily smooth neither strictly convex. The rectifiable distance in $K$ is defined as the infimum of the lengths of paths joining given endpoints, and a \textit{geodesic} in $K$ is a distance minimising path (we also use \textit{short}). Since the unit sphere can have faces and corners, there is a zoo of short paths for the distance  in $K$ besides the one-parameter groups. We want to give a full characterization of these geodesics and relate it with other geometrical invariants of the group and its Lie algebra.  Let us recall here that a Finsler length structure on the group $\Ham(M,\omega)$ of Hamiltonian diffeomorphisms of a symplectic manifold $M$ with symplectic form $\omega$, was introduced by Hofer in the paper \cite{hofer}. The Lie algebra of this group is the set of Hamiltonian vector fields, which can be identified (by means of the symplectic gradient) with the set of Hamiltonian functions in $M$ (modulo constant functions). The norm of a vector field is then the quotient  $L^\infty(M)$-norm of the generating Hamiltonian function (modulo constant functions). A natural problem of current interest in the literature is the study of geodesics in this Finsler manifold. There has been a significant amount of progress, and fairly deep work on the properties of this metric, mostly from the point of view of symplectic topology, see \cite{bp,lm95,lm,pol01,ps} and also the textbook \cite{mds} and the references therein. The results of this paper are for finite dimensional groups; there is however an interesting relation among them and the Finsler structure of $(M,\omega)$: we will consider (almost) effective Hamiltonian actions of compact semi-simple Lie groups $K$ on a symplectic manifold $(M,\omega)$. This action defines an inclusion (modulo the discrete kernel of the action) $K\hookrightarrow \Ham(M,\omega)$, which allows us to introduce a pull-back Hofer metric on $K$ by means of
$$
 \|x\|_{\mu(M)}=\max_{y\in\mu(M)}\langle y,x\rangle-\min_{y\in\mu(M)}\langle y,x\rangle.
$$
Here $x\in \k=Lie(K)=T_1K$ and $\mu$ is the momentum map of the Hamiltonian action. For this norm, the intersection of its unit ball with a maximal abelian subalgebra $\t\subseteq \mathfrak k$ is the polar dual of a certain polytope $P$, which is derived from the moment polytope of the action $\mu$.  A main example of this situation is when $M=\conv(\O_w)$, the convex closure of the adjoint orbit of $w$ in $\k$, with trivial momentum map and the adjoint action of $K$. 

\smallskip

The results on geodesics in this article are stated for any Finsler length structure given by (left or right) translation of an $\Ad$-invariant Finsler norm in $\k$, therefore we include non-symmetric distances in our discussion; what follows in one of the main results of this paper:

{\textbf{Theorem A}: }\textit{Let $K$ be a compact  semi-simple Lie group with a bi-invariant Finsler metric, let $\gamma:[a,b]\to K$ be a piecewise $C^1$ path.  If $\gamma$ is short for the bi-invariant metric, then for (almost) all $t$ we have $\varphi(\gamma_t^{-1}\dot{\gamma}_t)=\|\gamma_t^{-1}\dot{\gamma}_t\|$ for some unit norm functional $\varphi\in \k^*$. Reciprocally, if the equality holds for some $\varphi$ and (almost) all $t\in [t_0,t_1]$, and   $L(\gamma)_{t_0}^{t_1}\le R$, then $\gamma$ is short in $[t_0,t_1]\subseteq [a,b]$}.

\smallskip

Here $R$ is the injectivity radius of the norm. In particular one-parameter groups in $K$ are always geodesics for these distances, provided their speed is in the domain of injectivity of the exponential map of $K$. It is worthwhile mentioning here that the notion of majorization of real vectors $\overrightarrow{v}\prec \overrightarrow{w}$ (identified with the eigenvalues of the operators $\ad v, \ad w$, where $v,w\in \k$) plays a significant role in the proofs concerning minimality of geodesics, and it is related to the condition $v\in \conv(\O_w)$, where the later set is the convex closure of the coadjoint orbit of $w$ in $\k$. If we specialize the previous result for a Hofer norm derived from a Hamiltonian almost effective action $K\curvearrowright (M,\omega)$, one-parameter groups in $K$ are in correspondence to paths in $\Ham(M,\omega)$ with autonomous Hamiltonian, and we obtain the following:

\smallskip

\textbf{Theorem B:} \textit{ Let $\gamma:[a,b]\to K$ be piecewise $C^1$, and denote its right logarithmic derivative by $x_t=\dot{\gamma}_t\gamma^{-1}_t$. If $\gamma$ is a short path in $K$, then $(\mu^{x_t})_{t\in [0,1]}$ is a quasi-autonomous Hamiltonian path, and if $\mu$ is quasi-autonomous, $\gamma$ is locally short (in each interval of length $\le R$)}.

\smallskip

Then, in the final part of the paper, we move on to a finer characterization of geodesics for some special norms, and we obtain several sharper results.  Relevant geometrical properties of the geodesics can be expressed in terms of the extreme points of Hofer's polytope $P$. Of particular relevance are the polytopes with only regular extreme points in $\k$, which are fully characterized both in terms of the Lie algebra (by polar duality) and in terms of the geometry of geodesics in $K$:

\smallskip

{\textbf{Theorem C}: }\textit{Let $B$ be an $\Ad$-invariant convex body in $\k$ containing $0$,  such that $P=(B\cap\t)^\circ$ is a polytope (here $\t$ is any Cartan subalgebra). Endow $K$ with the Finsler length structure corresponding to the Minkowski norm of $B$. Then, all the extreme points of $P$ are regular if and only if all short curves $\gamma$ in $K$ have commuting logarithmic derivatives}.

We obtain similar results for the pull-back of the one-sided Hofer norm, which is usually only positively homogeneous.

\smallskip

There are several relevant applications related to this setting of actions of compact Lie groups: for instance, as shown in \cite{entov}, the geometry of the canonical Hamiltonian action $\SU(n)\to \Ham(Gr_{r,n},\omega)$ can be used as a tool to study the eigenvalue inequalities in the quantum version of Horn's problem  \cite{belkale}.  Here $Gr_{r,n}$ is the Grassmannian of $r$-dimensional planes in $\C^n$, and $\omega$ is the canonical Kirilov-Kostant-Souriau symplectic form. We plan to extend some of the results in this article to the case of infinite dimensional groups using ideas connected to the results in \cite{blz} and \cite{lar19}. Some of the techniques developed in this article might also be relevant to the study of Finsler length structures derived from mechanics with non-smooth energy.

\smallskip

The article is organized as follows: in Section \ref{sectionnorms} we define generalized Hofer norms. We study the faces and norming functionals of the unit balls of these norms, based on two different theories. We first analyse the structure of these balls with functional analytic techniques via an embedding in certain function spaces, an approach that will be useful in the study of the stability under geodesy at the end of Section \ref{sectionquasiaut}. We also study these norms using results from convex geometry.

In Section \ref{hofergroup} we recall basic results on Hamiltonian actions and Hofer's metric on groups of Hamiltonian diffeomorphisms, and we pull-back these metrics to compact groups using the homomorphism $K\to \Ham(M,\omega)$. These are the motivations and main examples for the norms and length structures on groups studied in this article. Nevertheless, this part of symplectic geometry is not necessary for the understanding of several results in the article which are solely based on convexity and Finsler length structures. 

In Section \ref{convexgeocart} a characterization of the intersection of the unit balls of the Hofer norms with maximal abelian algebras is given based on group invariant convex analysis and symplectic convexity theorems.

In Section \ref{l} we first recall several results obtained in \cite{lar19} for groups endowed with Finsler length structures obtained from $\Ad$-invariant norms which are valid for the groups studied in this article. Then we prove several global results on geodesics in the case of finite dimensional groups endowed with continuous Finsler metrics. We show that geodesics in groups $K$ with Hofer's metric are quasi-autonomous, which provides a link between the conditions for length minimization in $K$ and the corresponding conditions in $\Ham(M,\omega)$. 

Finally, in Section \ref{geodesicommute} we study actions of groups with commuting Hamiltonians. We start with the important special case of actions on regular coadjoint orbits and related groups: the Hamiltonians of length minimizing curves have the interesting feature of "non-crossing of eigenvalues". We characterize the $\Ad$-invariant norms such that in groups with Finsler structures defined from these norms, all geodesics have commuting speeds. Based on this result we characterize the compact groups of Hamiltonian diffeomorphisms such that length minimizing curves have commuting Hamiltonians. We then show how conditions on Kirwan's polytope can characterize this property. The paper ends with a study of how these properties behave when we consider the direct product of Hamiltonian actions. It is proved then that it suffices to have geodesics with commuting Hamiltonians for one of the actions, to obtain the same property for the geometry induced in $K$ by the direct product of actions.

\section{The generalized Hofer norm and its convex geometry}\label{sectionnorms}

In this section we define the generalized Hofer norms and we study them with two approaches. In the first we embed the normed space in a quotient of a space of continuous functions and use functional analytic techniques. In the second we use the polar duality from convex geometry. 

\begin{defn}\label{convexb}
A subset $E\subseteq V$ of a vector space $V$ is called \textit{full} if it affinely generates the space. A set $B\subseteq V$ is a \textit{convex body} if $B$ is a compact convex  set with non empty interior. If additionally, $B$ is centrally symmetric ($v\in B\Rightarrow -v\in B$), then it is called a \textit{symmetric convex body}. Equivalently, a symmetric convex body is a convex balanced absorbing set in $V$.
\end{defn}

\begin{defn}\label{iota}
Let $(V,\langle \cdot,\cdot\rangle)$ be a finite dimensional inner product space and let $E\subseteq V$ be a compact full subset. Consider the norm $\|\cdot\|_E$ on $V$ given by the embedding
$$
\iota:V\hookrightarrow C(E)/\R\bone, \quad x\mapsto [\varphi_x]:=\varphi_x+\R\bone,$$
where $\varphi_x(y)=\langle x,y\rangle$ for $y\in E$. Then
\begin{equation}\label{generhofernorm}
\|x\|_E=\max_{y\in E}\varphi_x(y)-\min_{y\in E}\varphi_x(y)=2\|[\varphi_x]\|_{\infty},
\end{equation}
where $\|[\varphi_x]\|_{\infty}=\inf\{\|\varphi_x-\lambda \bone \|_{\infty} :\lambda\in\mathbb R\}$ is the quotient norm. We call $\|\cdot\|_E$ a \textit{generalized Hofer norm}.
\end{defn}

This is a norm since $E$ is full. If there is a group acting isometrically on $V$ and leaving the set $E$ invariant then the action is also isometric for the norm $\|\cdot\|_E$.  

\begin{figure}[h]
\def\svgwidth{10cm}
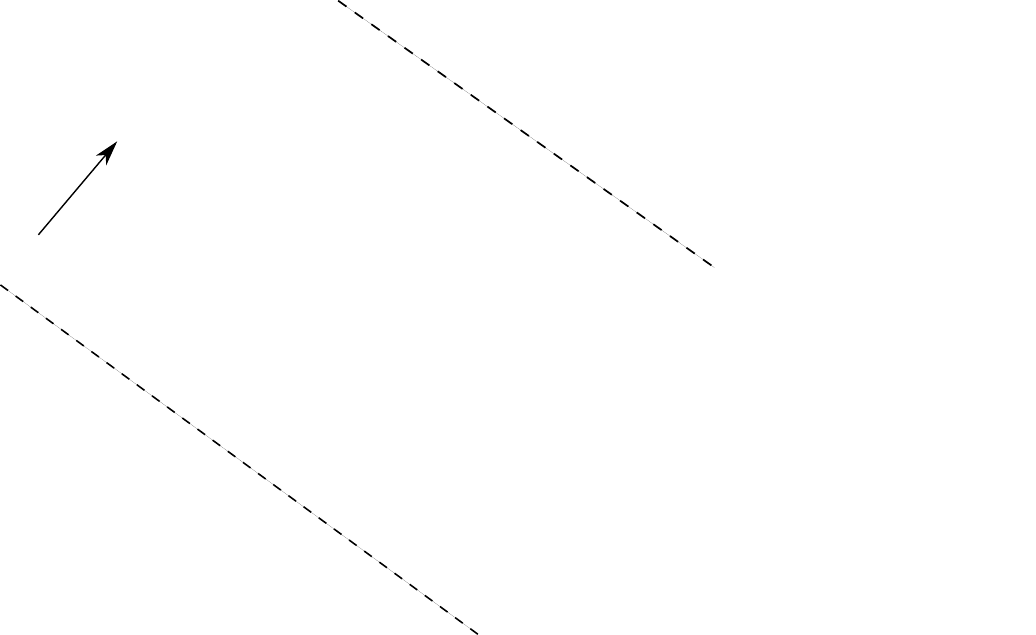
\caption{\small{Norming functionals, faces and supporting hyperplanes}}
\label{fig: norming}
\end{figure}

\begin{rem}\label{otrasnorm}
Another (semi)-norm which is invariant for isometric actions on $V$ that we will consider is the \textit{generalized second Hofer norm} given by the supremum norm
\begin{equation}\label{generhofernorm2}
\|x\|'_E= \max\{\max_{y\in E}\varphi_x(y),-\min_{y\in E}\varphi_x(y)\}=\|\varphi_x\|_{\infty}.
\end{equation}
It is also of interest to consider a third invariant norm which is only a Finsler norm (i.e. only positively homogeneous), given by
$$
\|x\|^+_E=\max_{y\in E}\varphi_x(y).
$$
We will call this the \textit{one-sided Hofer norm}. This defines a Finsler norm for a bounded set $E\subseteq V$ such that its convex hull contains $0$ in its interior, or equivalently, such that the cone generated by $E$ is all of $V$. It is also possible to consider
$$
\|x\|^-_E=-\min_{y\in E}\varphi_x(y).
$$
but note that this one can be obtained from the previous by replacing $E$ with $-E$. Clearly $\|x\|'_E=\max \{\|x\|_E^+,\|x\|_E^-\}$ is the second Hofer norm and $\|x\|_E=\|x\|_E^++\|x\|_E^-$ is the first Hofer norm.
\end{rem}

%\section{Convex structure of the unit ball}

\subsection{Maximal faces and norming functionals}

We begin studying the faces of the sphere and the norming functionals of Hofer's norm by relating this norm on the space $V$ to the norm on the much larger space $C(E)/\R\bone$ (where $E$ is a compact full set) by means of the embedding $\iota:V\hookrightarrow C(E)/\R\bone$ given by $x\mapsto \varphi_x+\R\bone$, where $\varphi_x=\langle x,\cdot\rangle$.

\begin{defn}
Let $V$ be a normed space and denote $\|\varphi\|=\sup\{\varphi(v):\|v\|=1\}$ for $\varphi\in V^*$. The dual space $V^*$ with this norm is a Banach space. We say that $\varphi\in V^*$ is a \textit{norming functional} of $v\in V$ if $\varphi(v)=\|v\|$ and $\|\varphi\|=1$. A functional $\varphi$ is \textit{extremal} if $\varphi$ is an extreme point of the unit ball $B_{V^*}$. If $\|\cdot\|$ is only positively homogeneous (to remark it we say that it is a \textit{Finsler norm}) the same definitions apply, and the norm given to $V^*$ is only positively homogeneous. In any case we refer to it as the \textit{dual norm}.
\end{defn}

\begin{rem}
Note that the difference between the ball of a norm and that of a Finsler norm is that the last one might not be balanced (i.e. symmetric). In both cases, it is an absorbing, open convex set containing $0\in V$.
\end{rem}

\begin{defn}
A \textit{face} $F$ of the unit ball $B_V$ of a normed space $V$ is the intersection of the unit ball $B_V$ with the hyperplane determined by a unit norm functional $\varphi\in V^*, \|\varphi\|=1$, i.e.
$$
F_{\varphi}=B_V\cap\{v\in V:\varphi(v)=1\}.
$$
We say that the face is \textit{maximal} if $\varphi$ is extremal. Every face is contained in a maximal face: if $\varphi$ is a unit norm functional then $\|\varphi\|=1$ and since $B_{V^*}$ is compact and convex there exists by the Krein-Milman theorem extremal functionals $\{\varphi_i\}_{i=1,\dots,n}\subseteq B_{V^*}$ such that $\varphi$ is a convex combination of the $\varphi_i$:
$$
\varphi=\sum_i \lambda_i \varphi_i,  \quad \lambda_i\ge 0,\quad \sum_i \lambda_i=1.
$$
It is then easy to check that if $\varphi(v)=\|v\|$ then $\varphi_i(v)=\|v\|$ for all $i$. Therefore if $v\in F_{\varphi}$, $v\in F_{\varphi_i}$ for all $i$ and in fact $F_{\varphi}$ is the intersection of all the maximal faces that contain it.

\medskip

The \textit{cone generated} by a face $F_{\varphi}$ is $\R_+ F_{\varphi}$. Note that this cone consists exactly of those $v\in V$ such that $\varphi(v)=\|v\|$.
\end{defn}

The following elementary characterization will be useful:
\begin{lem}\label{mismacara}
In a vector space $V$,  $\|v_1+\dots +v_n\|=\|v_1\|+\dots+\|v_n\|$ holds if and only if $v_1,\dots,v_n$ belong to the cone generated by a face.
\end{lem}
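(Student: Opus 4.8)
The plan is to reduce both implications to the characterization, noted just above the statement, that the cone generated by a face $F_\varphi$ is precisely the set $\{v\in V:\varphi(v)=\|v\|\}$ of vectors normed by the unit functional $\varphi$. In these terms the assertion ``$v_1,\dots,v_n$ lie in the cone generated by a single face'' is equivalent to the existence of one functional $\varphi\in V^*$ with $\|\varphi\|=1$ such that $\varphi(v_i)=\|v_i\|$ for every $i$, and I would phrase the whole argument around such a common norming functional.

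For the implication ($\Leftarrow$), suppose such a $\varphi$ exists. Since $\|\varphi\|=1$ one has $\varphi(w)\le\|w\|$ for all $w\in V$, an inequality that survives the merely positively homogeneous case and so covers the Finsler norms of Remark \ref{otrasnorm} as well. Then I would compute
\[
\sum_i\|v_i\|=\sum_i\varphi(v_i)=\varphi\Big(\sum_i v_i\Big)\le\Big\|\sum_i v_i\Big\|\le\sum_i\|v_i\|,
\]
where the final step is subadditivity of the norm. The two outer expressions coincide, which forces equality throughout and in particular $\|\sum_i v_i\|=\sum_i\|v_i\|$.

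For the converse ($\Rightarrow$), assume $\|\sum_i v_i\|=\sum_i\|v_i\|$ and set $v=\sum_i v_i$. First I would produce a norming functional $\varphi$ for $v$: in the finite-dimensional setting this is immediate from compactness of $B_{V^*}$ (equivalently Hahn--Banach), yielding $\|\varphi\|=1$ and $\varphi(v)=\|v\|$. Then the chain $\sum_i\|v_i\|=\|v\|=\varphi(v)=\sum_i\varphi(v_i)\le\sum_i\|v_i\|$ is a string of inequalities whose two ends agree. Since each summand satisfies $\varphi(v_i)\le\|v_i\|$, a pinching argument (any strict inequality in a single term would make the total sum strictly smaller) forces $\varphi(v_i)=\|v_i\|$ for every $i$, so all the $v_i$ lie in the cone generated by the single face $F_\varphi$.

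The whole argument is a squeeze principle sandwiched between subadditivity and the norming inequality $\varphi(\cdot)\le\|\cdot\|$, so I do not expect a genuine obstacle. The only points needing care are the existence of a norming functional for the sum $v$ (a nonissue in finite dimensions) and the verification that both $\varphi(w)\le\|w\|$ and subadditivity remain valid for the one-sided, only positively homogeneous Hofer norms, so that the lemma applies uniformly to every norm considered in this section.
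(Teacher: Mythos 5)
Your proposal is correct and follows essentially the same route as the paper's proof: one direction is the chain $\sum_i\|v_i\|\ge\|\sum_i v_i\|\ge\varphi(\sum_i v_i)=\sum_i\|v_i\|$, and the converse picks a Hahn--Banach norming functional for the sum and pinches $\varphi(v_i)\le\|v_i\|$ to equalities. The extra remarks about positively homogeneous norms are a harmless addition but not needed beyond what the paper already assumes.
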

\begin{proof}
If the $v_i$ are in the cone of $\varphi$, then 
$$
\|v_1\|+\dots+\|v_n\|\ge \|v_1+\dots+v_n\|\ge \varphi(v_1+\dots +v_n)=\|v_1\|+\dots + \|v_n\|.
$$
On the other hand, if $\|v_1+\dots +v_n\|=\|v_1\|+\dots+\|v_n\|$ holds, by means of Hahn-Banach's theorem pick a unit norm functional that norms the sum of the $v_i$, that is $\varphi(\sum_i v_i)=\|\sum_i v_i\|$. Then
$$
\|\sum_i v_i\|=\varphi(\sum_i v_i)=\sum_i \varphi(v_i)\le \sum_i \|v_i\|=\|\sum_i v_i\|
$$
since $\varphi(v_i)\le \|v_i\|$ for all $i$; this is only possible if equality holds for each $i$. Therefore $\varphi$ is a norming functional for all the $v_i$.
\end{proof}

\subsubsection{Norming functionals as Borel measures}

Let $X$ be a compact Hausdorff  topological space and let $C(X)$ be the continuous real valued functions on $X$. Endow $C(X)/\R\bone$ with (twice) the quotient $L^\infty$ norm (the factor $2$ is there to be consistent with formula (\ref{generhofernorm}) of the definition of generalized Hofer norm). By Riesz-Markov's theorem, its dual space can be identified with the regular finite Borel signed measures in $X$ such that $\mu(X)=0$ (that is because the identification $\mu\mapsto \varphi_\mu$ is given by integration $\varphi_\mu(f)=\int_X fd\mu$, and we require that $\varphi_\mu(\bone)=0$). The norm of $\varphi_\mu$ is given by the total variation of $\mu$, therefore unit norm functionals are characterized by having total variation equal to \textit{two}. The following characterization of norming functionals follows:

\begin{rem}\label{normingcont}
For $f\in C(X)/\R\bone$ different from zero, its norming functionals are given by $\varphi=\mu^+-\mu^-$, where $\mu^+$ and $\mu^-$ are probability measures in $X$,  supported in $\argmax(f)$ and $\argmin(f)$ respectively. Since the extreme points of the probability measures are the Dirac measures, the maximal faces are given by norming functionals $\varphi=\delta^+-\delta^-$, with delta measures $\delta^+,\delta^-$ supported in $x^+,x^-\in X$ respectively.
\end{rem}

\begin{prop}\label{argcont}
A set $\{f_i\}_{i\in I}\subseteq C(X)/\R\bone$ is a subset of a cone generated by a face if and only if $\cap_{i\in I}\argmin(f_i)\neq\emptyset$ and $\cap_{i\in I}\argmax(f_i)\neq\emptyset$. For $f,g\in C(X)/\R\bone$ we have $\|f+g\|=\|f\|+\|g\|$ if and only if $\argmin(f)\cap\argmin(g)\neq\emptyset$ and $\argmax(f)\cap\argmax(g)\neq\emptyset$.

\end{prop}
\begin{proof}
If both intersections are non empty, pick $x^-,x^+$ respectively in each of them and consider $\varphi$ in the dual given by $\varphi(f)=f(x^+)-f(x^-)$. Then $\varphi$ is a unit norm functional and 
$$
\sum_i \|f_i\|\ge \|\sum_i f_i\|\ge \varphi(\sum_i f_i)=\sum_i f_i(x^+)-f_i(x^-)=\sum_i\|f_i\|,
$$
therefore by Lemma \ref{mismacara} the $f_i$ are in the cone generated by the face given by $\varphi$. Reciprocally, if there are, say, $f=f_k$ and $g=f_l$ such that the maximal argument of $f$ does not intersect the maximal argument of $g$, then $\max (f+g)<\max f +\max g$, therefore
$$
\|f+g\|=\max(f+g)-\min(f+g)<\max f+ \max g-\min f - \min g=\|f\|+\|g\|
$$
and the conclusions follows by Lemma \ref{mismacara}.
\end{proof}

Putting together the previous characterizations (and recalling that the norm we are considering is twice the quotient norm), it is clear that

\begin{cor}
The maximal faces of the ball of $C(X)/\R\bone$ are given by the sets 
$$
F_{x^-,x^+}=\{[f]:\|[f]\|_{\infty}=2,x^-\in\argmin(f),x^+\in\argmax(f)\}
$$
for a choice of points $x^-,x^+\in X$. 
\end{cor}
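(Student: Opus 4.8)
The plan is to assemble the statement from the measure-theoretic description of norming functionals obtained above together with the definition of a maximal face. By definition a face $F_\varphi$ is maximal exactly when $\varphi$ is an extreme point of the dual unit ball, and under the Riesz--Markov identification recorded above this ball is the set of signed Borel measures $\mu$ on $X$ with $\mu(X)=0$ and total variation equal to two. The first step is therefore to identify its extreme points, and the second is to read off the face that each such extremal functional cuts out of the unit ball.

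For the first step I would reproduce the argument already isolated in Remark~\ref{normingcont}: writing $\varphi=\mu^+-\mu^-$ by Jordan decomposition, the constraints $\mu(X)=0$ and $|\mu|(X)=2$ force $\mu^+,\mu^-$ to be probability measures, and since the extreme points of the compact convex set of probability measures on the compact space $X$ are precisely the Dirac measures, every admissible $\mu$ is a convex combination of differences $\delta_{x^+}-\delta_{x^-}$. Hence the extreme points of the dual ball are exactly the functionals $\varphi_{x^-,x^+}:=\delta_{x^+}-\delta_{x^-}$ with $x^+\ne x^-$. As this is the content of Remark~\ref{normingcont}, I may simply invoke it.

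For the second step, fix $x^-,x^+\in X$ and the corresponding extremal functional $\varphi_{x^-,x^+}$, which acts by $\varphi_{x^-,x^+}([f])=f(x^+)-f(x^-)$. As noted just after the definition of a face, the cone generated by $F_{\varphi_{x^-,x^+}}$ consists of those $[f]$ with $\varphi_{x^-,x^+}([f])=\|[f]\|$. Since the norm in play is $\|[f]\|=\max_X f-\min_X f$ and
\[
f(x^+)-f(x^-)\le \max_X f-\min_X f,
\]
with equality precisely when $f$ attains its maximum at $x^+$ and its minimum at $x^-$, this cone is exactly $\{[f] : x^+\in\argmax(f),\ x^-\in\argmin(f)\}$. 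Because the argmin/argmax condition is scale-invariant and insensitive to adding constants, intersecting this cone with the unit sphere yields $F_{\varphi_{x^-,x^+}}=F_{x^-,x^+}$; the degenerate choice $x^-=x^+$ produces the empty face (it would force $f$ constant), so the genuine maximal faces arise from pairs with $x^+\ne x^-$, and letting $x^-,x^+$ range over $X$ exhausts them.

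The only genuinely delicate point is the extreme-point identification of the first step, and this has already been settled in Remark~\ref{normingcont}; everything afterwards is the elementary equality analysis for $f(x^+)-f(x^-)\le\max f-\min f$. The one thing to watch is the normalization: the factor two built into both the norm (twice the quotient norm) and into the total variation of the norming measures must be tracked consistently so that the norming condition matches the constraint on $\|[f]\|_\infty$ appearing in the statement.
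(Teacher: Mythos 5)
Your argument is correct and follows exactly the route the paper intends: the paper offers no explicit proof, merely stating that the corollary is "clear" from Remark~\ref{normingcont} and the preceding characterizations, and your write-up is a faithful expansion of that — extreme points of the dual ball are the differences $\delta_{x^+}-\delta_{x^-}$, and the equality case of $f(x^+)-f(x^-)\le\max_X f-\min_X f$ identifies the corresponding face. Your closing remark about tracking the factor of two is well taken, since the normalization in the corollary's statement (the condition $\|[f]\|_{\infty}=2$ versus the unit sphere of the doubled quotient norm) is the one place where the paper's bookkeeping is easiest to trip over.
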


\medskip

Since a face of the ball of $V$ is contained in a face of $C(E)/\R\bone$ by means of the map $\iota$ of Definition \ref{iota}, we have the following result (see Figure \ref{fig: norming2}):

\begin{cor}\label{arghofer}
Let $V$ be a vector space with the norm defined by the map $\iota$ and the compact full set $E\subseteq V$. A set $S\subseteq V$ is a subset of a cone generated by a face if and only if 
$$
\bigcap_{x\in S}\argmin_E(\varphi_{x})\neq\emptyset \quad \textrm{  and }\quad \bigcap_{x\in S}\argmax_E(\varphi_{x})\neq\emptyset.
$$
\end{cor}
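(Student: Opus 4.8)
The plan is to reduce the statement to Proposition \ref{argcont} by exploiting that $\iota\colon V\hookrightarrow C(E)/\R\bone$ of Definition \ref{iota} is a linear isometric embedding. Recall first that, by the discussion preceding Lemma \ref{mismacara}, a set $S$ is contained in a cone generated by a face of the unit ball precisely when there is a single unit-norm functional $\varphi$ with $\varphi(x)=\|x\|$ for every $x\in S$, i.e.\ a common norming functional for $S$. Thus the whole corollary amounts to transporting the existence of a common norming functional across $\iota$ and then invoking Proposition \ref{argcont} with $X=E$.

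First I would establish the transfer equivalence: $S\subseteq V$ admits a common norming functional in $V^*$ if and only if $\iota(S)=\{[\varphi_x]:x\in S\}$ admits a common norming functional in $(C(E)/\R\bone)^*$. For the ``if'' direction, given a unit-norm $\psi$ norming every $[\varphi_x]$, the pullback $\varphi:=\psi\circ\iota\in V^*$ satisfies $\|\varphi\|\le\|\psi\|=1$ because $\iota$ is isometric, while $\varphi(x)=\psi([\varphi_x])=\|[\varphi_x]\|=\|x\|_E$ forces $\|\varphi\|\ge1$; hence $\varphi$ is a common norming functional for $S$. For the ``only if'' direction, a common norming functional $\varphi$ for $S$ defines, by the same isometry, a norm-one functional $\varphi\circ\iota^{-1}$ on the subspace $\iota(V)$, which by the Hahn--Banach theorem extends to a unit-norm $\psi$ on $C(E)/\R\bone$; by construction $\psi([\varphi_x])=\varphi(x)=\|x\|_E=\|[\varphi_x]\|$, so $\psi$ norms all of $\iota(S)$.

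With the equivalence in hand I would apply Proposition \ref{argcont} to the family $\{[\varphi_x]\}_{x\in S}\subseteq C(E)/\R\bone$. Since the function represented by $[\varphi_x]$ is $\varphi_x=\langle x,\cdot\rangle$ restricted to $E$, its minimizers and maximizers are exactly $\argmin_E(\varphi_x)$ and $\argmax_E(\varphi_x)$. Proposition \ref{argcont} then says that $\iota(S)$ lies in a cone generated by a face if and only if $\bigcap_{x\in S}\argmin_E(\varphi_x)\neq\eset$ and $\bigcap_{x\in S}\argmax_E(\varphi_x)\neq\eset$, which combined with the transfer equivalence is precisely the assertion of the corollary. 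The element $x=0$, should it belong to $S$, is harmless: $\varphi_0\equiv 0$ makes both argument sets equal to $E$, and $0$ lies in every cone.

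The only genuinely delicate point is the backward half of the transfer step, where the non-surjectivity of $\iota$ prevents one from simply re-using the given functional and forces the Hahn--Banach extension; everything else is bookkeeping with the isometry and the factor-of-two normalization built into \eqref{generhofernorm}. I note in passing that the ``if'' half of the corollary can also be proved directly, without Hahn--Banach, by choosing $y^{+}\in\bigcap_{x\in S}\argmax_E(\varphi_x)$ and $y^{-}\in\bigcap_{x\in S}\argmin_E(\varphi_x)$ and checking that $v\mapsto\langle v,\,y^{+}-y^{-}\rangle$ is a unit-norm common norming functional for $S$; Hahn--Banach is really only needed to extract the common face in the other direction.
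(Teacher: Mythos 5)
Your proof is correct and follows essentially the same route as the paper, which disposes of the corollary in one sentence by observing that a face of the ball of $V$ is contained in a face of $C(E)/\R\bone$ via $\iota$ and then invoking Proposition \ref{argcont}. Your write-up merely makes explicit the two halves of that observation (the Hahn--Banach extension in one direction and the isometric pullback in the other), which the paper leaves implicit.
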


\begin{defn}\label{conosmaxmin}
Given a compact $E\subseteq V$ and $x^-,x^+\in E$ we define the cone
$$C_{x^-,x^+}(E):=\{x\in V:x^-\in\argmin_E(\varphi_{x})\mbox{  and  }x^+\in\argmax_E(\varphi_{x})\}.$$
\end{defn}

\begin{prop}\label{maxfacemaxmin}
Each cone generated by a maximal face is equal to $C_{x^-,x^+}(E)$ for some $x^-,x^+\in E$.
\end{prop}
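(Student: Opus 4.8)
The plan is to identify the extremal functional $\varphi$ that cuts out the given maximal face with the restriction through $\iota$ of an \emph{extremal} functional of the ambient space $C(E)/\R\bone$. By Remark \ref{normingcont} the latter must be a difference of two Dirac masses $\delta_{x^+}-\delta_{x^-}$, and once $\varphi$ is shown to equal $\langle\,\cdot\,,x^+-x^-\rangle$ the description of $C_{x^-,x^+}(E)$ drops out by a one-line computation. The whole point is that extremality of $\varphi$ lets us represent it by a difference of two Diracs rather than by an arbitrary mass-zero measure.

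Concretely, let $F_\varphi$ be a maximal face, so $\varphi\in V^*$ is extremal with $\|\varphi\|=1$ and the generated cone is $\R_+F_\varphi=\{x\in V:\varphi(x)=\|x\|_E\}$. By \eqref{generhofernorm} the map $\iota$ is a linear \emph{isometry} onto its image once $C(E)/\R\bone$ is given twice the quotient norm, so its adjoint $\iota^*\colon (C(E)/\R\bone)^*\to V^*$ sends the weak${}^*$-compact convex dual unit ball $B^*$ of $C(E)/\R\bone$ \emph{onto} $B_{V^*}$, surjectivity being exactly Hahn--Banach extension. I would then lift the extreme point $\varphi$ along $\iota^*$: the fibre $(\iota^*)^{-1}(\varphi)\cap B^*$ is nonempty, weak${}^*$-compact and convex, and it is a \emph{face} of $B^*$, because if one of its points were a midpoint of two elements of $B^*$, applying $\iota^*$ and using extremality of $\varphi$ would force both to lie in the fibre. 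By Krein--Milman this face has an extreme point $\tilde\varphi_0$, which is therefore an extreme point of $B^*$; by Remark \ref{normingcont} it has the form $\tilde\varphi_0=\delta_{x^+}-\delta_{x^-}$ for some $x^-,x^+\in E$, and $\iota^*\tilde\varphi_0=\varphi$.

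It remains to unwind $\iota^*\tilde\varphi_0=\varphi$. For every $x\in V$ this reads $\varphi(x)=\varphi_x(x^+)-\varphi_x(x^-)=\langle x,x^+-x^-\rangle$. Since $\varphi_x(x^+)\le\max_E\varphi_x$ and $\varphi_x(x^-)\ge\min_E\varphi_x$, we get $\varphi(x)\le\|x\|_E$, with equality precisely when $x^+\in\argmax_E(\varphi_x)$ and $x^-\in\argmin_E(\varphi_x)$, that is, precisely when $x\in C_{x^-,x^+}(E)$ in the sense of Definition \ref{conosmaxmin}. Hence $\R_+F_\varphi=\{x:\varphi(x)=\|x\|_E\}=C_{x^-,x^+}(E)$, which is the assertion.

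The main obstacle, and the step where extremality is essential, is obtaining a \emph{difference of two Diracs} rather than merely some representing measure. Every $\varphi$ equals $\iota^*$ of a mass-zero signed measure of total variation two, but such a measure only yields, via Corollary \ref{arghofer}, the inclusion $\R_+F_\varphi\subseteq C_{x^-,x^+}(E)$ for points $x^\pm$ in its supports; isolating a single pair $(x^-,x^+)$ that gives \emph{equality} is exactly what extremality buys through the face/preimage argument above. One should resist the temptation to instead derive the reverse inclusion from an inclusion-maximality of maximal faces, since exposed faces cut out by extreme functionals need not be maximal under inclusion.
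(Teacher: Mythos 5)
Your proof is correct, but it follows a genuinely different route from the paper's. The paper's argument is a short sandwich: by Corollary \ref{arghofer} the cone $\R_+F_{max}$ is contained in some $C_{x^-,x^+}(E)$; since $C_{x^-,x^+}(E)$ itself satisfies the hypothesis of Corollary \ref{arghofer}, it is contained in the cone of some face $F$; and ``maximality of $F_{max}$'' collapses the chain. You instead lift the extremal functional through $\iota^*$ to an extreme point of the dual ball of $C(E)/\R\bone$, identify it as $\delta_{x^+}-\delta_{x^-}$, and compute the equality set $\{x:\langle x,x^+-x^-\rangle=\|x\|_E\}=C_{x^-,x^+}(E)$ directly. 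Your version is longer but buys two things: it produces the pair $(x^-,x^+)$ together with the explicit representation $\varphi=\varphi_{x^+-x^-}$ (so it recovers a special case of Theorem \ref{normingfunc} along the way), and it bypasses exactly the step your closing remark worries about --- the paper defines a maximal face by extremality of the defining functional, and the final step of its proof tacitly uses that such faces are maximal under inclusion, whereas your argument needs no such fact. Two small points on the write-up: the fibre $(\iota^*)^{-1}(\varphi)\cap B^*$ is an extreme subset rather than an exposed face in the paper's sense of the word, which is all that the Krein--Milman step requires; and the fact you draw from Remark \ref{normingcont}, namely that \emph{every} extreme point of the dual ball of $C(E)/\R\bone$ is a difference of two Dirac masses, is slightly stronger than that remark's literal statement, though it follows from Milman's converse applied to the weak$^*$-compact set $\{\delta_x-\delta_y:x,y\in E\}$, whose closed convex hull is the dual ball.
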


\begin{proof}
Let $\R_+F_{max}$ be the cone generated by a maximal face. By Corollary \ref{arghofer} it is contained in $C_{x^-,x^+}(E)$ for some $x^-,x^+\in E$. Since $C_{x^-,x^+}(E)$ satisfies the condition of Corollary \ref{arghofer} it is contained in the cone $\R_+F$ generated by a face $F$. By maximality of $F_{max}$ we get $F_{max}=F$ and the conclusion follows. 
\end{proof}

The cones of Definition \ref{conosmaxmin} have good properties with respect to sum of sets.

\begin{prop}\label{interconosmaxmin}
Let $E_1,\dots,E_n$ be compact sets in $V$ and let $x^-_i,x^+_i\in E_i$ for $i=1,\dots,n$. If we define $E=E_1+\dots + E_n$, $x^-=x^-_1+\dots +x^-_n$ and $x^+=x^+_1+\dots +x^+_n$ then
$$C_{x^-,x^+}(E)=\bigcap_{i=1,\dots,n}C_{x^-_i,x^+_i}(E_i).$$
\end{prop}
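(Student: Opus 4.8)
The plan is to reduce everything to the \emph{additivity of support functions under Minkowski sums}. For a compact set $E\subseteq V$ write $h_E(x)=\max_{y\in E}\varphi_x(y)$ and $\ell_E(x)=\min_{y\in E}\varphi_x(y)$. By Definition \ref{conosmaxmin}, the condition $x\in C_{x^-,x^+}(E)$ is exactly the pair of equalities $\varphi_x(x^-)=\ell_E(x)$ and $\varphi_x(x^+)=h_E(x)$; likewise $x\in\bigcap_i C_{x_i^-,x_i^+}(E_i)$ amounts to $\varphi_x(x_i^-)=\ell_{E_i}(x)$ and $\varphi_x(x_i^+)=h_{E_i}(x)$ for every $i$. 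Thus the whole statement follows once I establish
$$
h_{E_1+\dots+E_n}(x)=\sum_{i=1}^n h_{E_i}(x),\qquad
\ell_{E_1+\dots+E_n}(x)=\sum_{i=1}^n\ell_{E_i}(x).
$$

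First I would prove these two identities. Since $\varphi_x$ is linear, any $y=y_1+\dots+y_n\in E$ with $y_i\in E_i$ satisfies $\varphi_x(y)=\sum_i\varphi_x(y_i)$, and because the $y_i$ range independently over the (compact) sets $E_i$, the maximum of the sum is the sum of the maxima; the same argument with minima gives the second identity. In particular $x^+=\sum_i x_i^+$ attains the maximum of $\varphi_x$ over $E$ exactly when each $x_i^+$ attains the maximum over $E_i$, which is essentially the content we are after.

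For the inclusion $\supseteq$, I would suppose $x\in C_{x_i^-,x_i^+}(E_i)$ for all $i$; summing the equalities $\varphi_x(x_i^+)=h_{E_i}(x)$ over $i$ and applying the additivity identity gives $\varphi_x(x^+)=\sum_i\varphi_x(x_i^+)=\sum_i h_{E_i}(x)=h_E(x)$, so $x^+\in\argmax_E(\varphi_x)$; the analogous computation with $\ell$ shows $x^-\in\argmin_E(\varphi_x)$, whence $x\in C_{x^-,x^+}(E)$.

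For the reverse inclusion $\subseteq$, I would suppose $x\in C_{x^-,x^+}(E)$, so that $\sum_i\varphi_x(x_i^+)=\varphi_x(x^+)=h_E(x)=\sum_i h_{E_i}(x)$, while termwise $\varphi_x(x_i^+)\le h_{E_i}(x)$ because $x_i^+\in E_i$. A sum of nonpositive differences $\varphi_x(x_i^+)-h_{E_i}(x)$ equalling zero forces each to vanish, giving $\varphi_x(x_i^+)=h_{E_i}(x)$ for every $i$; the same forcing argument applied to the minima yields $\varphi_x(x_i^-)=\ell_{E_i}(x)$, and therefore $x\in C_{x_i^-,x_i^+}(E_i)$ for all $i$. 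I do not expect a genuine obstacle here: the only delicate point is this final \emph{equality of sums forces termwise equality} step, which is precisely the mechanism already exploited in Lemma \ref{mismacara} and Proposition \ref{argcont}.
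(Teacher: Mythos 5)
Your proof is correct and follows essentially the same route as the paper: the paper's proof simply asserts as "easy to verify" the equivalence that $\varphi_x$ is maximized at each $x_i^+$ in $E_i$ if and only if it is maximized at $x^+$ in the Minkowski sum, which is exactly what you establish via the additivity $h_{E_1+\dots+E_n}=\sum_i h_{E_i}$ together with the termwise-equality forcing argument. Your write-up just supplies the details the paper omits.
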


\begin{proof}
It is easy to verify that for $x\in V$ the functional $\varphi_x$ has a maximum at  $x^-_i,x^+_i\in E_i$ in $E_i$ for $i=1,\dots,n$ if and only if it has a maximum at $x^-_1+\dots +x^-_n$ in $E_1+\dots + E_n$. The same holds for the minimizers and the proof follows.
\end{proof}

\begin{rem}\label{hofer3argmax}
Similar results can be obtained for the one-sided Hofer norm $\|\cdot\|_E^+$ (Remark \ref{otrasnorm}), for a compact set $E$ such that its convex hull contains $0$ in its interior. A set $S\subseteq V$ is a subset of a cone generated by a face if and only if 
$$
\bigcap_{x\in S}\argmax_E(\varphi_{x})\neq\emptyset.
$$
Given a compact $E\subseteq V$ and $x^+\in E$ we define the cone
$$C_{x^+}(E):=\{x\in V:x^+\in\argmax_E(\varphi_{x})\}.$$
Each cone generated by a maximal face is equal to $C_{x^+}(E)$ for some $x^+\in E$. These cones have also good properties with respect to sum of sets. Let $E_1,\dots,E_n$ be compact sets in $V$ and let $x^+_i\in E_i$ for $i=1,\dots,n$. If we define $E=E_1+\dots + E_n$ and $x^+=x^+_1+\dots +x^+_n$ then
$$C_{x^+}(E)=\bigcap_{i=1,\dots,n}C_{x^+_i}(E_i).$$
\end{rem}

We now characterize norming functionals for the Hofer norm, see Figure \ref{fig: norming} and Figure \ref{fig: norming2}. The convex hull of a set $X$ is denoted by $\conv(X)$.

\begin{thm}\label{normingfunc}
The norming functionals of $x \in (V,\|\cdot\|_E)$ are $\varphi_{y^+ -y^-}$, with 
$$
y^+ \in \conv(\argmax_E  \varphi_x)\quad\textrm{ and }\quad y^-\in\conv(\argmin_E \varphi_x).
$$ 
\end{thm}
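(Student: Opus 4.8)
The plan is to transfer the problem to the larger space $C(E)/\R\bone$, where the norming functionals have already been described in Remark \ref{normingcont} as differences of probability measures, and then to restrict these functionals back to the subspace $\iota(V)$, computing the restriction explicitly via barycenters.

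First I would record that $\iota\colon V\hookrightarrow C(E)/\R\bone$ is an \emph{isometric} embedding: this is precisely the content of \eqref{generhofernorm}, since the target carries twice the quotient $L^\infty$ norm and $\|x\|_E=2\|[\varphi_x]\|_\infty$. For an isometric embedding of a subspace $W\subseteq Z$ and a vector $x\in W$, the norming functionals of $x$ in $W^*$ are exactly the restrictions to $W$ of the norming functionals of $x$ in $Z^*$: if $\Psi\in Z^*$ is norming for $x$ then $\Psi|_W$ has norm at most $1$ and takes the value $\|x\|$ at $x$, hence is norming; conversely any norming $\psi\in W^*$ extends by Hahn--Banach to a norm-preserving $\Psi\in Z^*$ with $\Psi(x)=\psi(x)=\|x\|$, so $\psi=\Psi|_W$. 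Applying this with $W=\iota(V)$ and $Z=C(E)/\R\bone$ reduces the theorem to computing the restriction to $\iota(V)$ of an arbitrary norming functional of $[\varphi_x]$.

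By Remark \ref{normingcont}, such a functional is $\Psi=\mu^+-\mu^-$ with $\mu^\pm$ probability measures supported in $\argmax_E\varphi_x$ and $\argmin_E\varphi_x$ respectively (these two sets are disjoint for $x\neq 0$, so the total variation equals $2$, consistent with the doubled norm). Restricting to $\iota(V)$ and using $\varphi_v(y)=\langle v,y\rangle$, I would compute
$$
\Psi(\iota(v))=\int_E \langle v,y\rangle\, d(\mu^+-\mu^-)(y)=\Big\langle v,\ \int_E y\, d\mu^+(y)-\int_E y\, d\mu^-(y)\Big\rangle,
$$
which is legitimate because $\mu^+(E)=\mu^-(E)=1$ renders the integrand independent of the representative modulo $\R\bone$. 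Setting $y^+=\int_E y\, d\mu^+(y)$ and $y^-=\int_E y\, d\mu^-(y)$, the restricted functional is exactly $\varphi_{y^+-y^-}$.

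It remains to match the ranges of $y^\pm$ with the stated convex hulls, and this is where Carath\'eodory's theorem in finite dimensions does the work. Since $\mu^+$ is a probability measure supported on the compact set $\argmax_E\varphi_x$, its barycenter $y^+$ lies in $\conv(\argmax_E\varphi_x)$, which is itself compact; symmetrically $y^-\in\conv(\argmin_E\varphi_x)$. This proves every norming functional has the asserted form. For the converse, given $y^+\in\conv(\argmax_E\varphi_x)$ and $y^-\in\conv(\argmin_E\varphi_x)$, Carath\'eodory lets me write $y^+=\sum_j\lambda_j a_j$ and $y^-=\sum_k\nu_k b_k$ as finite convex combinations with $a_j\in\argmax_E\varphi_x$, $b_k\in\argmin_E\varphi_x$; then $\mu^+=\sum_j\lambda_j\delta_{a_j}$ and $\mu^-=\sum_k\nu_k\delta_{b_k}$ are probability measures of the required type, so $\mu^+-\mu^-$ is norming for $[\varphi_x]$ by Remark \ref{normingcont}, and its restriction $\varphi_{y^+-y^-}$ is norming for $x$. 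The main obstacle is purely the careful bookkeeping between measures and their barycenters: verifying that the restriction of a signed-measure functional to $\iota(V)$ is the inner-product functional against the difference of barycenters, and that this correspondence surjects onto the two prescribed convex hulls.
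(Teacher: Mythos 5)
Your proposal is correct and follows essentially the same route as the paper's proof: isometric embedding into $C(E)/\R\bone$, Hahn--Banach extension, the Riesz--Markov description of norming functionals as differences of probability measures supported on $\argmax_E\varphi_x$ and $\argmin_E\varphi_x$, and identification of the restricted functional with the difference of barycenters. The only (immaterial) difference is that the paper verifies the sufficiency direction by a direct two-line norm estimate on $\varphi_{y^+-y^-}$, while you route it back through discrete measures and Carath\'eodory; both are fine.
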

\begin{proof}
Suppose first that $y^+ \in \conv(\argmax_E  \varphi_x)$ and $y^-\in\conv(\argmin_E \varphi_x)$, then we have $\varphi_x(y^+)=\max_E(\varphi_x)$ and likewise with $y^-$. It is immediate from the definitions that $\|\varphi_{y^+-y^-}\|\le 1$, and on the other hand 
$$
\varphi_{y^+-y^-}(x)=\langle y^+-y^-,x\rangle=\max_E(\varphi_x)-\min_E(\varphi_x)=\|x\|_E,
$$
thus $\varphi_{y^+-y^-}$ has unit norm and it is norming for $x$. Suppose now $\varphi$ is a norming functional of $x$ in $V\simeq \iota(V)\subseteq C(E)/\R\bone$. We can extend it by the Hahn-Banach theorem to all $C(E)/\R\bone$, so that it is given by integration with $\mu^+-\mu^-$ for probability measures $\mu^+,\mu^-$ supported in $\argmax(\varphi_x)$ and $\argmin(\varphi_x)$ respectively. Hence 
\begin{align*}
\varphi(z)&=\int_E\langle w,z\rangle\, d(\mu_1-\mu_2)(w)\\
&=\langle \int_E w\,d\mu_1(w) -\int_Ew\,d\mu_2(w),z\rangle\\
&=\langle \cent(\mu_1) -\cent(\mu_2),z\rangle,
\end{align*}
where $\cent(\mu)$ denotes the center of mass of the probability measure $\mu$. The result follows if we take $y^+=\cent(\mu_1)$ and $y^-=\cent(\mu_2)$.
\end{proof}

\begin{figure}[h]
\def\svgwidth{12cm}
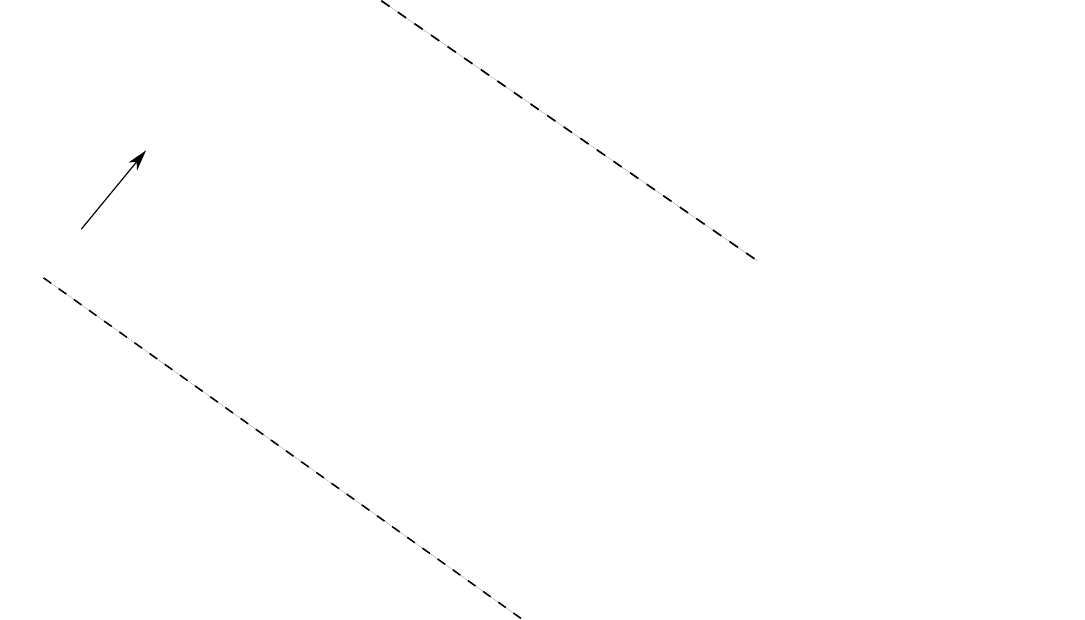
\caption{\small{$\varphi=\varphi_{y^+-y^-}$ is a norming functional of $x_1$ and $x_2$}}
\label{fig: norming2}
\end{figure}

In Theorem \ref{normingpolar} we will give another proof of the previous result, based on polar duality.

\subsection{Convex sets, polar duality and Minkowski norms}

We present some basic results on convex geometry and polar duality which will be used in this section to characterize the Hofer norms. We refer to Chapter I of \cite{bron} for basic results on convex sets and Chapter II of the same book for basic results on convex polytopes. Another reference is \cite{bar}. Let $V$ be a finite dimensional inner product space. For a non-zero vector $x\in V$ and a scalar $a\in\R$ we define the hyperplane
$$H_{x,a}=\{y\in H:\langle x,y\rangle=a\}.$$
For $a=1$ we set $H_x:=H_{x,1}$. 
We define the negative halfspace as
$$H_{x,a}^-=\{y\in H:\langle x,y\rangle\leq a\}.$$
The \textit{polar duality} is given by the following bijection between non-zero points in $V$ and hyperplanes in $V$ not containing zero: $x\mapsto H_x$. 

\begin{defn}[Supporting hyperplanes]\label{defih}
The \textit{support function} of a bounded subset $E\subseteq V$ is the function
$$
h_E:V\to\R,\quad h_E(u)=\sup_{x\in E}\langle x,u\rangle.
$$
Note that $h_E=h_{\conv(E)}$, and that if $\conv(E)$ contains $0$ in its interior, then $h_E$ is a Finsler norm, our one-sided Hofer norm (Remark \ref{otrasnorm}). 

If $0\neq x\in V$, the  hyperplane given by
$$
H(E,x):=\{v\in V:\langle v,x\rangle=h_E(x)\}
$$ 
is  the \textit{supporting hyperplane} of $E$ for $x$ (See Figure \ref{fig: norming}). For $x\in E$, the  set
$$
F_x(E):=E\cap H(E,x)=\argmax_E(\varphi_x)
$$ 
is called the \textit{face} of $E$ defined  by $x$, or also the \textit{support set} of $E$ for $x$. For $v\in F_x(E)$ we say that $H(E,x)$ supports $E$ at $v$.
\end{defn}

In the literature the faces defined above are usually called exposed faces.

\begin{defn}[Minkowski gauge]\label{minkowskigauge}
The \textit{Minkowski gauge} or \textit{gauge} of a bounded convex set $B\subseteq V$ which contains the origin in its interior is the function 
$$
g_B:V\to\R,\quad g_B(x)=\inf\{t>0:x\in tB\}.
$$ 
\end{defn}

The set $B$ is a symmetric convex body (Definition \ref{convexb}) if and only if the gauge function $g_B$ is a norm on $V$ whose unit ball is $B$. Otherwise it is a Finsler norm, i.e. only positively homogeneous.

\begin{rem}\label{normingsupport} 
If $x\in V$ is such that $g_B(x)=1$ then $H_y=\varphi_y^{-1}(1)=\{z\in V:\langle z,y\rangle=1\}$ is a supporting hyperplane of $B$ at $x$ if and only if $\varphi_y$ is a norming functional of $x$.
\end{rem}

\begin{defn}
The \textit{polar} of a nonempty bounded set $E\subseteq V$ is 
$$E^\circ =\{x\in V:\langle x,y\rangle\leq 1 \mbox{ for all }y\in E\}.$$
\end{defn}

Note that if $E$ is invariant by an isometric action so is its polar $E^\circ$. A \textit {polytope} is the convex hull of a finite set of points.

\bigskip

 These are the result of applying the polar operation to some standard sets
\begin{itemize}
\item[-] If $B_p$ is the unit ball of the $\ell^p$ space for $1\leq p\leq \infty$ then $B_p^\circ=B_q$ where $q$ is conjugate to $p$.
\item[-] The polar of an ellipsoid $E\subseteq \mathbb R^n$ with axes of length $a_1,\dots,a_n$ is the ellipsoid with axes of length $1/a_1,\dots, 1/a_n$.
\item[-] The polar of a polytope $P\subseteq V$ containing $0$ in its interior and which has $n$ faces and $k$ vertices is a polytope with $k$ faces and $n$ vertices.
\end{itemize}
\begin{rem}\label{polar}
These are some standard properties that will be used later; let $E,F\subseteq V$ be compact  convex sets containing $0\in V$ in the interior, then
\begin{enumerate}
\item $E^{\circ\circ}=E$, this is the bipolar property.
\item $(\lambda E)^\circ=\lambda^{-1}E^\circ$ for $\lambda >0$. 
\item If $E\subseteq F$, then $F^\circ\subseteq E^{\circ}$.
\item $(E\cup F)^\circ=E^\circ\cap F^\circ$.
\item $(E\cap F)^\circ=\conv(E^\circ\cup F^\circ)$.
\item For a polytope $E=\conv\{x_1,\dots,x_n\}$ we have 
$$
E^\circ=\{y\in V:\langle x_i,y\rangle\leq 1\mbox{  for  }i=1,\dots,n\}.
$$
\item For $E=\{y\in V:\langle x_i,y\rangle\leq 1\mbox{  for  }i=1,\dots,n\}$ its polar is the polytope  
$$
E^\circ=\conv\{0,x_1,\dots,x_n\}.
$$
\end{enumerate}
\end{rem}

The next result can be found in Theorem 6.4 and Corollary 6.5 of \cite{bron}, and Theorem 14.5 of \cite{rock}.

\begin{thm}\label{supportdual}
Let $E\subseteq V$ be a compact convex set containing $0$ in its interior. Then 
\begin{enumerate}
\item $h_E=g_{E^\circ}$ and $g_E=h_{E^\circ}$.
\item The supporting hyperplanes of $E$ are the hyperplanes determined by the points of the boundary $\bd E^\circ$ of $E^\circ$, i.e. $H_x$ with $x\in\bd E^\circ$.
\end{enumerate}
\item The following are equivalent, see Figure \ref{fig: duality}
\begin{itemize}
\item[-] the hyperplane $H_y$ supports $E$ at $x\in E$.
\item[-] the hyperplane $H_x$ supports $E^\circ$ at $y\in E^\circ$.
\item[-] $x\in\bd E$, $y\in \bd E^\circ$ and $\langle x,y\rangle=1$.
\end{itemize} 
\end{thm}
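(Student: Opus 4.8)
The plan is to derive all three parts from the single computation in part (1), together with the standard fact that for a convex body $B$ containing $0$ in its interior the gauge satisfies $g_B(x)=1$ precisely when $x\in\bd B$ (with $g_B<1$ on $\interior B$ and $g_B>1$ off $B$).

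First I would prove $h_E=g_{E^\circ}$ by unwinding the definitions. For $u\in V$ and $t>0$, dividing by $t$ shows that $u\in tE^\circ$ is equivalent to $\langle u,y\rangle\le t$ for every $y\in E$, i.e.\ to $h_E(u)\le t$. Taking the infimum over admissible $t$ gives $g_{E^\circ}(u)=h_E(u)$; here $0\in\interior E$ guarantees $h_E(u)\ge 0$, so the infimum is over a nonempty set. Applying this identity to $E^\circ$ in place of $E$ and invoking the bipolar property $E^{\circ\circ}=E$ (Remark \ref{polar}(1)) yields $g_E=h_{E^\circ}$, which completes part (1).

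For part (2), I would start from an arbitrary supporting hyperplane $H(E,u)=\{v:\langle v,u\rangle=h_E(u)\}$ with $u\neq 0$. Since $0\in\interior E$ we have $h_E(u)>0$, so setting $x:=u/h_E(u)$ rewrites the hyperplane as $H_x=\{z:\langle z,x\rangle=1\}$. By positive homogeneity and part (1), $g_{E^\circ}(x)=h_E(x)=1$, hence $x\in\bd E^\circ$. Conversely, any $x\in\bd E^\circ$ satisfies $h_E(x)=g_{E^\circ}(x)=1$, so $H_x=H(E,x)$ is the supporting hyperplane of $E$ for $x$; it does meet $E$ because $E$ is compact and the supremum defining $h_E(x)$ is attained. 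For part (3), I would unpack ``$H_y$ supports $E$ at $x$'' as the conjunction that $H_y$ is a supporting hyperplane of $E$ and that $x\in E\cap H_y$. By part (2) the first clause is exactly $y\in\bd E^\circ$, and the second is $x\in E$ with $\langle x,y\rangle=1$. The only remaining content is that these force $x\in\bd E$: if $x$ were interior, a small displacement in a direction increasing $\langle\cdot,y\rangle$ would stay in $E$ while exceeding $h_E(y)=1$, a contradiction. This identifies the first bullet with the symmetric statement $x\in\bd E,\ y\in\bd E^\circ,\ \langle x,y\rangle=1$. The second bullet is treated identically after exchanging the roles of $E$ and $E^\circ$ (and of $x$ and $y$), using $E^{\circ\circ}=E$, and is seen to be equivalent to the same symmetric statement; hence all three conditions coincide.

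The only genuinely delicate points are keeping track of the hypothesis $0\in\interior E$ (needed both to normalize $u$ and to know $h_E\ge 0$) and the interior-exclusion argument in part (3); everything else is a direct translation through the identity of part (1) and the bipolar property.
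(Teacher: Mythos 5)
Your proof is correct. Note that the paper itself does not prove this statement: it is quoted from the literature (Theorem 6.4 and Corollary 6.5 of Br{\o}ndsted, Theorem 14.5 of Rockafellar), so there is no in-paper argument to compare against. What you have written is a sound self-contained derivation of those standard facts, and the structure is the right one: the single computation $u\in tE^\circ \Leftrightarrow h_E(u)\le t$ gives $h_E=g_{E^\circ}$, the bipolar identity transports it to $g_E=h_{E^\circ}$, and parts (2) and (3) then reduce to the characterization $g_B(x)=1\Leftrightarrow x\in\bd B$ for a convex body with $0$ in its interior. Your handling of the delicate points is also correct: the normalization $x=u/h_E(u)$ in part (2) genuinely needs $h_E(u)>0$, hence $0\in\interior E$; and in part (3) the exclusion of interior points via the perturbation $x+\eps y$ (which stays in $E$ but violates $\langle\,\cdot\,,y\rangle\le h_E(y)=1$) is exactly the missing symmetric ingredient that makes the three bullets coincide. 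Two very minor remarks: for $h_E\ge 0$ you only need $0\in E$, not $0\in\interior E$ (the interiority is what you need for finiteness of $g_{E^\circ}$ and boundedness of $E^\circ$); and when you apply part (1) to $E^\circ$ you are implicitly using that $E^\circ$ is again a compact convex set with $0$ in its interior, which follows from the same hypotheses on $E$ and is worth one sentence. Neither affects correctness.
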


\begin{figure}[h]
\def\svgwidth{12cm}
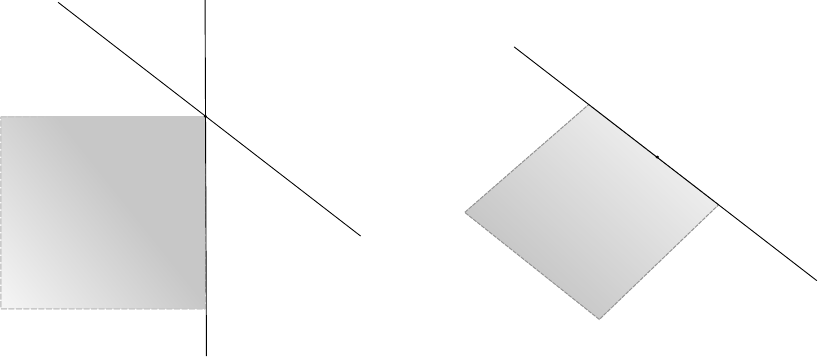
\caption{\small{Polar duality and supporting hyperplanes.}}
\label{fig: duality}
\end{figure}

The following, which relates the polar operation to orthogonal projections and sections with subspaces will also be used. Its proof is elementary therefore omitted. 

\begin{rem}\label{projpolar}
If $B$ is a convex body in the inner product space $V$, $W$ is a subspace of $V$,  and $p_W$ is the orthogonal projection onto $W$, then the following holds:
\begin{itemize}
\item[i)] $(B\cap W)^\circ=p_W(B^\circ).$
\item[ii)] $B^\circ\cap W=p_W(B)^\circ.$
\end{itemize}  
Item ii) actually holds for any subset $B$ of $V$ and item i) follows from polar duality.
\end{rem}

\subsection{Convex structure of the unit ball of Hofer norms} 

In this section, we characterize the generalized Hofer norms in terms of the supporting and gauge functions $h_E,g_E$.

\begin{prop}\label{hofernorms}
Let $E\subseteq V$ be a compact full set. Then we have 
\begin{align*}
\|y\|_E & =h_{E-E}(y)=g_{(E-E)^\circ}(y).\\
\|y\|_E'&=h_{E\cup-E}(y)=g_{(E\cup-E)^\circ}(y).
\end{align*}
If in addition $0$ is in the interior of $\conv(E)$ 
\begin{align*}
\|y\|_E^+ & =h_{E}(y)=g_{E^\circ}(y).
\end{align*}
\end{prop}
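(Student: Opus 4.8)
The plan is to reduce each of the three identities to the support-function machinery already assembled, namely that $h_E=h_{\conv(E)}$ (Definition \ref{defih}), the duality $h_C=g_{C^\circ}$ for a compact convex body $C$ containing $0$ in its interior (Theorem \ref{supportdual}(1)), and the elementary observation that the polar is insensitive to convexification, $E^\circ=(\conv E)^\circ$ (the defining inequalities are linear in the test point). Beyond unwinding the definitions, the only genuine input is checking that the relevant convex hulls are symmetric convex bodies, which is exactly what the fullness hypothesis delivers.

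First I would dispatch the one-sided norm. Directly from the definition, $\|y\|_E^+=\max_{z\in E}\langle y,z\rangle=h_E(y)$. Since $0$ lies in the interior of $\conv(E)$ by hypothesis, $\conv(E)$ is a compact convex body containing $0$ in its interior, so Theorem \ref{supportdual}(1) gives $h_{\conv(E)}=g_{(\conv E)^\circ}$; combined with $h_E=h_{\conv E}$ and $E^\circ=(\conv E)^\circ$ this yields $\|y\|_E^+=g_{E^\circ}(y)$.

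Next, for the first Hofer norm I would rewrite the minimum as a maximum: $-\min_{z\in E}\langle y,z\rangle=\max_{z\in E}\langle y,-z\rangle=h_{-E}(y)$, so that $\|y\|_E=h_E(y)+h_{-E}(y)$. The additivity of the support function under Minkowski sums, $h_{A+B}=h_A+h_B$ (immediate from $\sup_{a,b}\langle a+b,\cdot\rangle=\sup_a\langle a,\cdot\rangle+\sup_b\langle b,\cdot\rangle$), identifies this with $h_{E+(-E)}=h_{E-E}$. For the gauge form, note that $\conv(E-E)=\conv(E)-\conv(E)$ is full-dimensional (because $E$ is full, $\conv(E)$ has nonempty interior) and symmetric about $0$; a full-dimensional symmetric convex set automatically contains $0$ in its interior, since the midpoint of an interior point and its reflection is again interior. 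Hence Theorem \ref{supportdual}(1) applies to $\conv(E-E)$ and gives $h_{E-E}=g_{(E-E)^\circ}$, using once more that the polar ignores the convex hull.

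Finally, the second Hofer norm is $\|y\|_E'=\max\{h_E(y),h_{-E}(y)\}$, and since a supremum over a union distributes as a maximum, $\max\{h_A,h_B\}=h_{A\cup B}$, this equals $h_{E\cup-E}(y)$. As before, $\conv(E\cup-E)$ contains the full-dimensional $\conv(E)$ and is symmetric, so $0$ is interior and Theorem \ref{supportdual}(1) yields $g_{(E\cup-E)^\circ}$. The main point to keep an eye on throughout is the verification that $0$ lies in the interior of $\conv(E-E)$ and of $\conv(E\cup-E)$, so that the duality theorem is applicable; this is precisely where fullness of $E$ is used, rather than the stronger hypothesis $0\in\interior\conv(E)$ which is needed only for the one-sided norm.
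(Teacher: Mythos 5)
Your proof is correct and follows essentially the same route as the paper's: rewrite the minimum as $h_{-E}$, use additivity of support functions over Minkowski sums (resp.\ the union property) to get $h_{E-E}$ and $h_{E\cup -E}$, and then apply Theorem \ref{supportdual}(1) to pass to the gauge of the polar. The only difference is that you explicitly verify that $0$ lies in the interior of $\conv(E-E)$ and $\conv(E\cup -E)$ via fullness and symmetry, a hypothesis check the paper's proof leaves implicit; this is a welcome addition but not a change of method.
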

\begin{proof}
For the first Hofer norm observe that  
\begin{align*}
\|y\|_E&=\max_{x\in E}\langle x,y\rangle-\min_{x\in E}\langle x,y\rangle= \max_{x\in E}\langle x,y\rangle-(-\max_{x\in E}\langle x,-y\rangle)\\
&=h_E(y)-(-h_E(-y))=h_E(y)+h_E(-y)&\mbox{  by Definition \ref{defih}}\\
&=h_E(y)+h_{-E}(y)=h_{E-E}(y)&\\
&=g_{(E-E)^\circ}(y)&\mbox{  by Theorem  \ref{supportdual}.}
\end{align*}

For the second Hofer norm
\begin{align*}
\|y\|'_E&=\max\{\max_{x\in E}\langle x,y\rangle,-\min_{x\in E}\langle x,y\rangle\} \\
&=\max\{\max_{x\in E}\langle x,y\rangle,\max_{x\in E}-\langle x,y\rangle\}\\
&=\max\{\max_{x\in E}\langle x,y\rangle,\max_{x\in -E}\langle x,y\rangle\}=\max_{x\in E\cup-E}\langle x,y\rangle=h_{E\cup-E}(y)\\
&=g_{(E\cup-E)^\circ}(y)&\mbox{  by Theorem  \ref{supportdual}}.
\end{align*}
For the one-sided Hofer norm
\begin{align*}
\|y\|_E^+&=\max_{x\in E}\langle x,y\rangle=h_{E}(y)=g_{E^\circ}(y).
\end{align*}
\end{proof}

\begin{rem}\label{ballh}
Note that the unit ball of Hofer's norm is exactly
$$
(E-E)^\circ=(\conv(E-E))^\circ=(\conv(E)-\conv(E))^\circ.
$$
Therefore all norms are generalized Hofer norms if we take $E=\frac{1}{2}B^\circ$ where $B$ is the unit ball of the norm.
\end{rem}

With these tools, we give a second proof of the characterization of the norming functionals of vectors in the unit sphere (Theorem \ref{normingfunc}), or equivalently, the supporting hyperplanes of the unit ball for points in its sphere.

\begin{thm}\label{normingpolar}
The norming functionals of $x \in V$ are given by $\varphi_{y^+ -y^-}$, with 
$$
y^+ \in \conv(\argmax_E  \varphi_x)\quad \textrm{ and }\quad y^-\in\conv(\argmin_E \varphi_x).
$$ 
\end{thm}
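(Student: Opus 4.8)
The plan is to reduce the statement to the identification of a single face of a polar body, and then to compute that face explicitly. By positive homogeneity of both sides it suffices to treat $x$ with $\|x\|_E=1$. Write $B=(E-E)^\circ$ for the unit ball of $\|\cdot\|_E$ (Proposition \ref{hofernorms}). By Remark \ref{normingsupport}, $\varphi_z$ is a norming functional of $x$ precisely when the hyperplane $H_z$ supports $B$ at $x$. By the duality in Theorem \ref{supportdual}(3) this occurs exactly when $z\in\bd B^\circ$ and $\langle x,z\rangle=1=\|x\|_E$; equivalently, $z$ lies in the support set $F_x(B^\circ)=\argmax_{B^\circ}\varphi_x$ in the sense of Definition \ref{defih}. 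Thus the whole problem reduces to describing the face of $B^\circ$ in the direction $x$.

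By the bipolar property (Remark \ref{polar}(1)) together with Remark \ref{ballh}, we have $B^\circ=(E-E)^{\circ\circ}=\conv(E-E)=\conv(E)-\conv(E)$, so I must identify $F_x(\conv(E)-\conv(E))$. I would first record two elementary facts about support sets. For compact sets $A,C$ one has $F_x(A+C)=F_x(A)+F_x(C)$, because $\max_{A+C}\varphi_x=\max_A\varphi_x+\max_C\varphi_x$ and a point of $A+C$ attains this maximum if and only if both summands attain theirs; and $F_x(-C)=-\argmin_C\varphi_x$, since $\max_{-C}\varphi_x=-\min_C\varphi_x$. Applying these with $A=\conv(E)$ and $C=-\conv(E)$ yields $F_x(\conv(E)-\conv(E))=\argmax_{\conv E}\varphi_x-\argmin_{\conv E}\varphi_x$.

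The remaining computation is $\argmax_{\conv E}\varphi_x=\conv(\argmax_E\varphi_x)$ (and the analogous statement for the minimum). The inclusion $\supseteq$ is immediate. For $\subseteq$, note first that $\max_{\conv E}\varphi_x=\max_E\varphi_x$, since every point of $\conv E$ is a convex combination of points of $E$. Now if $a=\sum_i\lambda_i e_i$ with $e_i\in E$, $\lambda_i>0$, $\sum_i\lambda_i=1$ attains this maximum, then $\sum_i\lambda_i\langle x,e_i\rangle=\max_E\varphi_x$ forces $\langle x,e_i\rangle=\max_E\varphi_x$ for every $i$, so each $e_i\in\argmax_E\varphi_x$ and hence $a\in\conv(\argmax_E\varphi_x)$. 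Combining the three steps gives $F_x(B^\circ)=\conv(\argmax_E\varphi_x)-\conv(\argmin_E\varphi_x)$, which is exactly the set of $z=y^+-y^-$ described in the statement. Translating back through the norming-functional correspondence of the first paragraph completes the proof.

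I expect the only subtle point to be the bookkeeping of the normalization $\|x\|_E=1$ together with the verification that \emph{both} directions are covered (every norming functional has the asserted form, and conversely every such $\varphi_{y^+-y^-}$ is norming). The face identification handles both at once, since $F_x(B^\circ)$ is by definition precisely the set of $z\in B^\circ$ with $\langle x,z\rangle=\|x\|_E$, which is exactly the set of $z$ for which $\varphi_z$ norms $x$.
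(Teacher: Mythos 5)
Your argument is correct and follows essentially the same route as the paper's own proof of Theorem \ref{normingpolar}: normalize $\|x\|_E=1$, use Remark \ref{normingsupport} and the polar duality of Theorem \ref{supportdual} to convert norming functionals of $x$ into the support set of $(E-E)^{\circ\circ}=\conv(E)-\conv(E)$ in the direction $x$, and then decompose that face as $\conv(\argmax_E\varphi_x)-\conv(\argmin_E\varphi_x)$. The only difference is that you supply explicit verifications of the two elementary facts (additivity of support sets under Minkowski sums, and $\argmax_{\conv E}\varphi_x=\conv(\argmax_E\varphi_x)$) that the paper asserts without proof.
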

\begin{proof} Rescaling, we can assume that $1=\|x\|_E=g_{\conv(E-E)^\circ}(x)$, so that  $x\in\bd (E-E)^\circ$. By Remark \ref{normingsupport} a functional $\varphi_y$ is a norming functional of $x$ if and only if the hyperplane $H_y$ supports $(E-E)^\circ$ at $x$. Theorem \ref{supportdual} implies that $H_y$ supports $(E-E)^\circ$ at $x$ if and only if $H_x$ supports $(E-E)^{\circ\circ}=\conv(E-E)=\conv(E)-\conv(E)$ at $y\in \conv(E-E)$. Moreover, $H_x$ supports $\conv(E-E)$ at $y\in \conv(E-E)$ if and only if $y\in\argmax_{\conv(E)-\conv(E)}(\varphi_x)$, and this happens if and only if 
$$
y\in\argmax_{\conv(E)}(\varphi_x)-\argmin_{\conv(E)}(\varphi_x).
$$
That is, $y=y^+-y^-$ with 
$$
y^+\in \argmax_{\conv(E)}(\varphi_x)=\conv(\argmax_E  \varphi_x)
$$
and likewise with $y^-$. This finishes the proof.
\end{proof}

\begin{rem}
For the one-sided Hofer norm the norming functionals of $x \in V$ are given by $\varphi_{y^+}$, with 
$y^+ \in \conv(\argmax_E  \varphi_x)$. 
\end{rem}

\section{Hofer's metric on compact Lie groups}\label{hofergroup}

In this section, we present actions of compact semi-simple Lie groups $K$ on compact connected manifolds $M$ with symplectic form $\omega$, as a nice setting for the convex geometry that was discussed in the previous sections. It should serve as motivation and also as a source of examples. We refer to \cite{ps} for general background on the geometry of Hamiltonian actions.

\subsection{Hamiltonian diffeomorphisms}

Let $(M,\omega)$ be a connected closed symplectic manifold and let $H:[0,1]\times M \to \R$ be a smooth function. We denote $H_t(m):=H(t,m)$. This function $H$ induces a time dependent Hamiltonian vector field $X_{H_t}$ by Hamilton's equations 
\begin{align}
dH_t=\omega(\cdot,X_{H_t})=-\iota_{X_{H_t}}\omega,\label{hamiltoneq}
\end{align}
and hence an isotopy $\phi^H_t:M\to M$, $t\in [0,1]$ by the prescription that
$$
\phi^H_0=\phi\mbox{  and  } \frac{d}{dt}\phi^H_t(m)=X_{H_t}(\phi^H_t(m))
$$
for a symplectic map $\phi$. The Hamiltonian diffeomorphism group $\Ham(M,\omega)$ is by definition the set of diffeomorphisms $\phi:M\to M$ which can be written as $\phi=\phi^H_1$ for some $H$ and $\phi^H_0=\id$ as above. 

The set $\Ham(M,\omega)$ is an infinite dimensional group under composition, all elements of which are symplectomorphisms of $(M,\omega)$. Its Lie algebra are the Hamiltonian vector fields in $\X(M)$, which can be identified with the smooth functions in $M$ modulo constant functions, via (\ref{hamiltoneq}), hence  
$$
T_{\id}\Ham(M,\omega)\simeq \X_{\Ham}(M)\simeq C^{\infty}(M)/\R\bone.
$$
Since $\psi^*X_H=X_{H\circ \psi}$ the adjoint action in this group is given by $\Ad_\psi [H]=[H\circ \psi]$, where $[H]$ will denote the class of $H$ modulo constant functions. If the Hamiltonian is time independent, i.e. $H_t=H$ for $t\in[0,1]$, it is called \textit{autonomous}. Note that the flow of such $H$ is ruled by the equation
$$
\frac{d}{dt}\phi_t(m)=X_H(\phi_t(m))=D(R_{\phi_t})_1(X_H)
$$
when we interpret the differential of the right translation $R_g$ in the group of diffeomorphisms, as composition from the right. Therefore $\phi_t$ is the flow of the right invariant field $X_g=D(R_g)_1(X_H)$ and with the initial condition $\phi_0=\id=1_{\Ham(M,\omega)}$ it is clear that $\phi_1=\exp(X_H)$, where $\exp$ is the exponential map of the group of Hamiltonian diffeomorphism. However, this exponential map is not well-suited as a chart for the group, since it is not a local diffeomorphism in any reasonable neighbourhood of the $0$ vector field, see \cite{ps}.

\subsubsection{Hofer's norm}

The $\Ad$-invariant $L^\infty$ norm 
\begin{equation}\label{normahofer}
\|[H]\|= \max_MH -\min_M H
\end{equation}
on the Lie algebra $T_{\id}\Ham(M,\omega)\simeq C^{\infty}(M)/\R\bone$ of the group $\Ham(M,\omega)$ is \textit{Hofer's norm}. It induces a Finsler length structure on curves $(\phi_t)_{t\in [0,1]}$ in $\Ham(M,\omega)$ by means of 
\begin{align*} 
\length(\phi_t^H)& =\int_0^1\|\frac{d}{dt}\phi_t^H\|dt =\int_0^1\|H_t\circ\phi^H_t\|dt\\
& =\int_0^1 \left(\max_M H_t-\min_M H_t \right)  dt,
\end{align*}
and hence a bi-invariant distance
$$\dist(\phi_0,\phi_1) =\inf\left\{\length(\phi^t_H) :\phi^0_H=\phi_0,\quad\phi^1_H=\phi_1\right\}.$$
As was shown for $\mathbb R^{2n}$ in \cite{hofer} and for general symplectic manifolds in \cite{lm}, $\dist$ is a non degenerate, bi-invariant metric on $\Ham(M,\omega)$. 

\bigskip

Hofer proved that the path of any autonomous Hamiltonian on $\C^n$ is length minimizing (among homotopic paths with fixed endpoints) as long as the corresponding Hamilton's equation has no non-constant time-one periodic orbit. This result was generalized in \cite{lm} to general symplectic manifolds.

\begin{defn}\label{quasiauto}
A path $(\phi_t)_{t\in [0,1]}\subseteq \Ham(M,\omega)$ is a geodesic of the Hofer metric if each $t\in [0,1]$ has a neighbourhood $I$ such that $\phi|_I$ is minimal, i.e. no longer than any other path joining its endpoints. A Hamiltonian $H_t$ is called \textit{quasi-autonomous} if there exists two points $x^-,x^+ \in M$ such that 
$$H_t(x^-) = \min_M H_t,\quad H_t(x^+) = \max_M H_t$$
for all $t\in [0,1]$. 
\end{defn}

In Section \ref{geodebi} below we will give a characterization of all short paths for a Finsler metric in a compact Lie group $K$  (Theorem \ref{cuasi}). When the metric is the pull-back metric obtained by the action of $K$ on a symplectic manifold $(M,\omega)$ (see Section \ref{subsectionhoferham} below), we will be able to show that the autonomous Hamiltonians are in correspondence with one-parameter groups in $K$, and all other minimizing paths in $K$ are in correspondence with the quasi-autonomous Hamiltonians (Theorem \ref{teoautono}). These results should be compared to the following, obtained by Hofer and others with an entirely different approach (see \cite[Section 12.3]{mds} and the references therein for proofs):
\begin{thm}\label{geodquasia}
Let $(\phi_t)_{t\in [0,1]}\subseteq \Ham(M,\omega)$ be a regular path ($C^1$ and with non-vanishing derivative). If $\phi$ is short for the Hofer metric, then the corresponding Hamiltonian $H_t$ is quasi-autonomous. If the Hamiltonian is quasi-autonomous, then $\phi$ is locally short (locally here refers to the time-variable). 
\end{thm}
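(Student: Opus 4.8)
The plan is to treat the three assertions separately, since the necessary condition (short $\Rightarrow$ quasi-autonomous) is essentially variational and soft, whereas the sufficiency and the local existence statement require genuine symplectic-topological input.

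For the first assertion, I would compute the first variation of the Hofer length along a variation with fixed endpoints. Given a two-parameter family $\phi_t^s$ with $\phi_0^s=\id$ and $\phi_1^s$ independent of $s$, generated in the $t$-direction by $H_t^s$ and in the $s$-direction by $K_t^s$, the flatness (structure) equation $\partial_s H_t^s-\partial_t K_t^s=\{H_t^s,K_t^s\}$ holds, together with the boundary conditions $K_0^s\equiv 0$ and $K_1^s\equiv 0$ forced by the fixed endpoints. Writing $G_t=\partial_s H_t^s|_{s=0}$ and assuming for simplicity that the maximizer $p_t^+$ and minimizer $p_t^-$ of $H_t$ are unique, the derivative of $\length(\phi_t^s)=\int_0^1(\max_M H_t^s-\min_M H_t^s)\,dt$ at $s=0$ equals $\int_0^1\big(G_t(p_t^+)-G_t(p_t^-)\big)\,dt$. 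Since $p_t^{\pm}$ are critical points of $H_t$, the field $X_{H_t}$ vanishes there, so the Poisson bracket term drops out and $G_t(p_t^{\pm})=\partial_t K_t(p_t^{\pm})$. If $H_t$ is quasi-autonomous with fixed extremal points, then $\int_0^1\partial_t K_t(p^{\pm})\,dt=K_1(p^{\pm})-K_0(p^{\pm})=0$, so $\phi$ is a critical path. Conversely, if, say, $p_t^+$ genuinely moves, I would exhibit a choice of $K_t$ with $K_0=K_1=0$ making the first variation strictly negative, contradicting local minimality; this perturbation step, which must also keep the minimizer contribution under control, is the one real technical point of this part.

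For the second assertion (quasi-autonomous $\Rightarrow$ locally short), I would reduce to a sufficiently short time subinterval $[t_0,t_0+\eps]$, on which $\phi_t$ stays close to the identity. There the minimality among paths with the same endpoints follows from an energy--capacity (displacement energy) estimate: the Hofer norm of the generated diffeomorphism is bounded below by the oscillation $\max_M H_t-\min_M H_t$ integrated over the interval, with the slowness hypothesis (no contractible periodic orbits of period $\le 1$) guaranteeing that the relevant action spectrum, or holomorphic-curve count, produces no obstruction to this lower bound. This is precisely the step that cannot be handled by convexity alone and requires the symplectic-topological machinery of \cite{hofer,lm,mds}; it is the main obstacle of the whole statement.

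Finally, for the existence of minimizing geodesics in a $C^2$ neighbourhood $V$ of the identity, I would combine the local structure of $\Ham(M,\omega)$ near $\id$ with the sufficiency just established: nearby endpoints can be joined by a quasi-autonomous path built from a small slow Hamiltonian, and the second assertion guarantees such a path is locally, hence on $V$ globally, length-minimizing. A direct-method compactness argument then upgrades ``locally short'' to the existence of an honest minimizer joining any point of $V$ to the identity. As the excerpt indicates, full details for all three parts are in \cite[Section 12.3]{mds}; the genuinely hard content is the Floer/capacity input behind the second assertion, while the first is variational and the third is a packaging of the second.
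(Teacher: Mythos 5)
You should first note that the paper itself does not prove Theorem \ref{geodquasia}: it is quoted as background, explicitly flagged with ``we won't be making use of them,'' and the proofs are deferred wholesale to \cite[Section 12.3]{mds} and the references therein. So there is no in-paper argument to compare yours against; the only fair comparison is with the standard proofs of Bialy--Polterovich and Lalonde--McDuff that \cite{mds} presents.

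Measured against those, your outline is structurally correct. The first-variation computation (the structure equation $\partial_s H-\partial_t K=\{H,K\}$ with $K_0=K_1=0$, the envelope/Danskin differentiation of $t\mapsto\max_M H_t-\min_M H_t$, and the vanishing of the Poisson bracket at the critical points $p_t^{\pm}$) is exactly how the necessity direction is organized, and your identification of the converse perturbation step and of the energy--capacity input as the two genuinely hard points is accurate; your remark that on a short time subinterval the Hamiltonian becomes slow, so that the Lalonde--McDuff minimality criterion applies, is also the right mechanism for the second assertion. That said, what you have written is a roadmap rather than a proof: the shortening perturbation when a maximizer moves (including the treatment of non-unique and non-smoothly-varying extremizers, where $\max_M H_t$ is only Lipschitz and one must work with one-sided derivatives), the energy--capacity lower bound itself, and the compactness argument for the third assertion are all named but not carried out. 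Since the paper's own treatment is simply to cite \cite{mds}, this is not a defect relative to the paper, but you should not present the sketch as self-contained; the honest conclusion, which you in fact draw, is that the substantive content lives in \cite[Section 12.3]{mds}.
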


\smallskip

A very similar norm used in the literature is defined without taking the quotient of the Hamiltonian functions by the constant functions. It is defined via the normalization $\int_MH\omega^n=0$, where $\omega^n$ is the Liouville measure on $M$. The norm is the $\Ad$-invariant $L^\infty$ norm 
$$
\|H\|'= \max\{\max H,-\min H\}= \|H\|_{\infty}
$$
which we are going to relate to the second Hofer norm in Remark \ref{hofernormalizada}.

\subsection{Hamiltonian actions}\label{hamactions}

Let $K$ be compact semi-simple Lie group with Lie algebra $\k=T_1K$ and dual space $\k^*$, which we identify with $\k$ through the duality pairing given by the (opposite of) the $\Ad$-invariant Killing form $\langle \cdot,\cdot\rangle$. This is an inner product in $\k$ because $\k$ is compact; since it is also $\Ad$-invariant, we can identify the coadjoint action with the adjoint action. The Lie algebra $\k$ acts by skew-symmetric transformations, that is $\ad x$ is skew-adjoint for this inner product, for any $x\in\k$.

\begin{defn}
Assume that the action of $K$ on $M$ is a \textit{symplectic action}, that is, there is a smooth map $\Phi:K\times M\to M$ that we denote $\Phi(g,m)=g\cdot m$ such that for each fixed $g\in K$, the automorphism $\Phi_g=\Phi(g,\cdot)$ is a symplectomorphism of $M$. We also assume that the action is \textit{almost effective} ($g\mapsto \Phi_g$ has discrete kernel). For fixed $m\in M$ we denote by $\pi_m:K\to M$ the map $\pi_m(g)=\Phi(g,m)$.

For $x\in \k$, let $x_M(m)=\frac{d}{dt}\mid_{t=0}\exp(tx)\cdot m\in T_mM$ denote the infinitesimal action on $M$. The assumption that the action is almost effective implies that $\k \ni x\mapsto x_M\in \mathfrak X(M)$ is injective; the assumption that the action is symplectic implies that the field $x_M$ is symplectic, i.e.  $\mathcal L_{x_M}\omega=0$.

A \textit{moment map} for a symplectic $K$ action on $(M,\omega)$ is a map $\mu:M\to\k^*$ defined by
$$
\mu^x(m)=\langle\mu(m),x\rangle, \quad \mu^x:M\to \mathbb R
$$
such that $\mu$ intertwines the $K$-action on $M$ and the coadjoint action on $\k^*$, i.e. $\mu(g\cdot m)= \Ad_g\mu(m)$ for all $g\in K,m\in M$, and such that $\mu$ satisfies Hamilton's equation
\begin{align}
d\mu^x=-\iota(x_M)\omega=-\omega (x_M,\cdot)\mbox{  for all  }x\in\k.\label{hamiltoneq2}
\end{align}
A symplectic $K$ action is called \textit{Hamiltonian} if it admits a moment map.
\end{defn}

\begin{rem}
A symplectic $K$ action on a symplectic manifold $(M,\omega)$ is the same thing as a homomorphism $K\to \Symp(M,\omega)$ to the group of symplectomorphisms of $(M,\omega)$ such that the map $K\times M\to M$, $(g,m)\to g\cdot m$ is smooth. The action has a moment map if and only if the image of the identity component of $K$ is contained in $\Ham(M,\omega)$, i.e. there is a map $\mu:M\to\k^*$ which satisfies Hamilton's equations (\ref{hamiltoneq2}). Averaging with the Haar probability measure $dk$ on $K$ yields an equivariant moment map 
$$
\tilde{\mu}(x)=\int_{k\in K}\Ad_k\mu(k^{-1}\cdot x)\, dk.
$$
\end{rem}

Note that if $x\in\k$ then $x_M$ is a Hamiltonian vector field and $H=\mu^x$ is a Hamiltonian; from (\ref{hamiltoneq2}) it is apparent that $X_{\mu^x}=x_M$. Also note that the Poisson bracket of the Hamiltonian functions $\mu^x$ and $\mu^y$ is given by $\{\mu^x,\mu^y\}=\mu^{[x,y]}$ for $x,y\in\k$.

The action defines a homomorphism with discrete kernel
$$K\to \Ham(M,\omega).$$
At the level of Lie algebras this morphism is injective
\begin{align}
T_1(K)=\k&\hookrightarrow C^{\infty}(M)/\R\bone\simeq T_{\id} \Ham(M,\omega)\label{liealgaction}\\
x&\mapsto [\mu^x]=\mu^x +\R\bone\nonumber\mapsto x_M.
\end{align}

Given $u\in K$ and a path $x:[0,1]\to\k$, $t\mapsto x_t$, we calculate the isotopy $\phi_t\in \Ham(M,\omega)$ given by the initial condition $\phi_0=\Phi_u$ and the time dependent Hamiltonian $H_t=\mu^{x_t}$ (as noted, the Hamiltonian vector fields are $(x_t)_M$). 

\begin{prop}\label{actiondiffeo}
If $\gamma:[0,1]\to K$ is the smooth solution to $\dot{\gamma}_t\gamma_t^{-1}=x_t$ and $\gamma_0=u$, then the isotopy is given by $\phi_t=\Phi_{\gamma_t}$. 
\end{prop}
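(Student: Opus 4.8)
The plan is to invoke uniqueness of solutions of the (non-autonomous) flow equation defining $\phi_t$. By construction $\phi_t$ is the unique isotopy with $\phi_0=\Phi_u$ and $\frac{d}{dt}\phi_t(m)=X_{H_t}(\phi_t(m))$ for $H_t=\mu^{x_t}$; since $X_{\mu^{x_t}}=(x_t)_M$ this reads $\frac{d}{dt}\phi_t(m)=(x_t)_M(\phi_t(m))$. It therefore suffices to show that the candidate $t\mapsto \Phi_{\gamma_t}$ satisfies exactly this equation with the same initial condition. The initial condition is immediate, since $\Phi_{\gamma_0}=\Phi_u=\phi_0$.

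First I would differentiate $t\mapsto \Phi_{\gamma_t}(m)=\gamma_t\cdot m=\pi_m(\gamma_t)$ by the chain rule, obtaining $\frac{d}{dt}\Phi_{\gamma_t}(m)=D(\pi_m)_{\gamma_t}(\dot{\gamma}_t)$. The hypothesis $\dot{\gamma}_t\gamma_t^{-1}=x_t$ says precisely that $\dot{\gamma}_t=D(R_{\gamma_t})_1(x_t)$, i.e.\ $\dot\gamma_t$ is the right translation of $x_t\in\k$ to $T_{\gamma_t}K$. The key algebraic input is the equivariance of the orbit maps under right translation, $\pi_m\circ R_g=\pi_{g\cdot m}$ for every $g\in K$ (both send $h$ to $h\cdot(g\cdot m)$). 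Differentiating this identity at the identity yields $D(\pi_m)_{\gamma_t}\circ D(R_{\gamma_t})_1=D(\pi_{\gamma_t\cdot m})_1$.

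Combining these gives $\frac{d}{dt}\Phi_{\gamma_t}(m)=D(\pi_{\gamma_t\cdot m})_1(x_t)$. Finally, since $D(\pi_p)_1(x)=\frac{d}{ds}\big|_{s=0}\exp(sx)\cdot p=x_M(p)$ by the very definition of the infinitesimal action, this equals $(x_t)_M(\gamma_t\cdot m)=(x_t)_M(\Phi_{\gamma_t}(m))$. Hence $\Phi_{\gamma_t}$ solves the same flow equation as $\phi_t$ and agrees with it at $t=0$, so uniqueness of solutions forces $\phi_t=\Phi_{\gamma_t}$ for all $t$.

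The only genuine obstacle is the bookkeeping in the Lie-group differential calculus: one must take care that $\dot{\gamma}_t\gamma_t^{-1}=x_t$ encodes right (rather than left) translation of the velocity, and match this with the correct equivariance $\pi_m\circ R_{\gamma_t}=\pi_{\gamma_t\cdot m}$ of the orbit maps. Once the left/right conventions are aligned, everything reduces to the identity $X_{\mu^x}=x_M$ already recorded above together with the standard uniqueness theorem for time-dependent flows.
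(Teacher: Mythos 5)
Your proposal is correct and follows essentially the same route as the paper: both verify that $t\mapsto\Phi_{\gamma_t}$ satisfies the defining flow equation $\frac{d}{dt}\phi_t(m)=(x_t)_M(\phi_t(m))$ with the right initial condition, the paper carrying out the chain-rule computation $\frac{d}{dr}\big|_{r=0}\pi_m(e^{rx_t}\gamma_t)=\frac{d}{dr}\big|_{r=0}e^{rx_t}\cdot(\gamma_t\cdot m)=(x_t)_M(\gamma_t\cdot m)$ directly, which is exactly your equivariance identity $\pi_m\circ R_{\gamma_t}=\pi_{\gamma_t\cdot m}$ in differentiated form. Your explicit appeal to uniqueness of time-dependent flows is left implicit in the paper but is the same underlying argument.
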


\begin{proof}
We have to check that $\dot{\phi}_t\phi^{-1}_t=(x_t)_M$: for each $m\in M$,
\begin{align*}
\dot{\phi}_t(m) & = (\gamma_t\cdot m)^{\,\cdot}=D(\pi_m)_{\gamma_t}(\dot{\gamma}_t)=D(\pi_m)_{\gamma_t}(x_t \gamma_t)= D(\pi_m)_{\gamma_t}(\frac{d}{dr}\Bigr|_{r=0}\exp(rx_t)\gamma_t)\\
&=\frac{d}{dr}\Bigr|_{r=0} \pi_m(e^{rx_t}\gamma_t)=\frac{d}{dr}\Bigr|_{r=0} (e^{rx_t}\gamma_t)\cdot m=\frac{d}{dr}\Bigr|_{r=0} e^{rx_t}\cdot(\gamma_t\cdot m)\\
& =(x_t)_M(\gamma_t\cdot m)= (x_t)_M(\phi_t(m))=((x_t)_M\circ \phi_t)(m),
\end{align*}
and therefore $\dot{\phi}_t=(x_t)_M\circ\phi_t$ as claimed.
\end{proof}

\begin{rem}\label{chamber}
The image of the moment map $\mu(M)$ is a union of (co)adjoint orbits in $\k$ which we denote by $\O_{\lambda}=\Ad_K(\lambda)$ for $\lambda\in\k$. Let $T$ be a maximal torus of $K$ and $\t$ its Lie algebra, a Cartan subalgebra. The image of the moment map can therefore be parametrized by a subset $A=\mu(M)\cap\t_+$ of a closed positive Weyl chamber $\t_+$, corresponding to a choice positive simple roots. Hence we have
$$
\mu(M)=\bigsqcup_{\lambda\in A}\Ad_K(\lambda)=\bigsqcup_{\lambda\in A}\O_\lambda.
$$
\end{rem}

\subsubsection{Hofer's metric given by Hamitonian actions}\label{subsectionhoferham}

We use the inclusion of Lie algebras (\ref{liealgaction}) to pull-back the Finsler length structure given by  Hofer's norm. The $L^\infty$ norm on $C^{\infty}(M)/\R\bone$ restricted to the image $T_1(K)=\k$ is therefore by definition of the Hofer norm (\ref{normahofer}) given by
\begin{align}
\|\mu^x\| &=\max_{m\in M}\mu^x(m)-\min_{m\in M}\mu^x(m)=\max_{m\in M}\langle\mu(m),x\rangle-\min_{m\in M}\langle\mu(m),x\rangle\label{hofer1}\\
&=\max_{y\in\mu(M)}\langle y,x\rangle-\min_{y\in\mu(M)}\langle y,x\rangle= \|x\|_{\mu(M)},\nonumber
\end{align}
where the last equality is by definition of the generalized Hofer norm (Definition \ref{generhofernorm}) for the $\Ad$-invariant set $E=\mu(M)\subseteq \k$.

\begin{rem}[$\|\cdot\|_{\mu(M)}$ is a norm]\label{fullsiiae} Since $M$ is compact the image of the moment map is bounded. Also, because the image of the moment map is full in $\k$ if and only if the action is almost effective: The image of the moment map is not full if and only if its image is contained in a hyperplane, which in turn happens if and only if there exists $x\in\k$ such that $\varphi_x$ is constant on $\mu(M)$. This is equivalent to the map $\mu^x:M\to\R$ being a constant Hamiltonian which generates the zero vector field. Finally, this is also equivalent to the one-parameter group $\exp(tx)$ acting trivially, i.e. the action not being almost effective. 
\end{rem}

Therefore, the Hofer norm just described is the norm $\|\cdot\|_{\mu(M)}$ for a moment map $\mu:M\to\k^*\simeq\k$, given by equation (\ref{hofer1}) above. For a general $\Ad$-invariant set $E$ in $\k$ we obtain an $\Ad$-invariant norm $\|\cdot\|_{E}$ in $\k$. This norm is $\Ad$-invariant and induces a bi-invariant Finsler metric on $K$ by left (or right) translation given by
\begin{align*}
\length(\gamma)&=\int_0^1\|\dot{\gamma}_t\|_{\gamma_t}dt=\int_0^1\|\dot{\gamma}_t\gamma_t^{-1}\|_{\mu(M)} dt=\int_0^1\|\gamma_t^{-1}\dot{\gamma}_t\|_{\mu(M)}dt,
\end{align*}
\begin{align}\label{longitud}
\dist(u,v) &:=\inf\left\{\length(\gamma_t) :\gamma_0=u,\quad\gamma_1=v\right\}.
\end{align}

\begin{rem}[Barycenter]\label{hofernormalizada}
In the case of actions by semi-simple Lie groups, all Hamiltonians are normalized in the following sense:
$$\int_M \mu^x \omega^n=0.
$$
This is because
$$
\int_M \langle\mu,x\rangle \omega^n=\int_{\mu(M)}\langle y,x\rangle d\nu(y),
$$
where $\nu=\mu_*(\omega^n)$ is the pushforward of the normalized Liouville measure $\omega^n$ by the moment map. The measure $\nu$ is $\Ad$ invariant hence its center of mass is also invariant. The center of mass is zero since the group is semi-simple, therefore $\int_{\mu(M)}\langle y,x\rangle d\nu(y)=0$. The resulting $L^\infty$ norm is
\begin{equation}\label{hofer2}
\|\mu^x\|_{\infty}=\max\{\sup_{m\in M}\mu^x(m),-\inf_{m\in M}\mu^x(m)\}=\|x\|_{\mu(M)},
\end{equation}
by the definition of the second generalized Hofer norm in \ref{generhofernorm2} for the $\Ad$-invariant set $E=\mu(M)\subseteq\k$.
Therefore, this second Hofer norm just described is the norm defined in equation (\ref{generhofernorm2}) as $\|\cdot\|'_{\mu(M)}$ for a momentum map $\mu:M\to\k^*\simeq\k$.
\end{rem}

\begin{rem}\label{hoferlp}
We can define also an $L^p$-norm for $1\le p<\infty$ via the embedding
$$\iota:V\hookrightarrow L^p(E,\nu)/\R\bone, \quad x\mapsto [\varphi_x]=\varphi_x+\R\bone,$$
where $\nu$ is an $\Ad$-invariant measure. Consider the case of an $L^p$-norm defined by the inclusion
$$\iota:\k\hookrightarrow L^p(M,\nu)/\R\bone.$$
Since $L^p(M,\nu)$ is uniformly convex when $1<p<\infty$, the quotient $L^p(M,\nu)/\R\bone$ is uniformly convex. Therefore the  $L^p$-norm is uniformly convex and this implies by \cite[Theorem 4.15]{lar19} that $K$ with the induced metric is uniquely geodesic.  We won't be pursuing the geometry of these norms in this paper.
\end{rem}

\begin{rem}\label{regular}
Recall that for a compact semisimple group $K$, each element $\lambda\in K$ is semi-simple. A \textit{regular} element $\lambda$ is such that its commutant $\t={\mathfrak z}(\lambda)$ is a maximal abelian subalgebra; equivalently in this setting, $\t$ is a Cartan subalgebra of $\k$. 
\end{rem}

 In the case of $K=\SU(n)$, an element of $\k=\su(n)$ is regular if and only if all its eigenvalues are distinct.

\begin{ex}[Coadjoint orbits]\label{coadj}
An important special case is that of a compact connected semi-simple Lie group $K$ acting via $\Ad$ in a Hamiltonian way on a (co)adjoint orbit $\O_\lambda$ containing $\lambda\in\k^*\simeq \k$.

For the action to be almost effective, it is necessary and sufficient that $\lambda$ is non-zero in each simple summand of $\k$ (because in that case each simple factor of $K$ acts nontrivially on the orbit, and this is equivalent to the fact that the (co)adjoint orbit is full in $\k$  (see \cite[Lemma 17]{bgh} and Remark \ref{fullsiiae}). In particular this occurs if $\lambda$ is regular.

\smallskip

The adjoint orbit is endowed with the Kostant-Kirillov-Souriau symplectic form $\omega$. The kernel of this homomorphism is the center $Z(K)$ of $K$ which is a discrete subgroup. We have therefore an inclusion 
$$
K/Z(K)\hookrightarrow \Ham(\O_\lambda,\omega).
$$ 
At the level of Lie algebras this inclusion is $x\mapsto [\mu^x]$ where $\mu^x:\O_\lambda\to\R$ is the Hamiltonian map given by the $\mu^x(y)=\langle x, y\rangle$. The Hamiltonian is the component of the moment map $\iota:\O_\lambda\hookrightarrow\k^*\simeq\k$ along $x$, that is $\mu^x=\varphi_x\circ\iota$. Moreover, since the action of $K$ is given by $g\mapsto g xg^{-1}$, it is apparent that if $x\in \k$ then the induced Hamiltonian vector field is given by
$$
x_{\O_{\lambda}}(m)=\frac{d}{dt}\Bigr|_{t=0}\exp(tx)\cdot m = \frac{d}{dt}\Bigr|_{t=0}\Ad_{e^{tx}}(m)=(\ad x)(m)=[x,m].
$$
The symplectic form is 
$$
\omega_x( y_{\O_{\lambda}}(x),z_{\O_{\lambda}}(x) )=\langle x,[y,z]\rangle
$$
where the pairing is given by the opposite Killing form of $\k$ as before. The induced Hofer norm on $T_1(K)=\k$ is therefore
\begin{align}\label{normcadjoint}
\|x\|_{\O_\lambda}&=\max_{y\in\O_\lambda}\mu^x(y)-\min_{y\in\O_\lambda}\mu^x(y)=\max_{k\in K}\mu^x(\Ad_k\lambda)-\min_{k\in K}\mu^x(\Ad_k\lambda)\\
&=\max_{k\in K}\langle x,\Ad_k\lambda\rangle-\min_{k\in K}\langle x,\Ad_k\lambda\rangle.\nonumber
\end{align} 
Since the action is almost effective $\|\cdot\|_{\O_\lambda}$  is a norm. Since it is also $\Ad$-invariant it induces a bi-invariant Finsler metric on $K$.
\end{ex}

\begin{ex}\label{sun}
In the case of the special unitary group $\SU(n)$ we have the inclusion
$$\SU(n)/\Z_n\simeq\PU_n\hookrightarrow \Ham(\O_\lambda,\omega)$$
where $\O_\lambda$ is a (co)adjoint orbit of $\SU(n)$ passing through the given diagonal matrix $\lambda = i\diag(\lambda_1,\ldots,\lambda_n)\in\su(n)$ with $\lambda_1\leq \lambda_2 \leq \ldots \le \lambda_n$. At the Lie algebra level this inclusion is 
$$
T_1(\SU(n))=\su(n)\hookrightarrow C^{\infty}(\O_\lambda)/\R\bone\simeq T_{\id} \Ham(\O_\lambda,\omega)
$$ 
and $\mu^x:\O_\lambda\to\R$ is the linear Hamiltonian given by the opposite Killing form
$$
\mu^x(y)=-2n\tr(xy^*)=2n\tr(xy).
$$
The Hofer norm (\ref{normcadjoint}) is given by
$$
\|v\|_{\O_\lambda}=2n\max\{\tr(\lambda uvu^{-1}):u\in \SU(n)\}-2n\min\{\tr(\lambda wvw^{-1}):w\in \SU(n)\},
$$
which is exactly ($2n$-times) the $\lambda$-numerical diameter of $v$, see the next remark. 
\end{ex}

\begin{ex}
The group $\SU(2)/{\{\id,-\id\}}\simeq\SO(3)$ acts on a nontrivial (co)adjoint orbit $\O_\lambda\simeq S^2$ which is the two dimensional sphere endowed with the area form $\omega$. The action is given by rotations, see  Example 1.4.H in \cite{pol01}. 
%Gromov showed that $\SO(3)$ is a deformation retract of $\Ham(S^2)$ ??.
\end{ex}

\begin{rem}[One-sided norms]\label{semiads} For fixed $0\ne \lambda\in \k$, and $v\in \k$, consider the function
$$
\|v\|_{\O_\lambda}^+=\max_{k\in K}\langle v , \Ad_k\lambda\rangle.
$$
Note that by the $\Ad$-invariance of the norm, this can be also computed as
$$
\|v\|_{\O_\lambda}^+=\max_{k\in K}\langle \Ad_k v ,\lambda\rangle=\max\{\langle y , \lambda\rangle:  y\in \conv(\O_v)\}.
$$
When $\lambda$ is non-zero on each simple-summand of $\k$ (in particular if $\lambda$ is regular), then $0$ is in the interior of its closed convex hull (see Remark \ref{onesided} below). Therefore this is a Finlser norm, in fact it is our one-sided Hofer norm of Remark \ref{otrasnorm}. In the context of the group of Hamiltonian symplectomorphisms, these norms appear in \cite{md} where the name \textit{one-sided} was used.  
\end{rem}

\section{Convex geometry of Hofer's norm on Cartan algebras}\label{convexgeocart}

In this section we study generalized Hofer norms on the Lie algebra of a compact semisimple Lie group. We use several convexity theorems and the convex analysis of $\Ad$-invariant convex functions. This convex analysis expresses properties of $\Ad$-invariant functions in terms of properties of its restrictions to Cartan algebras and positive Weyl chambers, which are fundamental domains for the adjoint action.

\smallskip

We first recall Kirwan's nonabelian convexity theorem (see \cite{guisja}), which will be useful for our purposes of characterizing the Hofer norm restricted to a Cartan subalgebra $\t\subseteq \k$. As before $\t_+\subseteq \t$ is a choice of closed positive Weyl chamber.

\begin{thm}[Kirwan]\label{kirwan}
If $K\curvearrowright M$ is a Hamiltonian action with $K$ a compact connected Lie group and $(M,\omega)$ compact and connected, then $\mu(M)\cap\t_+$ is a convex polytope, i.e. the convex hull of a finite set of points $x_1,\dots,x_n$ in $\t_+$, that is
$$\mu(M)\cap\t_+=\conv\{x_1,\dots, x_n\}.$$
\end{thm}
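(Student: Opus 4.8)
The plan is to build on the abelian convexity theorem of Atiyah and of Guillemin--Sternberg, and then to extract the noncommutative statement from it using the symplectic cross-section (slice) theorem together with a local-global convexity principle. Throughout I use the identification $\k^*\simeq\k$ of the excerpt, so that $\mu\colon M\to\k$ is $\Ad$-equivariant and $\t_+\subseteq\t$ is a fundamental domain for the adjoint action.

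First I would settle the abelian case. Fix the maximal torus $T\subseteq K$ with Lie algebra $\t$ and let $p\colon\k\to\t$ be the orthogonal projection (the restriction map in the dual picture); then $\mu_T=p\circ\mu$ is a moment map for the induced Hamiltonian $T$-action. Each component $\mu_T^{\xi}$ is a Morse--Bott function whose critical submanifolds are the fixed loci of subtori, and all of its indices and coindices are even. This parity forces every nonempty level set $\mu_T^{-1}(c)$ and every sublevel set to be connected, and connectedness of the fibers combined with the elementary local convexity of the image near a critical value yields that $\mu_T(M)$ is a convex polytope, namely $\conv$ of the finitely many images of the connected components of the fixed-point set $M^T$.

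Next I would pass to the nonabelian image. By $\Ad$-equivariance, $\mu(g\cdot m)=\Ad_g\mu(m)$, so $\mu(M)$ is $\Ad$-invariant and hence a union of adjoint orbits; since each such orbit meets $\t_+$ in exactly one point, the set $\Delta=\mu(M)\cap\t_+$ encodes all of $\mu(M)$, and $\Delta$ is precisely what must be understood. The subtlety is that $\Delta$ is in general a proper subset of $\mu_T(M)\cap\t_+$, so the abelian polytope does not directly give the answer. To control $\Delta$ I would invoke the symplectic cross-section theorem of Guillemin--Sternberg: for each open face $\sigma$ of $\t_+$, with centralizer $K_\sigma$, the preimage under $\mu$ of a small open star neighbourhood of $\sigma$ is a $K_\sigma$-invariant symplectic submanifold on which the relevant subtorus acts, and there $\mu$ looks like an abelian moment map. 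Applying the abelian result on each cross-section produces, locally along $\t_+$, a convex polyhedral description of $\Delta$. I would then assemble these local pieces using the connectedness of the fibers $\mu^{-1}(\alpha)$ (Kirwan's connectedness theorem) and the local-global convexity principle of Condevaux--Dazord--Molino and of Hilgert--Neeb--Plank: a proper map from a connected manifold with connected fibers that is locally convex with locally polyhedral image has convex and polyhedral image. Since $M$ is compact there are only finitely many isotropy types, hence only finitely many fixed-point components enter the cross-sections and only finitely many supporting hyperplanes occur; therefore $\Delta=\conv\{x_1,\dots,x_n\}$ for finitely many vertices $x_i\in\t_+$.

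The main obstacle is the convexity step in the nonabelian setting: unlike the torus case, it cannot be read off from a single moment map, and the technical heart of Kirwan's original argument is the delicate Morse theory of the norm-square $\|\mu\|^2$ needed to prove connectedness of the relevant level sets and to match the local convexity data across the walls of the chamber. Once connectedness and the cross-section structure are available, both the polyhedrality and the explicit description as $\conv\{x_1,\dots,x_n\}$ follow from the finiteness of the underlying combinatorial data.
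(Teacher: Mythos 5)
The paper does not prove this statement: Theorem \ref{kirwan} is quoted as a known result, with the reference \cite{guisja} standing in for the proof, so there is no in-paper argument to measure yours against. Your outline is a faithful summary of the standard modern strategy for Kirwan's theorem --- the abelian Atiyah--Guillemin--Sternberg case via Morse--Bott theory of the components $\mu_T^\xi$ (even indices, hence connected level sets and local convexity), equivariance reducing everything to $\Delta=\mu(M)\cap\t_+$, the symplectic cross-section theorem to linearize the problem along the faces of $\t_+$, and a local-global convexity principle together with connectedness of the fibers of $\mu$ to globalize --- and this is essentially the route taken in the cited monograph. In that sense your proposal is "correct in approach" and is on the same footing as the paper itself, which also defers to the literature.

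That said, you should be clear that what you have written is a roadmap, not a proof. Every load-bearing step --- connectedness of the fibers $\mu^{-1}(\alpha)$, the cross-section theorem, the local-global convexity principle, and even the connectedness claim in the abelian case (which for the full map $\mu_T^{-1}(c)$, as opposed to a single component $(\mu_T^\xi)^{-1}(c)$, already requires an induction over subtori) --- is itself a substantial theorem that you invoke rather than establish, and you explicitly flag the Morse theory of $\|\mu\|^2$ as an obstacle you have not addressed. So the proposal is acceptable as a literature-level justification of the statement, matching how the paper uses it, but it would not pass as a self-contained proof without importing those results with precise references.
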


This theorem is a generalization of the Atiyah-Guillemin-Sternberg theorem, which states that for a Hamiltonian action of a torus on a compact connected manifold the image of the moment map is the  convex hull of the image of the fixed point set of the action. The Atiyah-Guillemin-Sternberg theorem is a generalization of Kostant's convexity theorem which will be useful  here:

\begin{thm}[Kostant]\label{kostant}
If $K$ is a compact Lie group and $T$ is a maximal torus in $K$ and $p:\k\to\t$ is the projection of the Lie algebra of $K$ onto the Lie algebra of $T$, then 
$$p(\O_\lambda )=\conv(\W . \lambda ),$$
where $\W .\lambda$ is the Weyl group orbit of $\lambda\in\k$.
\end{thm}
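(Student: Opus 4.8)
The plan is to prove the two inclusions $p(\O_\lambda)\subseteq\conv(\W\cdot\lambda)$ and $\conv(\W\cdot\lambda)\subseteq p(\O_\lambda)$ separately, after reducing to the case where $K$ is connected (replacing $K$ by its identity component changes neither $\t$, nor the relevant adjoint orbit, nor the action of $\W$ on $\t$). The upper inclusion is an elementary majorization statement, while the reverse inclusion together with the convexity of the projected orbit is the analytic heart of the matter; I would obtain both of the latter at once from the convexity theorem quoted above.

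First I would establish $p(\O_\lambda)\subseteq\conv(\W\cdot\lambda)$ by a separating-functional argument. Since $\conv(\W\cdot\lambda)$ is a compact convex subset of $\t$, it suffices to check $\langle\xi,v\rangle\le h_{\conv(\W\cdot\lambda)}(\xi)=\max_{w\in\W}\langle\xi,w\lambda\rangle$ for every $\xi\in\t$ and every $v=p(x)$ with $x=\Ad_g\lambda\in\O_\lambda$. Because $\xi\in\t$ and $p$ is the orthogonal projection onto $\t$, one has $\langle\xi,p(x)\rangle=\langle\xi,x\rangle=\langle\Ad_{g^{-1}}\xi,\lambda\rangle\le\max_{k\in K}\langle\Ad_k\xi,\lambda\rangle$. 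The key lemma is that this maximum over the whole group orbit is already attained on the finite Weyl orbit, i.e. $\max_{k\in K}\langle\Ad_k\xi,\lambda\rangle=\max_{w\in\W}\langle w\xi,\lambda\rangle$. I would prove it by a critical-point computation: at a maximiser $y_0=\Ad_{k_0}\xi$ the $\Ad$-invariance of $\langle\cdot,\cdot\rangle$ gives $\langle[z,y_0],\lambda\rangle=\langle z,[y_0,\lambda]\rangle=0$ for all $z\in\k$, hence $[y_0,\lambda]=0$. Commuting elements lie in a common Cartan subalgebra $\t'$; conjugating $\t'$ onto $\t$ and using that an orbit meets $\t$ exactly in a Weyl orbit ($\O_\xi\cap\t=\W\cdot\xi$, $\O_\lambda\cap\t=\W\cdot\lambda$) lets me write $\langle y_0,\lambda\rangle=\langle w_2\xi,w_1\lambda\rangle=\langle w_1^{-1}w_2\,\xi,\lambda\rangle\le\max_w\langle w\xi,\lambda\rangle$. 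Reindexing $w\mapsto w^{-1}$ turns $\max_w\langle w\xi,\lambda\rangle$ into $\max_w\langle\xi,w\lambda\rangle=h_{\conv(\W\cdot\lambda)}(\xi)$, completing this inclusion.

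For the reverse inclusion I would argue symplectically, which simultaneously settles the convexity of $p(\O_\lambda)$. Equip $\O_\lambda$ with its Kostant--Kirillov--Souriau form and restrict the Hamiltonian $K$-action to the maximal torus $T$. Under the identification $\k^*\simeq\k$, the moment map of this $T$-action is precisely the restriction to $\O_\lambda$ of the orthogonal projection $p\colon\k\to\t$, so its image is $p(\O_\lambda)$. The $T$-fixed points of $\O_\lambda$ are the $y$ with $[x,y]=0$ for all $x\in\t$, i.e. $y\in\z_\k(\t)=\t$ (the centraliser of a Cartan subalgebra being itself); hence the fixed-point set is $\O_\lambda\cap\t=\W\cdot\lambda$, on which $p$ acts as the identity. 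By the Atiyah--Guillemin--Sternberg convexity theorem quoted above, the moment image is the convex hull of the moment values at the fixed points, that is $p(\O_\lambda)=\conv(\W\cdot\lambda)$, giving both inclusions at once.

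The main obstacle is exactly the convexity of $p(\O_\lambda)$: the upper inclusion is elementary, and since $\W\cdot\lambda\subseteq\O_\lambda\cap\t$ projects onto itself the inclusion $\W\cdot\lambda\subseteq p(\O_\lambda)$ is immediate, but passing to the convex hull requires knowing that $p(\O_\lambda)$ is convex, which is not apparent from the description of the orbit as a homogeneous space. Invoking Atiyah--Guillemin--Sternberg circumvents this at the cost of citing a stronger theorem. If one prefers to stay purely Lie-theoretic, I would instead reproduce Kostant's original argument, establishing convexity through the connectedness of the fibres of $p$ restricted to the orbit (by induction on the rank, or a Morse-theoretic analysis of the functions $x\mapsto\langle\xi,p(x)\rangle$), which is the genuinely delicate part of the classical proof.
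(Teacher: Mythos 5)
The paper does not prove this statement: it is quoted as Kostant's classical convexity theorem, with the surrounding text only recording that the Atiyah--Guillemin--Sternberg theorem generalizes it. Your argument is a correct proof and fits that framing. The inclusion $p(\O_\lambda)\subseteq\conv(\W.\lambda)$ via the support function together with the critical-point identity $\max_{k\in K}\langle\Ad_k\xi,\lambda\rangle=\max_{w\in\W}\langle w.\xi,\lambda\rangle$ is sound: the two facts you invoke --- that commuting elements of a compact Lie algebra lie in a common Cartan subalgebra, and that $\O_\mu\cap\t=\W.\mu$ for $\mu\in\t$ --- are standard for compact connected groups, and the reduction to the connected case is legitimate. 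The reverse inclusion plus the convexity of $p(\O_\lambda)$, obtained by applying Atiyah--Guillemin--Sternberg to the restricted $T$-action on $\O_\lambda$ with the Kostant--Kirillov--Souriau form, is also correct: under $\k^*\simeq\k$ the $T$-moment map is exactly $p$ restricted to the orbit, and the fixed-point set is $\O_\lambda\cap\z_\k(\t)=\O_\lambda\cap\t=\W.\lambda$, a finite set, so the convexity theorem yields $p(\O_\lambda)=\conv(\W.\lambda)$ in one stroke. Since the Morse-theoretic proofs of Atiyah and Guillemin--Sternberg do not rely on Kostant's theorem there is no circularity, although, as you acknowledge, this derives the weaker statement from a stronger one; given that the paper itself presents Kirwan, Atiyah--Guillemin--Sternberg and Kostant as successive specializations, this is the natural route in context, and your sketched alternative (Kostant's original argument via connectedness of the fibres of $p$ on the orbit) is the self-contained option. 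One cosmetic point: for $\W.\lambda$ to be meaningful one should take $\lambda\in\t$ (or replace $\lambda$ by the representative of $\O_\lambda$ in $\t_+$), as is implicit in the paper's statement.
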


The projection $p$ is taken along the orthogonal direction given by (minus) the Killing form of $\k$. 

\subsection{The Hofer polytope} With these tools we characterize the intersection with Cartan algebras of unit balls (of Hofer norms given by compact, full and $\Ad$-invariant sets $E\subseteq \k$). 
 
\begin{lem}\label{projconv}
If $E$ is $\Ad$-invariant, and $p:\k\to \t$ is the orthogonal projection, then 
$$
p(\conv(E))=\conv\{w.x:x\in E\cap\t_+, \,w\in\W\},
$$
and if $E\cap\t_+=\conv\{x_1,\dots,x_n\}$, then 
$$
p(\conv(E))=\conv\{w.x_i:i=1,\dots,n,\,w\in\W\}.
$$
\end{lem}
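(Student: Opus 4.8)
The plan is to prove the second identity by combining Kostant's convexity theorem (Theorem~\ref{kostant}) with the decomposition of $E$ into adjoint orbits, and then deduce the polytope description from Kirwan's theorem (Theorem~\ref{kirwan}). The key observation is that an $\Ad$-invariant set $E$ is a union of adjoint orbits, $E=\bigcup_{x\in E\cap\t_+}\O_x$, since every adjoint orbit meets the closed positive Weyl chamber $\t_+$ in exactly one point. Because the orthogonal projection $p$ and the operation of taking convex hulls interact well with unions, I would first reduce the computation of $p(\conv(E))$ to computing $p(\conv(\O_x))$ for the individual orbits.

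\emph{Main steps.} First I would establish the set-theoretic identity $\conv(E)=\conv\!\big(\bigcup_{x\in E\cap\t_+}\O_x\big)$, which is immediate from $\Ad$-invariance. Since $p$ is linear and continuous and $E$ is compact, $p$ commutes with the closed convex hull, giving
\begin{equation*}
p(\conv(E))=\conv\Big(\bigcup_{x\in E\cap\t_+}p(\O_x)\Big).
\end{equation*}
Now Kostant's theorem (Theorem~\ref{kostant}) identifies each factor: $p(\O_x)=\conv(\W.x)$. Substituting and using that a convex hull of convex hulls is the convex hull of the union,
\begin{equation*}
p(\conv(E))=\conv\Big(\bigcup_{x\in E\cap\t_+}\conv(\W.x)\Big)=\conv\{w.x:x\in E\cap\t_+,\ w\in\W\},
\end{equation*}
which is the first displayed formula.

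\emph{From the general formula to the polytope.} For the second statement, I would invoke the hypothesis $E\cap\t_+=\conv\{x_1,\dots,x_n\}$. The point is that the map $x\mapsto \conv(\W.x)$ is, for each fixed $w$, just the affine (indeed linear) image under $w$, and taking the convex hull over all of $E\cap\t_+$ reduces to taking it over the extreme points $x_1,\dots,x_n$. Concretely, any $x\in E\cap\t_+$ is a convex combination of the $x_i$, so each $w.x$ lies in $\conv\{w.x_i\}$, and hence $\conv\{w.x:x\in E\cap\t_+,w\in\W\}=\conv\{w.x_i:i=1,\dots,n,\,w\in\W\}$. This yields the claimed finite polytope.

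\emph{The main obstacle} I expect is justifying that $p$ commutes with the \emph{closed} convex hull of a possibly infinite union of orbits, and that the resulting set is genuinely $\conv\{w.x_i\}$ rather than merely its closure. Compactness of $E$ (hence of $\conv(E)$, by finite-dimensionality) is what makes this work: the continuous image of a compact convex set is compact and convex, so no closure issues arise, and the Weyl group $\W$ being finite keeps the final vertex set finite. I would be careful to note that $p(\O_x)=\conv(\W.x)$ requires $x$ to be identified with its $\t_+$-representative, which is exactly how $E\cap\t_+$ parametrizes the orbits, so the indexing in the union is consistent.
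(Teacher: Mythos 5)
Your proposal is correct and follows essentially the same route as the paper: decompose $E$ into adjoint orbits indexed by $E\cap\t_+$, commute the linear projection $p$ with the convex hull (unproblematic here since everything is compact in finite dimensions), apply Kostant's theorem orbitwise, and then reduce the convex hull over all of $E\cap\t_+$ to one over its vertices $x_1,\dots,x_n$ using linearity of the Weyl group action. The only stray remark is the appeal to Kirwan's theorem in your plan, which is not needed since $E\cap\t_+=\conv\{x_1,\dots,x_n\}$ is a hypothesis of the lemma; your actual argument correctly uses only that hypothesis.
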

\begin{proof}
This follows from the  identities
\begin{align*}
p(\conv(E))&=p(\conv(\cup_{\lambda\in E\cap\t_+}\O_\lambda))&\\
&=\conv(p(\cup_{\lambda\in E\cap\t_+}\O_\lambda))&\mbox{ since }p\mbox{ is affine}\\
&=\conv(\cup_{\lambda\in E\cap \t_+}p(\O_\lambda))\\
&=\conv(\cup_{\lambda\in E\cap \t_+}\{w.\lambda:w\in\W\})&\mbox{ by Theorem \ref{kostant}}\\
&=\conv\{w.\lambda:\lambda\in E\cap \t_+,\,w\in\W\}.
\end{align*}
Now observe that if $E\cap\t_+=\conv\{x_1,\dots,x_n\}$
\begin{align*}
p(\conv(E))&=\conv(\cup_{\lambda\in E\cap\t_+}\{w.\lambda:w\in\W\})& \\
&=\conv(\cup_{w\in \W}w.(E\cap \t_+))=\conv(\cup_{w\in \W}\conv(w.(E\cap\t_+)))\\
&=\conv(\cup_{w\in \W}\conv\{w.x_i:i=1,\dots,n\})\\
&=\conv\{w.x_i:i=1,\dots,n,\,w\in\W\}
\end{align*}
where the third and last equalities follow from basic properties of the convex hull operation. 
\end{proof}

\begin{rem}\label{proyecartan}
Let $B\subseteq \k$ be an $\Ad$-invariant convex body, and $p:\k\to \t$ the orthogonal projection to a Cartan sub-algebra, then Kostant's Theorem \ref{kostant} implies that $B\cap \t=p(B)$. This follows from $p(b)\in p(\O_b)=\conv(\O_b\cap \t)\subseteq\conv(B\cap \t)=B\cap\t$ for $b\in B$. 
\end{rem}

\begin{prop}\label{hoferpoly}
Set $A=E\cap\t_+$, then
\begin{align*}
(\conv(E)-\conv(E))\cap\t&=\conv\{w_1.x_1-w_2.x_2:x_1,x_2\in A,\,w_1,w_2\in\W\}.\\
(\conv(E\cup -E))\cap\t&=\conv\{w.x:x\in A\cup -A,w\in\W\}.\\
\conv(E)\cap\t&=\conv\{w.x:x\in A,w\in\W\}.
\end{align*}

If furthermore $A=\conv\{x_1,\dots,x_n\}$ is a polytope, then
\begin{align*}
(\conv(E)-\conv(E))\cap\t&=\conv\{w.x-w'.x':x,x'\in \{x_1,\dots,x_n\},\,w,w'\in\W\}.\\
(\conv(E)\cup\conv(E))\cap\t&=\conv\{w.x:x\in \{x_1,\dots,x_n,-x_1,\dots,-x_n\},w\in\W\}.\\
\conv(E)\cap\t&=\conv\{w.x:x\in \{x_1,\dots,x_n\},w\in\W\}.
\end{align*}
\end{prop}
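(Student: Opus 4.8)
The plan is to convert each intersection $(\,\cdot\,)\cap\t$ into an orthogonal projection $p(\,\cdot\,)$ onto $\t$, where Lemma \ref{projconv} already evaluates the projection of an $\Ad$-invariant convex hull. First I would check that the three sets on the left are $\Ad$-invariant convex bodies, so that Remark \ref{proyecartan} applies. Indeed $\conv(E)$ is compact, convex, and full-dimensional (since $E$ is full), and $\Ad$-invariant because $\Ad_g$ is linear and preserves $E$; the sets $\conv(E)-\conv(E)$ and $\conv(E\cup-E)$ are then $\Ad$-invariant (a Minkowski sum, resp.\ a union, of $\Ad$-invariant sets is $\Ad$-invariant), convex, compact, and symmetric about $0$. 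Thus Remark \ref{proyecartan} gives $B\cap\t=p(B)$ for each of the three choices of $B$.

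The last of the three identities is then immediate: $\conv(E)\cap\t=p(\conv(E))=\conv\{w.x:x\in A,\,w\in\W\}$ by Lemma \ref{projconv}, and its polytope refinement is precisely the polytope statement of that lemma.

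For the first identity (the Minkowski difference) I would exploit the linearity of $p$: since $p(\conv(E)-\conv(E))=p(\conv(E))-p(\conv(E))$, writing $Q:=p(\conv(E))=\conv(\W.A)$ (Lemma \ref{projconv}), it remains to compute $Q-Q$. Here I invoke the standard identity $\conv(S)+\conv(T)=\conv(S+T)$, applied with $S=\W.A$ and $T=-\W.A$, to obtain $Q-Q=\conv(\W.A-\W.A)=\conv\{w_1.x_1-w_2.x_2:x_1,x_2\in A,\,w_1,w_2\in\W\}$. The polytope version follows by substituting $\W.A=\bigcup_{w\in\W}\conv\{w.x_i\}$, so that $\W.A-\W.A$ is generated by the differences $w.x_i-w'.x_j$.

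The second identity (the union) requires the only genuine bookkeeping. Lemma \ref{projconv} applied to $E\cup-E$ gives $p(\conv(E\cup-E))=\conv\{w.x:x\in(E\cup-E)\cap\t_+,\,w\in\W\}$, so I must identify this with $\conv\{w.x:x\in A\cup-A,\,w\in\W\}$. The key point is that $E\cap\t$ is $\W$-invariant (since $E$ is $\Ad$-invariant and $\W$ acts on $\t$ through $\Ad$), whence $E\cap\t=\W.A$ and $(E\cup-E)\cap\t=(E\cap\t)\cup(-(E\cap\t))=\W.(A\cup-A)$. Consequently the two generating sets $(E\cup-E)\cap\t_+$ and $A\cup-A$ have the same Weyl orbit, namely $(E\cup-E)\cap\t$, so the convex hulls of their Weyl orbits coincide. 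The polytope refinement then follows from $w.\conv\{x_i\}=\conv\{w.x_i\}$ and $\conv(X\cup Y)=\conv(\conv X\cup\conv Y)$, which reduce $A\cup-A$ to the vertex list $\{x_1,\dots,x_n,-x_1,\dots,-x_n\}$. I expect this Weyl-orbit manipulation in the union case---specifically the freedom to replace one $\W$-orbit-generating set by another inside a $\W$-symmetric convex hull---to be the main, though mild, obstacle; all remaining steps are direct applications of Lemma \ref{projconv}, Remark \ref{proyecartan}, and the linearity of $p$.
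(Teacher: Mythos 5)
Your proof is correct and follows essentially the same route as the paper: convert each intersection with $\t$ into the orthogonal projection $p$ via Remark \ref{proyecartan}, then apply Lemma \ref{projconv} together with the linearity of $p$ and the identity $\conv(S)+\conv(T)=\conv(S+T)$. The only (harmless) divergence is in the union case, where you apply Lemma \ref{projconv} directly to $E\cup-E$ and then match Weyl orbits, whereas the paper distributes $p$ and $\conv$ over the union and applies the lemma to $E$ alone; both reduce to the same computation.
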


\begin{proof}
For the first equality note that by the previous remark,
\begin{align*}
(\conv(E)-\conv(E))\cap\t & = p(\conv(E)-\conv(E)) \\
& = p(\conv(E))-p(\conv(E))\\
& = \conv\{w_1.x_1-w_2.x_2:x_1,x_2\in A,\,w_1,w_2\in\W\}&
\end{align*}
where the last equality is due to Lemma \ref{projconv}. For the fourth assertion note that
\begin{align*}
(\conv(E)-\conv(E))\cap\t & = p(\conv(E)-\conv(E)) = p(\conv(E))-p(\conv(E))\\
& = \conv\{w.x-w'.x':x,x'\in \{x_1,\dots,x_n\},\quad w,w'\in\W\}
\end{align*}
holds, where we used the second assertion of Lemma \ref{projconv} in the last equality.

For the second equality note that by Remark \ref{proyecartan},
\begin{align*}
(\conv(E\cup -E)\cap\t & = p(\conv(E\cup -E)) \\
& = p(\conv(\conv(E)\cup \conv(-E))) \\
& = \conv(p(\conv(E)\cup \conv(-E))) \\
& = \conv(p(\conv(E))\cup p(\conv(-E))) \\
& = \conv(p(\conv(E))\cup -p(\conv(E))) \\
& = \conv(\{w.x:x\in A,\,w\in\W\}\cup -\{w.x:x\in A,\,w\in\W\}) \\
& = \conv(\{w.x:x\in A\cup -A,\,w\in\W\}) 
\end{align*}
where the penultimate equality is due to Lemma \ref{projconv}. For the fifth assertion note that
\begin{align*}
(\conv(E\cup -E)\cap\t & = p(\conv(E\cup -E)) \\
& = \conv\{\pm w.x:x\in \conv\{x_1,\dots,x_n\},\,w\in\W\}\\
& = \conv\{w.x:x\in \{x_1,\dots,x_n,-x_1,\dots,-x_n\},\,w\in\W\}
\end{align*}
holds, where we used the previous identity and properties of the convex hull operation.

The proof of the third and fourth equalities is simpler and we omit it.
\end{proof}

\begin{figure}[h]
\def\svgwidth{12cm}
%% Creator: Inkscape inkscape 0.92.4, www.inkscape.org
%% PDF/EPS/PS + LaTeX output extension by Johan Engelen, 2010
%% Accompanies image file '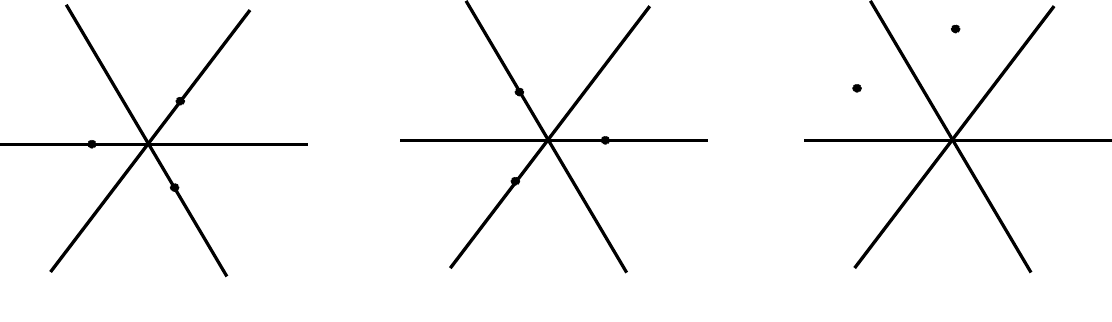' (pdf, eps, ps)
%%
%% To include the image in your LaTeX document, write
%%   \input{<filename>.pdf_tex}
%%  instead of
%%   \includegraphics{<filename>.pdf}
%% To scale the image, write
%%   \def\svgwidth{<desired width>}
%%   \input{<filename>.pdf_tex}
%%  instead of
%%   \includegraphics[width=<desired width>]{<filename>.pdf}
%%
%% Images with a different path to the parent latex file can
%% be accessed with the `import' package (which may need to be
%% installed) using
%%   \usepackage{import}
%% in the preamble, and then including the image with
%%   \import{<path to file>}{<filename>.pdf_tex}
%% Alternatively, one can specify
%%   \graphicspath{{<path to file>/}}
%% 
%% For more information, please see info/svg-inkscape on CTAN:
%%   http://tug.ctan.org/tex-archive/info/svg-inkscape
%%
\begingroup%
  \makeatletter%
  \providecommand\color[2][]{%
    \errmessage{(Inkscape) Color is used for the text in Inkscape, but the package 'color.sty' is not loaded}%
    \renewcommand\color[2][]{}%
  }%
  \providecommand\transparent[1]{%
    \errmessage{(Inkscape) Transparency is used (non-zero) for the text in Inkscape, but the package 'transparent.sty' is not loaded}%
    \renewcommand\transparent[1]{}%
  }%
  \providecommand\rotatebox[2]{#2}%
  \newcommand*\fsize{\dimexpr\f@size pt\relax}%
  \newcommand*\lineheight[1]{\fontsize{\fsize}{#1\fsize}\selectfont}%
  \ifx\svgwidth\undefined%
    \setlength{\unitlength}{533.7414652bp}%
    \ifx\svgscale\undefined%
      \relax%
    \else%
      \setlength{\unitlength}{\unitlength * \real{\svgscale}}%
    \fi%
  \else%
    \setlength{\unitlength}{\svgwidth}%
  \fi%
  \global\let\svgwidth\undefined%
  \global\let\svgscale\undefined%
  \makeatother%
  \begin{picture}(1,0.28891258)%
    \lineheight{1}%
    \setlength\tabcolsep{0pt}%
    \put(0,0){\includegraphics[width=\unitlength,page=1]{ext.pdf}}%
    \put(0.07670314,0.01720077){\color[rgb]{0,0,0}\makebox(0,0)[lt]{\lineheight{1.25}\smash{\begin{tabular}[t]{l}$\O_{\lambda}\cap\t$\end{tabular}}}}%
    \put(0.45101048,0.01257712){\color[rgb]{0,0,0}\makebox(0,0)[lt]{\lineheight{1.25}\smash{\begin{tabular}[t]{l}$-\O_{\lambda}\cap \t$\end{tabular}}}}%
    \put(0,0){\includegraphics[width=\unitlength,page=2]{ext.pdf}}%
    \put(0.76260028,0.00311773){\color[rgb]{0,0,0}\makebox(0,0)[lt]{\lineheight{1.25}\smash{\begin{tabular}[t]{l}$ext(P)\cup\{0\}$\end{tabular}}}}%
  \end{picture}%
\endgroup%

\caption{\small{Extremal points of the Hofer norm polytope.}}
\label{fig: ext}
\end{figure}

In Figure \ref{fig: ext} we illustrate the first couple of equalities of Proposition \ref{hoferpoly} in the case $E=\O_\lambda$ where $\lambda\in\su(3)$ is singular and non zero.  

\begin{defn}\label{hoferpolytopes}
For $E\cap\t_+=\conv\{x_1,\dots,x_n\}$, we call the polytope 
$$
P=\conv\{w.x-w'.x':x,x'\in \{x_1,\dots,x_n\},\quad w,w'\in\W\}
$$
of Proposition \ref{hoferpoly} the \textit{Hofer norm polytope}. We call
$$
P'=\conv\{w.x:x\in \{x_1,\dots,x_n,-x_1,\dots,-x_n\},\,w\in\W\}
$$
the \textit{second Hofer norm polytope}. Finally, we call
$$P^+=\conv\{w.x:x\in \{x_1,\dots,x_n\},\,w\in\W\}$$
the \textit{one-sided Hofer norm polytope}.
\end{defn}

\begin{rem}[Polytopes, norms, unit balls]\label{polnorm}
If $\ext(P)=\{y_1,\dots,y_m\}$ are the extreme points of the Hofer norm polytope $P$ then this set is Weyl group invariant. If 
$$
B=(\conv(E)-\conv(E))^\circ
$$
is the unit ball of the Hofer norm (Remark \ref{ballh}), from Remarks \ref{projpolar} and  \ref{proyecartan}, we obtain
\begin{equation}\label{convep}
B\cap \t=(\conv(E)-\conv(E))^\circ\cap\t=P^\circ.
\end{equation} 
The same holds for the other Hofer norms with 
$B'=(\conv(E)\cap-\conv(E))^\circ $ and $B^+=\conv(E)^\circ$ and the polytopes $P'$ and $P^+$ respectively (for the second Hofer norm and the one-sided Hofer norm, see Remark \ref{otrasnorm}). The polytopes $P$ and $P'$ are centrally symmetric, and in general $P^+$ is not.
\end{rem}

From (\ref{convep}) and Proposition \ref{hofernorms} we obtain the following

\begin{cor}
Let $\|\cdot\|_{\mu(M)}$ be the Hofer norm derived from the $\Ad$-invariant set $E=\mu(M)$, and let $P$ be the corresponding Hofer norm polytope. Then Hofer norm restricted to the Cartan subalgebra $\t$ is given by the Minkowski gauge $g_{P^\circ}$, that is 
$$
\|x\|_{\mu(M)}=g_{P^\circ}(x)=\inf\{t>0: x\in tP^{\circ}\} \qquad \forall x\in \t.
$$
\end{cor}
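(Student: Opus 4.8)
The plan is to reduce the computation of the gauge of the full unit ball $B$ to the computation of the gauge of its slice $B\cap\t$, and then to invoke the identification of that slice furnished by equation (\ref{convep}).

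First I would recall, from Proposition \ref{hofernorms} together with Remark \ref{ballh}, that on all of $\k$ one has
$$
\|x\|_{\mu(M)}=\|x\|_E=g_{(E-E)^\circ}(x)=g_B(x),
$$
where $B=(\conv(E)-\conv(E))^\circ$ is the unit ball of the Hofer norm. Hence the assertion is equivalent to the statement that $g_B$ and $g_{P^\circ}$ agree on $\t$, and it suffices to work with the single equality $g_B|_\t=g_{P^\circ}$.

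Next, the key observation is that the Minkowski gauge of a convex body containing the origin is determined, on any linear subspace, by its intersection with that subspace. Concretely, for $x\in\t$ and $t>0$ one has $x\in tB$ if and only if $x/t\in B$; since $\t$ is a linear subspace it is invariant under scaling, so $x/t\in\t$, and therefore $x/t\in B$ is equivalent to $x/t\in B\cap\t$, i.e.\ to $x\in t\,(B\cap\t)$. Taking the infimum over admissible $t$ yields $g_B(x)=g_{B\cap\t}(x)$ for every $x\in\t$.

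Finally, by (\ref{convep}) the slice is exactly the polar polytope, $B\cap\t=P^\circ$, so $g_{B\cap\t}=g_{P^\circ}$ on $\t$; combining the three displayed equalities gives $\|x\|_{\mu(M)}=g_{P^\circ}(x)$ for all $x\in\t$, as claimed. The argument is essentially routine, the only point demanding care being that the polar $P^\circ$ appearing in (\ref{convep}) is taken relative to $\t$, so that $g_{P^\circ}$ is a genuine gauge on the Cartan subalgebra; this is precisely what Remark \ref{projpolar}(ii) and Remark \ref{proyecartan} guarantee, since $P=p_\t(\conv(E)-\conv(E))=(\conv(E)-\conv(E))\cap\t$. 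Beyond this bookkeeping there is no real obstacle, the whole statement being a direct consequence of the linearity of $\t$ and the slicing formula already established.
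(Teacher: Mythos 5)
Your proof is correct and follows exactly the route the paper intends: the paper states the corollary as an immediate consequence of Proposition \ref{hofernorms} (which identifies the Hofer norm with the gauge of $B=(\conv(E)-\conv(E))^\circ$) and equation (\ref{convep}) (which identifies $B\cap\t$ with $P^\circ$), and you have simply filled in the routine slicing argument $g_B|_\t=g_{B\cap\t}$. Your closing remark that the polar in (\ref{convep}) is taken relative to $\t$ is a correct and worthwhile clarification.
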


\begin{rem}
From Remark \ref{polar} it follows that 
$$
P^\circ=\{x\in V:\langle y_i,x\rangle\leq 1\mbox{  for  }i=1,\dots,m\}.
$$
\end{rem}

\subsubsection{Direct sums of manifolds and polytopes}

If $M_1,\dots,M_n$ are symplectic manifolds equipped with Hamiltonian actions of $K$, with moment maps $\mu_1,\dots,\mu_n$, then the induced action on $M_1\times\dots\times M_n$ is also Hamiltonian with moment map $\mu:M_1\times\dots\times M_n\to\k$ given by 
$$\mu(m_1,\dots,m_n)=\mu_1(m_1)+\dots+\mu_n(m_n).$$
Therefore the image of $\mu$ is given by 
\begin{align}\label{sumaimagmoment}
\mu(M_1\times\dots\times M_n)=\mu_1(M_1)+\dots+\mu_n(M_n).
\end{align}
To find the Hofer norm polytope of this action we need the following 

\begin{lem}\label{sumadeadinv}
If $E_1,\dots,\E_n$ are $\Ad$-invariant and compact subsets of $\k$, and $p:\k\to \t$ is the orthogonal projection, then
\begin{align*}
(\conv(E_1&+\dots+\E_n)-\conv(E_1+\dots+\E_n))\cap\t\\
&=(\conv(E_1)-\conv(E_1))\cap\t+\dots +(\conv(E_n)-\conv(E_n))\cap\t.
\end{align*}
\end{lem}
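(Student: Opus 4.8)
The plan is to reduce the whole statement to the single identity recorded in Remark \ref{proyecartan}: for an $\Ad$-invariant compact convex set $B\subseteq\k$ one has $B\cap\t=p(B)$. Combined with the linearity of the orthogonal projection $p$ and the elementary behaviour of convex hulls under Minkowski sums, this turns the asserted set-equality into a short chain of equalities.

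First I would collect the routine facts to be used. Since each $\Ad_g$ is linear it is in particular affine, so $\Ad_g\conv(S)=\conv(\Ad_gS)$; hence the Minkowski sum $E:=E_1+\dots+E_n$ is again $\Ad$-invariant and compact, and both $B:=\conv(E)-\conv(E)$ and each $B_i:=\conv(E_i)-\conv(E_i)$ are $\Ad$-invariant compact convex sets. I would also use the standard identity $\conv(A+B)=\conv(A)+\conv(B)$, which by induction gives $\conv(E)=\conv(E_1)+\dots+\conv(E_n)$, together with the fact that the linear map $p$ satisfies $p(A\pm B)=p(A)\pm p(B)$ on Minkowski sums and differences.

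With these in hand, applying Remark \ref{proyecartan} to $B$ and then pushing $p$ through the Minkowski operations, the computation I would carry out reads
\begin{align*}
\big(\conv(E)-\conv(E)\big)\cap\t &= p\big(\conv(E)-\conv(E)\big) \\
&= p(\conv(E))-p(\conv(E)) \\
&= \sum_{i=1}^n p(\conv(E_i)) - \sum_{i=1}^n p(\conv(E_i)) \\
&= \sum_{i=1}^n \big(p(\conv(E_i))-p(\conv(E_i))\big) \\
&= \sum_{i=1}^n p(B_i).
\end{align*}
Applying Remark \ref{proyecartan} once more, this time to each $B_i$, gives $p(B_i)=B_i\cap\t=\big(\conv(E_i)-\conv(E_i)\big)\cap\t$, and substituting into the last line produces exactly the right-hand side of the claimed equality.

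The only delicate point --- and the closest thing to an obstacle --- is the applicability of Remark \ref{proyecartan}, whose statement is phrased for convex \emph{bodies}, whereas the difference sets $B$ and $B_i$ need not have nonempty interior (the $E_i$ are not assumed full). I would therefore note explicitly that the proof of that remark uses only $\Ad$-invariance, convexity and compactness --- it runs through the inclusions $p(b)\in p(\O_b)=\conv(\O_b\cap\t)\subseteq\conv(B\cap\t)=B\cap\t$ valid for every $b\in B$ --- so it applies verbatim to any $\Ad$-invariant compact convex set. Once this is recorded, the chain of equalities above is purely formal.
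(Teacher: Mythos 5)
Your proof is correct and follows essentially the same route as the paper: apply Remark \ref{proyecartan} to replace intersection with $\t$ by the projection $p$, use $\conv(E_1+\dots+E_n)=\conv(E_1)+\dots+\conv(E_n)$ and the linearity of $p$ over Minkowski sums, then apply the remark again to each summand. Your explicit observation that Remark \ref{proyecartan} extends to $\Ad$-invariant compact convex sets that need not be bodies is a worthwhile clarification the paper glosses over, but it does not change the argument.
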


\begin{proof}
Again, by Remark \ref{proyecartan}
\begin{align*}
(\conv(E_1&+\dots+\E_n)-\conv(E_1+\dots+\E_n))\cap\t  \\
& = p((\conv(E_1+\dots+\E_n)-\conv(E_1+\dots+\E_n))) \\
& = p((\conv(E_1)-\conv(E_1))+\dots +(\conv(E_n)-\conv(E_n)))\\
& = p(\conv(E_1)-\conv(E_1))+\dots + p(\conv(E_n)-\conv(E_n))\\
& =(\conv(E_1)-\conv(E_1))\cap\t+\dots +(\conv(E_n)-\conv(E_n))\cap\t.
\end{align*}
\end{proof}

The next proposition is now a straightforward consequence of Lemma \ref{sumadeadinv}.

\begin{prop}\label{sumahoferpoly}
If $M_1,\dots,M_n$ are symplectic manifolds equipped with Hamiltonian almost effective actions of a compact semi-simple group $K$ such that the Hofer norm polytopes are $P_1,\dots,P_n$. Then the induced action on $M_1\times\dots\times M_n$ has Hofer norm polytope $P_1+\dots+P_n$.
\end{prop}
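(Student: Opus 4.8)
The plan is to reduce the statement to the identification of the Hofer norm polytope as the intersection of a difference body with the Cartan subalgebra, and then to invoke the additivity already established in Lemma \ref{sumadeadinv}; the entire content of the proposition sits in that lemma. First I would recall that for a single Hamiltonian action with moment image $E=\mu(M)$, the first equality of Proposition \ref{hoferpoly} gives
$$
(\conv(E)-\conv(E))\cap\t=P,
$$
so that the Hofer norm polytope $P_i$ of the action on $M_i$ is precisely $(\conv(E_i)-\conv(E_i))\cap\t$, where $E_i=\mu_i(M_i)$. Each $E_i$ is compact, since $M_i$ is compact, and $\Ad$-invariant, by equivariance of the moment map as in Remark \ref{chamber}; hence the hypotheses of Lemma \ref{sumadeadinv} are met.

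Next I would use the additivity of moment maps for product actions. By equation (\ref{sumaimagmoment}), the moment map of the induced action on $M_1\times\dots\times M_n$ has image
$$
E:=\mu(M_1\times\dots\times M_n)=E_1+\dots+E_n,
$$
and this set is again compact and $\Ad$-invariant. Therefore the Hofer norm polytope of the product action is, by Proposition \ref{hoferpoly} applied to $E$,
$$
(\conv(E)-\conv(E))\cap\t=(\conv(E_1+\dots+E_n)-\conv(E_1+\dots+E_n))\cap\t.
$$

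Finally I would apply Lemma \ref{sumadeadinv} directly to this last expression, obtaining
$$
(\conv(E_1+\dots+E_n)-\conv(E_1+\dots+E_n))\cap\t=(\conv(E_1)-\conv(E_1))\cap\t+\dots+(\conv(E_n)-\conv(E_n))\cap\t=P_1+\dots+P_n,
$$
which is the claimed Minkowski sum. I do not expect a genuine obstacle here: the only points needing (routine) verification are that the product action remains almost effective, so that $P$ really is the polytope of a norm, and that $E=E_1+\dots+E_n$ is compact and $\Ad$-invariant. Almost-effectiveness of the product is immediate because its kernel is the intersection of the individual kernels, hence discrete; equivalently, $\conv(E_1+\dots+E_n)$ is full since a single full summand already forces fullness, and one concludes via Remark \ref{fullsiiae}.
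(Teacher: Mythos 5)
Your argument is correct and is exactly the route the paper takes: the paper states the proposition as a "straightforward consequence" of Lemma \ref{sumadeadinv}, combined with equation (\ref{sumaimagmoment}) and the identification $P=(\conv(E)-\conv(E))\cap\t$ from Proposition \ref{hoferpoly}, which is precisely your chain of reductions. Your added check that the product action remains almost effective (so the sum $E_1+\dots+E_n$ is full and one really gets a norm) is a reasonable point that the paper leaves implicit.
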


\begin{rem}
Proposition \ref{sumahoferpoly} is also valid in the case of the one-sided Hofer Finsler norm (Remark \ref{semiads}), with polytopes $P^+_1,\dots,P^+_n$ and $P^+_1+\dots+P^+_n$.
\end{rem}

\smallskip

\subsubsection{Faces of balls of $\Ad$-invariant norms}

In this section we will use Lewis' results on convex analysis of $\Ad$-invariant functions and their restriction to Cartan subalgebras \cite{lewis} to describe the faces of the ball of $\Ad$-invariant norms. 

\smallskip

Since Lewis' results are stated in terms of subgradients of gauge function we provide next the correspondence between the faces of the balls and the gradients of the gauge functions: \textit{the subdifferential of a gauge $g_B$ at a unit norm $x$ corresponds via $y\mapsto \varphi_y$ to the supporting functionals of the polar of the ball at $x$}. 

Recall that the subdifferential of a convex function $f:V\to\R$ at $x_0\in V$ is defined as 
$$
\partial f(x_0)=\{y\in V : f(x)-f(x_0)\geq \langle x-x_0,y\rangle\qquad \forall\, x\in V\}.
$$
Each such $y$ which satisfies the condition stated above is called a \textit{subgradient}. It defines a supporting hyperplane to the graph of $f$ at $(x_0,f(x_0))$.  

\begin{prop}\label{gradface}
Let $B\subseteq V$ be a convex body containing $0\in V$ and let $g_B$ be the gauge function associated to $B$. For a non-zero $x\in V$
$$\partial g_B(x)=F_x(B^\circ)\quad \textrm{ and }\quad \partial g_{B^\circ}(x)=F_x(B)
$$
i.e. $\partial g_B(x)$ is the exposed face of $B^\circ$ defined by $x$. 
\end{prop}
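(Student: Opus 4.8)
The plan is to reduce the statement to the classical fact that the subdifferential of a support function is the exposed face that it defines. By Theorem \ref{supportdual}(1) we have $g_B=h_{B^\circ}$, and applying the same identity to the convex body $B^\circ$ together with the bipolar relation $B^{\circ\circ}=B$ (Remark \ref{polar}(1)) gives $g_{B^\circ}=h_{B^{\circ\circ}}=h_B$. Thus both identities of the proposition follow from the single claim that for any compact convex set $K\subseteq V$ (with $0$ in its interior) one has $\partial h_K(x)=F_x(K)=\argmax_K\varphi_x$, applied once with $K=B^\circ$ and once with $K=B$. I would isolate and prove this claim first.

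To establish $F_x(K)\subseteq\partial h_K(x)$, I would take $y_0\in F_x(K)$, so that $y_0\in K$ and $\langle x,y_0\rangle=h_K(x)$. Since $y_0\in K$ we have $\langle z,y_0\rangle\le h_K(z)$ for every $z\in V$ directly from the definition of the support function; subtracting the equality $\langle x,y_0\rangle=h_K(x)$ yields the subgradient inequality $h_K(z)-h_K(x)\ge\langle z-x,y_0\rangle$ for all $z$, that is, $y_0\in\partial h_K(x)$.

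For the reverse inclusion I would take $y_0\in\partial h_K(x)$, so $h_K(z)-h_K(x)\ge\langle z-x,y_0\rangle$ for all $z$, and exploit the positive homogeneity of $h_K$: evaluating at $z=0$ gives $-h_K(x)\ge-\langle x,y_0\rangle$, while evaluating at $z=2x$ gives $h_K(x)\ge\langle x,y_0\rangle$, whence $\langle x,y_0\rangle=h_K(x)$. Feeding this back collapses the subgradient inequality to $\langle z,y_0\rangle\le h_K(z)$ for all $z\in V$. The point $y_0$ then lies in the set of vectors whose pairing against every direction is dominated by the support function, which is exactly the closed convex set $K$: if $y_0\notin K$, the separating hyperplane theorem produces a direction $z$ with $\langle z,y_0\rangle>\max_{w\in K}\langle z,w\rangle=h_K(z)$, a contradiction. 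Hence $y_0\in K$ and $\langle x,y_0\rangle=h_K(x)$, i.e. $y_0\in F_x(K)$.

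The inequality manipulations in the two inclusions are routine; the one step that genuinely requires care is the final membership $y_0\in K$, where the closedness and convexity of $K$ are essential and enter through separation (equivalently, through the bipolar theorem). Since both $B$ and $B^\circ$ are compact convex bodies, this hypothesis holds in each application, and substituting $g_B=h_{B^\circ}$ and $g_{B^\circ}=h_B$ then delivers $\partial g_B(x)=F_x(B^\circ)$ and $\partial g_{B^\circ}(x)=F_x(B)$, completing the argument.
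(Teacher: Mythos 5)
Your proposal is correct and follows essentially the same route as the paper: both reduce to the identity $\partial h_K(x)=F_x(K)$ via $g_B=h_{B^\circ}$, use the evaluations at $z=0$ and $z=2x$ to extract $\langle x,y_0\rangle=h_K(x)$, and then invoke separation (equivalently the bipolar relation) to conclude $y_0\in K$. The only cosmetic difference is that the paper phrases the intermediate characterization in terms of the gauge condition $g_{B^\circ}(y)=1$ (Rockafellar's Corollary 23.5.3) and then translates to faces via supporting hyperplanes, whereas you work directly with $\argmax_K\varphi_x$.
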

\begin{proof}
If $x\neq 0$, we first claim that
$$
\partial g_B(x)=\{y\in V:\langle y,x\rangle=g_B(x),g_{B^\circ}(y)= 1\}.
$$
This is Corollary 23.5.3 in \cite{rock}, we give a direct proof for the convenience of the reader. Note that since $g_B=h_{B^{\circ}}$ (Proposition \ref{hofernorms}), $\langle y,x\rangle=g_B(x)$ and $g_{B^{\circ}}(y)= 1$ imply $\langle x,y\rangle=\sup_{w\in B^{\circ}}\langle x, w\rangle=g_B(x)$ and $\langle y,z\rangle\le g_B(z) \quad\forall z\in V$ respectively. Thus for such $y\in V$ and any $z\in V$, we have $\langle z,y\rangle- \langle x,y\rangle \le g_B(z)-g_B(x)$ which shows that $y\in \partial g_B(x)$. Reciprocally, if $y\in \partial g_B(x)$, then replacing with $z=0$ and $z=2x$ in
$$
\sup_{w\in B^{\circ}}\langle w,z\rangle - \sup_{w\in B^{\circ}}\langle w,x\rangle=  g_B(z)-g_B(x)\ge \langle z,y\rangle -\langle x,y\rangle,
$$
we obtain $g_B(x)=\langle x,y\rangle$; in particular $g_{B^{\circ}}(y)\ge 1$. But then from $g_B(z)\ge \langle z,y\rangle$ for all $z$ it also follow that $g_{B^{\circ}}(y)\le 1$, thus proving the claim. 

Therefore, noting that $x/{g_B(x)}\in\bd B$
\begin{align*}
\partial g_B(x)&=\{y\in V:\langle y,x\rangle=g_B(x),g_{B^\circ}(y)= 1\}\\
&=\{y\in V:\langle y,x\rangle=g_B(x),y\in\bd B^\circ\}\\
&=\{y\in V:\langle y,x/{g_B(x)}\rangle=1,y\in\bd B^\circ\}\\
&=\{y\in V:{H_{x/{g_B(x)}}}\mbox{  supports  } B^\circ\mbox{  at  }y\}&\mbox{ by Theorem \ref{supportdual}} \\
&=F_{x/{g_B(x)}}(B^\circ)=F_{x}(B^\circ).&\mbox{ by Definition \ref{defih}}
\end{align*}
The other identity is immediate from the polar duality.
\end{proof}

The next theorem relates the subdifferential of the gauges to the subdifferentials of the gauges restricted to Cartan subalgebras $\t$. It's proof can be found in \cite[Theorem 3.5]{lewis}. We define the map $\delta:\k\to\t^+$ by $\O_x\cap\t^+=\{\delta(x)\}$, that is, $\delta(x)$ is the unique element in the positive Weyl chamber $\t^+$ that is $\Ad$-conjugated to $x$.

\begin{thm}[Lewis]\label{lewis}
Let $K$ be a semi-simple compact group and let $g_\k:\k\to\R_+$ be an $\Ad$-invariant gauge function. Then $y\in\partial g_\k(x)$ if and only if $\delta(y)\in\partial g_\t(\delta(x))$ and there is  $u\in K$ such that $\Ad_u(x),\Ad_u(y)\in\t_+$. 
\end{thm}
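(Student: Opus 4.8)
The plan is to reduce everything to the Lie-theoretic analogue of the von Neumann--Fan trace inequality,
$$
\langle x,y\rangle\le\langle\delta(x),\delta(y)\rangle\qquad(x,y\in\k),
$$
with equality \emph{if and only if} $x$ and $y$ can be simultaneously conjugated into $\t_+$, combined with the factorization $g_\k=g_\t\circ\delta$. The latter is immediate: since $g_\k$ is $\Ad$-invariant, $g_\k(x)=g_\k(\delta(x))=g_\t(\delta(x))$, and $g_\t$ is $\W$-invariant. For the inequality I rotate $y$ to $\delta(y)\in\t$ (using $\Ad$-invariance of $\langle\cdot,\cdot\rangle$) and note that only the $\t$-component of a vector pairs with $\delta(y)$, so that by Kostant's Theorem \ref{kostant},
$$
\max_{u\in K}\langle\Ad_u x,y\rangle=\max_{z\in\O_x}\langle p(z),\delta(y)\rangle=\max_{v\in\conv(\W.\delta(x))}\langle v,\delta(y)\rangle=\langle\delta(x),\delta(y)\rangle,
$$
the last equality being the elementary fact that for dominant $a,b\in\t_+$ one has $\langle w.a,b\rangle\le\langle a,b\rangle$ for all $w\in\W$ (because $a-w.a$ lies in the cone of positive coroots, on which $b$ pairs nonnegatively). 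Hence $\langle x,y\rangle\le\max_u\langle\Ad_u x,y\rangle=\langle\delta(x),\delta(y)\rangle$, as claimed.

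For the converse direction of the theorem I assume $\delta(y)\in\partial g_\t(\delta(x))$ and that $\Ad_u x,\Ad_u y\in\t_+$ for some $u$. Conjugating by $u$ (legitimate since $g_\k$, the inner product, and the subdifferential are $\Ad$-equivariant) I may take $x,y\in\t_+$, so $x=\delta(x)$ and $y=\delta(y)\in\partial g_\t(x)$. Then for any $z\in\k$,
$$
g_\k(z)=g_\t(\delta(z))\ge g_\t(x)+\langle\delta(z)-x,y\rangle=g_\k(x)+\langle\delta(z),\delta(y)\rangle-\langle x,y\rangle\ge g_\k(x)+\langle z-x,y\rangle,
$$
where the first inequality is the subgradient inequality for $g_\t$ at $x$ and the last uses the master inequality $\langle\delta(z),\delta(y)\rangle\ge\langle z,y\rangle$. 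Thus $y\in\partial g_\k(x)$.

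For the forward direction I start from $y\in\partial g_\k(x)$. Testing the subgradient inequality at $z=0$ and $z=2x$ and using positive homogeneity gives $\langle x,y\rangle=g_\k(x)$, and substituting this back yields the polar inequality $g_\k(z)\ge\langle z,y\rangle$ for all $z$. Restricting this to the orbit $\O_x$, on which $g_\k\equiv g_\k(x)$, shows that $x$ maximizes $\langle\cdot,y\rangle$ over $\O_x$. Two consequences follow at once. Comparing with the computation above, $g_\k(x)=\langle x,y\rangle=\max_{z\in\O_x}\langle z,y\rangle=\langle\delta(x),\delta(y)\rangle$, so equality holds in the master inequality; and the first-order condition at the maximizer, whose tangent space is $[\k,x]$, reads $\langle[\xi,x],y\rangle=\langle\xi,[x,y]\rangle=0$ for all $\xi\in\k$, forcing $[x,y]=0$. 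Commuting elements of a compact Lie algebra lie in a common Cartan subalgebra (take a maximal abelian subalgebra of $\z(x)$ containing $y$; it also contains the central element $x$ and is Cartan in $\k$), so after a further conjugation I may assume $x,y\in\t$. The equality case of the master inequality, now a statement about the Coxeter pair $(\W,\t)$, then supplies a single $w\in\W$ with $w.x,w.y\in\t_+$: this is assertion (b). Finally, conjugating so that $x,y\in\t_+$ and restricting the subgradient inequality $g_\k(w)\ge g_\k(x)+\langle w-x,y\rangle$ to $w\in\t$, where $g_\k=g_\t$ and $y=\delta(y)$, gives $y\in\partial g_\t(x)$, i.e. assertion (a).

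The main obstacle is the equality characterization of the master inequality, i.e. the purely Weyl-group statement that $\langle a,b\rangle=\langle\delta(a),\delta(b)\rangle$ for $a,b\in\t$ forces a common $w\in\W$ with $w.a,w.b\in\t_+$; every other step is routine convex-analytic packaging together with Kostant's theorem and the optimality condition on the orbit. I would prove this chamber-alignment point via the face structure of orbit polytopes: conjugate so that $b=\delta(b)\in\t_+$; the equality then says $a$ maximizes $\langle\cdot,b\rangle$ over $\W.a$, hence also over $\conv(\W.\delta(a))$, so $a$ lies in the face exposed by the dominant functional $b$, which for such $b$ is $\conv(\W_b.\delta(a))$ with $\W_b$ the parabolic stabilizer of $b$. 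Being a vertex of the whole polytope lying in this face, $a$ is a vertex of the face, so $a=\sigma.\delta(a)$ for some $\sigma\in\W_b$; taking $w=\sigma^{-1}$ gives $w.a=\delta(a)\in\t_+$ and $w.b=b\in\t_+$, as required. This combinatorial identification of the exposed face (equivalently, the equality case of the dominant-pairing lemma) is the delicate ingredient; I expect it, rather than any analytic estimate, to be where the real work lies.
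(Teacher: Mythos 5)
The paper offers no proof of this statement: it is quoted verbatim from Lewis \cite[Theorem 3.5]{lewis}, so there is nothing internal to compare against. Your argument is correct and amounts to a self-contained reconstruction of Lewis's proof in this compact setting: everything is driven by the Fan-type inequality $\langle x,y\rangle\le\langle\delta(x),\delta(y)\rangle$ (which you correctly derive from Kostant's Theorem \ref{kostant} plus the dominant-pairing inequality $\langle w.a,b\rangle\le\langle a,b\rangle$) together with its equality case, and the remaining convex-analytic bookkeeping is sound --- note that your computation showing $y\in\partial g_\k(x)$ forces $\langle x,y\rangle=g_\k(x)$ and $g_\k\ge\varphi_y$ is exactly the one the paper performs in Proposition \ref{gradface}. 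The one ingredient you flag as delicate, namely that $\langle a,b\rangle=\langle\delta(a),\delta(b)\rangle$ for $a,b\in\t$ forces a single $w\in\W$ with $w.a,w.b\in\t_+$, is precisely the equality case of the generalized rearrangement inequality that the paper itself records later as Lemma \ref{rearrengementineq} (with references to Eaton--Perlman and McCarthy--Ogilvie--Zobin--Zobin); your reduction to it via the first-order condition $[x,y]=0$ at the maximizer on $\O_x$, followed by placing the commuting pair in a common Cartan subalgebra, is correct, and in fact you do not need the full identification of the exposed face of $\conv(\W.\delta(a))$ --- the lemma already says the maximizers of $\varphi_b$ on the orbit $\W.\delta(a)$ are exactly $\Stab(b).\delta(a)$, which is all you use. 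The only loose end is the degenerate case $x=0$ (your forward direction uses the manifold structure of $\O_x$ and a first-order condition), which is immediate from the $\Ad$-invariance of the polar ball and Remark \ref{projpolar} and is also excluded in the paper's Proposition \ref{gradface}; granting Lemma \ref{rearrengementineq}, your proof is complete.
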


If $\g=\k\oplus\p$ is the Cartan decomposition of a semi-simple Lie algebra $\g$, then Lewis' theorem was stated for an $\Ad$ invariant gauge on $\p$. We can take the complexified Lie algebra $\g=\k\oplus i\k$ and apply the theorem to $\p=i\k$. In the case $K=\SU(n)$, the matrices $x$ and $y$ such that there is  $u\in K$ with $\Ad_u(x),\Ad_u(y)\in\t_+$ are said to be \textit{simultaneously ordered diagonalizable}.

\medskip 

We now restate Lewis' theorem in a form convenient to its application in Section \ref{geodesicommute}; there is a related formulation due to Lewis in \cite{lewis2} for the orthogonal group of matrices.

\begin{thm}\label{lewisfaces}
Let $K$ be a semi-simple compact group, let $B$ be an $\Ad$-invariant convex body in $\k$ containing $0$, and let $\t$ be a Cartan algebra in $\k$ and $\t_+$ a positive Weyl chamber. Then for non-zero $x\in\t_+$  
$$F_x(B)=\Ad_{Z(x)}(F_x(B\cap\t)\cap\t_+),$$
and for general $x'\in\k$, if $v\in K$ is such that $\Ad_v(x')\in\t_+$, then
$$F_{x'}(B)=\Ad_{v^{-1}}\Ad_{Z(\Ad_v(x'))}(F_{\Ad_v(x')}(B\cap\t)\cap\t_+).$$
\end{thm}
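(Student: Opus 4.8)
The plan is to pass from the face $F_x(B)$ to a subdifferential of an $\Ad$-invariant gauge, apply Lewis' Theorem \ref{lewis}, and then unwind the resulting condition on Weyl chamber representatives into the centralizer description. Since $B$ is an $\Ad$-invariant convex body containing $0$, so is its polar $B^\circ$, and hence $g_{B^\circ}$ is an $\Ad$-invariant gauge. By Proposition \ref{gradface} we have $F_x(B)=\partial g_{B^\circ}(x)$, so it suffices to understand this subdifferential through Lewis' theorem.

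The next step is to identify the restriction of $g_{B^\circ}$ to the Cartan algebra $\t$. For $x\in\t$ one has $tB^\circ\cap\t=t(B^\circ\cap\t)$, so this restriction is exactly $g_{B^\circ\cap\t}$, the gauge computed inside $\t$. Using Remark \ref{proyecartan} (so that $B\cap\t=p(B)$) together with Remark \ref{projpolar}(ii), I get $B^\circ\cap\t=p(B)^\circ=(B\cap\t)^\circ$, the polar taken in $\t$. Applying Proposition \ref{gradface} inside the vector space $\t$ then gives $\partial g_{(B\cap\t)^\circ}(\delta(x))=F_{\delta(x)}(B\cap\t)$. Feeding these two identifications into Theorem \ref{lewis} yields the key equivalence: $y\in F_x(B)$ if and only if $\delta(y)\in F_{\delta(x)}(B\cap\t)$ and there exists $u\in K$ with $\Ad_u(x),\Ad_u(y)\in\t_+$.

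For the first formula I specialize to $x\in\t_+$, where $\delta(x)=x$. The crux is analyzing the clause $\Ad_u(x)\in\t_+$: since $x$ already lies in $\t_+$ and in the same orbit as $\Ad_u(x)$, uniqueness of the Weyl chamber representative forces $\Ad_u(x)=x$, i.e. $u\in Z(x)$. From here I verify both inclusions. If $y\in F_x(B)$, picking such a $u\in Z(x)$ gives $\Ad_u(y)=\delta(y)\in F_x(B\cap\t)\cap\t_+$, so $y\in\Ad_{Z(x)}(F_x(B\cap\t)\cap\t_+)$. Conversely, for $z\in F_x(B\cap\t)\cap\t_+$ and $u\in Z(x)$, taking $u^{-1}$ as the required conjugating element shows that $\Ad_u(z)$ satisfies both conditions of the equivalence, hence lies in $F_x(B)$. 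This yields $F_x(B)=\Ad_{Z(x)}(F_x(B\cap\t)\cap\t_+)$.

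The general case reduces to the first by $\Ad$-equivariance of exposed faces. Because $B$ is $\Ad$-invariant and $\Ad_v$ is an isometry of the Killing form, one has $\argmax_B\varphi_{\Ad_v(x')}=\Ad_v(\argmax_B\varphi_{x'})$, that is $F_{x'}(B)=\Ad_{v^{-1}}(F_{\Ad_v(x')}(B))$. Setting $x=\Ad_v(x')\in\t_+$ and substituting the first formula for $F_x(B)$ produces the stated expression. I expect the main obstacle to be the bookkeeping in the equivalence of the second paragraph---correctly matching Lewis' clause ``$\Ad_u(x),\Ad_u(y)\in\t_+$'' with the centralizer $Z(x)$ and ensuring that $\delta(y)$ is realized by the \emph{same} $u$---rather than any single hard estimate.
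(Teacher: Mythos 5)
Your proof is correct and follows essentially the same route as the paper: Proposition \ref{gradface} to convert exposed faces into subdifferentials, Lewis' Theorem \ref{lewis}, and the observation that $\Ad_u(x)\in\t_+$ with $x\in\t_+$ forces $u\in Z(x)$. The only (cosmetic) difference is that you apply Lewis directly to the gauge $g_{B^\circ}$, identifying $B^\circ\cap\t=(B\cap\t)^\circ$ at the outset, whereas the paper applies it to $g_B$, obtains the formula for $F_x(B^\circ)$, and then takes polars at the very end.
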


\begin{proof}
We restate Theorem \ref{lewis} in the case that $x\in\t_+$. Let $g=g_B$ be the gauge function, see Definition \ref{minkowskigauge}. The theorem states that $y\in\partial g(x)$ if and only if $\gamma(y)\in\partial g_\t(x)$ and for some $u\in K$ we have $\Ad_u(x),\Ad_u(y)\in\t_+$. Observe that  $\Ad_u(x)\in\t_+$ for $u\in K$ if and only if $u\in Z(x)$, so the condition $\Ad_u(y)\in \t_+$ reduces to $y\in\Ad_{Z(x)}\t_+$. The conditions $\gamma(y)\in\partial g_\t(x)$ and $y\in\Ad_{Z(x)}\t_+$ are equivalent to $y\in\Ad_{Z(x)}(\partial g_\t(x)\cap\t_+)$. Hence
$$
\partial g(x)=\Ad_{Z(x)}(\partial g_\t(x)\cap\t_+).
$$
For a general $x'\in\k$ take $v\in K$ such that $\Ad_v(x')\in\t_+$. Then, since the adjoint action is isometric for the norm $g$ we get
$$\partial g(x')=\Ad_{v^{-1}}\Ad_{Z(\Ad_v(x'))}(\partial g_\t(\Ad_v(x'))\cap\t_+).$$
In the case that $g=g_B$ and $g_{\t}=g_{B\cap\t}$ we have
$$F_x(B^\circ)=\partial_B(x)=\Ad_{Z(x)}(\partial g_{B\cap\t}(x)\cap\t_+)=\Ad_{Z(x)}(F_x(B^\circ\cap\t)\cap\t_+),$$
where we used $F_x(B^\circ)=\partial_B(x)$ and $F_x(B^\circ\cap\t)=\partial g_{B\cap\t}$ (this follows from Proposition \ref{gradface}). If we take polars, by Remark \ref{projpolar} and the bipolar property, we obtain the required statement.
\end{proof}

If $B=\conv(E-E)^\circ$, then $B\cap\t=\conv(E-E)^\circ\cap\t=(\conv(E-E)\cap\t)^\circ=P^\circ$, so we get the following:

\begin{cor}
If $B=\conv(E-E)^\circ$ is the unit ball of Hofer's norm  and $P=\conv(E-E)\cap\t$ is Hofer's norm polytope then for non-zero $x\in\t_+$
$$F_x(B)=\Ad_{Z(x)}(F_x(P^\circ)\cap\t_+).$$
\end{cor}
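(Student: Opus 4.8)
The plan is to obtain this as an immediate specialization of Theorem \ref{lewisfaces}, applied to the particular convex body $B=\conv(E-E)^\circ$. The only substantive work is to check that this $B$ satisfies the hypotheses of that theorem and then to substitute the identity $B\cap\t=P^\circ$ that was recorded just above the statement.

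First I would verify that $B$ is an $\Ad$-invariant convex body containing $0$ in its interior, which is exactly what Theorem \ref{lewisfaces} requires. Since $E$ is $\Ad$-invariant, so is $E-E$, hence so is $\conv(E-E)$; and because the polar of an $\Ad$-invariant set is again $\Ad$-invariant (as noted after the definition of the polar), the set $B=\conv(E-E)^\circ$ is $\Ad$-invariant as well. Moreover, as $E$ is full the set $\conv(E-E)$ is a symmetric convex body containing $0$ in its interior, so by the bipolar property and Remark \ref{polar} its polar $B$ is again a convex body containing $0$ in its interior.

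Then I would apply Theorem \ref{lewisfaces} directly: for non-zero $x\in\t_+$ it gives
$$
F_x(B)=\Ad_{Z(x)}\bigl(F_x(B\cap\t)\cap\t_+\bigr).
$$
Finally I would substitute the identity $B\cap\t=\conv(E-E)^\circ\cap\t=(\conv(E-E)\cap\t)^\circ=P^\circ$, which follows from item ii) of Remark \ref{projpolar} together with the definition $P=\conv(E-E)\cap\t$. Since $B\cap\t=P^\circ$ forces $F_x(B\cap\t)=F_x(P^\circ)$, the displayed formula becomes $F_x(B)=\Ad_{Z(x)}(F_x(P^\circ)\cap\t_+)$, as claimed.

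There is essentially no real obstacle here beyond bookkeeping: all the content sits in Theorem \ref{lewisfaces} and in the polar-of-section identity of Remark \ref{projpolar}. The one point deserving a line of care is confirming that the hypotheses of Theorem \ref{lewisfaces} ($\Ad$-invariance, together with being a convex body with $0$ in its interior) genuinely hold for $B=\conv(E-E)^\circ$, which, as indicated above, is a consequence of the fullness and $\Ad$-invariance of $E$.
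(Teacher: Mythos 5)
Your proposal is correct and matches the paper's own derivation: the paper likewise obtains the corollary by applying Theorem \ref{lewisfaces} to $B=\conv(E-E)^\circ$ and substituting $B\cap\t=\conv(E-E)^\circ\cap\t=(\conv(E-E)\cap\t)^\circ=P^\circ$ via Remark \ref{projpolar}. Your extra verification of the hypotheses ($\Ad$-invariance and $0$ in the interior, from the fullness and invariance of $E$) is sound and only makes explicit what the paper leaves implicit.
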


The same characterization holds for the one-sided Hofer norm and its polytope $P^+$.

\section{The geometry of groups with bi-invariant Finsler metrics}\label{l}

In this section we focus our study of geodesics of Lie groups with Finsler metrics obtained by (left or right) translation of $\Ad$-invariant Finsler norms. We want to emphasize that in this section, we only require that norms are positively homogeneous,  i.e. $\|\lambda x\|=\lambda \|x\|$ for $\lambda \ge 0$, thus including our one-sided Hofer norms. Therefore the distance obtained is not necessarily symmetric.

\bigskip 

We need to begin this section with some remarks concerning compact semi-simple Lie algebras and their root decomposition.

\begin{rem}\label{milnor}A connected finite dimensional Lie group $K$ admits a bi-invariant \textit{Riemannian} metric if and only if it is isomorphic to the cartesian product of a compact group and an additive vector group (see \cite[Lemma 7.5]{milnor}). Therefore in that case $\k\simeq \k_0\oplus \mathfrak a$ where $a$ is an abelian sub-algebra and $\k_0$ is semi-simple. To simplify the discussion, we will only consider here compact groups $K$ with semisimple Lie algebra $\k$. Thus $K$ is always unimodular, in particular $\tr\ad v=0$ for any $v\in \k$. The $\Ad$-invariant inner product is given by (minus) the Killing form of $\k$, as before, and for this $\Ad$-invariant inner product, the operator $\ad v$ is skew-adjoint for any $v\in \k$ (\cite[Lemmas 6.3 \& 7.2]{milnor}). 
\end{rem}

What follows, in the form of a Remark, is in fact a recollection of useful facts that we will use about the real root decomposition of a real semi-simple Lie algebra. It will also help to fix and clarify the notation.

\begin{rem}[Real root decomposition]\label{rootdecom} Let $\Delta$ be the set of (real) roots of $\k$ with respect to a fixed Cartan subalgebra $\t $, and denote $\Delta_+$ the positive roots. There is an orthonormal set (the real root vectors)
$$
\{u_\alpha,v_\alpha:\alpha \in\Delta_+\}\subseteq \k
$$
with respect to this inner product, such that for each $h\in\t$ 
\begin{align}\label{weights}
[h,u_\alpha] & =-\alpha(h)v_\alpha \qquad [h,v_\alpha]=\alpha(h)u_\alpha \qquad [u_\alpha,v_\alpha]= h_\alpha,
\end{align}
where $h_\alpha\in \t$ is the unique element such that $\langle h_\alpha,\cdot\rangle=-\alpha( \cdot )$ (see for instance \cite[Chapter 6]{knapp}). Set 
$$Z_\alpha=\R u_\alpha\oplus\R v_\alpha.$$
Then $\k=\t\oplus\bigoplus_{\alpha\in\Delta_+}Z_\alpha$, and moreover for each $h\in \t$, we have
\begin{equation}\label{diago}
\ad h=i\sum_{\alpha\in\Delta_+} \alpha(h)(u_{\alpha}\otimes v_{\alpha}-v_{\alpha}\otimes u_{\alpha})=i \sum_{\alpha\in\Delta_+} \alpha(h) T_{\alpha}
\end{equation}
were we write $T_{\alpha}=(u_{\alpha}\otimes v_{\alpha}-v_{\alpha}\otimes u_{\alpha})$ for short. Note that $T_{\alpha}T_{\beta}=0$ when $\alpha\ne \beta$.
\end{rem}

\begin{rem}\label{perm}
The Weyl group of $K$ acts transitively on the roots (see the remark in \cite[p. 47]{serre}). Thus, if $\sigma$ is a permutation of the roots, there exists $k_{\sigma}\in K$ such that $\Ad_{k_{\sigma}}u_{\alpha}=u_{\sigma \alpha}$ and likewise with $v_{\alpha}$. Let $w\in \t$, and assume that $T_{\sigma}$ is the linear transform in $\k$ obtained by permuting the eigenvalues of $\ad w$, then
\begin{align*}
T_{\sigma} & = i\sum_{\alpha\in\Delta_+} \sigma\alpha(w)(u_{\alpha}\otimes v_{\alpha}-v_{\alpha}\otimes u_{\alpha})=  i\sum_{\alpha\in\Delta_+} \alpha(w)(u_{\sigma \alpha}\otimes v_{\sigma \alpha}-v_{\sigma\alpha}\otimes u_{\sigma\alpha})\\
& = \Ad_{k_{\sigma}} i\sum_{\alpha\in\Delta_+} \alpha(w)(u_{\alpha}\otimes v_{\alpha}-v_{\alpha}\otimes u_{\alpha})\Ad_{k\sigma}^{-1}\\
&= \Ad_{k_{\sigma}}\ad w \Ad_{k_{\sigma}}^{-1}=\ad(\Ad_{k_{{\sigma}}}w).
\end{align*}
Then $T_{\sigma}=\ad(\Ad_{k_{{\sigma}}}w)$. It is clear that $T_{\sigma}$ commutes with $\ad w$, and since $\k$ is semi-simple, $\Ad_{k_{\sigma}}w$ commutes with $w$ and in particular $w_{\sigma}=\Ad_{k_{{\sigma}}}w\in \t$. 
\end{rem}

\subsection{Geodesics of groups with bi-invariant Finsler metrics}\label{geodebi}

First we recall here some useful facts regarding norming functionals in $\k$, their proofs are quite elementary and can be found in \cite[Section 4]{lar19}. 

\begin{rem}[Gauss' Lemma]\label{gausslem}
Let $\k$ be the Lie algebra of $K$ equipped with an $\Ad$-invariant norm (or Finsler norm).  As mentioned in the introduction, it is relevant to remark here that the norm does not need to be homogeneous, it is only necessary that it is positively homogeneous. For $v\in\k$ let $\varphi$ be a norming functional of $v$ or $-v$. Then
\begin{enumerate}
\item We have $\varphi([w,v])=0$ for all $w\in\k$. Equivalently, $\varphi\circ \ad v\equiv 0$ on $\k$.
\item $\varphi(e^{\lambda \ad v}w)=\varphi(w)$ for all $w\in\k$ and $\lambda\in\R$.
\item $\varphi(e^{-v}D\exp_v w)=\varphi(w)$ for all $w\in\k$.
\end{enumerate}
\end{rem}

When the norm $\| \cdot \|$ is a norm derived from an inner product, the third item above is in fact Gauss' Lemma of Riemannian geometry (it is well-known that the Riemannian exponential map, for a bi-invariant Riemannian metric in a Lie group, coincides with the group exponential). We next show how to describe the boundary of a ball in $K$. We first recall a definition.

\begin{defn}
Let $B$ be a convex body in a vector space $V$ and let $v\in\bd B$. The \textit{solid tangent cone} at $v$ is 
$$TC_v:=\bigcap\{H_v^-:H_v \mbox{ is a supporting hyperplane of }B\mbox{ at }v\}.$$
\end{defn}
If $v\in (V,\|\cdot\|)$ and $\|v\|=r>0$, then taking $B=B_r(0)$ it is apparent that
$$
TC_v:=\bigcap\{\varphi^{-1}(-\infty,r]: \varphi \mbox{ is a norming functional of }v\}.
$$

Therefore, by Gauss' Lemma above, for each $0\ne v\in \k$, if $w\in TC_v$ then $ e^{-v}D\exp_v w\in TC_v$. Moreover, if $v$ is such that the differential of the exponential map is invertible (for instance, if $v$ is smaller that the injectivity radius of $K$, see Definition \ref{injrad} below), then it is clear that
$$
D\exp_v(TC_v)=e^vTC_v.
$$
Geometrically, the differential of the exponential map at $v$ acts on the tangent cone at $v$ as the left translation.

\begin{rem}\label{normicon}
 Any functional $\varphi$ can be described as $\varphi(v)=\langle v,a\rangle$ for some $a\in \k$, via the inner product in $\k$ (Remark \ref{milnor}). That is $\varphi=-\tr(\ad a\;\cdot\;)$.  If $\varphi(z)=\|z\|$ for some $z\in \k$, then $\varphi([v,z])=0$ for all $v\in\k$ (Remark \ref{gausslem}). Thus for any $v\in \k$,
$$
0=\varphi([v,z])=\langle [v,z],a\rangle =\langle v,[z,a]\rangle
$$
and then $[z,a]=0$. Therefore $\ad a$ and $\ad z$ commute, and they are simultaneously diagonalizable. Let $\{e_i\}_{i=1,\dots,N}$ be an orthonormal basis of $\k^{\mathbb C}$ that diagonalizes both simultaneously, let $q_{i,j}=e_i\otimes e_j$ and $q_i=q_{i,i}$ the corresponding rank-one orthogonal projections. If $\ad z=i\sum_{j\in J} z_j q_j$ is the spectral decomposition of the skew-adjoint operator $\ad z$, we can write
$$
A=\ad a=i\sum_{j\in J} a_j q_j +iB
$$
for certain $a_j\in \mathbb R$ and $B^*=B=\sum_{k,l\notin J}B_{kl}q_{kl}$, with $Bq_j=0$ for all $j\in J$. 
\end{rem}

\begin{lem}[Supporting norming functionals]\label{normicon2}
If $\varphi$ norms $z\in \k$ as in the previous remark, and $\|\varphi\|=1$, drop the term $B$ and all the $a_j=0$, and consider  $\psi=-\tr(\tilde{A}\quad \cdot)$, where $\tilde{A}=i\sum_j a_jq_j$. Then $\psi$ is still norming for $z$ and has unit norm. 
\end{lem}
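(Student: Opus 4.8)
The plan is to reduce everything to the Cartan subalgebra and the root data of Remark~\ref{rootdecom}. Since $\varphi$ norms $z$, Gauss' Lemma (Remark~\ref{gausslem}) gives $[a,z]=0$ — exactly the computation of Remark~\ref{normicon} — so $a$ and $z$ lie in a common maximal abelian subalgebra $\t$; fix a positive system with $z\in\t_+$. Then the simultaneous eigenplanes $q_j$ of $\ad a,\ad z$ are the root planes $Z_\alpha$, the indices with $z_j\ne0$ are the roots with $\alpha(z)\ne0$, and $B$ is precisely $\ad a$ restricted to $\ker\ad z=\z(z)$. A trace computation (using $T_\alpha T_\beta=0$ for $\alpha\ne\beta$ and $\tr T_\alpha^2=-2$) shows $\psi=\varphi_{\tilde a}$ for the vector
\[
\tilde a=2\!\!\sum_{\alpha\in\Delta_+,\ \alpha(z)\ne0}\!\!\alpha(a)\,t_\alpha\in\t,\qquad t_\alpha:=-h_\alpha,
\]
while $a=2\sum_{\alpha\in\Delta_+}\alpha(a)t_\alpha$, this last identity being just $\langle\cdot,\cdot\rangle=2\sum_{\Delta_+}\alpha\otimes\alpha$ on $\t$. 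The first (easy) step is $\psi(z)=\varphi(z)=\|z\|$: the dropped pieces do not pair with $z$, since $B\,\ad z=0$ ($B$ lives on $\ker\ad z$) and the terms with $a_j=0$ vanish. Hence $\|\psi\|\ge\psi(z)/\|z\|=1$.

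The core is the reverse inequality $\|\psi\|=\|\tilde a\|_*\le1$, which I would get from the majorization $\tilde a\in\conv(\O_a)$: the dual unit ball is $\Ad$-invariant, convex and contains $a$, hence contains $\conv(\O_a)\ni\tilde a$, so $\|\tilde a\|_*\le\|a\|_*=1$. By Kostant's Theorem~\ref{kostant} it is enough to prove $\tilde a\in\conv(\W_z.a)$ for the Weyl group $\W_z$ of the Levi $\z(z)$. Writing $a=a''+a'$ with $a''$ in the centre of $\z(z)$ (fixed by $\W_z$) and $a'$ in the Cartan of its semisimple part, the identities above give
\[
\tilde a=a''+(\mathrm{id}-P)a',\qquad P:=2\!\!\sum_{\alpha\in\Delta_+,\ \alpha(z)=0}\!\!t_\alpha\otimes t_\alpha .
\]
Now $P$ is a partial sum of the resolution $2\sum_{\Delta_+}t_\alpha\otimes t_\alpha=\mathrm{id}$, so $0\preceq P\preceq\mathrm{id}$; being $\W_z$-invariant, Schur's lemma forces it to be a scalar $c_i\in(0,1]$ on each simple factor of $\z(z)$. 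Thus $(\mathrm{id}-P)a'$ equals $(1-c_i)a'_i$ on each factor, a point of the segment $[0,a'_i]\subseteq\conv(\W_z.a'_i)$ (recall $0$ is the barycentre of a Weyl orbit), and assembling the factors yields $(\mathrm{id}-P)a'\in\conv(\W_z.a')$, whence $\tilde a\in\conv(\W_z.a)$. I expect this to be the main obstacle — controlling that the ``compression'' $(\mathrm{id}-P)a'$ stays inside the orbitope — and the Schur-plus-segment observation is what makes it go through.

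For the sign condition I would argue by convexity. For any root $\beta$, realise $s_\beta$ by some $\Ad_k$, so $\|s_\beta a\|_*=1$; convexity of the dual norm along $t\mapsto(1-t)a+ts_\beta a$ together with $\langle(1-t)a+ts_\beta a,\,z\rangle=\|z\|-t\,2\beta(a)\beta(z)/\|h_\beta\|^2\le\|z\|$ forces $\beta(a)\beta(z)\ge0$, i.e. $a_jz_j\ge0$ for every $j$ (this is also the ``simultaneously ordered diagonalizable'' conclusion of Lewis' Theorem~\ref{lewis}). On the surviving indices $a_j\ne0$ and $z_j\ne0$, so the inequality is strict, $a_jz_j>0$. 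The minimality statement then follows from the same bookkeeping: deleting a surviving term replaces the pairing by $\psi''(z)=\|z\|-2\sum_{\text{dropped}}\alpha(a)\alpha(z)<\|z\|$, so $\psi''$ can no longer be a unit norming functional of $z$ — the quantity that makes it norm $z$ strictly drops. (When the deleted directions lie in the cone of the face exposed by $z$, Lemma~\ref{mismacara} gives the additive identity $\|\tilde a\|_*=\|\psi''\|+\|d\|_*$, so the dual norm itself strictly shrinks; in general one should read the claim as the loss of the norming property, since for non-strictly-convex balls $\|\psi''\|$ may stay equal to $1$.)
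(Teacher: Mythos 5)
Your proposal is correct on the two substantive claims and reaches them by a genuinely different route. The paper stays in the operator picture: it writes $\ad x$ in block form with respect to the eigenbasis of $\ad z$ and compares suprema directly, getting $\|\psi\|\le\|\varphi\|$ by restricting the supremum to those $x$ with vanishing complementary block, and then obtains the sign condition by contradiction from $\|\tilde{\psi}\|\le 1$ after deleting one further term. You instead identify $\psi=\varphi_{\tilde{a}}$ with $\tilde{a}=a-Pa$ for the $\W_z$-invariant compression $P=2\sum_{\alpha(z)=0}t_\alpha\otimes t_\alpha$, prove $\tilde{a}\in\conv(\W_z.a)\subseteq\conv(\O_a)$ by Schur's lemma on each simple factor of $\z(z)$ together with the fact that $0$ is the barycentre of a Weyl orbit, and conclude $\|\tilde{a}\|_*\le\|a\|_*=1$ from convexity and $\Ad$-invariance of the dual ball (i.e.\ the majorization of Proposition \ref{majo}); your sign argument via $\|s_\beta a\|_*=\|a\|_*$ is likewise independent and clean. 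What your route buys is that it sidesteps the paper's supremum-compression step (the equality $\sup_{\|x\|=1,\,x_0=0}\sum_j a_jx_{jj}=\sup_{\|x\|=1}\sum_j a_jx_{jj}$), which is delicate for a general $\Ad$-invariant Finsler norm, replacing it by orbitope convexity.

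The one clause you do not actually prove is ``if we drop some of the non-zero $a_j$, its norm shrinks.'' The paper's content there is $\|\tilde{\psi}\|\le\|\psi\|$ (its proof gives only the non-strict inequality), and in the paper this inequality is the engine of the contradiction that yields $a_jz_j\ge0$. You prove a different statement, namely that the norming value $\psi''(z)$ strictly drops, and you concede that $\|\psi''\|$ may remain equal to $1$. Your orbitope argument does not extend to an arbitrary further drop: the compression $P_S$ is then not invariant under any Weyl subgroup, Schur's lemma is unavailable, and whether $(\mathrm{id}-P_S)a$ still lies in $\conv(\O_a)$ would require a separate argument (in type $A$ it follows from a Birkhoff-type computation, but you give nothing in general). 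Because your proof of the sign condition does not use this clause, the omission does not propagate; but as a proof of the lemma as stated, that sub-claim is left open.
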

\begin{proof}Note first that $\psi$ is still norming for $z$:
$$
\psi(z)=\sum_j a_j z_j\tr(q_j)=\tr(A\circ\ad z)=\varphi(z)=\|z\|.
$$
Now for any $x\in \k$, write $\ad x$ as a block matrix in terms of the $q_j$ and its orthogonal complement,
$$
A=\ad a=\left( 
\begin{array}{cc} ia_j & 0 \\ 
0 &  B 
\end{array} \right)\qquad 
\tilde{A}=\left( 
\begin{array}{cc} ia_j & 0 \\ 
0 &  0
\end{array} \right)\qquad 
\ad x=\left( 
\begin{array}{cc} x_{ij} & \ast \\ 
\ast&  x_0 
\end{array} \right)
$$
where $x_0q_j=0$ for all $j$. Then
\begin{align*}
1 & =\|\varphi\|=\sup\limits_{\|x\|=1} \varphi(x)=\sup\limits_{\|x\|=1} -\tr(\ad a\circ\ad x)= \sup\limits_{\|x\|=1} \sum_j a_j x_{jj}+ \tr(Bx_0)\\
& \ge \sup\limits_{\|x\|=1, \, x_0=0} \sum_j a_j x_{jj}+ \tr(Bx_0)= \sup\limits_{\|x\|=1, \, x_0=0} \sum_j a_j x_{jj}\\
& = \sup\limits_{\|x\|=1} \sum_j a_j x_{jj} = \sup\limits_{\|x\|=1} -\tr(\tilde{A}\circ\ad x) =   \sup\limits_{\|x\|=1}\psi(x)=\|\psi\|.
\end{align*}
This proves that $\|\psi\|\le 1$, but since $\psi(z)=\|z\|$, it must be $\|\psi\|=1$. 
\end{proof}

We now charachterize the linear order given by Finsler $\Ad$-invariant norms; results in the same vein can be found in \cite[Section 12]{atiyah}, \cite[Proposition 6]{bhatia} and \cite[Proposition 2.8]{tam}. For $z,w\in \k$ we denote $\overrightarrow{w}=(w_1,w_2,\dots,w_N)$ the string of real numbers such that $i w_j$ are the eigenvalues of $\ad w$ in the complexification $\k^{\mathbb C}$ of $\k$, and likewise with $\overrightarrow{z}$.

\begin{prop}[Majorization and norms]\label{majo}
Let $z,w\in\k$, let $N=\dim(\k)$. The following are equivalent:
\begin{enumerate}
\item $z\in \conv\O_w$, more precisely there exist (at most) $N+1$ points $k_i\in K$ and $N+1$ real numbers $\lambda_i \ge 0$ with $\sum_i \lambda_i= 1$ such that
$$
z=\sum_{i=1}^{N+1} \lambda_i\, \Ad_{k_i}w. 
$$
\item $\overrightarrow{z}\prec\overrightarrow{w}$ (strong majorization).
\item $\|z\|\le \|w\|$ for all $\Ad$-invariant Finsler norms in $\k$.
\item $\max_{k\in K} \langle z, \Ad_k x\rangle\le \max_{k\in K} \langle w, \Ad_k x\rangle$ for all $x\in \k$.  
\end{enumerate}
If moreover equality holds for some Finsler norm, then $z$ and all the $\Ad_{k_i}w$ lie in the same face of the ball for that norm (and in fact lie in the intersection of all the faces that $z$ lies in). If that norm is strictly  convex then $z=\Ad_k w$ for some $k\in K$.
\end{prop}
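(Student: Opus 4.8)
The plan is to prove the cycle $(1)\Rightarrow(3)\Rightarrow(4)\Rightarrow(1)$, to establish $(1)\Leftrightarrow(2)$ separately through majorization theory, and finally to deduce the two supplementary assertions. Throughout I use that all four conditions are $\Ad$-invariant, so that by Kostant's Theorem \ref{kostant} and Remark \ref{proyecartan} I may reduce, when convenient, to $z,w\in\t_+$. In that case $(1)$ reads $z\in\conv(\W w)$, since $\conv(\O_w)\cap\t=p(\conv\O_w)=\conv(p(\O_w))=\conv(\W w)$, and $z\in\t$.

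For $(1)\Rightarrow(3)$, if $z=\sum_i\lambda_i\Ad_{k_i}w$ is a convex combination then the triangle inequality and positive homogeneity of any $\Ad$-invariant Finsler norm give $\|z\|\le\sum_i\lambda_i\|\Ad_{k_i}w\|=\sum_i\lambda_i\|w\|=\|w\|$. For $(3)\Rightarrow(4)$ the key point is that, by $\Ad$-invariance of the inner product, $\max_{k\in K}\langle z,\Ad_k x\rangle=\max_{k\in K}\langle\Ad_k z,x\rangle=\|z\|^+_{\O_x}$ is exactly the one-sided Hofer norm of Remark \ref{semiads}, which is an $\Ad$-invariant Finsler norm whenever $x$ is nonzero on each simple summand, in particular for regular $x$. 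Applying $(3)$ to this family of norms gives $(4)$ for regular $x$, and then for every $x$ by continuity, since the regular elements are dense and both sides are continuous in $x$. Finally, for $(4)\Rightarrow(1)$ note that $\max_k\langle z,\Ad_k x\rangle=h_{\conv(\O_z)}(x)$ is the support function of $\conv\O_z$, so $(4)$ reads $h_{\conv(\O_z)}\le h_{\conv(\O_w)}$ pointwise, hence $\conv\O_z\subseteq\conv\O_w$; since $z\in\O_z$ this yields $z\in\conv\O_w$, and Carath\'eodory's theorem in the $N$-dimensional space $\k$ produces the representation with at most $N+1$ points.

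For $(1)\Leftrightarrow(2)$ I argue as follows. Reducing to $z,w\in\t$ and writing $z=\sum_j\mu_j\,\sigma_j w$ with $\sigma_j\in\W$, the operators $\ad(\sigma_j w)$ all lie in the commutative subalgebra $\ad(\t)$ and share the common eigenbasis of the root decomposition of Remark \ref{rootdecom}, so $\alpha(z)=\sum_j\mu_j\,\alpha(\sigma_j w)$ for every root $\alpha$. For any convex $\phi$, convexity together with the fact that $\W$ permutes $\Delta$ gives $\sum_{\alpha\in\Delta}\phi(\alpha(z))\le\sum_{\alpha\in\Delta}\phi(\alpha(w))$; adding the equal contributions of the $\dim\t$ zero eigenvalues and invoking the Hardy--Littlewood--P\'olya theorem yields $\overrightarrow z\prec\overrightarrow w$, which is $(2)$. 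The reverse implication $(2)\Rightarrow(1)$ is the step I expect to be the main obstacle. The natural route is the contrapositive: if $z\notin\conv(\W w)$, separate $z$ from this compact convex $\W$-invariant set by a dominant functional $x\in\t_+$ with $\langle z,x\rangle>\max_{\sigma\in\W}\langle\sigma w,x\rangle=\langle w,x\rangle$, and then convert this strict inequality of dominant pairings into a violation of the partial-sum inequalities defining majorization. Making this conversion precise is exactly the content of the classical group-majorization theorems (Rado's theorem and its Lie-theoretic extensions); for $K=\SU(n)$ it specializes to the Schur--Horn / Ky Fan majorization theorem, and in general I would appeal to the results quoted in \cite{atiyah}, \cite{bhatia}, \cite{tam}.

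It remains to prove the two final assertions. Suppose $\|z\|=\|w\|$ for some $\Ad$-invariant Finsler norm, and fix any norming functional $\varphi$ of $z$. From $z=\sum_i\lambda_i\Ad_{k_i}w$ we obtain $\|w\|=\|z\|=\varphi(z)=\sum_i\lambda_i\varphi(\Ad_{k_i}w)\le\sum_i\lambda_i\|\Ad_{k_i}w\|=\|w\|$, and since each summand satisfies $\varphi(\Ad_{k_i}w)\le\|\Ad_{k_i}w\|=\|w\|$, equality forces $\varphi(\Ad_{k_i}w)=\|w\|$ for every $i$ with $\lambda_i>0$. Thus $\varphi$ norms $z$ and each $\Ad_{k_i}w$ simultaneously; by Lemma \ref{mismacara} they all lie in the cone generated by the face $F_\varphi$, and letting $\varphi$ range over all norming functionals of $z$ they lie in the intersection of all faces containing $z$. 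If the norm is strictly convex, every such face is a single extreme ray, so the points $z$ and $\Ad_{k_i}w$, all of norm $\|w\|$, must coincide; in particular $z=\Ad_{k}w$ for some $k\in K$.
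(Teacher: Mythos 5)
Most of your proposal runs parallel to the paper's own proof: the cycle $(1)\Rightarrow(3)\Rightarrow(4)\Rightarrow(1)$ is exactly the paper's architecture (the paper phrases $(4)\Rightarrow(1)$ as the contrapositive $\neg(1)\Rightarrow\neg(4)$ via Hahn--Banach separation, which is the same content as your support-function comparison $h_{\conv(\O_z)}\le h_{\conv(\O_w)}$), your use of the one-sided Hofer norms at regular $x$ plus density is verbatim the paper's $(3)\Rightarrow(4)$, and your equality-case argument via Lemma \ref{mismacara} and the strict-convexity endgame coincides with the paper's. Your $(1)\Rightarrow(2)$ is a legitimate alternative: where the paper passes to the adjoint representation and invokes Schur--Horn, you reduce to $z\in\conv(\W w)$ by Kostant and use linearity of the roots, $\W$-invariance of $\Delta$, and the Hardy--Littlewood--P\'olya convexity criterion; that argument is correct and arguably more elementary.

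The genuine gap is $(2)\Rightarrow(1)$, which you flag yourself and then discharge by a vague appeal to ``Rado's theorem and its Lie-theoretic extensions'' in \cite{atiyah}, \cite{bhatia}, \cite{tam}. The paper cites those references only as results ``in the same vein'' and does not rely on them; it actually proves the implication. The mechanism is: conjugate $z$ into a Cartan subalgebra containing $w$; the hypothesis $\overrightarrow{z}\prec\overrightarrow{w}$ gives, by Rado's theorem in the form \cite[Theorem II.1.10]{bhatia2} together with Carath\'eodory, that $\ad z$ is a convex combination of operators $T_{\sigma_i}$ obtained by permuting the eigenvalues of $\ad w$; the crucial Lie-theoretic step is Remark \ref{perm}, which realizes each such permutation by an inner automorphism, $T_\sigma=\ad(\Ad_{k_\sigma}w)$, after which faithfulness of the adjoint representation (semisimplicity of $\k$) descends the identity to $z=\sum_i\lambda_i\Ad_{k_i}w$ in $\k$. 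Your sketched contrapositive --- separate $z\notin\conv(\W w)$ by a dominant $x$ and ``convert'' the strict inequality $\langle z,x\rangle>\langle w,x\rangle$ into a violated partial-sum inequality --- is not a routine conversion: a single dominant linear functional controls the partial sums of the $\ad$-spectrum only through precisely the kind of permutation-realization statement above, and the results you point to concern group majorization (orbit-hull membership versus support functions, i.e.\ essentially the content of $(1)\Leftrightarrow(4)$), not the passage between orbit-hull membership and ordinary majorization of the spectrum of $\ad z$; none of them states the implication in the form you need. Without supplying the mechanism of Remark \ref{perm} (or an equivalent), this direction of your proof remains unproven.
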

\begin{proof}
The fact that any element in the orbit can be written with a prescribed number of combinations ($N+1$) is a consequence of Caratheodory's theorem. Let us establish first the equivalence $(1)\Leftrightarrow (2)$. Assume $(1)$, passing to the adjoint representation we have 
$$
\ad z= \sum_{i=1}^N \lambda_i \Ad_{k_i} \ad w \Ad_{k_i}^{-1}.
$$
Each $\Ad_{k_i}$ is a unitary operator acting in $\k^{\mathbb C}$ therefore $\ad z$ belongs to the convex hull of the coadjoint orbit of $\ad w$, and this in turn (and by Schur-Horn's theorem) implies that strong majorization $\overrightarrow{z}\prec \overrightarrow{w}$ holds. Now assume $(2)$ holds, let $\t$ be a Cartan subalgebra containing $w$, let $\Delta^+$ be the positive simple roots and let $k\in K$ such that $\Ad_k z\in \t$. The spectrum of 
$$
\ad(\Ad_k z)=\Ad_k \ad z \Ad_k^{-1} 
$$
is also the string $\overrightarrow{z}$, and the assumption implies $\overrightarrow{z}$ is in the convex hull of the permutations of the eigenvalues of $\ad w$  (see \cite[Theorem  II.1.10]{bhatia2}). Equivalently (and again invoking Caratheodory's theorem), there are $N+1$ such elements with
$$
\ad (\Ad_k z)=\sum_{i=1}^{N+1} \lambda_i T_{\sigma_i}
$$
for a certain string of non-negative numbers $(\lambda_i)_{i=1,\dots , N+1}$ such that $\sum_i\lambda_i=1$. Now each $T_{\sigma_i}$ is obtained permuting the eigenvalues of $\ad w$ or equivalently, permuting the roots $\alpha\in\Delta_+$. By Remark \ref{perm}, any such permutation is obtained by an inner automorphism, therefore 
$$
\ad (\Ad_k z)=\sum_{i=1}^{N+1}\lambda_i \ad (\Ad_{h_i}w)=\ad(\sum_{i=1}^{N+1}\lambda_i  \Ad_{h_i}w),
$$
and by the semi-simplicity of $\k$, we obtain $z=\sum_{i=1}^{N+1}\lambda_i  \Ad_{k_i}w \in \conv \O_w$, where $k_i=k^{-1}h_i$. 

Clearly $(1)\Rightarrow (3)$, and $(3)\Rightarrow (4)$ when $x\in\k$ is a regular element (so we obtain a non-degenerate Finsler norm, see Remarks \ref{semiads} and \ref{onesided}). Since regular elements are dense in $\k$, it is straightforward to see that $(4)$ must then hold for any $x\in \k$, if $(3)$ holds. Now assume that $(1)$ does not hold, then by Hahn-Banach's theorem there exists a linear functional $\varphi$ separating $z$ and the convex capsule of the orbit, i.e.
$$
\varphi (y) \le r<\varphi(z) \qquad \forall\, y\in \conv(\O_w).
$$
Let $x\in \k$ such that $\varphi=\langle x,\,\cdot\rangle$, then by Remark \ref{semiads}
$$
\max_{k\in K}\langle w, \Ad_k x\rangle=\max_{k\in K}\langle\Ad_k w,x\rangle =\max\{\langle y , x\rangle:  y\in \conv(\O_w)\}\le r<\langle z,x\rangle.
$$
This shows that $(4)$ cannot hold, finishing the proof of the equivalences.  Now assume that any of the conditions hold, and we have equality of norms $\|z\|=\|w\|$ holds for some norm, then
$$
\|w\|=\|z\|=\|\sum \lambda_i \Ad_{k_i}w\|\le \sum \lambda_i \|\Ad_{k_i}w\|=\sum \lambda_i \|w\|\le \|w\|.
$$
We note that there is a common norming functional $\varphi$ for all the $\lambda_i \Ad_{k_i}w$ by Lemma \ref{mismacara}; since $\lambda_i\ge 0$ this amounts for the $\Ad_{k_i}w$ in the same face of a sphere of the norm. Then also
$$
\varphi(z)=\varphi (\sum_i \lambda_i \Ad_{k_i}w)=\sum \lambda_i \varphi(\Ad_{k_i}w)=\sum \lambda_i \|w\|=\|w\|=\|z\|.
$$
If $\varphi$ is any functional norming $z$, then 
$$
\|z\|=\varphi(z)=\sum \lambda_i \varphi(\Ad_{k_i}w)\le \sum \lambda_i \|\Ad_{k_i}w\|=\|w\|=\|z\|
$$
shows that it must be $\varphi(\Ad_{k_i}w)=\|\Ad_{k_i}w\|$ for all $i$, thus these vectors are in fact in the intersection of all the faces where $z$ lives. If the norm is strictly convex all the $\Ad_{k_i}w$ are aligned, but being normed by the same functional they must be equal therefore $z=\Ad_k w$ as claimed.
\end{proof}

\begin{rem}
It suffices to check condition $(3)$ above only for \textit{strictly convex norms} to obtain the equivalences. This is because any $\Ad$-invariant Finsler norm  $\|\cdot\|$ can be approximated explicitly with a strictly convex ($\Ad$-invariant, Finsler) norm by means of
$$
\|x\|_{\varepsilon}=\|x\|+\varepsilon \|x\|_2.
$$
Here $\|x\|_2^2=-\tr(\ad x\circ \ad x)$ is the norm derived from the Killing form of $\k$. Likewise, it suffices to check $(4)$ for regular $x\in \k$.
\end{rem}

\begin{rem}[One-sided norms]\label{onesided}
Since $K$ is compact it is unimodular and then $0=\sum_j w_j=i\tr(\ad w)$ (Remark \ref{milnor}). This easily implies that all the partial sums $\sum_{k=1}^m w_k$, with the $w_k$ rearranged in decreasing order, must be non-negative. Thus the vector $\overrightarrow{w}$ strongly majorizes the zero vector in $\mathbb R^N$, i.e. $\overrightarrow{0}\prec \overrightarrow{w}$, and by the previous proposition,  $0\in \conv(\O_w)$ for any $w\in \k$. In particular the one-sided Hofer norms (Remark \ref{semiads}) are in fact-nonnegative, regardless their degeneracy or non-degeneracy.  Assumming that the orbit $\O_w$ is full, then $0$ must be an interior point of $\conv(\O_w)$, and then we obtain a true Finlser norm. This can be seen using the argument in   \cite[Lemma 6]{bgh}: if $0$ is in the boundary of $\conv(\O_w)$, by Hahn-Banach's separation theorem, there exists $0\ne x\in\k$ such that $\varphi_x(0)=0$ and $\varphi_x(\conv(\O_w))\geq 0$. But then it must be $\varphi_x(\O_w)= 0$ because otherwise
$$
\int_{k\in K}\varphi_x(\Ad_k(w))dk=\varphi_x(\int_{k\in K}\Ad_k(w)dk)  >0
$$
contradicting that $\int_{k\in K}\Ad_k(w)dk$ is a fixed point of the adjoint action, therefore it is $0$ because $\k$ is semi-simple. Since $\varphi_x(\O_w)= 0$, the orbit is not full.

\end{rem}

\subsubsection{Domain of injectivity of the exponential}

Since $K$ is a finite-dimensional Lie group, the exponential map of $K$ is a local diffeomorphism for some open ball of the norm $\|\cdot \|$. More precisely, let $\mathbf D\subseteq \k$ be a maximal open convex $\Ad$-invariant set, such that $V=\exp(\mathbf D)$ is open in $\k$ and $\exp:{\mathbf D}\to V$ is a diffeomorphism. It will be convenient to denote $\|\cdot\|_{\infty}$ to the Minkowski gauge of the set $\mathbf D$; it is an $\Ad$-invariant Finsler norm in $\k$ and we will refer to it as the \textit{uniform norm}. Then we also define

\begin{defn}\label{injrad}
Let $K$ be a Lie group, $\| \cdot \|$ an $\Ad$-invariant Finsler norm on $\k$, and for $R>0$ let $B_R=\{v\in\k:\|v\|<R\}$, $V_R=\exp(B_R)$. If $\exp:B_R\to V_R$ is a diffeomorphism between open sets and $R$ is maximal, we call $R$ the \textit{radius of injectivity} for the given norm. 
\end{defn}

Note that if $B_R\subseteq \mathbf D$ then $\exp:B_R\to V_R$ is a diffeomorphism between open sets, thus one looks for balls of the given norm that fit inside the domain of injectivity of the exponential map.

\smallskip

\begin{rem}The condition $B_R\subseteq \mathbf D$ is equivalent to $B_R\cap\t\subseteq \mathbf D\cap\t$. In the case of Hofer norms $B_R\cap\t=R\{y_1,\dots,y_m\}^\circ$, where $\{y_1,\dots,y_m\}$ are the extreme points of the Hofer norm polytope. Note also that  $\mathbf D\cap\t$ can be taken as the interior of $\cup_{w\in\W}w.C$, where $C$ is a Weyl alcove.
\end{rem}

\begin{ex}\label{espectro}
For the group $\SU(n)$ we take $\mathbf D=\{z\in {\mathfrak su}_n: \|z\|_{\infty}<\pi\}$ where $\|\cdot\|_{\infty}$ now is exactly the usual spectral norm. Equivalently, $\mathbf D\cap\t$ is 
$$
\{\diag(x_1,\dots,x_n)\in\t:|x_i|<\pi,\mbox{ for }i=1,\dots,n\}.
$$
For the group $\SU(n)/\Z_n$ we take 
$$
\mathbf D\cap\t=\{\diag(x_1,\dots,x_n)\in\t:|x_i|<\pi/n,\mbox{ for }i=1,\dots,n\}.
$$
\end{ex}

\bigskip

We now show that the convex body $\mathbf D$ (which depends only on $K$) is optimal in terms of lengths of segments (one-parameter subgroups), for any bi-invariant distance:

\begin{thm}[Exponential map and Finsler norms]\label{expono} Let $z,w\in \k$ such that $e^z=e^w$. Assume that $z\in \mathbf D/2$, Then $w$ commutes with $z$ and $\|z\|\le \|w\|$ for any $\Ad$-invariant Finsler norm $\|\cdot\|$ in $\k$. If equality of norms holds for a strictly convex norm, then $w=z$.
\end{thm}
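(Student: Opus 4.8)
The plan is to prove, in order, that $z$ and $w$ commute, that this forces the majorization $\overrightarrow{z}\prec\overrightarrow{w}$ of their $\ad$-spectra, and finally to read off both the norm inequality and the rigidity statement from Proposition \ref{majo}.

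First, commutativity. Writing $g:=e^z=e^w$, the one-parameter groups $e^{tz}$ and $e^{tw}$ both commute with $g$, so $z$ and $w$ lie in the Lie algebra of the centralizer $Z(g)$, that is in $\ker(\Ad_g-\mathrm{id})=\ker(e^{\ad z}-\mathrm{id})$ (using $\Ad_{e^z}=e^{\ad z}$). Since $z\in\mathbf{D}/2\subseteq\mathbf{D}$ and $\exp$ is a diffeomorphism on $\mathbf{D}$, the differential $D\exp_z$ is invertible, so $\ad z$ has no nonzero eigenvalue in $2\pi i\Z$; hence $\ker(e^{\ad z}-\mathrm{id})=\ker(\ad z)$. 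Therefore $w\in\ker(\ad z)$, i.e. $[z,w]=0$. Consequently $z,w$ lie in a common maximal abelian subalgebra $\t$, and since $\exp|_\t$ is the torus exponential with kernel a lattice $\Lambda$, we get $w-z=\lambda\in\Lambda$, a fact I reserve for the rigidity step.

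The crux is the majorization $\overrightarrow{z}\prec\overrightarrow{w}$. Conjugating into $\t$ and using that $\mathbf{D}$ is $\Ad$-invariant with $\mathbf{D}\cap\t$ lying between the nearest affine walls $\alpha(\cdot)=\pm 2\pi$, the hypothesis $z\in\mathbf{D}/2$ gives $|\alpha(z)|<\pi$ for every root $\alpha$, so every eigenvalue of $\ad z$ lies in $i(-\pi,\pi)$. Diagonalizing the commuting skew-adjoint operators $\ad z,\ad w$ simultaneously over $\k^{\mathbb C}$, on a common eigenvector they act by $ia_k,ib_k$ with $e^{ia_k}=e^{ib_k}$ (from $e^{\ad z}=e^{\ad w}$), whence $b_k=a_k+2\pi n_k$ with $n_k\in\Z$ and $a_k\in(-\pi,\pi)$; moreover $\sum_k a_k=\sum_k b_k=0$ since $\tr\ad z=\tr\ad w=0$ (Remark \ref{milnor}), so $\sum_k n_k=0$. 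To prove $(a_k)\prec(b_k)$ I will check $\sum_k\phi(a_k)\le\sum_k\phi(b_k)$ for every convex $\phi$. Setting $g(c)=\phi(c+2\pi)-\phi(c)$, which is nondecreasing by convexity, and telescoping each $\phi(a_k+2\pi n_k)-\phi(a_k)$ into $|n_k|$ increments of $g$, the quantity $\sum_k[\phi(b_k)-\phi(a_k)]$ becomes a sum of $M:=\sum_{n_k>0}n_k$ values of $g$ at arguments $>-\pi$ minus $M=\sum_{n_k<0}|n_k|$ values of $g$ at arguments $<-\pi$; as $g$ is nondecreasing, each term of the first group is $\ge g(-\pi)\ge$ each term of the second, and the counts coincide, so the difference is $\ge 0$. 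This is the main obstacle, and it is precisely where the factor $1/2$ (forcing $a_k\in(-\pi,\pi)$ rather than merely $(-2\pi,2\pi)$) is needed.

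With $\overrightarrow{z}\prec\overrightarrow{w}$ established, Proposition \ref{majo} (implication $(2)\Rightarrow(3)$) yields $\|z\|\le\|w\|$ for every $\Ad$-invariant Finsler norm. For the rigidity, suppose equality holds for a strictly convex such norm; the final assertion of Proposition \ref{majo} gives $z=\Ad_k w$ for some $k\in K$. As $z,w\in\t$ lie in the same $K$-orbit they are $\W$-conjugate, say $z=\sigma.w$ with $\sigma\in\W$; combined with $w=z+\lambda$ this gives $\sigma.\lambda=(\mathrm{id}-\sigma)z\in\tfrac12\mathbf{D}-\tfrac12\mathbf{D}\subseteq\mathbf{D}$, since $\mathbf{D}$ is convex and symmetric (symmetric because $\exp(-z)=\exp(z)^{-1}$). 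As $\Lambda$ is $\W$-invariant we have $\sigma.\lambda\in\mathbf{D}\cap\Lambda$, but injectivity of $\exp$ on $\mathbf{D}$ forces $\mathbf{D}\cap\Lambda=\{0\}$; hence $\lambda=0$ and $w=z$, as claimed.
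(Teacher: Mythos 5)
Your proof is correct, but it follows a genuinely different route from the paper's in all three stages, and in two of them it is arguably cleaner. For commutativity, the paper differentiates $\exp(e^{t\ad w}z)=e^w$ at $t=0$ and, more importantly, first assumes $w$ regular and then removes that hypothesis by a perturbation-and-limit argument at the end; your observation that $w\in\Ker(\Ad_{e^z}-\id)=\Ker(e^{\ad z}-\id)=\Ker(\ad z)$ (the last equality because $z\in\mathbf D$ forces $\sigma(\ad z)\cap 2\pi i\Z=\{0\}$) works for arbitrary $w$ and eliminates the case split entirely. For the majorization $\overrightarrow{z}\prec\overrightarrow{w}$, the paper orders the eigenvalues, splits the indices according to the sign of $n_j$, and verifies the partial-sum inequalities by hand; you instead verify $\sum_k\phi(a_k)\le\sum_k\phi(b_k)$ for all convex $\phi$ via the telescoping of each $2\pi n_k$ jump into increments of the nondecreasing function $g(c)=\phi(c+2\pi)-\phi(c)$, and invoke the Hardy--Littlewood--P\'olya characterization (legitimate here since $\sum a_k=\sum b_k=0$). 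Both arguments use $|a_k|<\pi$ at exactly the same pivot, and both then feed the majorization into Proposition \ref{majo} to get the norm inequality, so the endgame is identical.

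The one place where your route is weaker is the rigidity step. The paper argues directly: $z=\Ad_kw$ gives $w=\Ad_{k^{-1}}z\in\mathbf D/2\subseteq\mathbf D$, and then $e^w=e^z$ with injectivity of $\exp$ on $\mathbf D$ forces $w=z$. Your detour through the kernel lattice $\Lambda$ and the Weyl group needs $\tfrac12\mathbf D-\tfrac12\mathbf D\subseteq\mathbf D$, for which you assert that $\mathbf D$ is centrally symmetric; but the paper only defines $\mathbf D$ as \emph{a} maximal open convex $\Ad$-invariant domain of injectivity, and the parenthetical ``$\exp(-z)=\exp(z)^{-1}$'' only shows that $-\mathbf D$ is another such domain, not that $\mathbf D=-\mathbf D$. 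This is not fatal --- one can either choose $\mathbf D$ symmetric or simply replace this step by the paper's one-line argument --- but as written it leans on an unestablished property of $\mathbf D$.
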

\begin{proof}
Assume first that $w$ is regular, let $\mathfrak z(w)=\t$ denote the Cartan subalgebra. Note that 
$$
\exp(e^{t \ad w}z)=\exp(\Ad_{e^{tw}}z)=e^{tw}e^ze^{-tw}=e^{tw}e^w e^{-tw}=e^w.
$$
Therefore, differentiating at $t=0$ we obtain $D\exp_z([w,z])=0$, and since $z\in \mathbf D$, we conclude that $[w,z]=0$, and $z\in \t$. Since $e^{\ad w}=e^{\ad z}$, then $\exp(\ad(w-z))=\exp(\ad w-\ad z)=1$, implying that $\sigma(\ad(w-z))\subseteq 2\pi i \mathbb Z$. This implies,  using equation (\ref{diago}), that we can write
$$
i\sum_{\alpha\in \Delta_+} w_{\alpha} T_{\alpha} =\ad w=i\sum_{\alpha\in \Delta_+} (z_{\alpha}+ 2\pi n_{\alpha})T_{\alpha}
$$
wiht $w_{\alpha},z_{\alpha}\in\mathbb R$ and $n_{\alpha}\in \mathbb Z$. It will be convenient to number the roots, so we let $J=card(\Delta_+)$ and we have $w_j=z_j+2\pi n_j$ for all $j\in J$, where some of the $z_j$ might be zero. Note that since $z\in \mathbf{D}/2$, then $2z\in \mathbf{D}$ therefore the exponential map is injective and a diffeomorphism in $t2z$ for $t\in [-1,1]$, and in particular it must be $\sigma(\ad 2z)\subseteq (-2\pi i,2\pi i)$ (by inspection of the formula of the differential of the exponential map, see Remark \ref{DAD} below). Thus  we have $|2z_j|<2\pi$ for all $j\in J$ or equivalently $-\pi<z_j<\pi$. We can asumme that the $z_j$ are given in order $z_1\ge z_2\ge \dots \ge z_J$ (recall also $\sum z_j=0)$. Let us reorder the $w_j$ in decreasing order also, so there is a permutation $\sigma$ of $\{1,\dots, J\}$ such that if $v_j=w_{\sigma j}$ then $v_1\ge v_2\ge \dots \ge v_J$ (and we also have $\sum v_j=0$). From here it is also clear that $\sum n_j=0$. Let's spare for a moment those $j$ such that $n_j=0$, and for the others, note that if $n_j>0$ and $n_r<0$ then
$$
z_j+2\pi n_j> -\pi+2\pi=\pi >-\pi =-2\pi +\pi > 2\pi n_r+z_r.
$$
This shows that the $v_j=z_{\sigma j}+2\pi n_{\sigma j}$, with positive $n_{\sigma j}$, are always bigger than those with  negative $n_{\sigma j}$.  We split the indices in two sets: let $j_0$ be such that if $j\in {1,\dots, j_0}$ then $n_{\sigma j}>0$ and otherwise $n_{\sigma j}<0$ when $j_0+1\le j\le J$. We compute the sum of the $z_j$ up to any such $1\le j\le j_0$, we have 
$$
\sum_{k=1}^j z_k \le \sum_{k=1}^j  \pi \le j\pi.
$$
On the other hand, it is clear that the sum of the first $j$ bigger $v_k$, for $k\le j$, must be of those $v_k$ with $n_{\sigma k}>0$, therefore
$$
\sum_{k=1}^j v_j=\sum_{k=1}^j z_{\sigma k}+ 2\pi n_{\sigma k}\ge -j\pi+2\pi j=j\pi.
$$
Thus if $j\le j_0$,
\begin{equation}\label{mayorizando}
z_1+z_2+\dots + z_j\le v_1+ v_2+\dots +v_j.
\end{equation}
Now assume that $j\ge j_0+1$, and note that
$$
\sum_{k=1}^j z_k=-\sum_{k=j+1}^N z_k< \pi( N-(j+1)).
$$
Likewise
$$
\sum_{k=1}^j v_k=-\sum_{k=j+1}^N v_k =-\sum_{k=j+1}^N (z_{\sigma k}+ 2\pi n_{\sigma k})
$$
and note that now all the $n_{\sigma k}\le -1$, since $k\ge j+1\ge j_0+1>j_0$. Therefore
$$
\sum_{k=1}^j v_k \ge -\pi(N-(j+1))+2\pi(N-(j+1))=\pi(N-(j+1)),
$$
and equation (\ref{mayorizando}) is also valid for $j\ge j_0$. Let $\overrightarrow{v}=(v_1,v_2,\dots,v_N)$ and likewise $\overrightarrow{z}=(z_1,z_2,\dots,z_N)$. Then equation (\ref{mayorizando}) together with $\sum z_j=\sum v_j=0$ tells us that $\overrightarrow{z}\prec \overrightarrow{v}$, that is $\overrightarrow{v}$ majorizes $\overrightarrow{z}$. Then by  \cite[Theorem  II.1.10]{bhatia2}, $\overrightarrow{z}$ is in the convex hull of all vectors obtained by permutating the coordinates of $\overrightarrow{v}$. Clearly, we can add those $w_j$ such that $w_j=z_j$ ($n_j=0$) and this still holds true. Since the $v_j$ are just a permutation of the $w_j$, then  $\overrightarrow{z}$ is in fact in the convex hull of all vectors obtained permutating the coordinates of $\overrightarrow{w}=(w_1,w_2,\dots,w_J)$. By Remark \ref{perm}, we have
$$
\ad z=\sum_{{\sigma}} \lambda_{{\sigma}} \ad(\Ad_{k_{\sigma}}w)=\ad (\sum_{\sigma} \lambda_{\sigma} \Ad_{k_{\sigma}}w),
$$
and since $\k$ is semi-simple, it must be $z=\sum \lambda_{\sigma} \Ad_{k_{\sigma}}w$. Then 
$$
\|z\|\le \sum \lambda_{\sigma} \|\Ad_{k_{\sigma}}w\|=\sum \lambda_{\sigma} \|w\|=\|w\|
$$
proving the claim for regular $w$. If $w$ is not regular, fix the bi-invariant distance in $K$ given by the uniform norm (the Minkowski Finsler norm of the convex set $\mathbf D$). For each $\varepsilon>0$ pick $w_{\varepsilon}\in\k$ such that $\|w_{\varepsilon}\|_{\infty}<\varepsilon$ and $w+w_{\varepsilon}$ is regular (regular elements are dense). Observe that 
$$
\dist(e^w,e^{w+w_{\varepsilon}})\le \|w_{\varepsilon}\|_{\infty}<\varepsilon
$$
by Theorem \ref{charactgeod}. Therefore there exists $y_{\varepsilon}\in \k$ such that $e^we^{y_{\varepsilon}}=e^{w+w_{\varepsilon}}$. Again, note that
$$
\|y_{\varepsilon}\|_{\infty}=\dist(1,e^{y_{\varepsilon}})=\dist(1,e^{-w}e^{w+w_{\varepsilon}})=\dist(e^w,e^{w+w_{\varepsilon}})\le \|w\|_{\infty}<\varepsilon.
$$
Now consider the map $f:v\mapsto e^{z+v}$. Since $f(0)=e^z$ and $Df_0=D\exp_z$, the hypotesis $z\in \mathbf{D}/2$ guarantees that $f$ is a local diffeomorphism from a $0$-neighbourhood to a neighbourhood of $e^z$. Therefore there exists a unique $z_{\varepsilon}\in \k$ in that neighbourhood, such that $e^{z+z_{\varepsilon}}=e^ze^{y_{\varepsilon}}$. Note also that when $y_{\varepsilon}\to 0$, then also $z_{\varepsilon}\to 0$. In particular, for small $\varepsilon>0$, $z+z_{\varepsilon}\in \mathbf{D}/2$ just like $z$. Then from
$$
e^{z+z_{\varepsilon}}=e^ze^{y_{\varepsilon}}=e^we^{y_{\varepsilon}}=e^{w+w_{\varepsilon}}.
$$
and the previous proof, we can conclude that $\|z+z_{\varepsilon}\|\le \|w+w_{\varepsilon}\|$ for any $\Ad$-invariant Finsler norm in $\k$. Letting $\varepsilon\to 0$ gives us the desired inequality $\|z\|\le \|w\|$.

Now assume that there is an equality of norms for a strictly convex norm, then  by Proposition \ref{majo}, $z=\Ad_k w$ for some $k\in K$, and in particular $w\in \mathbf D/2$ also. But $e^w=e^z$ and the injectivity of the exponential map implies $z=w$.
\end{proof}

\begin{rem}If $z,w$ are as in the previous theorem, then by Proposition \ref{majo}, we have
$$
 z=\sum_{i=1}^{N+1} \lambda_i\, \Ad_{k_i} w,
$$
for some $k_i\in K$, $\lambda_i\ge 0$ with $\sum_i\lambda_i=1$. We also mention here that the proof of the previous theorem shows that when $w$ is regular, the $k_i$ are in the Weyl group of $K$.
\end{rem}

For linear Lie groups such as $K=\SU(n)$, the passage to the adjoint representation is unnecessary. Thus from $e^z=e^w$ with $z\in \mathbf D$ we can conclude that $[w,z]=0$ and therefore
$$
w=z+ 2\pi i \sum_j n_j q_j.
$$
With the same proof as the previous theorem, we now obtain the same result for $z\in \mathbf D$, i.e. $\|z\|_{\infty}<\pi$ (and not just $z\in \mathbf D/2$):

\begin{cor}[Linear groups]\label{lingr}
Let  $K\subseteq M_n(\mathbb C)$ be compact semi-simple linear Lie group. Let $z,w\in \k$ such that $e^z=e^w$, and assume that $z\in \mathbf D$.  Then $\|z\|\le \|w\|$ for any $\Ad$-invariant Finsler norm $\|\cdot\|$ in $\k$, and if equality of norms holds for a strictly convex norm, then  $w=z$.
\end{cor}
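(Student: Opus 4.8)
The plan is to transplant the majorization argument of Theorem \ref{expono} from the adjoint representation into the defining matrix representation $K\subseteq M_n(\mathbb C)$. The gain is conceptual: the hypothesis $z\in\mathbf D$ controls the \emph{matrix} spectrum of $z$ directly, whereas the adjoint argument only controls the spectrum of $\ad z$, whose eigenvalues can be twice as large as the matrix eigenvalues of $z$; this factor of two is exactly why Theorem \ref{expono} needed the stronger assumption $z\in\mathbf D/2$.

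First I would show $[w,z]=0$. Since $\mathbf D$ is $\Ad$-invariant and $e^z=e^w$, for every $t\in\mathbb R$ we have
$$
\exp(\Ad_{e^{tw}}z)=e^{tw}e^ze^{-tw}=e^{tw}e^we^{-tw}=e^w=e^z,
$$
and as both $\Ad_{e^{tw}}z$ and $z$ lie in $\mathbf D$, the injectivity of $\exp$ on $\mathbf D$ forces $\Ad_{e^{tw}}z=z$; differentiating at $t=0$ gives $[w,z]=0$. Hence $z$ and $w$ share a Cartan subalgebra $\t$ and may be simultaneously diagonalized in $M_n(\mathbb C)$: writing $z=i\sum_j z_jq_j$ and $w=i\sum_j w_jq_j$ with $z_j,w_j\in\mathbb R$ and common rank-one projections $q_j$, the equality $e^z=e^w$ gives $e^{iz_j}=e^{iw_j}$, so $w_j=z_j+2\pi n_j$ with $n_j\in\mathbb Z$, that is $w=z+2\pi i\sum_j n_jq_j$.

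Next I would rerun the combinatorial estimate of Theorem \ref{expono}, now reading $z_j,w_j$ as matrix eigenvalues. Two facts make this work under the weaker hypothesis $z\in\mathbf D$: the bound $|z_j|<\pi$ on the matrix spectrum is immediate from $z\in\mathbf D$ (for $\SU(n)$ it is literally $\|z\|_\infty<\pi$); and since $\k$ is semi-simple, $\k=[\k,\k]$, so the trace, which annihilates commutators, vanishes on $\k$, whence $\sum_j z_j=\sum_j w_j=0$ and therefore $\sum_j n_j=0$. Ordering the $z_j$ decreasingly, reordering the $w_j$ decreasingly as $v_j$, and splitting the indices by the sign of the associated $n_j$, the same partial-sum inequalities $\sum_{k\le j}z_k\le\sum_{k\le j}v_k$ hold for all $j$; together with $\sum_j z_j=\sum_j v_j=0$ this gives the majorization $\overrightarrow{z}\prec\overrightarrow{w}$ of matrix eigenvalues. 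For $K=\SU(n)$ the Weyl group is the full symmetric group, so $z\in\conv(\W\cdot w)$, which by Kostant's Theorem \ref{kostant} and Remark \ref{proyecartan} equals $\conv(\O_w)\cap\t$; thus $z\in\conv(\O_w)$, and Proposition \ref{majo} yields $\|z\|\le\|w\|$ for every $\Ad$-invariant Finsler norm.

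For the equality statement, if $\|z\|=\|w\|$ for a strictly convex norm, then the last clause of Proposition \ref{majo} gives $z=\Ad_kw$ for some $k\in K$; as $k$ is unitary, conjugation preserves the matrix spectrum, so $\|w\|_\infty=\|z\|_\infty<\pi$ and hence $w\in\mathbf D$ as well. Since $e^z=e^w$ with both points in $\mathbf D$, injectivity of $\exp$ forces $w=z$. The step I expect to be most delicate is the passage from the matrix majorization $\overrightarrow{z}\prec\overrightarrow{w}$ to membership in the $K$-orbit hull $\conv(\O_w)$ rather than merely the ambient $U(n)$-orbit hull: for $\SU(n)$ it is immediate, while for a general compact semi-simple linear group it requires using that $z\in\mathbf D\cap\t$ places $z$ in a Weyl alcove, so that the reordering permutation is realized by an element of the Weyl group of $K$ (cf. Remark \ref{perm}), keeping the conclusion inside $\conv(\O_w)$.
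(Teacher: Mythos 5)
Your proposal is correct and follows essentially the same route as the paper, whose proof of this corollary is literally a one-line reduction: pass to the matrix (rather than adjoint) eigenvalues, deduce $[w,z]=0$ and $w=z+2\pi i\sum_j n_jq_j$, and rerun the partial-sum/majorization argument of Theorem \ref{expono}, the bound $|z_j|<\pi$ now coming directly from $z\in\mathbf D$ instead of $z\in\mathbf D/2$. You have in fact spelled out the details more carefully than the paper does, including the one genuinely delicate point (realizing the reordering permutation inside the Weyl group of $K$ rather than the full symmetric group) that the paper glosses over.
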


\medskip

\subsubsection{Local Hopf-Rinow theorem and the characterization of geodesics}\label{1}

Let us take a look at  geodesics of a Lie group with a bi-invariant Finsler metric. We recall here the fundamental results about geodesics, for proofs see \cite[Section 4]{lar19}. For a given Finsler norm, let $R>0$ be an injectivity radius. The results are stated in terms of the left logarithmic derivatives $\gamma^{-1}_t\dot{\gamma}_t$ and hold also if stated in terms of the right logarithmic derivative since the norm is $\Ad$-invariant.

\begin{defn}[Short paths]
 We call a curve $\gamma:[0,1]\to K$ \textit{short} or we say that $\gamma$ is a  \textit{geodesic} if it minimizes the length functional
$$
L(\gamma)=\length(\gamma)=\int_a^b\|\dot{\gamma}_t\|_{\gamma_t}dt=\int_a^b\|\dot{\gamma}_t\gamma_t^{-1}\|dt=\int_a^b\|\gamma_t^{-1}\dot{\gamma}_t\|dt
$$
among all curves in $K$ with the same endpoints. 
\end{defn}

\begin{thm}[Geodesics]\label{charactgeod}
Let $u_0,u_1=u_0e^z\in K$ with $\|z\|< R$.
\begin{enumerate}
\item If $\delta(t)=u_0e^{tz}$, $t\in [0,1]$, then $\delta$ is shorter than any other piecewise $C^1$ path $\gamma$ in $K$ joining $u_0,u_1$ and $\dist(u_0,u_1)=\|z\|$.
\item If $v,w\in\k$ then 
$$\dist(e^v,e^w) \leq \|w-v\|$$
and if $w,v$ commute and $\|w-v\| \leq R$, then equality holds (this is known as the exponential metric decreasing property).
\item Let $\Gamma:[a,b]\to \k$ be a piecewise $C^1$ short path joining $0,z$, let $\gamma=e^{\Gamma}$. Then  $\|\gamma^{-1}_t\dot{\gamma}_t\|=\|\dot{\Gamma}_t\|$ for all $t$, and  $\gamma$ is short in $K$ with the same length than $\Gamma$. Moreover if $\varphi$ is norming functional for $z$, then
$$
\varphi(\gamma^{-1}_t\dot{\gamma}_t)=\|\gamma^{-1}_t\dot{\gamma}_t\|=\|\dot{\Gamma}_t\|=\varphi(\dot{\Gamma}_t)\quad \forall t\in [a,b],
$$
thus $\gamma^{-1}\dot{\gamma}$ (normalized) sits inside a face of the unit sphere of the norm.
\item $\gamma:[a,b]\to K$ is a piecewise $C^1$ short path joining $1,e^z$ in $K$ if and only if $\gamma=e^{\Gamma}$ for a piecewise $C^1$  path $\Gamma:[a,b]\to \k$ joining $0,z$ (with $\|\Gamma_t\|\le R$) and
$$
\varphi(\dot{\Gamma}_t)=\|\gamma_t^{-1}\dot{\gamma}_t\|=\varphi(\gamma_t^{-1}\dot{\gamma}_t) 
$$
for some unit norm functional $\varphi$ and all $t\in [a,b]$ (and then this holds for any norming functional of $z$).
\item If  $z/\|z\|$ is an extremal point of the unit sphere of $\k$, then the only short piecewise $C^1$ path joining $1,e^z$ in $K$ is (a reparametrization of) the segment $\delta(t)=e^{tz}$.
\end{enumerate}
\end{thm}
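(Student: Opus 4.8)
The plan is to strip everything off using items~(1)--(4) of the theorem just stated and reduce the claim to a single ordinary differential equation. First I would take an arbitrary short piecewise $C^1$ path $\gamma\colon[a,b]\to K$ joining $1$ and $e^z$ and feed it into item~(4): this writes $\gamma=e^{\Gamma}$ for a path $\Gamma$ joining $0,z$ with $\|\Gamma_t\|\le R$ and, more importantly, it tells me that $\varphi(\gamma_t^{-1}\dot\gamma_t)=\|\gamma_t^{-1}\dot\gamma_t\|$ holds for \emph{every} norming functional $\varphi$ of $z$ (the parenthetical remark in item~(4)). In the language of the earlier sections this says that, at each $t$ with $\dot\gamma_t\neq 0$, the normalized velocity $u_t:=\gamma_t^{-1}\dot\gamma_t/\|\gamma_t^{-1}\dot\gamma_t\|$ lies in the face $F_\varphi$ of the unit sphere of $\k$ cut out by each norming functional $\varphi$ of $z$; equivalently, by Lemma~\ref{mismacara}, $u_t$ and $z/\|z\|$ always sit in the cone generated by a common face.

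The crux is the next step, where the hypothesis is used. Since $z/\|z\|$ is an extreme point of the unit sphere, the only unit vector normed by all the norming functionals of $z$ is $z/\|z\|$ itself: the intersection $\bigcap_\varphi F_\varphi$ over the norming functionals $\varphi$ of $z$ collapses to the single point $\{z/\|z\|\}$ (equivalently, one can exhibit a single norming functional of $z$ whose face is that singleton). Hence $u_t=z/\|z\|$ for all $t$, and, since $u_t$ lies in the cone over that face, we get
$$
\gamma_t^{-1}\dot\gamma_t=c(t)\,z,\qquad c(t)\ge 0,
$$
for a scalar function $c$. I expect this deduction to be the main obstacle: it is exactly the point where extremality of $z/\|z\|$ must be converted into rigidity of the admissible directions, and it rests on the collapse of the common face to a point.

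Finally I would integrate this equation. Set $s(t)=\int_a^t c(r)\,dr$, a nondecreasing function, and let $\eta_t=e^{s(t)z}$. Since $z$ commutes with itself, $\eta$ satisfies $\eta_t^{-1}\dot\eta_t=s'(t)\,z=c(t)\,z$ with $\eta_a=1=\gamma_a$, so by uniqueness of solutions of this left-logarithmic-derivative equation $\gamma=\eta$, that is $\gamma_t=e^{s(t)z}$. Comparing endpoints, $e^{s(b)z}=e^z$ with $\|s(b)z\|=\|\Gamma_b\|<R$ and $\|z\|<R$, so injectivity of $\exp$ on $B_R$ forces $s(b)=1$; thus $s$ is a nondecreasing reparametrization of $[a,b]$ onto $[0,1]$ and $\gamma$ is a reparametrization of $\delta(t)=e^{tz}$. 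The converse, that $\delta$ itself is short, is item~(1), which completes the argument.
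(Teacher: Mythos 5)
First, a remark on scope: the paper does not actually prove Theorem \ref{charactgeod} --- it explicitly defers the proofs to \cite[Section 4]{lar19} --- so there is no internal argument to compare yours with. But your proposal only argues item (5), taking items (1)--(4) as black boxes; since the statement to be proved consists of all five items, the bulk of the theorem (in particular the lower bound in (1), the exponential metric decreasing property in (2), and the lifting/characterization in (4), which is where the real work lies) is left untouched.

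Second, and more seriously, the step you yourself single out as the crux of item (5) rests on a false convex-geometry claim. You assert that extremality of $z/\|z\|$ forces $\bigcap_\varphi F_\varphi=\{z/\|z\|\}$, the intersection being over all norming functionals $\varphi$ of $z$, ``equivalently'' that some norming functional of $z$ has a singleton face. That is the statement that every extreme point of the unit ball is exposed, which is false: in $\R^2$ let $B=\conv(D_+\cup D_-)$ with $D_\pm$ the closed unit disks centered at $(0,\pm1)$. Then $(1,1)$ is an extreme point of $B$, its only norming functional is $v\mapsto v_1$, and the corresponding face is the whole segment $\{1\}\times[-1,1]$. Nothing in the hypotheses excludes such non-exposed extreme points for an $\Ad$-invariant norm, whose restriction to a Cartan subalgebra is an essentially arbitrary Weyl-invariant convex body. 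Consequently item (4) only places the normalized velocities in a common face containing $z/\|z\|$; it does not pin them to $z/\|z\|$, and your equation $\gamma_t^{-1}\dot{\gamma}_t=c(t)z$ does not follow as written. The conclusion of item (5) is still true, but extremality must be exploited at a point where a convex combination actually reproduces $z$ itself: for instance via $z=\Gamma_b=\int_a^b\dot{\Gamma}_t\,dt$ together with the fact that an extreme point of a compact convex set admits no representing probability measure other than the point mass --- and this in turn requires first showing $\int_a^b\|\dot{\Gamma}_t\|\,dt=\|z\|$, i.e.\ that the lift $\Gamma$ is itself short in $\k$, which item (4) does not hand you for free since it controls $\varphi(\dot{\Gamma}_t)=\|\gamma_t^{-1}\dot{\gamma}_t\|$ rather than $\|\dot{\Gamma}_t\|$. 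Your final integration step and the identification $s(b)=1$ are fine once $\gamma_t^{-1}\dot{\gamma}_t=c(t)z$ has actually been established.
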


\bigskip

These results were established in \cite{lar19} with some generality; for finite dimensional groups we can improve the existence of short paths invoking the metric version of Hopf-Rinow's theorem. As usual, here $K$ denotes a connected compact Lie group with semi-simple Lie algebra $\k$. The last item of this theorem extends significanly (to this family of Lie groups) the results obtained in \cite{alv} for the group $U(n)$.

\begin{thm}\label{existen}
Let $\dist$ be a bi-invariant metric in $K$ (i.e. from an $\Ad$-invariant Finsler norm $\|\cdot\|$ in $\k$). Then
\begin{enumerate}
\item For each $u_1,u_2\in K$ there exists a short polygonal path $\delta$ joining them, i.e. a concatenation of segments $t\mapsto u_ie^{tz_i}$ such that $\|z_i\|<R$ and 
$$
L(\delta)=\sum \|z_i\|=\dist(u_1,u_2).
$$
\item If the norm is strictly convex, there exists $w\in \k$ such that $\|w\|=\dist(u_1,u_2)$ and the segment $t\mapsto u_1e^{tw}$ is a short path joining them. Any short path is a reparametrization of a segment, and if $\dist(u_1,u_2)<R$, there is exactly one short segment joining them.
\item If $z\in \mathbf D/2$ ($z\in \mathbf D$ for linear Lie groups), then $\dist(1,e^z)=\|z\|$. If the norm is strictly convex, $t\mapsto e^{tz}$ is the unique short path joining them. 
\end{enumerate}
\end{thm}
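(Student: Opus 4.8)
The plan is to prove the three parts in order, each resting on the previous one and on the local results of Theorem \ref{charactgeod}.

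For part (1), I would first obtain a minimizer by the direct method (this is the content of the metric Hopf--Rinow theorem, which remains valid here even when $\|\cdot\|$ is merely positively homogeneous, hence possibly asymmetric). Since $K$ is compact, by homogeneity and compactness of the unit sphere the norm satisfies $\|v\|\ge c\|v\|_{0}$ for a fixed background Riemannian norm $\|\cdot\|_0$ and some $c>0$; thus a length-minimizing sequence of constant-speed paths from $u_1$ to $u_2$ is equi-Lipschitz, and Arzel\`a--Ascoli together with the lower semicontinuity of the Finsler length produces a short path $\gamma$. To make it polygonal I use that $V_R=\exp(B_R)$ is an open neighbourhood of $1$: by uniform continuity of $\gamma$ one can choose a partition $0=t_0<\dots<t_m=1$ so fine that $\gamma(t_k)^{-1}\gamma(t_{k+1})\in V_R$, say equal to $e^{z_k}$ with $\|z_k\|<R$. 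By left-invariance and Theorem \ref{charactgeod}(1), $\dist(\gamma(t_k),\gamma(t_{k+1}))=\|z_k\|$, and since $\gamma$ is minimizing the triangle inequality forces $\sum_k\|z_k\|=\dist(u_1,u_2)$; replacing each arc by the segment $t\mapsto\gamma(t_k)e^{tz_k}$ gives the desired polygon.

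For part (2) I assume strict convexity and upgrade any short path $\gamma$ to a genuine one-parameter subgroup by a local-to-global argument. Cover $[0,1]$ by finitely many overlapping open subintervals $I_j$, each so small that $\gamma(I_j)$ lies in a single left translate of $V_R$; then $\gamma|_{I_j}$ is a minimizer joining points $a,ae^z$ with $\|z\|<R$, and because strict convexity makes $z/\|z\|$ an extreme point of the sphere, Theorem \ref{charactgeod}(5) identifies $\gamma|_{I_j}$ with a reparametrization of the segment $s\mapsto ae^{sz}$. Hence the left logarithmic derivative of $\gamma$ has a constant direction on each $I_j$; the overlaps force these directions to agree, so $\gamma$ is globally a reparametrization of a single segment $t\mapsto u_1e^{tw}$ with $e^w=u_1^{-1}u_2$ and $\|w\|=\dist(u_1,u_2)$. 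Applying this to the minimizer of part (1) yields the existence of $w$; and when $\dist(u_1,u_2)<R$ any two such $w,w'$ lie in $B_R$ with $e^w=e^{w'}$, so injectivity of $\exp$ on $B_R$ gives $w=w'$ and hence uniqueness of the segment.

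For part (3), the bound $\dist(1,e^z)\le\|z\|$ is just the length of the segment. When the norm is strictly convex, part (2) writes a minimizer as $t\mapsto e^{tw}$ with $e^w=e^z$ and $\|w\|=\dist(1,e^z)$; since $z\in\mathbf D/2$ (resp. $z\in\mathbf D$ in the linear case), Theorem \ref{expono} (resp. Corollary \ref{lingr}) gives $\|z\|\le\|w\|$, so $\dist(1,e^z)=\|z\|$, and the equality clause of that theorem forces $w=z$, whence $t\mapsto e^{tz}$ is the unique short path. For a general $\Ad$-invariant Finsler norm I would pass to the strictly convex approximations $\|\cdot\|_\varepsilon=\|\cdot\|+\varepsilon\|\cdot\|_2$ (note $\mathbf D$ depends only on $K$, not on the norm): the polygon $\delta$ of part (1) has finite Killing length $L_2(\delta)$ by equivalence of norms, so
\[
\|z\|\le\|z\|_\varepsilon=\dist_\varepsilon(1,e^z)\le L_\varepsilon(\delta)=\dist(1,e^z)+\varepsilon L_2(\delta),
\]
and letting $\varepsilon\to0$ gives $\|z\|\le\dist(1,e^z)$, completing the equality. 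The step I expect to be the main obstacle is the local-to-global argument of part (2) --- promoting ``$\gamma$ is locally a one-parameter subgroup'' to ``$\gamma$ is globally one, with no genuine corners'' via the overlapping cover and the rigidity of Theorem \ref{charactgeod}(5); a secondary technical point is carrying out the existence argument of part (1) for possibly asymmetric norms rather than citing the symmetric Hopf--Rinow theorem verbatim.
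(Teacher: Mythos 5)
Your proof follows the same overall strategy as the paper's: existence of a minimizer from completeness plus local compactness (the paper invokes Cohn--Vossen's theorem where you run the direct method with Arzel\`a--Ascoli; these are interchangeable here, and your remark about comparing the possibly asymmetric norm with a background Riemannian norm is exactly what makes either version go through), then polygonalization via Theorem \ref{charactgeod}(1), and for part (3) the identical two-step scheme --- strictly convex case via part (2) together with Theorem \ref{expono} (resp.\ Corollary \ref{lingr}), then the general case by the approximation $\|\cdot\|+\varepsilon\|\cdot\|_2$ and letting $\varepsilon\to 0$. The one place you genuinely diverge is part (2): the paper does not prove this in-text but cites \cite[Theorem 4.15]{lar19}, whose mechanism is to show that the increments $z_i$ of the minimizing polygon commute, so that $e^{z_1}\cdots e^{z_k}=e^{\sum z_i}$ and strict convexity of $\|\sum z_i\|=\sum\|z_i\|$ collapses the polygon to a single segment; you instead run a local-to-global rigidity argument, covering the path by overlapping subintervals on which Theorem \ref{charactgeod}(5) forces the logarithmic derivative to point in a fixed extreme direction. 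Your route is self-contained and reaches the same conclusion, with one small caveat: the step ``the overlaps force these directions to agree'' tacitly assumes the logarithmic derivative does not vanish identically on an overlap (a short path that pauses could in principle turn a corner while stationary). This is harmless --- reparametrize the minimizer to constant speed first, which changes neither its length nor its image --- but it should be said.
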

\begin{proof}
Let $\dist_g$ denote the bi-invariant distance induced by the $\Ad$-invariant metric given by the Killing form in $\k$. It is well-known that such $\dist_g$ metric has one-parameter groups as Riemannian geodesics, therefore it is geodesically complete. By Hopf-Rinow's theorem $(K,\dist_g)$ is metrically complete, and therefore $(K,\dist)$ is metrically complete since both metrics are uniformly equivalent (since both are bi-invariant and the tangent norms are uniformly equivalent, $\k$ being finite dimensional). Since any of these metrics induce the original topology of $K$, and since $K$ is  a finite dimensional manifold, it is locally compact and we can apply Cohn-Vossen's theorem \cite[Theorem 2.5.28]{burago} to the metric space $(K,\dist)$. This theorem tells us that any approximating sequence of paths in $K$ has a limit point $\gamma:[0,1]\to K$ such that $\gamma$ is rectifiable and $L(\gamma)=\dist(u_1,u_2)$. Partition $\gamma$ into finite pieces, in points denoted $\gamma_i=\gamma_{t_i}$, such that $\dist( \gamma_i,\gamma_{i+1})<R$. Write $\gamma_{i+1}=\gamma_ie^{z_i}$ using Theorem \ref{charactgeod}(1), hence if $\delta$ is the concatenation of these paths,
$$
L(\delta)=\sum \|z_i\|= \sum \dist(\gamma_i,\gamma_{i+1})=\dist(u_1,u_2)
$$
which shows that $\delta$ is minimizing. If the norm is strictly convex, it can be shown that the $z_i$ commute, hence we can replace the concatenation of segments with a segment; the proof of this and the local uniqueness can be found in \cite[Theorem 4.15]{lar19}. Now assume that $z$ is in (half of) the domain of injectivity of the exponential map. Assume first that the norm is strictly convex. By the previous item of this theorem, there exists $w\in \k$ such that $t\mapsto  e^{tw}$ joins $1,e^z$ and such that $\|w\|=\dist(1,e^z)\le \|z\|$. But Theorem \ref{expono} also tells us that $\|z\|\le \|w\|$, therefore $\|z\|=\|w\|=\dist(1,e^z)$. Now let $\|\cdot\|$ be any $\Ad$-invariant norm, let $\varepsilon >0$ and let $\|v\|_g=\sqrt{\langle v,v\rangle}$ be an $\Ad$-invariant Riemannian metric in $\k$. Consider
$$
|v|_{\varepsilon}=\|v\|+ \varepsilon \|v\|_g,
$$
and note that $|\cdot|_{\varepsilon}$ is $\Ad$-invariant and strictly convex. Therefore by what we just proved, if $z\in \mathbf D/2$ then 
$$
\|z\|\le |z|_{\varepsilon}=\dist_{\varepsilon}(1,e^z)\le L_{\varepsilon}(\gamma)=L(\gamma)+\varepsilon L_g(\gamma)
$$
for any piecewise smooth path $\gamma$ joining $1,e^z$ in $K$. Letting $\varepsilon\to 0^+$ first, and taking the infimum over the paths $\gamma$, shows that $\|z\|\le \dist(1,e^z)$ as claimed. If the norm is strictly convex, any other short path is also a segment $t\mapsto e^{tw}$ by the second item of this theorem. Thus $e^w=e^z$ and $\|w\|=\|z\|$, and by Theorem \ref{expono}, we conclude that $w=z$. For linear Lie groups  we can replace half of $\mathbf D$ with the full set $\mathbf D$ by Corollary \ref{lingr}.
\end{proof}

From the last assertion of the theorem we can give a nice characterization of the product of exponentials. This is connected with the noncommutative Horn inequalities as studied by Belkale et al, see \cite{belkale,entov} and the references therein.

\begin{cor}[Product of exponentials]
Let $x,y,z\in \k$ with $z\in\overline{\mathbf D}/2$ such that $e^xe^y=e^z$. Then
\begin{enumerate}
\item $\|z\|\le \|x+y\|$ for any $\Ad$-invariant Finsler norm in $\k$.
\item Let $N=\dim(\k)$, then there exist (at most) $N+1$ points $k_i\in K$ and $N+1$ real numbers $\lambda_i \ge 0$ with $\sum_i \lambda_i= 1$ such that
$$
z=\sum_{i=1}^{N+1} \lambda_i\, \Ad_{k_i}(x+y).
$$
\item If equality holds for some Finsler norm, then $z$ and all the $\Ad_{k_i}(x+y)$ lie in the same face of the ball for that norm (in fact, in the intersection of all the faces where $z$ sits).
\item If equality holds for a strictly convex norm then $x,y$ commute, $z=\Ad_k(x+y)$ for some $k\in K$ (thus $x+y\in \overline{\mathbf D}/2$) and $k$ commutes with $e^z$. If moreover $z\in \mathbf D/2$ then $x,y$ commute and $z=x+y$.
\end{enumerate}
\end{cor}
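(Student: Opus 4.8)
The plan is to reduce every assertion to Proposition \ref{majo} and to the exponential metric decreasing property, the only new input being a concrete path whose length controls $\|x+y\|$. First I would rewrite $e^ye^x$ as the exponential of an orbit point of $z$. Since $e^xe^y=e^z$, conjugation gives $e^ye^x=e^{-x}(e^xe^y)e^x=e^{-x}e^ze^x=\exp(\Ad_{e^{-x}}z)$; write $w_0=\Ad_{e^{-x}}z$. As $\mathbf D$ (hence $\oline{\mathbf D}/2$) is $\Ad$-invariant, $w_0\in\oline{\mathbf D}/2$ and $\|w_0\|=\|z\|$ for every $\Ad$-invariant norm. Using left-invariance of $\dist$ and the exponential metric decreasing property (Theorem \ref{charactgeod}(2)),
$$
\|z\|=\dist(1,e^{w_0})=\dist(1,e^ye^x)=\dist(e^{-y},e^x)\le\|x-(-y)\|=\|x+y\|,
$$
where $\dist(1,e^{w_0})=\|w_0\|=\|z\|$ follows from Theorem \ref{existen}(3) for $w_0\in\mathbf D/2$ and extends to $\oline{\mathbf D}/2$ by continuity of $\dist$, $\exp$ and $\|\cdot\|$. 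This is (1).

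Parts (2) and (3) then follow mechanically. Part (1) is exactly condition $(3)$ of Proposition \ref{majo} for the vector $w=x+y$, so condition $(1)$ of that proposition holds, which is precisely the Carath\'eodory expansion $z=\sum_{i=1}^{N+1}\lambda_i\Ad_{k_i}(x+y)$ asserted in (2). If moreover $\|z\|=\|x+y\|$ for some Finsler norm, the final assertion of Proposition \ref{majo} gives that $z$ together with all the $\Ad_{k_i}(x+y)$ lie in the intersection of all faces of the ball containing $z$, which is (3).

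For (4), the strict convexity clause of Proposition \ref{majo} upgrades the expansion to $z=\Ad_k(x+y)$, so $x+y=\Ad_{k^{-1}}z\in\oline{\mathbf D}/2$. The remaining content is the commutation $[x,y]=0$: once it is established, $e^z=e^xe^y=e^{x+y}$, whence $e^z=ke^{x+y}k^{-1}=ke^zk^{-1}$, i.e.\ $k$ commutes with $e^z$; and if in addition $z\in\mathbf D/2$ then $z$ and $x+y$ both lie in the injectivity domain of $\exp$, so $e^z=e^{x+y}$ forces $z=x+y$. Thus everything in (4) reduces to proving commutativity.

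The hard part will be this commutativity, which amounts to the equality case of the exponential metric decreasing property for strictly convex norms. I would argue as follows. The exponential of the straight chord $\Gamma(t)=-y+t(x+y)$, namely $\alpha(t)=e^{\Gamma(t)}$, joins $e^{-y}$ to $e^x$, and the inequality $L(\alpha)\le\|x+y\|$ is exactly the content of the metric decreasing estimate; by the equality it is length-minimizing. For a strictly convex norm the minimizer is unique and is a reparametrized one-parameter coset (Theorem \ref{existen}(2)), so after left-translation by $e^y$ it coincides with the geodesic $t\mapsto e^{c(t)w_0}$. Hence $e^ye^{-y+t(x+y)}=e^{c(t)w_0}$ for all $t$, i.e.\ the analytic curve $t\mapsto\log(e^ye^{-y+t(x+y)})$ is confined to the line $\R w_0$. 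Expanding this curve by Baker--Campbell--Hausdorff, its first-order coefficient is $x+y$ corrected by iterated brackets in $[y,x]$, while its second-order coefficient is proportional to $[x+y,[y,x]]$ plus higher brackets; confinement to a line through the origin forces these to be parallel. The delicate point is to rule out accidental parallelism at every order and conclude $[x,y]=0$; I expect this to be the genuine obstacle, and the clean way to package it is as a standalone lemma: \emph{for a strictly convex $\Ad$-invariant norm, $t\mapsto\log(e^{-v}e^{v+t(w-v)})$ lies on a line through the origin if and only if $[v,w]=0$}, applied here with $v=-y$, $w=x$.
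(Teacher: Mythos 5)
Your treatment of items (1)--(3) is sound. For (1) you take a slightly different route from the paper: you conjugate to get $e^ye^x=\exp(\Ad_{e^{-x}}z)$ and then invoke the exponential metric decreasing property $\dist(e^{-y},e^x)\le\|x+y\|$, whereas the paper works directly with the path $\beta(t)=e^{tx}e^{ty}$ joining $1$ to $e^z$, whose logarithmic derivative $\beta_t^{-1}\dot\beta_t=e^{-t\ad y}(x+y)$ has constant norm $\|x+y\|$, so that $L(\beta)=\|x+y\|$ exactly. Both arguments rest on the same two pillars ($\dist(1,e^z)=\|z\|$ on $\overline{\mathbf D}/2$, obtained by closure from Theorem \ref{existen}(3), and a competitor path of length $\le\|x+y\|$), and your continuity extension to the closed set is the same device the paper uses. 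Items (2) and (3) are, as you say, immediate from Proposition \ref{majo}.

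The genuine gap is in item (4): you do not prove that $x$ and $y$ commute, and you say so yourself. Everything else in (4) you correctly reduce to this commutativity, but the reduction is circular as written: you use $e^z=e^xe^y=e^{x+y}$ to get that $k$ commutes with $e^z$, and that chain already presupposes $[x,y]=0$. The paper does not prove the commutativity internally either; it invokes \cite[Theorem 4.17]{lar19}, which is precisely the equality case you identify as the obstacle. Your proposed substitute -- that the minimizer $t\mapsto e^{-y+t(x+y)}$ must coincide with a one-parameter coset, hence $t\mapsto\log(e^ye^{-y+t(x+y)})$ is confined to a line, hence $[x,y]=0$ by inspecting the BCH expansion -- is only a sketch, and its crucial step (ruling out that all higher-order BCH coefficients happen to be parallel to the first-order one) is exactly the hard content and is nowhere addressed. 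Note also that your standalone lemma, as stated, is a purely Lie-algebraic assertion in which the strictly convex norm plays no role, so nothing in your argument after the confinement step uses strict convexity; that is a warning sign that the lemma either needs more hypotheses or a genuinely different proof. As it stands, item (4) is unproved; you should either supply a proof of the equality case of the exponential metric decreasing property for strictly convex $\Ad$-invariant norms or cite it, as the paper does.
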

\begin{proof}
Let $\beta(t)=e^{tx}e^{ty}$, which joins $1,e^z$ in $K$. Note that $\beta_t^{-1}\dot{\beta}_t=e^{-t\ad y}(x+y)$ therefore $L(\beta)=\|x+y\|$ for any $\Ad$-invariant Finsler norm in $\k$. By the previous theorem, the third assertion also holds for the closure of $\mathbf D/2$ therefore we must have
$$
\|z\|\le L(\beta)=\|x+y\|
$$
for any such Finsler norm. Assertions $2$ and $3$ follow from Proposition \ref{majo}. If the norm is strictly convex then  $z=\Ad_k(x+y)$ by Proposition \ref{majo}, but also $x,y$ commute by \cite[Theorem 4.17]{lar19}. This implies $e^z=e^{\Ad_k(x+y)}=ke^{x+y}k^{-1}=ke^xe^yk^{-1}=ke^zk^{-1}$ thus $k$ commutes with $e^z$. When $z\in \mathbf D/2$, the condition $e^{\Ad_k z}=e^z$ is only possible if $\Ad_k z=z=\Ad_k(x+y)$ therefore $z=x+y$.
\end{proof}

\begin{rem}
For linear Lie groups such as $K=\SU(n)$, and by Corollary \ref{lingr}, the same results stated in the previous corollary hold for $z\in \overline{\mathbf D}$, i.e. $\|z\|_{\infty}\le \pi$ (and not just $z\in \mathbf \overline{\mathbf D}/2$).
\end{rem}

\subsection{Characterization of all short paths}

Before we proceed with the characterization of other short paths (for the case of non-strictly convex norms), we recall two results concerning the exponential map and the adjoint representation of the group $K$ and its differentials:

\begin{rem}\label{remark}
If $g_t$ is a smooth path in $K$, 
\begin{equation}\label{DAD}
\frac{d}{dt}\Ad_{g_t}v=\Ad_{g_t}[g_t^{-1}\dot{g}_t,v]\qquad \forall v\in \k.
\end{equation}
This follows by noting that to compute the derivative we can take $g_t=g_0e^{tg_0^{-1}\dot{g}_0}$, and differentiate at $t=0$, thus
$$
\Ad_{g_t}v=\Ad_{g_0} \Ad_{e^{tg^{-1}_0\dot{g}_0}}v=\Ad_{g_0}e^{t\ad g^{-1}_0\dot{g}_0}v=\Ad_{g_0}v+ t\Ad_{g_0}\ad(g^{-1}_0\dot{g}_0)v+o(t^2).
$$
On the other hand if $v,w\in \k$ then it is well-known that
\begin{equation}\label{dexp}
e^{-v}D\exp_v w=\int_0^1 e^{-\lambda \ad v}w\,d\lambda=F(\ad v)w
\end{equation}
where $F(\lambda)=\frac{1-e^{-\lambda}}{\lambda}$ is extended by $F(0)=1$ to be an holomorphic function in $\mathbb C$. In particular, note that if $v\in \mathbf D$ (the domain of injectivity of $\exp$),  then $F(\ad v)$ must be nonsingular and in particular $\pm 2\pi i\notin \sigma(\ad v)$; otherwise we would have $0\in \sigma(F(\ad v))$. Moreover, it must be $\sigma(\ad v)\subseteq i(-2\pi ,2\pi )$, otherwise replacing $v$ with $tv$ for some $t\in (0,1)$ we would obtain a contradiction (recall that $\mathbf D$ is convex).
\end{rem}

\medskip

%{\color{red} Esto que sigue, faltaba para caracterizar los cuasi autonomos, la parte tecnica es que si la derivada logarimica esta en una cara, entonces la derivada de la levantada esta en la misma cara. Uso la dualidad  $\k\simeq \k^*$.

%ADEMAS: para la ida (si $\gamma$ es corta) se elimina la condicion local del $R$}

With these tools at hand we now characterize short paths $\gamma$ without the restriction of having length less that the radius of inyectivity of the group. In the next theorem $K$ is a connected compact semi-simple Lie group with the metric induced by an $\Ad$-invariant Finsler norm in $\k$.

\begin{thm}[Characterization of geodesics]\label{cuasi}
Let $\gamma:[a,b]\to K$ be a piecewise $C^1$ path in $K$.  If $\gamma$ is short for the bi-invariant metric, then for (almost) all $t$
\begin{equation}\label{cuasiaut}
\varphi(\gamma_t^{-1}\dot{\gamma}_t)=\|\gamma_t^{-1}\dot{\gamma}_t\|
\end{equation}
for some unit norm functional $\varphi$. Reciprocally, if the equality holds for some $\varphi$ and (almost) all $t\in [t_0,t_1]$, and   $L(\gamma)_{t_0}^{t_1}\le R$, then $\gamma$ is short in $[t_0,t_1]\subseteq [a,b]$.
\end{thm}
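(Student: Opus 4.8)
The plan is to discretise the path and reduce both implications to a single statement about one norming functional. I would first partition $[a,b]$ as $a=s_0<\dots<s_n=b$ finely enough that each restriction $\gamma|_{[s_j,s_{j+1}]}$ is short with length $<R$, and write $\gamma_{s_{j+1}}=\gamma_{s_j}e^{z_j}$ with $\|z_j\|=\dist(\gamma_{s_j},\gamma_{s_{j+1}})<R$ (Theorem \ref{charactgeod}(1), Theorem \ref{existen}(1)). On each such piece Theorem \ref{charactgeod}(4) shows that \emph{every} norming functional of $z_j$ already norms $\gamma_t^{-1}\dot\gamma_t$ for almost every $t$ of that piece. Thus the direct implication reduces to producing one unit functional $\varphi$ that simultaneously norms all the $z_j$, which by Lemma \ref{mismacara} is equivalent to the additivity $\|z_0+\dots+z_{n-1}\|=\sum_j\|z_j\|$.

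The difficulty is that non-commutativity prevents reading this additivity off directly, and gluing the two-segment identities $\|z_j+z_{j+1}\|=\|z_j\|+\|z_{j+1}\|$ pairwise is not enough (already for a cubical unit ball, consecutive common functionals need not glue to a global one). To obtain the global identity I would instead test the product curve $\beta(t)=\gamma_a\,e^{tz_0}e^{tz_1}\cdots e^{tz_{n-1}}$, $t\in[0,1]$, joining $\gamma_a$ to $\gamma_b$. Its left logarithmic derivative is $\beta_t^{-1}\dot\beta_t=\sum_k V_k(t)$ with $V_k(t)=\Ad_{e^{-tz_{n-1}}\cdots e^{-tz_{k+1}}}z_k$, so $\|V_k(t)\|=\|z_k\|$ by $\Ad$-invariance. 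Since $\gamma$ is short, $\sum_k\|z_k\|=\length(\gamma)=\dist(\gamma_a,\gamma_b)\le\length(\beta)\le\sum_k\|z_k\|$, forcing $\int_0^1\big(\sum_k\|V_k(t)\|-\|\sum_k V_k(t)\|\big)\,dt=0$. The integrand is continuous and nonnegative, hence identically $0$; evaluating at $t=0$, where $V_k(0)=z_k$, yields $\|\sum_k z_k\|=\sum_k\|z_k\|$, and Lemma \ref{mismacara} produces the common $\varphi$.

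For the converse I would write $\varphi=\langle a,\cdot\rangle$ and exploit that the velocity is confined to a centraliser. The hypothesis $\varphi(\gamma_t^{-1}\dot\gamma_t)=\|\gamma_t^{-1}\dot\gamma_t\|$ forces $[a,\gamma_t^{-1}\dot\gamma_t]=0$ by Remark \ref{normicon}, so $\gamma_t^{-1}\dot\gamma_t\in\z(a)$ for almost every $t$. As $\z(a)$ is a subalgebra, the curve $c_t=\gamma_{t_0}^{-1}\gamma_t$ (with $c_{t_0}=1$ and left logarithmic derivative in $\z(a)$) stays in the connected subgroup $H$ with Lie algebra $\z(a)$; because $\length(\gamma)|_{[t_0,t]}\le R$ keeps everything inside the injectivity domain, the chart $\gamma_t=\gamma_{t_0}e^{\Gamma_t}$ has $\Gamma_t\in\z(a)$ and $\|\Gamma_t\|\le R$. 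Now $a$ is central in $\z(a)$, so $\langle a,\ad\Gamma_t(w)\rangle=\langle[a,\Gamma_t],w\rangle=0$ for $w\in\z(a)$; together with $\gamma_t^{-1}\dot\gamma_t=F(\ad\Gamma_t)\dot\Gamma_t$ from (\ref{dexp}) this gives $\varphi(\gamma_t^{-1}\dot\gamma_t)=\varphi(\dot\Gamma_t)$ almost everywhere. Writing $\gamma_{t_1}=\gamma_{t_0}e^{z}$ with $\|z\|=\dist(\gamma_{t_0},\gamma_{t_1})$, integration yields
$$\length(\gamma)\big|_{[t_0,t_1]}=\int_{t_0}^{t_1}\varphi(\dot\Gamma_t)\,dt=\varphi(\Gamma_{t_1})=\varphi(z)\le\|z\|=\dist(\gamma_{t_0},\gamma_{t_1}),$$
and since length always dominates distance, equality holds throughout and $\gamma$ is short on $[t_0,t_1]$.

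The two steps I expect to be the main obstacles are exactly these. In the direct implication, converting minimality into the global additivity $\|\sum_j z_j\|=\sum_j\|z_j\|$ is the crux, and I rely on the product curve together with the continuity argument rather than on pairwise gluing. In the converse, the delicate point is an apparent circularity: running Gauss' Lemma (Remark \ref{gausslem}) seems to require \emph{a priori} that $\varphi$ norm $\Gamma_t$. The observation that dissolves it is that the velocity, and hence $\Gamma_t$ itself, is trapped in $\z(a)$, on which $\langle a,\cdot\rangle$ annihilates all brackets and therefore behaves like a closed form.
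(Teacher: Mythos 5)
Your proposal is correct and follows essentially the same route as the paper: the same partition into sub-$R$ pieces, the same comparison curve $\beta(t)=\gamma_a e^{tz_0}\cdots e^{tz_{n-1}}$ reducing minimality to the additivity $\|\sum_k z_k\|=\sum_k\|z_k\|$ via Lemma \ref{mismacara}, and the same reduction of the converse to $[\Gamma_t,a]=0$ followed by $\varphi(\gamma_t^{-1}\dot{\gamma}_t)=\varphi(\dot{\Gamma}_t)$. Your two local shortcuts --- evaluating the vanishing defect $\sum_k\|V_k(t)\|-\|\sum_k V_k(t)\|$ at $t=0$ instead of at an interior point $c$ followed by Gauss' Lemma, and killing the higher-order terms of $F(\ad \Gamma_t)$ using $\varphi\circ\ad\Gamma_t\equiv 0$ instead of the paper's simultaneous-diagonalization computation --- are valid and slightly cleaner; just note that passing from ``$c_t$ stays in the subgroup $H$'' to ``$\Gamma_t\in\z(a)$'' still requires that $\ker(e^{\ad\Gamma_t}-\id)=\ker(\ad\Gamma_t)$ for $\Gamma_t$ in the injectivity domain, which is precisely the paper's $D\exp$ step.
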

\begin{proof}
By the invariance of the metric, it suffices in all cases to consider paths starting at $u=1$. If $\gamma$ is short, assume first that its length is smaller than $R$, then for any $t\in [a,b]$ we have $\dist(\gamma(a),\gamma(t))= L(\gamma)_a^t<R$. Therefore we can lift $\gamma_t= e^{\Gamma_t}$ for a rectifiable path $\Gamma:[a,b]\to\k$, which does not leave the ball $B_R$. Then by Theorem \ref{charactgeod}, for any norming functional of $z$ and any $t$ we have
$$
\varphi(\Gamma_t)=\int_{a}^t \varphi(\dot{\Gamma})=\int_{a}^t \|\gamma^{-1}\dot{\gamma}\|=L(\gamma)_{a}^t=\dist(\gamma_{a},\gamma_{a}e^{\Gamma_t})=\|\Gamma_t\|.
$$
Therefore again by the previous theorem and by Gauss' Lemma \ref{gausslem},
\begin{align*}
\|z\| & =\varphi(z)=\int_{a}^{b} \varphi(\dot{\Gamma}_s)ds=\int_{a}^b \varphi(e^{-\Gamma_s}D\exp_{\Gamma_s}\dot{\Gamma}_s) ds=\int_a^b \varphi(\gamma^{-1}_s\dot{\gamma}_s)ds \\
& \le \int_a^b\|\gamma^{-1}_s\dot{\gamma}_s\|ds =L(\gamma)_a^b=\dist(1,e^z)=\|z\|,
\end{align*}
and this is only possible if $\varphi(\gamma_t^{-1}\dot{\gamma}_t)=\|\gamma_t^{-1}\dot{\gamma}_t\|$ for (almost) all $t$. If $\gamma$ is short but its longer than $R$, we can still partition $\gamma$ in pieces of length smaller than $R$ and its still short on each piece, in any of these intervals $[a_i,a_{i+1}]$ $(i=1,\dots, k)$. There we have $\dist(\gamma(a_i),\gamma(t))\le L(\gamma)_{a_i}^t<R$. Write $\gamma_{a_{i+1}}=\gamma_{a_i}e^{z_i}$, thus we can lift $\gamma_t=\gamma_{a_i} e^{\Gamma^i_t}$ for a rectifiable $\Gamma^i:[a_i,a_{i+1}]\to\k$, which does not leave the ball $B_R$. Using the translation invariance of the metric (from $\gamma_{a_i}$ to $1$), and repeating the argument above, we have for each $i$
\begin{equation}\label{eachi}
\varphi_i(\gamma_t^{-1}\dot{\gamma}_t)=\|\gamma_t^{-1}\dot{\gamma}_t\|\qquad \textrm{ if }\,  \varphi_i \; \textrm{ norms }z_i \,\textrm{ and }\; \|\varphi_i\|=1,
\end{equation}
for (almost) all $t\in [a_i,a_{i+1}]$. Now let $\beta(s)=e^{sz_1}e^{sz_2}\dots e^{sz_k}$, then $\beta(0)=1=\gamma(0)$, $\beta(1)=\gamma(1)$ and
$$
\dist(1,\gamma_1)\le L(\beta)\le \sum_i \|z_i\|=\sum_i \dist(\gamma_i,\gamma_{i+1})=\sum_i L(\gamma|_{[a_i,b_i]}) = L(\gamma)_a^b=\dist(1,\gamma_1),
$$ 
therefore $\beta$ is short also. Since $\beta$ is smooth, there exists $c\in (0,1)$ such that
$$
\sum_i\|z_i\|=\dist(1,\gamma_1)=L(\beta)=\int_0^1 \|\beta_s^{-1}\dot{\beta}_s\|ds= \|\beta_c^{-1}\dot{\beta}_c\|.
$$
Now let $w_0=z_k$, $w_1=e^{-c\ad z_k}z_{k-1}$, $w_2=e^{-c\ad z_k}e^{-c\ad z_{k-1}}z_{k-2}$ and in general
$$
w_j=e^{-c\ad z_k}e^{-c\ad z_{k-1}}\dots e^{-c\ad z_{k-j+1}} z_{k-j}
$$
for $j=0,\dots, k-1$. Note that $\|w_j\|=\|z_{k-j}\|$ for each $j$. A straightforward computation shows that 
$$
\beta_c^{-1}\dot{\beta}_c=\sum_{j=0}^{k-1} w_j,
$$
therefore 
$$
\sum_j \|w_j\|=\sum_i \|z_i\|=L(\beta)=\|\beta_c^{-1}\dot{\beta}_c\|=\|\sum_j w_j\|,
$$
and by Lemmma \ref{mismacara}, there exists a unit norm functional such that $\varphi(w_j)=\|w_j\|$ for all $j=0,\dots, k-1$. In particular $\varphi(z_k)=\varphi(w_0)=\|w_0\|=\|z_k\|$, thus also by Gauss' Lemma (Remark \ref{gausslem}) we have
$$
\varphi(z_{k-1})=\varphi(e^{-c \ad z_k}z_{k-1})=\varphi(w_1)=\|w_1\|=\|z_{k-1}\|.
$$
Proceedings backwards in this fashion, we can conclude that $\varphi(z_i)=\|z_i\|$ for all $i=1,\dots, k$. Thus by (\ref{eachi}) we have that (\ref{cuasiaut}) holds for this $\varphi$, for (almost) all $t\in [a,b]$.

Now assume that (\ref{cuasiaut}) holds for some $\varphi$,  described as $\varphi(v)=\langle v,a\rangle=-\tr(\ad v\circ \ad a)$ for some $a\in \k$ (Remark \ref{normicon}),  and let $t_0\le t\le t_1$ then $\dist(\gamma_{t_0},\gamma_t)\le L(\gamma)_{t_0}^t\le L(\gamma)_{t_0}^{t_1}=R$. Using the invariance of the metric, we can assume that $\gamma_{t_0}=1$. Then there exists a rectifiable lift $\Gamma$ of $\gamma$ such that $\Gamma\subseteq B_R$, $\Gamma_{t_0}=0$, $e^{\Gamma_{t_1}}=\gamma_{t_1}$. Note that $\|\Gamma_{t_1}\|=R=\dist(1,\gamma_{t_1})$. By Remark \ref{normicon},  since $\varphi$ norms $\gamma_t^{-1}\dot{\gamma}_t$, we have that $\gamma_t^{-1}\dot{\gamma}_t$ and $a$ commute for all $t$. By formula (\ref{DAD}) of Remark \ref{remark}
$$
\frac{d}{dt} \Ad_{\gamma_t}a=\Ad_{\gamma_t}[\gamma_t^{-1}\dot{\gamma}_t,a]=0,
$$ 
which shows that $\Ad_{\gamma_t}a=a$ for all $t$ (since $\gamma_{t_0}=1$). Then by the formula  (\ref{dexp}) for the differential of the exponential map of $K$,
\begin{align*}
e^{-\Gamma_t}D\exp_{\Gamma_t}[\Gamma_t,a] & = (\int_0^1 e^{-\lambda \ad\Gamma_t}\ad \Gamma_t \,d\lambda\, )(a)=-e^{-\lambda \ad \Gamma_t}\Bigr|_0^1(a)\\
&=-e^{-\ad\Gamma_t}a+a=-\Ad_{\gamma_t^{-1}}a+a=0.
\end{align*}
Since $\Gamma_t$ is inside the injectivity radius of the exponential map, it follows that $[\Gamma_t,a]=0$ for all $t\in [t_0,t_1]$. Then for fixed $t$, the operators $\ad \Gamma_t$ and $\ad a$ commute, and since they are both skew-adjoint operators acting on $\k$, they can be simultaneously diagonalized in an orthonormal basis, say $\{e_1,\dots,e_k\}$ of $\k$. Let $p_i=e_i\otimes e_i$ denote the rank-one orthogonal projection with range $e_i$, then $\ad a=\sum_{i=1}^k a_i p_i$ and $\ad \Gamma_t=\sum_i \lambda_i p_i$. Note that 
$\Ad_{\gamma_t}=e^{\ad\Gamma_t}=\sum_i e^{\lambda_i}p_i$. Now we apply the formula (\ref{dexp}) for the differential of the exponential map to the path $t\mapsto e^{\ad\Gamma_t}$, and noting that $\ad\dot{\Gamma}_t=(\ad\Gamma_t)^{\, \cdot}$ we obtain
$$
e^{-\ad\Gamma_t} D\exp_{\ad\Gamma_t}\ad\dot{\Gamma}_t =\int_0^1 e^{-s\ad(\ad\Gamma_t)}\ad\dot{\Gamma}_t ds = \int_0^1 e^{-s\ad\Gamma_t}\ad\dot{\Gamma}_t e^{s\ad\Gamma_t}ds.
$$
Due to (\ref{DAD}) we also have
\begin{equation}\label{aii}
\ad \gamma_t^{-1}\dot{\gamma}_t=\Ad_{\gamma_t^{-1}}\frac{d}{dt}\Ad_{\gamma_t}=e^{-\ad\Gamma_t}D\exp_{\ad\Gamma_t}(\ad\dot{\Gamma_t}).
\end{equation}
Then since $\ad a$ and $e^{\ad\Gamma_t}$ are diagonal in the orthonormal basis $\{e_i\}_{i=1,\dots,k}$ (for this particular $t$), we have
\begin{align*}
p_i ( e^{-s\ad\Gamma_t}\ad\dot{\Gamma}_t e^{s\ad\Gamma_t}\circ \ad a) p_i & = e^{-s\lambda_i} p_i \ad\dot{\Gamma}_t \,e^{s\lambda_i}\,a_i\, p_i= p_i \ad\dot{\Gamma}_t\, a_i\, p_i \\
& =  a_i(\ad\dot\Gamma_t)_{ii} \,p_i=p_i(\ad a\circ \ad\dot{\Gamma}_t)p_i.
\end{align*}
Integrating $s$ in $[0,1]$ it follows that
$$
(\ad a\circ e^{-\ad\Gamma_t} D\exp_{\ad\Gamma_t}\ad\dot{\Gamma}_t)_{ii}=(\ad a\circ \ad\dot{\Gamma}_t)_{ii}
$$
and using (\ref{aii}) shows that
$$
(\ad a\circ (\ad \gamma_t^{-1}\dot{\gamma}_t))_{ii}= (\ad a\circ \ad\dot{\Gamma}_t)_{ii}
$$
for all $i$. Thus
\begin{align*}
\varphi(\gamma_t^{-1}\dot{\gamma}_t)&=\langle a,\gamma_t^{-1}\dot{\gamma_t}\rangle=-\tr(\ad \gamma_t^{-1}\dot{\gamma}_t\circ\ad a)=-\sum_i (\ad \gamma_t^{-1}\dot{\gamma}_t\circ\ad a)_{ii}\\
&= -\sum_i (\ad a\circ\ad\dot{\Gamma}_t)_{ii}=-\tr(\ad a\circ\ad\dot{\Gamma}_t) = \langle\dot{\Gamma_t},a\rangle=\varphi(\dot{\Gamma}_t).
\end{align*}
Since this holds for any $t\in [t_0,t_1]$, we have $\varphi(\dot{\Gamma}_t)=\|\gamma_t^{-1}\dot{\gamma}_t \|$ there, and then
$$
\dist(\gamma_{t_0},\gamma_{t_1})=\|\Gamma_{t_1}\|\ge \varphi(\Gamma_{t_1})=\varphi(\int_{t_0}^{t_1} \dot{\Gamma}_t \, dt)=\int_{t_0}^{t_1} \varphi(\dot{\Gamma}_t)\, dt= \int_{t_0}^{t_1} \|\gamma_t^{-1}\dot{\gamma}_t \|dt=L_{t_0}^{t_1}(\gamma)
$$
which proves that $\gamma$ is short in that interval.
\end{proof}

\begin{rem}It is clear from the proof of the previous theorem, that
\begin{itemize}
\item[i)] If $\gamma=e^{\Gamma}$ is short, then there exists $a\in \k$ such that $\ad\Gamma_t$ (and then also $\ad\dot{\Gamma}_t$, $\Ad_{\gamma_t}$,  $\ad\gamma_t^{-1}\dot{\gamma}_t$) commute with $\ad a$ for all $t$. From the faithfulness of the adjoint representation, this is equivalent to $\Gamma_t,\dot{\Gamma}_t,\gamma_t,\gamma_t^{-1}\dot{\gamma}_t$ commuting with $a$ for each $t$. For linear Lie groups it then follows that $a$ and $\Gamma_t$ can be simultaneously diagonalized for each $t$, and this basis also diagonalizes $\gamma_t=e^{\Gamma_t}$. However, a word of caution: the orthonormal basis depends on $t$. 
\item[ii)] Once $\gamma$ is short in a certain interval $[a,b]$, and assuming that we first write $\gamma_b=\gamma_a e^{z_1}e^{z_2}\dots e^{z_k}$, with $\gamma(t_{i+1})=\gamma(t_i)e^{z_i}$, and the $z_i$ giving the distance among the endpoints and $\|z_i\|\le R$, then the equality (\ref{cuasiaut}) if fulfilled \textit{for any} norming functional of $z_1+\dots+z_n$. If $\gamma_b=\gamma_ae^z$ with $z\in \mathbf D$, when can we ensure that $z$ and the $z_i$ are in the same face of the sphere? Equivalently, $z$ and $\gamma^{-1}\dot{\gamma}$ are in the same face of the sphere? If $\|z\|<R$ this follows from Theorem \ref{charactgeod}, see also the next remark.
\item[iii)] If $\gamma_0=1,\gamma_1=e^z$ with $z\in \mathbf D$, and $\gamma$ is short for a given norm, does it follow that  $\gamma=e^{\Gamma}$ with $\Gamma\subset \mathbf D$? The argument used in repeated occasions is that if $\|z\|<R$ and $\gamma$ is short for that norm, then there is a lift $\Gamma\subset B_R$; but this radius depends on the norm.
\end{itemize}
\end{rem}

\medskip

\begin{rem}[Non-optimal lifts]
By Theorem \ref{charactgeod}, we know that if $\Gamma$ is short in $\k$ for a given norm, then its exponential $\gamma=e^{\Gamma}$ is short in $K$ for the bi-invariant metric of that norm. Is there any concrete example of Lie group $K$ with $\Ad$-invariant Finsler norm such that: there exists a path $\gamma=e^{\Gamma}:[a,b]\to K$, with $\gamma$  short in $K$ joining $1,e^z$ but $\Gamma$ (which joins $0,z$ in $\k$) not short in $\k$? For strictly convex norms this is not possible since the only short paths are segments; as we will see in the next section it is also impossible if all the faces of the unit sphere are abelian (Corollary \ref{corabelian}). Another relevant example, studied by Antezana, Ghighlioni and Stojanoff in \cite{ags} is the full unitary group (or in our setting, $K=\SU_n$) with the spectral norm: examining \cite[Theorem 2.1]{ags} it is apparent that lifted short paths  are also short there.
\end{rem}

\smallskip

\subsection{Norms with abelian faces} In Theorem \ref{existen} we showed that when the faces of the unit ball are singletons, the short paths are one-parameter groups. We will show here that the situation is somewhat similar if we allow the faces of the  unit ball to be inside \textit{abelian} subalgebras of $\k$. In Section \ref{geodesicommute} we will give a full characterization of those norms with this important structural property.

\begin{prop}\label{abelianfaces1}
Let $B$ the unit ball of the bi-invariant norm in $\k$. Then all the faces of $B$ are abelian if and only if for all piecewise $C^1$ short curves $\gamma\subseteq K$, the logarithmic derivatives $x_t=\gamma_t^{-1}\dot{\gamma}_t$ of $\gamma$ commute for all $t$.
\end{prop}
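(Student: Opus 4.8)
The plan is to combine the characterization of short paths in Theorem \ref{cuasi} with the identity, coming from the definition of the cone generated by a face, that $\R_+ F_\varphi = \{v \in \k : \varphi(v) = \|v\|\}$. Throughout I call a face $F_\varphi = B \cap \{\varphi = 1\}$ \emph{abelian} when its elements pairwise commute; by bilinearity of the bracket this is equivalent to the whole generated cone $\R_+ F_\varphi$ consisting of pairwise commuting elements.

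For the implication ``abelian faces $\Rightarrow$ commuting speeds'', I would start from a short piecewise $C^1$ curve $\gamma$ and invoke the forward part of Theorem \ref{cuasi}, whose proof in fact produces a \emph{single} unit norm functional $\varphi$ with $\varphi(\gamma_t^{-1}\dot\gamma_t) = \|\gamma_t^{-1}\dot\gamma_t\|$ for almost every $t$. Every logarithmic derivative $x_t = \gamma_t^{-1}\dot\gamma_t$ then lies in the single cone $\R_+ F_\varphi$; since $F_\varphi$ is abelian by hypothesis, all the $x_t$ commute, and continuity of $t \mapsto x_t$ on each $C^1$ piece upgrades this to every $t$.

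For the converse I would argue contrapositively. Assuming some face $F = F_\varphi$ is not abelian, I pick $x', y' \in F$ with $[x', y'] \neq 0$ and rescale to $x = \eps x'$, $y = \eps y'$ with $\eps > 0$ small enough that $\|x\| + \|y\| \le R$; then $[x,y] = \eps^2[x',y'] \neq 0$ and $\varphi$ still norms both $x$ and $y$. I would then form the broken path $\gamma_t = e^{tx}$ on $[0,1]$ and $\gamma_t = e^x e^{(t-1)y}$ on $[1,2]$, check that its logarithmic derivative equals $x$ on the first leg and $y$ on the second (so the single functional $\varphi$ norms $\gamma_t^{-1}\dot\gamma_t$ throughout and $L(\gamma) = \|x\| + \|y\| \le R$), and conclude from the sufficiency half of Theorem \ref{cuasi} that $\gamma$ is short. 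Its speeds $x$ and $y$ fail to commute, which is the desired contradiction.

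I expect the main obstacle to be the forward direction's reliance on a \emph{single} norming functional governing the entire short path, so that all the $x_t$ share one common face; this is precisely what the concatenation argument inside the proof of Theorem \ref{cuasi} delivers, and without it the $x_t$ could a priori be normed by different functionals. The converse is comparatively routine, hinging only on the elementary observation that elements of the cone of one common face are simultaneously normed by $\varphi$, which makes the two-segment path short.
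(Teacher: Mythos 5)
Your proposal is correct and follows essentially the same route as the paper: both directions rest on Theorem \ref{cuasi} (the forward implication reads off commuting speeds from a single common face exactly as the paper does, including your correct observation that the proof of Theorem \ref{cuasi} yields one functional norming all the speeds). The only difference is the test path in the converse — you concatenate the two segments $e^{tx}$ and $e^{x}e^{(t-1)y}$, whereas the paper takes one smooth path with logarithmic derivative $tx+(b-t)y$ interpolating between the non-commuting face elements — but both constructions are validated by the same sufficiency half of Theorem \ref{cuasi}.
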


\begin{proof}
Assume first that each face of the unit ball is abelian. By Theorem \ref{cuasi}, if $\gamma$ is short, it has logarithmic derivative (after normalizing)  inside a face of the ball, and these commute for all $t$. 

Now assume that for some $w\in\k$ the face $F_w(B)$ is non-abelian. Let $x,y\in F_w(B)$ with $[x,y]\ne 0$. Let $R>0$ be the radius of injectivity and for $0<b<R$ let $\gamma:[0,b]\to K$ be the path which solves
$$\gamma_t^{-1}\dot{\gamma}_t=tx+(b-t)y$$
for $t\in[0,b]$. Then by Theorem \ref{cuasi} $\gamma$ is a geodesics, furthermore the logarithmic derivatives $\gamma_0^{-1}\dot{\gamma}_0=x$ and $\gamma_b^{-1}\dot{\gamma}_b=y$ do not commute.
\end{proof}

\begin{cor}\label{corabelian}
Let  $B$ be the unit ball of the bi-invariant norm in $\k$ and assume that the faces of $B$ are all abelian. Let $\gamma:[a,b]\to K$ be a short piecewise $C^1$ path. Then
\begin{itemize}
\item[i)] There exists $z\in \k$ such that $\delta_t=\gamma_a e^{tz}$ is also short with the same endpoints.
\item[ii)] If $L(\gamma)\le R$ (thus $\gamma_b=\gamma_ae^z$ for some $\|z\|\le R$), then $\gamma=\gamma_a e^{\Gamma}$ where $\Gamma:[a,b]\to \k$ and $[\Gamma_t,\Gamma_s]=0$ for all $s,t\in [a,b]$, thus $\gamma_t^{-1}\dot{\gamma}_t=\dot{\Gamma}_t$. In particular the logarithmic derivatives of $\gamma$ commute and $\Gamma$ is short in $\k$, with the same length than $\gamma$.
\end{itemize}
\end{cor}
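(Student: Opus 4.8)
The plan is to prove both items at once by realizing $\gamma$, after a harmless left translation, as the exponential of a single commuting path in $\k$, and then comparing lengths in $K$ and in $\k$. First I would use the left-invariance of the metric to assume $\gamma_a=1$. Since the faces of $B$ are all abelian and $\gamma$ is short, Proposition \ref{abelianfaces1} applies and gives that the logarithmic derivatives $x_t=\gamma_t^{-1}\dot{\gamma}_t$ pairwise commute, i.e. $[x_s,x_t]=0$ for all $s,t\in[a,b]$.

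Next I would build the lift. Setting $\Gamma_t=\int_a^t x_s\,ds\in\k$, the pairwise commutativity of the $x_s$ yields $[\Gamma_t,\Gamma_s]=\int_a^t\int_a^s[x_u,x_v]\,dv\,du=0$ for all $s,t$, and in particular $[\Gamma_t,\dot{\Gamma}_t]=[\Gamma_t,x_t]=0$. Feeding this into formula (\ref{dexp}) for the differential of the exponential map, the factor $F(\ad\Gamma_t)$ acts as the identity on $\dot{\Gamma}_t$ (since $\dot\Gamma_t\in\ker\ad\Gamma_t$ and $F(0)=1$), so $\frac{d}{dt}e^{\Gamma_t}=e^{\Gamma_t}\dot{\Gamma}_t=e^{\Gamma_t}x_t$. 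Thus $t\mapsto e^{\Gamma_t}$ and $\gamma$ solve the same left-invariant ODE $y_t^{-1}\dot{y}_t=x_t$ with the same initial value $y_a=1$, and by uniqueness of solutions $\gamma_t=e^{\Gamma_t}$ for all $t$. This already yields the structural content of (ii): $\gamma=\gamma_a e^{\Gamma}$ with the $\Gamma_t$ pairwise commuting and $\gamma_t^{-1}\dot{\gamma}_t=\dot{\Gamma}_t$. I would emphasize that this lift is global and requires no injectivity hypothesis.

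It remains to control lengths. I would put $z=\Gamma_b=\int_a^b x_s\,ds$, so that $\gamma_b=\gamma_a e^z$. On one hand the triangle inequality for the norm gives $\|z\|=\big\|\int_a^b x_s\,ds\big\|\le\int_a^b\|x_s\|\,ds=L(\gamma)$. On the other hand $\gamma$ is short, so $L(\gamma)=\dist(\gamma_a,\gamma_b)$, while the segment $\delta_t=\gamma_a e^{tz}$ is a path joining $\gamma_a$ to $\gamma_b$ of length $\|z\|$, whence $\dist(\gamma_a,\gamma_b)\le\|z\|$. Combining, $\|z\|=L(\gamma)=\dist(\gamma_a,\gamma_b)$, so $\delta$ attains the distance and is short, proving (i). For (ii), the hypothesis $L(\gamma)\le R$ now reads $\|z\|\le R$; moreover $\Gamma$ joins $0$ to $z$ in the flat normed space $\k$ with length $\int_a^b\|\dot{\Gamma}_t\|\,dt=\int_a^b\|x_t\|\,dt=L(\gamma)=\|z\|$, which is exactly the distance from $0$ to $z$ in $\k$, so $\Gamma$ is itself short in $\k$ with the same length as $\gamma$.

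The main obstacle is the lift step: one must be certain that commuting logarithmic derivatives force $\gamma=e^{\Gamma}$ globally. The key point is that $\{x_t\}$ spans an abelian subalgebra, so $F(\ad\Gamma_t)\dot{\Gamma}_t=\dot{\Gamma}_t$ and the exponential behaves as in an abelian group, validating the ODE-uniqueness argument with no restriction on $L(\gamma)$. The bound $R$ then enters only to record $\|z\|\le R$ in (ii) and to align the statement with Theorem \ref{charactgeod}; the commutativity of the $\Gamma_t$ and the minimality of the segment $\delta$ in (i) hold in full generality.
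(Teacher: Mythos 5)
Your proof is correct, but it takes a genuinely different route from the paper's. You front-load the commutativity by citing Proposition \ref{abelianfaces1} (all logarithmic derivatives $x_t$ of a short curve pairwise commute), then build the lift \emph{explicitly} as $\Gamma_t=\int_a^t x_s\,ds$ and identify $e^{\Gamma}$ with $\gamma$ by ODE uniqueness, after which both items reduce to the chain $\|z\|\le L(\gamma)=\dist(\gamma_a,\gamma_b)\le L(\delta)=\|z\|$. The paper instead proves item (i) by re-deriving commutativity of the polygonal increments $z_i$ from scratch (expanding $\beta_s=e^{sz_1}e^{sz_2}$, showing its logarithm $B_s$ sits in a face, hence commutes, and extracting $[z_1,z_2]=0$ by differentiating), and proves item (ii) by taking the lift supplied by the injectivity radius and showing $\Gamma_t$ lies in an abelian face. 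Your version is shorter, unifies the two items, and actually yields the structural content of (ii) --- the commuting lift with $\gamma_t^{-1}\dot\gamma_t=\dot\Gamma_t$ --- with no hypothesis $L(\gamma)\le R$, that bound entering only to record $\|z\|\le R$; the paper's version is more self-contained at the level of the polygonal decomposition and keeps track of which face the increments share, which it reuses elsewhere. Two small points to make explicit: Theorem \ref{cuasi} (hence Proposition \ref{abelianfaces1}) gives the common norming functional only for almost every $t$, so $[x_s,x_t]=0$ holds a.e., which is enough for $[\Gamma_s,\Gamma_t]=0$ and for the a.e.\ ODE $\dot y_t=y_tx_t$ to determine $y$; and the inequality $\bigl\|\int_a^b x_s\,ds\bigr\|\le\int_a^b\|x_s\|\,ds$ is still valid for the merely positively homogeneous (Finsler) norms allowed here, since they are sublinear. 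With those remarks your argument stands.
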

\begin{proof}
As always we can assume that $\gamma_a=1$, partition $\gamma$ in small pieces such that $\gamma(t_{i+1})=\gamma(t_i)e^{z_i}$ with $\|z_i\|\le R$ as in the proof of Theorem \ref{cuasi}. Let $\beta(s)=e^{sz_1}e^{sz_2}$, then $\beta$ joins $1,e^{z_1}e^{z_2}=\gamma(t_2)$ and $L(\beta)=\|z_1+z_2\|$, thus
$$
\dist(1,\gamma_{t_2})\le L(\beta)\le\|z_1\|+\|z_2\|=\dist(1,\gamma_{t_1})+\dist(\gamma_{t_1},\gamma_{t_2})=\dist(1,\gamma_{t_2}),
$$
which shows that $\beta$ is short among its endpoints. For small $s$, let $\beta_s=e^{B_s}$ with $B_s=s(z_1+z_2)+\frac{s^2}{2}[z_1,z_2]+o(s^3)$, then by Theorem \ref{charactgeod}(1), $\|B_s\|=\dist(1,\beta_s)=L_0^s(\beta)=s\|z_1+z_2\|$. On the other hand, by Theorem \ref{charactgeod}(4), we also have $\varphi(\dot{B}_s)=\|\beta_s^{-1}\dot{\beta}_s\|=\|z_1+z_2\|$, then integrating we have $\varphi(B_s)=s\|z_1+z_2\|=\|B_s\|$, and this shows that $B_s$ (normalized) is inside a face of the ball, which implies that the $B_s$ commute for all (small) $s$ (therefore they also commute with $\dot{B}_s$). Then using the formula (\ref{dexp}) we have
$$
e^{-s\ad z_2}(z_1+z_2)=\beta_s^{-1}\dot{\beta}_s=e^{-B_s}D\exp_{B_s}\dot{B}_s=\dot{B_s}=z_1+z_2+s[z_1,z_2]+o(s^2).
$$
Differentiating at $s=0$ shows that $[z_1,z_2]=0$. Thus $\gamma(t_2)=e^{z_1}e^{z_2}=e^{z_1+z_2}$ and $\gamma(t_3)=e^{z_1}e^{z_2}e^{z_3}=e^{z_1+z_2}e^{z_3}$. We repeat the argument now using $\beta_s=e^{s(z_1+z_2)}e^{sz_3}$, this shows that $[z_3,z_1+z_2]=0$. Thus $\gamma_b=e^{z_1}e^{z_2}e^{z_3}\dots e^{z_k}=e^{\sum z_i}$, and if we let $z=\sum z_i$, then $\|z\|=\sum \|z_i\|=\dist(\gamma_a,\gamma_b)$ and the first claim follows.

For the second assertion let $\Gamma$ be the smooth lift of $\gamma$, $\Gamma_a=0$, then by Theorem \ref{charactgeod}(4) we have $\varphi(\dot{\Gamma}_t)=\|\gamma_t^{-1}\dot{\gamma}_t\|$ for some unit norm $\varphi$, and again integrating $\varphi(\Gamma_t)=L_0^t(\gamma)=\dist(1,e^{\Gamma_t})=\|\Gamma_t\|$ shows that $\Gamma_t$ (normalized) is inside a face of the sphere, which by hypothesis is abelian. But then $\Gamma_t$ and $\dot{\Gamma}_t$ also commute for all $t$ and it is apparent that
$$
L(\Gamma)=\int_a^b \|\dot{\Gamma}\|dt=\int_a^b\|e^{-\Gamma}D\exp_{\Gamma}\dot{\Gamma}\|dt=\int_a^b\|\gamma^{-1}\dot{\gamma}\|dt=L(\gamma)
$$
thus if $\Gamma_b=z$ then $\|z\|\le L(\Gamma)=L(\gamma)=\dist(1,e^z)=\|z\|$ and $\Gamma$ is short.
\end{proof}

\subsection{Geodesic are quasi-autonomous}\label{sectionquasiaut}

Throughout, the action of the semi-simple compact group $K$ on the symplectic manifold $M$ complies the hypothesis of Section \ref{hamactions}. Recall (Definition \ref{quasiauto}) that a Hamiltonian $H_t$ is called \textit{quasi-autonomous} if there exists $x^-,x^+ \in M$ such that $H_t(x^-) = \min_M H_t$, $H_t(x^+) = \max_M H_t$ for all $t\in[a,b]$. As before, we use $R$ to indicate the injectivity radius of the exponential map of $K$, for the given norm. In the next theorem, the geometry of the group $K$ is the one given by the generalized Hofer norm (\ref{hofer1}) obtained by the almost effective action.

\begin{thm}\label{teoautono}
Let $K\curvearrowright (M,\omega)$ be a Hamiltonian almost effective action. Let $\gamma:[a,b]\to K$ be piecewise $C^1$, and denote its right logarithmic derivative by $x_t=\dot{\gamma}_t\gamma^{-1}_t$. Then if $\gamma$ is short, $(\mu^{x_t})_{t\in [0,1]}$ is a quasi-autonomous Hamiltonian, and if $\mu$ is quasi-autonomous, $\gamma$ is locally short (in each interval of length $\le R$).
\end{thm}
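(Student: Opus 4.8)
The plan is to route everything through the face/cone dictionary of Section \ref{sectionnorms} and to observe that quasi-autonomy of $\mu^{x_t}$ is, tautologically, a statement about common maximizers and minimizers of the functionals $\varphi_{x_t}$ on $E=\mu(M)$. First I would record that, since $\mu^{x_t}(m)=\langle\mu(m),x_t\rangle$ and $E=\mu(M)$, one has $\max_M\mu^{x_t}=\max_{y\in E}\varphi_{x_t}(y)$ and $\min_M\mu^{x_t}=\min_{y\in E}\varphi_{x_t}(y)$; consequently $(\mu^{x_t})_t$ is quasi-autonomous precisely when $\bigcap_t\argmax_E(\varphi_{x_t})\neq\emptyset$ and $\bigcap_t\argmin_E(\varphi_{x_t})\neq\emptyset$ (given such common points $y^{\pm}$ one selects $x^{\pm}\in M$ with $\mu(x^{\pm})=y^{\pm}$, using that $E$ is compact and, by almost effectiveness together with Remark \ref{fullsiiae}, full). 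By Corollary \ref{arghofer} applied to $S=\{x_t:t\in[a,b]\}$, these two intersection conditions together are equivalent to $S$ lying in a single cone generated by a face, i.e. to the existence of a unit norm functional $\varphi$ with $\varphi(x_t)=\|x_t\|_{\mu(M)}$ for every $t$.

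With this reformulation in hand, the forward direction would go: assuming $\gamma$ short, apply Theorem \ref{cuasi} in its right-logarithmic-derivative form (legitimate because $\|\cdot\|_{\mu(M)}$ is $\Ad$-invariant, as remarked at the start of Section \ref{1}) to obtain a single unit norm $\varphi$ with $\varphi(x_t)=\|x_t\|_{\mu(M)}$ for almost every $t$, then upgrade ``almost every $t$'' to ``every $t$'', and finally read off the common $y^{\pm}$ from Corollary \ref{arghofer} to produce the witnesses $x^{\pm}$. The converse would run in reverse: quasi-autonomy hands us common $y^{\pm}=\mu(x^{\pm})$, Corollary \ref{arghofer} turns these into a common norming functional $\varphi$ valid for all $t$, and the reciprocal half of Theorem \ref{cuasi} (again in right-invariant form) then certifies that $\gamma$ is short on any subinterval of length at most $R$, which is exactly local shortness.

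The hard part will be the upgrade from an almost-everywhere to an everywhere statement before invoking Corollary \ref{arghofer} on the whole curve, since cones generated by faces need not be closed. I would handle this by noting that $\varphi(x_t)=\|x_t\|_{\mu(M)}$ is a closed condition: on each $C^1$ piece the map $t\mapsto x_t=\dot{\gamma}_t\gamma_t^{-1}$ is continuous, and $t\mapsto\|x_t\|_{\mu(M)}=h_{E-E}(x_t)$ is continuous as well by Proposition \ref{hofernorms}, so the equality, holding on a dense set, holds throughout each piece and hence on all of $[a,b]$. Everything else is bookkeeping with the identities already established, so I do not anticipate further obstacles.
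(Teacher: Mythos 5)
Your proposal is correct and follows essentially the same route as the paper's proof: Theorem \ref{cuasi} converts shortness into the existence of a common norming functional for the (right) logarithmic derivatives, Corollary \ref{arghofer} converts that into common maximizers and minimizers of the $\varphi_{x_t}$ on $E=\mu(M)$, and these pull back through $\mu$ to the witnesses $x^{\pm}\in M$ of quasi-autonomy. The only difference is that you explicitly patch the ``almost every $t$'' versus ``every $t$'' issue by a continuity/closedness argument, a point the paper's proof passes over silently; that addition is sound and harmless.
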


\begin{proof}
By Theorem \ref{cuasi}, $\gamma$ is locally short if and only if there is a common norming functional for $x_t=\dot{\gamma}_t\gamma^{-1}_t$, for all $t$. By Corollary \ref{arghofer} the set of logarithmic derivatives $\{x_t\}_{t\in [0,1]}\subseteq \k$ is contained in a cone generated by a face if and only if there exist $x^-$ and $x^+$ such that $x^-\in\cap_{t\in [0,1]}\argmin(\varphi_{x_t})$ and $x^+\in\cap_{t\in [0,1]}\argmax(\varphi_{x_t})$, where $\varphi_{x_t}:\mu(M)\to\R$. We can choose $m^-,m^+\in M$ such that $x^-=\mu(m^-)$ and $x^+=\mu(m^+)$. The result follows since $\mu^{x_t}=\varphi_{x_t}\circ\mu$ for all $t\in[0,1]$.
\end{proof}

\begin{rem}
A similar characterization of geodesics (in fact, of their logarithmic derivative $\gamma_t^{-1}\dot{\gamma}_t=x_t$) can be obtained for the one-sided norm induced by the action of $K$ in $M$, if one replaces the condition of quasi-autonomous for the Hamiltonian $H_t=\mu^{x_t}$, with the condition that there exists a point $x^+\in M$ such that $H_t(x^+)=\max_M H_t$ for all $t\in [a,b]$.
\end{rem}

\section{Spheres with abelian faces}\label{geodesicommute}

In this section we characterize those $\Ad$-invariant norms which have unit balls with abelian faces in terms of conditions on its intersection with a Cartan subalgebra. When this intersection is a polytope the condition reduces to the regularity of the extreme points of its polar dual. Based on this result and Kirwan's Theorem \ref{kirwan} we characterize the compact semi-simple groups of Hamiltonian diffeomorphims such that geodesics have commuting Hamiltonians. We start with the important special case of Hofer norms derived from coadjoint actions on regular coadjoint orbits. Groups with length structures derived from these norms have the property that the logarithmic speed of its geodesics lie in a Weyl chamber.

\subsection{Regular coadjoint actions and non-crossing of eigenvalues}\label{subsectionspeedweyl}

In this section we follow the setting of Example \ref{coadj} on (co)adjoint orbits in $\O\subseteq \k\simeq \k^*$. 

\begin{defn}
The (co)adjoint orbit $\O=\O_{\lambda}\subseteq \k$ of the action is \textit{regular} if $\lambda\in\k$ is a regular element (Remark \ref{regular}).
\end{defn}

\smallskip

In order to characterize the faces of the unit ball in $\k$ via Corollary \ref{arghofer} we need to  study, for a nonzero $x$ in $\k$, the sets $\argmax_{\O}(\varphi_x)$ and $\argmin_{\O}(\varphi_x)$ of $\varphi_x=\mu^x:\O\to\R$. Let $m_t=\Ad_{e^{tv}}m$ be a path through $m\in\O$ with $\dot{m}_0=[v,m]\in T_m\O$, differentiating $\varphi_x(m_t)=\langle x,m_t\rangle$ at $t=0$ we obtain
$$
D(\varphi_x)_m(\dot{m}_0)=\langle x, [v,m]\rangle =-\langle x,[m,v]\rangle =\langle [m,x],v\rangle,
$$
since $\ad m$ is skew-adjoint for the Killing form. Since $v\in \k$ is arbitrary, this  implies  that $m\in\O$ is a critical point of $\varphi_x$ if and only if $m\in \z(x)$, where $\z(x)$ is the centralizer of $x$, i.e. 
\begin{equation}\label{critpoint}
Crit(\varphi_x)=\O\cap\z(x).
\end{equation}

The proof of following proposition can be found in \cite[Lemma 23]{bgh}.

\begin{prop}\label{maximumorbit}
Fix a maximal torus $T\subseteq K$, a nonzero vector $x\in \t=Lie(T)$ and a  point $m\in \O\cap\t$; thus $m\in Crit(\varphi_x)$. Then $m$ is a maximum point of $\varphi_x$ if and only if there is a Weyl chamber in $\t$ whose closure contains both $x$ and $m$.
\end{prop}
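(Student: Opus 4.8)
The plan is to reduce the maximization over the whole orbit $\O$ to a maximization over the finite set $\O\cap\t=\W\cdot m$ by means of Kostant's convexity theorem, and then to translate the resulting numerical inequalities into the geometric condition of lying in a common closed Weyl chamber. Since $x\in\t$ and the orthogonal projection $p\colon\k\to\t$ satisfies $m'-p(m')\perp\t$, I would first note that $\varphi_x(m')=\langle x,m'\rangle=\langle x,p(m')\rangle$ for every $m'\in\O$. As $m'$ ranges over $\O$ the point $p(m')$ ranges over $p(\O)=\conv(\W\cdot m)$ by Theorem \ref{kostant} (recall $\O\cap\t=\W\cdot m$), and a linear functional attains its maximum on a polytope at a vertex, so
\[
\max_{\O}\varphi_x=\max_{y\in\conv(\W\cdot m)}\langle x,y\rangle=\max_{w\in\W}\langle x,w\cdot m\rangle .
\]
Hence $m$ is a maximum point of $\varphi_x$ \emph{if and only if} $\langle x,m\rangle\ge\langle x,w\cdot m\rangle$ for all $w\in\W$; this reformulation is the technical heart of the argument, as it replaces the analytic problem by a combinatorial one over $\W$.

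Next I would establish the auxiliary fact that two elements $x,m\in\t$ lie in a common closed Weyl chamber if and only if $\alpha(x)\,\alpha(m)\ge 0$ for every root $\alpha$. The forward implication is immediate, since a closed chamber is cut out by $\alpha(\cdot)\ge 0$ for a positive system and each root therefore has a fixed sign on it. For the converse I would build an adapted positive system: fix a generic $\xi\in\t$ and declare $\alpha>0$ exactly when the lexicographic triple $(\alpha(x),\alpha(m),\alpha(\xi))$ is positive. This is a genuine positive system, and the hypothesis $\alpha(x)\alpha(m)\ge 0$ forces both $x$ and $m$ to be dominant for it, so both lie in its closed chamber. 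The role of the lexicographic refinement is precisely to select, among the several closed chambers containing a singular $x$, one that also contains $m$.

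With these two ingredients the equivalence of the proposition follows. For the direction ``$m$ maximal $\Rightarrow$ common chamber'' I would test the inequality $\langle x,m\rangle\ge\langle x,w\cdot m\rangle$ on the reflections $w=s_\alpha$; writing $m-s_\alpha m=c_\alpha h_\alpha$ with $c_\alpha=2\langle m,h_\alpha\rangle/\langle h_\alpha,h_\alpha\rangle$ and using the convention $\langle h_\alpha,\cdot\rangle=-\alpha(\cdot)$ of Remark \ref{rootdecom}, the inequality becomes $\alpha(x)\alpha(m)\ge 0$ for every root $\alpha$, whence the common chamber by the auxiliary fact. For the reverse direction I would use that once $x$ and $m$ are simultaneously dominant for a positive system $\Delta_+'$, the vector $m-w\cdot m$ is a nonnegative combination of the roots in $\Delta_+'$ (the standard fact that a dominant element is maximal in its Weyl orbit for the root order), and since $\langle x,\beta\rangle\ge 0$ for every $\beta\in\Delta_+'$ this gives $\langle x,m-w\cdot m\rangle\ge 0$ for all $w$, i.e. $m$ is a maximum.

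The main obstacle throughout is the non-regularity of $x$. When $x$ is regular one has $\z(x)=\t$, the critical set of equation \eqref{critpoint} is the finite set $\O\cap\t$, and the closed chamber containing $x$ is unique, so the correspondence between maximizers and chambers is transparent. The content of the proposition is precisely that this correspondence persists for singular $x$, where $x$ sits on several walls and lies in several closed chambers at once; it is the lexicographic construction of the auxiliary fact that secures the existence of a single closed chamber containing both $x$ and the maximizing $m$.
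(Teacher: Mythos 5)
Your argument is correct, but it is worth noting that the paper does not actually prove this proposition: it only cites \cite[Lemma 23]{bgh}, so what you have written is a self-contained substitute rather than a variant of an in-text argument. Your route --- reduce $\max_{\O}\varphi_x$ to $\max_{w\in\W}\langle x,w.m\rangle$ via Kostant's Theorem \ref{kostant}, extract $\alpha(x)\alpha(m)\ge 0$ for every root by testing the reflections $s_\alpha$, and then manufacture a positive system by the lexicographic order on $(\alpha(x),\alpha(m),\alpha(\xi))$ so that $x$ and $m$ are simultaneously dominant --- is sound, and the lexicographic step is exactly the right device for the only delicate case, namely singular $x$ lying on several walls. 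The converse direction is the standard fact that a dominant element dominates its Weyl orbit in the root order, which is also the content of Lemma \ref{rearrengementineq} that the paper quotes from \cite{eaton,mozz}; so your proof uses only tools already present in the paper, which is a mild advantage over deferring to the external reference. Two points of hygiene: make explicit that ``maximum point'' means global maximum of $\varphi_x$ on $\O$ (this is how the statement is used in Proposition \ref{speedweylchamber}), and fix once and for all the identification of roots with vectors in $\t$, since with the paper's convention $\langle h_\alpha,\cdot\rangle=-\alpha(\cdot)$ the vector representing $\alpha$ is $-h_\alpha$; your computation $\langle x,m-s_\alpha m\rangle=2\alpha(x)\alpha(m)/\langle h_\alpha,h_\alpha\rangle$ already absorbs the sign correctly, so nothing breaks, but the phrase ``nonnegative combination of the roots in $\Delta_+'$'' in the last step should be read with that identification in mind.
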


With these tools at hand, we next characterize the faces of the unit ball of $\|\cdot\|_{\O}$. 
 
\begin{prop}\label{speedweylchamber}
Let $\O$ be a regular (co)adjoint orbit and let $\|\cdot\|_{\O}$ be the associated $\Ad$-invariant Hofer norm. A set of vectors has a common norming functional if and only if it is contained in a Weyl chamber (given by a choice of torus and positive simple roots). Hence, the maximal cones generated by faces of the unit ball are Weyl chambers.  
\end{prop}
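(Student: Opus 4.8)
The plan is to translate the statement about common norming functionals into the combinatorial criterion of Corollary~\ref{arghofer} and then exploit the description of maxima on a regular orbit recorded in Proposition~\ref{maximumorbit}. A family $S\subseteq\k$ admits a common norming functional precisely when it lies in a cone generated by a face (this is Lemma~\ref{mismacara} together with the definition of a norming functional), and by Corollary~\ref{arghofer} applied to $E=\O$ this happens if and only if
\begin{equation*}
\bigcap_{x\in S}\argmax_{\O}(\varphi_x)\neq\emptyset\quad\text{and}\quad\bigcap_{x\in S}\argmin_{\O}(\varphi_x)\neq\emptyset .
\end{equation*}
Thus it suffices to prove that $S$ has a common maximizer and a common minimizer on $\O$ if and only if $S$ is contained in a single closed Weyl chamber.

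For the forward implication I would pick a common maximizer $x^+\in\bigcap_{x\in S}\argmax_{\O}(\varphi_x)$. Since $\O=\O_\lambda$ is regular, every point of $\O$ is regular (its centralizer is $\Ad_k\z(\lambda)$, a Cartan subalgebra), so $\t':=\z(x^+)$ is a Cartan subalgebra and $x^+$ lies in a unique \emph{open} Weyl chamber $C$ of $\t'$. For each $x\in S$ the point $x^+$ is in particular a critical point of $\varphi_x$, so by \eqref{critpoint} we have $x^+\in\O\cap\z(x)$, i.e. $[x,x^+]=0$ and hence $x\in\z(x^+)=\t'$. Now apply Proposition~\ref{maximumorbit} with the torus $T'$, the vector $x\in\t'$ and the point $x^+\in\O\cap\t'$: since $x^+$ is a maximizer there is a closed chamber containing both $x$ and $x^+$, and because $x^+$ is regular the only chamber whose closure contains it is $\bar C$. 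Therefore $x\in\bar C$ for every $x\in S$, so $S\subseteq\bar C$.

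For the converse, suppose $S\subseteq\bar C$ for a closed Weyl chamber of some Cartan subalgebra $\t$. By regularity of $\lambda$ the set $\O\cap\t$ is the regular Weyl orbit $\W.\lambda$, on which $\W$ acts simply transitively, so each closed chamber contains exactly one of its points; let $m^+$ be the one in $\bar C$ and $m^-$ the one in $-\bar C$ (note $-\bar C$ is again a Weyl chamber, since $x\mapsto -x$ permutes the root hyperplanes). Proposition~\ref{maximumorbit} gives $m^+\in\argmax_{\O}(\varphi_x)$ for every $x\in S\subseteq\bar C$, and applied to $-x\in-\bar C$ it gives $m^-\in\argmax_{\O}(\varphi_{-x})=\argmin_{\O}(\varphi_x)$. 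Both intersections are nonempty, so Corollary~\ref{arghofer} yields the common norming functional. The final assertion then follows: by Proposition~\ref{maxfacemaxmin} each cone generated by a maximal face equals $C_{x^-,x^+}(\O)$ for suitable $x^\pm\in\O$, and the computation above identifies $\{x:x^+\in\argmax_{\O}(\varphi_x)\}$ with the closed chamber determined by the regular point $x^+$; intersecting with the (thereby determined) minimizer condition leaves exactly that closed chamber.

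The main obstacle is the forward direction, where regularity is used twice: first through \eqref{critpoint}, to force every $x\in S$ into the single Cartan subalgebra $\z(x^+)$, and then through the regularity of $x^+$ inside $\t'$, to conclude via Proposition~\ref{maximumorbit} that only one closed chamber can accommodate $x^+$ as a maximizer. Some care is also needed to verify that $-\bar C$ is a genuine Weyl chamber and that the minimizer data is consistent (one may harmlessly assume the vectors of $S$ are nonzero, as $0$ imposes no constraint), but these points are routine once the uniqueness coming from regularity is established.
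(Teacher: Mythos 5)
Your proof is correct and follows essentially the same route as the paper's: reduce via Corollary \ref{arghofer} to the existence of a common maximizer and minimizer on $\O$, use the criticality condition (\ref{critpoint}) and regularity of $x^+$ to force $S$ into the Cartan subalgebra $\z(x^+)$, and then apply Proposition \ref{maximumorbit} together with the opposite chamber to identify the cone with $\overline{C}$. The only cosmetic difference is that you make explicit the uniqueness of the chamber containing the regular maximizer and the verification that $-\overline{C}$ is again a chamber, both of which the paper leaves implicit.
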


\begin{proof}
By Corollary \ref{arghofer} a set of elements $S\subseteq\k$ have the same norming functional if and only if the set $\{\varphi_u:u\in S\}$ has a common maximizer $x^+\in \O$ and a common minimizer $x^-\in\O$. 

We denote by $\t_+$ the Weyl chamber that contains $x^+$, it is unique since $x^+$ is regular. If $u\in S$, then the functional $\varphi_u$ has a maximum at $x^+$, hence by equation (\ref{critpoint}) $u$ commutes with $x$ and therefore $u\in\t$.  By Proposition \ref{maximumorbit} we conclude that $u\in\t_+$ if and only if the functional $\varphi_u$ has a maximum at $x^+$. Let us denote by $x'\in\k$ the element defined by $\{x'\}=\O\cap -\t_+$ , that is, the element of $\O$ in the opposite Weyl chamber. A functional $\varphi_u$ has a minimum at $x'$ if and only if $\varphi_{-u}$ has a maximum at $x'$. This holds if and only if $-u$ and $x'$ belong to the same Weyl chamber, which is equivalent to $u\in\t_+$. 
Hence, $\varphi_u$ has a maximum at $x^+$ and $\varphi_u$ has a minimum at $x'$, are both equivalent to $u\in\t_+$. If we take $x^-=x'$ as the common minimizer we get a maximal face which is $\t_+$.    

If $\t_+$ is a Weyl chamber we denote by $x^+$ and $x^-$ the elements defined by $\{x^+\}=\O\cap \t_+$ and $\{x^-\}=\O\cap -\t_+$. Then we can reverse the argument in the previous paragraph to conclude that the functionals $\varphi_u$ with $u\in \t_+$ have $x^+\in \O$ as maximizer and $x^-\in\O$ as minimizer.  
\end{proof}

From this characterization of the faces of the unit ball in $\k$ we can determine the geodesics in $K$. Let $\gamma:[a,b]\to K$ be a curve in a group $K$ endowed with the length structure obtained from Hofer's norm $\|\cdot\|_{\O}$ for a regular (co)adjoint orbit $\O$. As a combination of the previous proposition and Theorem \ref{cuasi}, we obtain

\begin{thm}\label{speedchamber}
If $\gamma$ is short then all its logarithmic derivatives are contained in the same Weyl chamber $\t_+$ (given by a choice of torus and positive simple roots). If its derivatives are contained in a Weyl chamber, then $\gamma$ is locally short (in each interval of $\length \le R$, where $R$ is the injectivity radius of the norm).
\end{thm}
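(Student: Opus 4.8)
The plan is to simply combine the characterization of short paths in Theorem \ref{cuasi} with the description of the faces of the unit ball of $\|\cdot\|_{\O}$ furnished by Proposition \ref{speedweylchamber}. The asserted statement is the conjunction of two ``if and only if'' results, so no genuinely new analytic content is needed; the work is in checking that the notion of ``common norming functional'' appearing in Theorem \ref{cuasi} is exactly the one characterized by Proposition \ref{speedweylchamber} (namely, the logarithmic speeds lying in the cone generated by a single face, which for the regular orbit norm is precisely a closed Weyl chamber, via Corollary \ref{arghofer}).

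For the forward implication, suppose $\gamma$ is short. Theorem \ref{cuasi} produces a \emph{single} unit norm functional $\varphi$ with $\varphi(\gamma_t^{-1}\dot{\gamma}_t)=\|\gamma_t^{-1}\dot{\gamma}_t\|$ for almost every $t$. Since $\gamma$ is piecewise $C^1$, on each $C^1$ piece the maps $t\mapsto \gamma_t^{-1}\dot{\gamma}_t$, $t\mapsto\varphi(\gamma_t^{-1}\dot{\gamma}_t)$ and $t\mapsto\|\gamma_t^{-1}\dot{\gamma}_t\|$ are continuous, so the equality, holding a.e., in fact holds at every point of differentiability (at $t$ where the derivative vanishes it is trivially satisfied). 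Thus $\varphi$ is a \emph{common} norming functional for the whole family $\{\gamma_t^{-1}\dot{\gamma}_t\}_t$ of logarithmic derivatives. By Proposition \ref{speedweylchamber}, a set of vectors sharing a common norming functional is contained in a single closed Weyl chamber $\t_+$; hence all logarithmic derivatives lie in $\t_+$. Because the norm is $\Ad$-invariant, the identical argument applies verbatim to the right logarithmic derivatives $\dot{\gamma}_t\gamma_t^{-1}$.

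For the reverse implication, assume all logarithmic derivatives lie in a common Weyl chamber $\t_+$. Proposition \ref{speedweylchamber} then guarantees a common norming functional $\varphi$ for the family $\{\gamma_t^{-1}\dot{\gamma}_t\}_t$, that is $\varphi(\gamma_t^{-1}\dot{\gamma}_t)=\|\gamma_t^{-1}\dot{\gamma}_t\|$ for all $t$. This is exactly the hypothesis required by the converse half of Theorem \ref{cuasi}, which yields that $\gamma$ is short on every subinterval of length at most $R$, giving local shortness and completing the proof.

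The only points demanding care, and hence the main (minor) obstacle, are the passage from ``almost all $t$'' to ``all $t$'' via the continuity argument above, and the verification that the single functional delivered by Theorem \ref{cuasi} is genuinely common to the entire family of speeds rather than merely pointwise-norming; both reduce to the observation that the speeds lie in the cone generated by a single face, which for the regular coadjoint norm coincides with a closed Weyl chamber by Proposition \ref{speedweylchamber} and Corollary \ref{arghofer}.
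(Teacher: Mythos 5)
Your proposal is correct and follows exactly the route the paper takes: the paper itself states Theorem \ref{speedchamber} as an immediate combination of Proposition \ref{speedweylchamber} (faces of the ball of $\|\cdot\|_{\O}$ generate Weyl chambers, i.e.\ a common norming functional is equivalent to lying in a common chamber) with the characterization of short paths in Theorem \ref{cuasi}, giving no further details. The two points you flag as needing care (upgrading ``almost all $t$'' to ``all $t$'' by continuity and closedness of the chamber, and the fact that Theorem \ref{cuasi} supplies a single functional for the whole family of speeds) are handled correctly and are consistent with the paper's reading of its own results.
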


\begin{ex}[$K=\SU(n)$ and the non-crossing of eigenvalues]
Consider the case of $\SU(n)$ acting on a regular (co)adjoint orbit $\O_{\lambda}$ containing  $\lambda=i\diag(\lambda_1,\dots,\lambda_n)$, with $\lambda_1<\dots<\lambda_n$. Recall that here we obtain in $\k$ the $\lambda$-numerical radius as Hofer norm (Example \ref{sun}). The local condition for the logarithmic derivatives to be the speeds of geodesics is that there exist a fixed orthonormal basis such that the speeds are simultaneously diagonal in this basis for all $t$ (Corollary \ref{corabelian}) and if $x_i(t)$ are the eigenvalues of $x_t=\dot{\gamma}_t\gamma^{-1}_t$, then 
$$
x_1(t)\leq\dots\leq x_n(t) \qquad \forall t\in [a,b].
$$
This is because in $\SU(n)$ the condition of being in the same Weyl chamber is given by the non-crossing of the eigenvalues.
%This is because...{\color{red} no termino de ver por que la camara de Weyl aqui es donde no se cruzan los autovalores...justificar mejor}
\end{ex}

\begin{rem}
The Hofer norm associated to \textit{singular} (co)adjoint orbits can be studied using the following result on maximizers and minimizers of linear functionals restricted to the orbits, see \cite[Lemma 22]{bgh}. Let $Z(x)$ be the centraliser of $x$ in $K$ and let $F_x(\O)$ be  the face of $\O$ defined by $x$. Then
\begin{itemize}
    \item $\argmax(\varphi_x)$ is a $Z(x)$-orbit,
    \item $\ext(F_x(\conv(\O))=\argmax(\varphi_x)$, so $\ext(F_x(\conv(\O))$ is a $Z(x)$-orbit.
    \item $F_x(\conv(\O))\subseteq \z(x)$.
\end{itemize}
\end{rem}

\subsection{Invariant norms with abelian faces}\label{ssabelianface}

Let $K$ be a compact connected semi-simple Lie group and let $\langle\cdot,\cdot\rangle$ be as usual the opposite Killing form of $\k^\C$ restricted to $\k$. Let $\Delta$ be the set of (real) roots of $\k$ with respect to a fixed Cartan subalgebra $\t $, as in Remark \ref{rootdecom}. For $x\in\t$ we have 
\begin{align}\label{centralizer}
\z(x) & =\t\oplus\bigoplus_{\alpha\in\Delta_+,\alpha(x)=0}Z_\alpha,
\end{align}
here  $\z(x)$ is the Lie algebra of $Z(x)$ as usual (the centralizer of $x\in \k$). We define the \textit{smallest Weyl chamber wall containing} $x$ as the linear space 
$$
W_x=\bigcap_{\{\alpha\in\Delta_+ : \alpha(x)=0\}}\ker(\alpha).
$$

\begin{thm}\label{abelianfaces}
Let $K$ be a semi-simple compact connected group and let $B$ be an $\Ad$-invariant convex body in $\k$ containing $0$. Let $\t$ be a Cartan algebra in $\k$ and $\t_+$ a positive Weyl chamber. Then all faces of $B$ are abelian if and only if for all $x\in\t^+$ we have
$$
F_x(B\cap\t)\cap\t_+\subseteq W_x,
$$ 
and in this case 
$F_x(B)=F_x(B\cap\t)\cap\t_+.$
\end{thm}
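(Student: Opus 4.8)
The plan is to reduce everything to exposed faces of the form $F_x(B)$ with $x\in\t_+$ nonzero and then feed them through the description supplied by Theorem~\ref{lewisfaces}, namely $F_x(B)=\Ad_{Z(x)}(F_x(B\cap\t)\cap\t_+)$. Every face of $B$ equals $F_{x'}(B)$ for some nonzero $x'\in\k$, and $x'$ is $\Ad$-conjugate to an element of $\t_+$; since $B$ is $\Ad$-invariant and each $\Ad_g$ is a Lie algebra automorphism (hence preserves brackets), a face is abelian precisely when the associated face with direction in $\t_+$ is abelian. Thus it suffices to treat $F_x(B)$ for nonzero $x\in\t_+$.

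The first ingredient I would record is that $W_x$ coincides with the center of the reductive algebra $\z(x)=\t\oplus\bigoplus_{\alpha\in\Delta_+,\,\alpha(x)=0}Z_\alpha$ of \eqref{centralizer}. Indeed, by the relations \eqref{weights} an element $h\in\t$ commutes with $Z_\beta$ exactly when $\beta(h)=0$, so the elements of $\t$ central in $\z(x)$ are precisely those annihilated by every root $\beta$ with $\beta(x)=0$, i.e. the elements of $W_x$; and since $\t\subseteq\z(x)$, any central element must already lie in $\t$. The second ingredient is that $Z(x)$ is connected, being the centralizer in a compact connected group of the torus $\overline{\exp(\R x)}$. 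Hence for $s\in W_x$ and $\xi\in\z(x)$ we have $\Ad_{\exp\xi}(s)=e^{\ad\xi}s=s$, and because $\Ad$ is a homomorphism and $Z(x)$ is generated by such exponentials, $\Ad_{Z(x)}$ fixes $W_x$ pointwise.

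For the implication from the polytope condition to abelianity, suppose $S:=F_x(B\cap\t)\cap\t_+\subseteq W_x$ for every nonzero $x\in\t_+$. Then $\Ad_{Z(x)}$ fixes $S$ pointwise, so Theorem~\ref{lewisfaces} yields $F_x(B)=\Ad_{Z(x)}(S)=S\subseteq\t$, which is abelian because $\t$ is; this is at once the equality $F_x(B)=F_x(B\cap\t)\cap\t_+$ of the ``in this case'' clause and, after conjugation, the statement that every face of $B$ is abelian.

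For the converse I would argue by contraposition. If $S\not\subseteq W_x$ for some nonzero $x\in\t_+$, choose $s\in S\setminus W_x$; then there is $\beta\in\Delta_+$ with $\beta(x)=0$ but $\beta(s)\neq0$, whence $Z_\beta\subseteq\z(x)$ and $g_t:=\exp(tu_\beta)\in Z(x)$. Both $s$ and $\Ad_{g_t}(s)$ belong to $F_x(B)=\Ad_{Z(x)}(S)$, while \eqref{weights} gives $\Ad_{g_t}(s)=s+t\beta(s)v_\beta+O(t^2)$ and therefore $[s,\Ad_{g_t}(s)]=t\,\beta(s)^2\,u_\beta+O(t^2)$, which is nonzero for small $t\neq0$. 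Thus $F_x(B)$ contains a non-commuting pair and is not abelian. The main obstacle is the second ingredient of the second paragraph: recognizing $W_x$ as the center of $\z(x)$ and using the connectedness of $Z(x)$ to make $\Ad_{Z(x)}$ act trivially there, since this is exactly what collapses the general Lewis formula $F_x(B)=\Ad_{Z(x)}(F_x(B\cap\t)\cap\t_+)$ to the clean equality $F_x(B)=F_x(B\cap\t)\cap\t_+$; the bracket computations are then routine.
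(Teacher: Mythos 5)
Your proof is correct and takes essentially the same route as the paper's: both rest on Theorem \ref{lewisfaces} together with the observation that $Z(x)$ acts trivially on $W_x$ (so the Lewis formula collapses to $F_x(B)=F_x(B\cap\t)\cap\t_+$), and both refute abelianity in the converse by extracting a root $\beta$ with $\beta(x)=0$ and $\beta(s)\neq 0$ and exploiting $u_\beta,v_\beta$. The only difference is cosmetic: the paper argues infinitesimally, showing $u_\beta,v_\beta$ lie in the (necessarily non-abelian) tangent space of the orbit $\Ad_{Z(x)}(s)$, whereas you exhibit the non-commuting pair $s,\ \Ad_{\exp(tu_\beta)}s$ directly and are slightly more explicit about the connectedness of $Z(x)$ and the identification of $W_x$ with the centre of $\z(x)$.
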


\begin{proof}
We assume for simplicity that $x\in\t^+$. Theorem \ref{lewisfaces} states that 
$$F_x(B)=\Ad_{Z(x)}(F_x(B\cap\t)\cap\t_+).$$ 

Next we show that this set is abelian when the inclusion stated in the theorem holds. If $x$ is in the interior of the Weyl chamber then $Z(x)$ is trivial and $F_x(B)=F_x(B\cap\t)\cap\t_+$, hence the face is abelian. If $x$ is not regular, and hence $Z(x)$ is not trivial, then by (\ref{weights}) and (\ref{centralizer}) the centralizer $Z(x)$ acts trivially on $W_x$. Therefore, the inclusion $F_x(B\cap\t)\cap\t_+\subseteq W_x$ implies that $F_x(B)=\Ad_{Z(x)}(F_x(B\cap\t)\cap\t_+)=F_x(B\cap\t)\cap\t_+$, so the face is abelian. Note also that the last assertion of the theorem follows from this argument.

If the inclusion in the statement of the theorem is not satisfied there exists $y\in F_x(B\cap\t)\cap\t_+$ such that $y\notin W_x$; it follows that there exists $\alpha$ such that $\alpha(y)\neq 0$ and $\alpha(x)=0$. Consider the orbit $\Ad_{Z(x)}(y)\subseteq \k$ as a submanifold with its differentiable structure, and observe that 
$$
[\z(x),y]\subseteq T_y(\Ad_{Z(x)}(y)).
$$
By (\ref{centralizer}), we have $v_\alpha,u_\alpha\in\z(x)$;  the equations $[y,u_\alpha]=-\alpha(y)v_\alpha$ and $[y,v_\alpha]=\alpha(y)u_\alpha$ hold by (\ref{weights}), therefore we conclude that 
$$\{u_\alpha,v_\alpha\}\subseteq T_y(\Ad_{Z(x)}(y)).$$ 
Since $[u_\alpha,v_\alpha]=h_\alpha\neq 0$ the tangent $T_y(\Ad_{Z(x)}(y))$ is not an abelian space, so that $\Ad_{Z(x)}(y)$ cannot be an abelian set. The inclusion $\Ad_{Z(x)}(y)\subseteq \Ad_{Z(x)}(F_x(B\cap\t)\cap\t_+)=F_x(B)$ implies that $F_x(B)$ is not an abelian set.   
\end{proof}

\begin{rem}\label{maximalabelianfaces}
From the previous theorem we know that if the faces of $B$ are abelian then 
$$F_x(B)=F_x(B\cap\t)\cap\t_+$$
for a Weyl chamber $\t_+$ given by a choice of torus and positive simple roots such that $x\in\t_+$. Therefore the balls of the norms studied in Section \ref{subsectionspeedweyl} have maximal commuting cones generated by faces. 
\end{rem}

\subsection{Polytopes with regular extreme points}\label{ssabelianfacepolytope}

The aim of this section is to prove the following theorem. Note that an element $x\in \k$ is regular if and only if $\alpha(x)\neq 0$ for all $\alpha\in\Delta_+$.

\begin{thm}\label{abelian}
Let $K$ be a semi-simple compact connected group, let $B$ be an $\Ad$-invariant convex body in $\k$ containing $0$, such that $B\cap\t=P^\circ$ is a polytope. Then all faces of $B$ are abelian if and only if all extreme points of $P$ are regular.
\end{thm}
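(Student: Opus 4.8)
The plan is to combine the face criterion of Theorem~\ref{abelianfaces} with polytope polar duality. By Theorem~\ref{abelianfaces}, since $B\cap\t=P^\circ$, all faces of $B$ are abelian if and only if
$$
F_x(P^\circ)\cap\t_+\subseteq W_x\qquad\text{for all } x\in\t_+,
$$
so the whole task reduces to showing this holds for every $x$ exactly when every extreme point of $P$ is regular. First I would set up the dictionary between the two polytopes. Writing $\ext(P)=\{y_1,\dots,y_m\}$ (a Weyl-invariant set by Remark~\ref{polnorm}), Remark~\ref{polar}(6) gives $P^\circ=\{z:\langle y_i,z\rangle\le 1,\ i=1,\dots,m\}$, so the facets of $P^\circ$ are exactly the sets $F_{y_i}(P^\circ)=\{z\in P^\circ:\langle y_i,z\rangle=1\}$, each with outer normal the vertex $y_i$. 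For $x\in\t_+$ one has $F_x(P^\circ)=\argmax_{z\in P^\circ}\langle z,x\rangle$, and a boundary point $y\in\bd P^\circ$ lies in $F_x(P^\circ)$ if and only if $x$ belongs to the normal cone $N_{P^\circ}(y)=\mathrm{cone}\{y_i:\langle y_i,y\rangle=1\}$; equivalently $x=\sum_{i\in I}c_iy_i$ with $c_i\ge 0$ and $I=\{i:\langle y_i,y\rangle=1\}$ the \emph{active} vertices at $y$. I would also record that $y\in W_x$ is equivalent to $\z(x)\subseteq\z(y)$, i.e.\ to $Z(x)$ fixing $y$, using \eqref{weights} and the description of $W_x$.

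For the implication \textit{all $y_i$ regular $\Rightarrow$ the inclusion holds}, take $x\in\t_+$, $y\in F_x(P^\circ)\cap\t_+$ and a positive root $\alpha$ with $\alpha(x)=0$; I must show $\alpha(y)=0$. The clean case is $y$ regular: then every active vertex $y_i$ maximizes $\langle\cdot,y\rangle$ over its Weyl orbit, which for regular dominant $y$ forces $y_i\in\t_+$ and hence $\alpha(y_i)>0$ by regularity of $y_i$; summing against the $c_i\ge 0$ yields $\alpha(x)>0$, contradicting $\alpha(x)=0$. Thus no violation of the inclusion occurs with $y$ regular. The genuine work is when $y$ lies on walls $\ker\gamma$ (with $\gamma(y)=0$), where active vertices may be $\W$-conjugate under the stabilizer and carry both signs of $\alpha$. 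Here I would pass to the sub-root system $\Delta_y=\{\beta\in\Delta:\beta(y)=0\}$ and the stabilizer $W_y=\{w\in\W:w.y=y\}$: the active set $I$ is $W_y$-invariant (each $s_\gamma$ fixes $y$ and permutes the maximizers of $\langle\cdot,y\rangle$), the relevant data lives in the Levi subalgebra $\z(y)$, and any $y_i$ regular for $\Delta$ is a fortiori regular for $\Delta_y$. Reducing to this lower-rank situation and iterating until the element becomes regular for the ambient subsystem closes the argument.

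For the converse I argue contrapositively: assume some extreme point $v$ of $P$ is singular. By Weyl invariance of $\ext(P)$ I may take $v\in\t_+$, and since $v$ is dominant and killed by some positive root, there is a \emph{simple} root $\alpha$ with $\alpha(v)=0$. Put $x=v$. The dual facet $G:=F_v(P^\circ)=\{z\in P^\circ:\langle v,z\rangle=1\}$ has dimension $\dim\t-1$ and, since $s_\alpha v=v$ and $P^\circ$ is Weyl invariant, is invariant under $s_\alpha$; as $G$ lies in the affine hyperplane $\{\langle v,\cdot\rangle=1\}$, which does not pass through $0$, it cannot be contained in the linear hyperplane $\ker\alpha$, and therefore not in the proper subspace $W_v\subseteq\ker\alpha$. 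Hence $G$ has points off $W_v$, occurring in $s_\alpha$-mirror pairs; using the stabilizer $\langle s_\beta:\beta(v)=0\rangle$, which preserves $G$ and fixes $W_v$ pointwise, I would move such a point into the closed chamber to produce $y\in G\cap\t_+$ with $y\notin W_v=W_x$. This gives $F_x(P^\circ)\cap\t_+\not\subseteq W_x$, so by Theorem~\ref{abelianfaces} some face of $B$ is non-abelian.

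The main obstacle in both directions is the same: reconciling the intersection with the closed chamber $\t_+$ against the Weyl-stabilizer symmetry at singular points. In the forward direction one must guarantee that the ``bulge'' of $G$ off $W_v$ survives after intersecting with $\t_+$ (and not merely with the larger sub-dominant cone cut out by $\Delta_v$); in the backward direction one must rule out a pathological face $F_x(P^\circ)\cap\t_+$ that lies on a wall $\ker\gamma$ yet off $\ker\alpha$. I expect the cleanest way to dispatch both is the Levi reduction to the sub-root system fixing the singular element, after verifying that regularity of the vertices of $P$ is inherited by the subsystem, so that the easy regular case serves as the base of an induction on rank.
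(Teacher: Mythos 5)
Your reduction to the condition $F_x(P^\circ)\cap\t_+\subseteq W_x$ of Theorem \ref{abelianfaces}, and the dictionary via active vertices and normal cones, match the paper's strategy; but both directions stop exactly at the points you yourself flag as ``the main obstacle'', and that is where the actual content lies. In the forward direction, your clean case only proves that a regular $y$ in the face forces $x$ to be regular, i.e.\ that a singular $x$ admits no regular $y$ in $F_x(P^\circ)\cap\t_+$; it does not show that such a $y$ vanishes on the \emph{same} roots as $x$, which is what $y\in W_x$ demands. The deferred ``Levi reduction and induction on rank'' is not carried out, and it is not what the paper does: Lemma \ref{lemregext} settles this case in one stroke by writing the face-defining point (scaled into $\bd P$) as a convex combination of vertices of $P$ whose active set is symmetric under the reflection $r_u$ fixing that point, and observing that for an active vertex $y_1\notin M_u$ the nonzero vector $y_1-r_u(y_1)$ is orthogonal to the mirror and is annihilated by the supporting functional, forcing that functional into $M_u$. (Your setup could be completed without any induction by noting that each active vertex lies in $\Stab(y)\cdot\t_+$ and that $\Stab(y)$ permutes the positive roots not vanishing on $y$, so $\beta(y)>0$ forces $\beta(y_i)>0$ for every active vertex and hence $\beta(x)>0$; but as written this step is missing.)

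In the converse the gap is sharper: from $G=F_v(P^\circ)\not\subseteq W_v$ you must produce a point of $G\cap\t_+$ outside $W_v$, and ``moving such a point into the closed chamber using the stabilizer'' does not achieve this --- the orbit $\Stab(v)\cdot z$ need not meet $\t_+$, while averaging $z$ over $\Stab(v)$ produces a $\Stab(v)$-fixed point, which lies in every mirror of $\Stab(v)$ and hence \emph{inside} $W_v$. A priori $G\cap\t_+$ could be entirely contained in $W_v$ even though $G$ is not. The paper's Lemma \ref{lemnonregext} closes this by averaging the \emph{strictly exposing functional} of the singular vertex $y_1$ over $\Stab(y_1)$ (the average still strictly exposes $y_1$), verifying $\langle x',u\rangle>0$ for every root $u$ with $y_1\notin M_u$ via the identity $y_1-r_u(y_1)=d\cdot u$ with $d>0$, and then perturbing $x'$ into the open chamber; the resulting regular $x'\in F_{y_1}(P^\circ)\cap\t_+$ cannot lie in the proper subspace $W_{y_1}$. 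An argument of this kind is needed to finish your converse.
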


For the proof of this theorem we need a couple of preliminary lemmas on polytopes invariant under finite reflection groups.  These lemmas will be later applied to the Hofer norm polytopes defined in Definition \ref{hoferpolytopes}, which are polytopes invariant under the Weyl group. 

\begin{figure}[h]
\def\svgwidth{6.5cm}
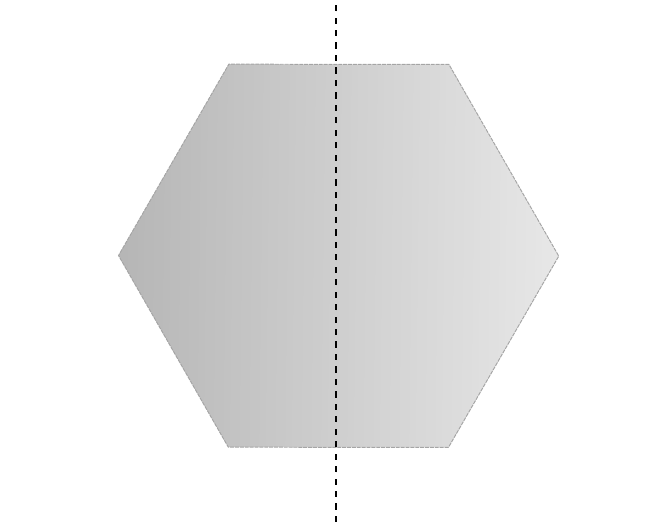
\caption{\small{Mirror hyperplane and extreme points of a polytope $P$.}}
\label{fig: refl}
\end{figure}

\begin{defn}[Finite reflection groups]\label{frl}
Let $\W$ be a finite reflection group acting isometrically on an inner product vector space $(V,\langle \cdot,\cdot\rangle)$ and generated by reflections $\{r_u\}_{u\in\Phi}$. Here $r_u$ is the refection which fixes the mirror hyperplane $M_u=\{u\}^\perp$ and $\Phi$ is a finite subset of the unit sphere that is called the positive root system. For a finite reflection group the space can be subdivided into chambers bounded by mirror hyperplanes. We can choose any (closed) chamber $C$ and call it the fundamental chamber. Given a positive root system $\Phi$, a fundamental chamber is given by
$$
C=\{x\in V:\langle x,u\rangle\geq 0 \mbox{ for all }u\in\Phi\}.
$$
Let $P$ be a polytope with extreme points $\ext(P)=\{y_1,\dots,y_m\}$ and assume that $P$ is invariant under $\W$.
\end{defn}

\begin{lem}\label{lemregext}
Let $\Pi\subseteq\Phi$ and assume that $\cup_{u\in\Pi}M_u\cap\{y_1,\dots,y_m\}=\emptyset$. If $H_x$ supports $P$ at $y$ and $y\in\cap_{u\in\Pi}M_u$ it follows that $x\in\cap_{u\in\Pi}M_u$.
\end{lem}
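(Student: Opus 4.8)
The plan is to reduce everything to a single root: I fix $u\in\Pi$ and prove $\langle x,u\rangle=0$, and since $u\in\Pi$ is arbitrary this yields $x\in\bigcap_{u\in\Pi}M_u$. First I would unwind the hypothesis. Saying that $H_x$ supports $P$ at $y$ means (Definition \ref{defih} together with Theorem \ref{supportdual}) that $y$ realizes the maximum of $\varphi_x$ over $P$; that is, $y\in F_x(P)=\argmax_P\varphi_x$ and $\langle y,x\rangle=h_P(x)$. The exact value $h_P(x)$ will not matter, only that $y$ is a maximizer.

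Next I would extract the combinatorial input. Since $F_x(P)$ is a face of the polytope $P$, its extreme points are extreme points of $P$, so I may write $y=\sum_j\lambda_j y_{i_j}$ with $\lambda_j>0$, $\sum_j\lambda_j=1$, and $y_{i_j}\in\ext(F_x(P))\subseteq\ext(P)$. The standing assumption $\bigcup_{u\in\Pi}M_u\cap\ext(P)=\emptyset$ guarantees $\langle y_{i_j},u\rangle\neq0$ for every $j$. On the other hand $y\in M_u$ gives $0=\langle y,u\rangle=\sum_j\lambda_j\langle y_{i_j},u\rangle$, a vanishing convex combination of nonzero reals; hence the values $\langle y_{i_j},u\rangle$ must occur with both signs, i.e. there are indices $a,b$ with $\langle y_{i_a},u\rangle>0$ and $\langle y_{i_b},u\rangle<0$.

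The heart of the argument is a reflection estimate. Because $u\in\Phi$ we have $r_u\in\W$, so $r_u(P)=P$; moreover $r_u$ is a self-adjoint orthogonal involution with $r_u(x)=x-2\langle x,u\rangle u$. For any extreme point $y_{i_j}\in F_x(P)$ I compute, using self-adjointness of $r_u$,
$$\langle r_u(y_{i_j}),x\rangle=\langle y_{i_j},r_u(x)\rangle=\langle y_{i_j},x\rangle-2\langle x,u\rangle\langle y_{i_j},u\rangle=h_P(x)-2\langle x,u\rangle\langle y_{i_j},u\rangle.$$
Since $r_u(y_{i_j})\in P$, the left-hand side is at most $h_P(x)$, which forces $\langle x,u\rangle\langle y_{i_j},u\rangle\ge0$. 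Applying this to $y_{i_a}$ gives $\langle x,u\rangle\ge0$ and to $y_{i_b}$ gives $\langle x,u\rangle\le0$, so $\langle x,u\rangle=0$; this is the desired conclusion for the fixed $u$.

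I expect the only genuinely delicate step to be the sign dichotomy. The whole argument hinges on knowing that the maximizing face $F_x(P)$ is the convex hull of extreme points of $P$ and that the hypothesis excludes precisely these from each $M_u$, for this is what upgrades $\langle y,u\rangle=0$ into the coexistence of strictly positive and strictly negative values $\langle y_{i_a},u\rangle$, $\langle y_{i_b},u\rangle$, and thereby makes the two-sided reflection inequality pinch $\langle x,u\rangle$ to zero. Once the reflection identity $r_u(x)=x-2\langle x,u\rangle u$ and the invariance $r_u(P)=P$ are in place, the remaining computations are routine.
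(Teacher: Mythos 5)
Your proof is correct. Every step checks out: $y$ lies in the exposed face $F_x(P)=\argmax_P\varphi_x$, whose extreme points are extreme points of $P$ and hence (by hypothesis) avoid $M_u$; the identity $0=\langle y,u\rangle=\sum_j\lambda_j\langle y_{i_j},u\rangle$ with all $\lambda_j>0$ and all $\langle y_{i_j},u\rangle\neq 0$ indeed forces both signs to occur; and since $r_u(P)=P$, reflecting a maximizer cannot increase $\varphi_x$, which gives $\langle x,u\rangle\langle y_{i_j},u\rangle\ge 0$ and pinches $\langle x,u\rangle$ to zero. The route differs from the paper's in the key mechanism: the paper first symmetrizes the convex combination under $r_u$ so that the supporting set of extreme points is $r_u$-stable, concludes that $y_1$ and $r_u(y_1)$ are \emph{both} maximizers of $\varphi_x$, and then reads off $\langle x, y_1-r_u(y_1)\rangle=0$ with $y_1-r_u(y_1)$ a nonzero multiple of $u$. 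You avoid the symmetrization entirely and never need $r_u(y_{i_j})$ to be a maximizer — only that it stays in $P$ — trading the paper's equality argument for a one-sided inequality plus the sign dichotomy extracted from $\langle y,u\rangle=0$. Your version is slightly more economical (no re-choice of the convex combination) and produces the reusable intermediate fact that $\langle x,u\rangle\langle z,u\rangle\ge 0$ for every extreme point $z$ of the maximizing face; the paper's version makes the geometric picture (pairs of extreme points mirrored across $M_u$, both on the supporting hyperplane) more explicit.
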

\begin{proof}
Assume $P$ is invariant under a reflection $r_u$ and the mirror hyperplane does not contain any extreme point of $P$, i.e. $M_u\cap\{y_1,\dots,y_m\}=\emptyset$, see Figure \ref{fig: refl}. Then, if $H_x$ is a hyperplane that supports $P$ at an $y\in M_u$  we claim that $x\in M_u$. To prove this we write $y$ as a convex combination of the $y_1,\dots,y_m$, i.e. $y=\lambda_1y_1+\dots\lambda_ny_n$, with $n\le m$, $\lambda_1,\dots,\lambda_n>0$, and $\sum_{i=1}^n\lambda_i=1$. If we write 
$$
y=\frac{1}{2}y+ \frac{1}{2}r_u(y)=\frac{1}{2}(\lambda_1y_1+\dots\lambda_ny_n)+\frac{1}{2}r_u(\lambda_1y_1+\dots\lambda_ny_n),
$$
we see that we can assume that $y=\lambda_1y_1+\dots\lambda_ny_n$, with $\lambda_1,\dots,\lambda_n>0$, $\sum_{i=1}^n\lambda_i=1$ and $r_u(y_j)\in\{y_1,\dots,y_n\}$ for $i=1,\dots,n$. Since $H_x$ supports $P$ at $y\in M_u$ we get $\varphi_x(y)=1$ and $\varphi_x(y_1),\dots\varphi_x(y_m)\leq 1$. From $y=\lambda_1y_1+\dots+\lambda_ny_n$, with $\lambda_1,\dots,\lambda_n>0$, $\sum_{i=1}^n\lambda_i=1$, it follows that $\varphi_x(y_1),\dots, \varphi_x(y_n)= 1$. Since $y_1\notin M_u$ we know that $r_u(y_1)\in\{y_2,\dots,y_n\}$ so that $y_1-r_u(y_1)$ is non zero. Since $y_1-r_u(y_1)$ is orthogonal to  the mirror $M_u$ and $\varphi_x(y_1-r_u(y_1))=1-1=0$ we conclude that $x$ is orthogonal to $y_1-r_u(y_1)$ and is therefore contained in $M_u$. The Lemma follows by applying this result to each of the mirrors $\{M_u\}_{u\in\Pi}$.
\end{proof}

\begin{lem}\label{lemnonregext}
Assume that there is an extreme point $y_1$ of $P$ such that $y_1\in C$ and $y_1$ is contained in a mirror. Then there is a supporting hyperplane $H_{x'}$ of $P$ at $y_1$ such that $x'$ is in the interior of the fundamental chamber.
\end{lem}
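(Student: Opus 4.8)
The plan is to produce the required functional by exhibiting a regular (i.e. off-mirror) vector inside the cone of outward normals to $P$ at $y_1$, and then replacing it by its chamber representative. Introduce the normal cone
$$
N(y_1)=\{x\in V:\langle x,y_1\rangle=\max_{z\in P}\langle x,z\rangle\}=\{x\in V:y_1\in F_x(P)\},
$$
so that saying $H_x$ supports $P$ at $y_1$ is exactly saying $x\in N(y_1)$. In the situation where this lemma is applied (Theorem \ref{abelian}) the ambient space is $V=\t$, and $P=(B\cap\t)^\circ$ is full dimensional with $0$ in its interior, since $B\cap\t=p(B)$ is a convex body by Remark \ref{proyecartan}. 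Because $y_1$ is a vertex (extreme point) of the full dimensional polytope $P$, the normal cone $N(y_1)$ is full dimensional, i.e. $\interior N(y_1)\neq\emptyset$.

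Next I use that a vector is regular exactly when it avoids the finite union $\bigcup_{u\in\Phi}M_u$ of mirror hyperplanes, so the regular vectors form a dense open subset of $V$. Since $\interior N(y_1)$ is a nonempty open set and $\bigcup_{u\in\Phi}M_u$ is nowhere dense, I can choose a regular vector $x_1\in \interior N(y_1)$. Let $w\in\W$ be any element with $x'=w\,x_1$ in the closed fundamental chamber $C$ (this exists because $C$ is a fundamental domain). Regularity is $\W$-invariant, because $\W$ permutes the mirrors, so $x'$ is regular as well; being in $C$ means $\langle x',u\rangle\ge 0$ for all $u\in\Phi$, and regularity upgrades this to $\langle x',u\rangle>0$ for all $u\in\Phi$, that is $x'\in\interior C$. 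It only remains to verify that $x'$ still supports $P$ at $y_1$, i.e. that $x'\in N(y_1)$.

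This last verification is the crux, and it is where the hypothesis $y_1\in C$ is used. For every $z\in P$ the $\W$-invariance of $P$ gives $w^{-1}z\in P$, whence
$$
\langle x',z\rangle=\langle x_1,w^{-1}z\rangle\le \max_{p\in P}\langle x_1,p\rangle=\langle x_1,y_1\rangle,
$$
the last equality because $x_1\in N(y_1)$. On the other hand both $x'$ and $y_1$ lie in $C$, and for a finite reflection group one has the standard inequality $\langle \sigma a,b\rangle\le\langle a,b\rangle$ for all $a,b\in C$ and $\sigma\in\W$ (since $a-\sigma a$ is a nonnegative combination of the positive roots $\Phi$, each pairing nonnegatively with $b\in C$). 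Taking $a=x'$, $b=y_1$, $\sigma=w^{-1}$ gives $\langle x_1,y_1\rangle=\langle w^{-1}x',y_1\rangle\le\langle x',y_1\rangle$. Combining the two estimates yields $\langle x',z\rangle\le\langle x',y_1\rangle$ for all $z\in P$, i.e. $y_1\in F_{x'}(P)$, so that $H_{x'}$ supports $P$ at $y_1$ with $x'\in\interior C$, as claimed.

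I expect the only genuinely load-bearing ingredient to be the reflection-group inequality $\langle\sigma a,b\rangle\le\langle a,b\rangle$ for $a,b$ in the closed chamber, which is precisely what converts the membership $y_1\in C$ into the chain of inequalities above; the remaining steps (density of regular vectors and full dimensionality of the normal cone at a vertex) are routine. If a self-contained account is preferred, the inequality admits a short proof by induction on the length of $\sigma$, reducing to a single simple reflection.
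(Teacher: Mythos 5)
Your proof is correct, and it takes a genuinely different route from the paper's. The paper starts from a functional $x$ that strictly exposes the vertex $y_1$, averages it over $\Stab(y_1)$ (using that this stabilizer is generated by the reflections in the mirrors through $y_1$, \cite[Theorem 12.6]{bor}) so as to land in $\bigcap_{u\in\Pi}M_u$, checks $\langle x',u\rangle>0$ for the walls not containing $y_1$ via $y_1-r_u(y_1)=d\,u$ with $d>0$, and finally perturbs $x'$ off the remaining walls into $\interior C$. You instead note that the normal cone at a vertex has nonempty interior, pick a regular functional $x_1$ there (the complement of the finitely many mirrors is dense open), and conjugate it into $C$; the only nontrivial point is that $wx_1$ still supports $P$ at $y_1$, which you obtain from the $\W$-invariance of $P$ together with the inequality $\langle \sigma a,b\rangle\le\langle a,b\rangle$ for $a,b\in C$ --- this is essentially Lemma \ref{rearrengementineq} of the paper (stated only later in the text), with the standard justification that $a-\sigma a$ is a nonnegative combination of positive roots. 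Your route buys a cleaner ending (the paper's final ``can be perturbed'' sentence is its least detailed step, since one must preserve both the strict support inequalities and the positivity on the walls), and it never uses the hypothesis that $y_1$ lies on a mirror, so it in fact covers every extreme point of $P$ lying in $C$; what the paper's route buys is independence from the rearrangement inequality, at the cost of invoking the stabilizer-generation theorem. Two harmless remarks: the nonemptiness of $\interior N(y_1)$ needs only that $y_1$ is a vertex of the polytope (a strictly exposing functional survives small perturbations), not full-dimensionality of $P$; and your conclusion places $x'$ in the normal cone, so to get literally $H_{x'}=\varphi_{x'}^{-1}(1)$ through $y_1$ one rescales $x'$ by a positive constant, which is fine since $\interior C$ is a cone and $\langle x',y_1\rangle=\max_P\varphi_{x'}>0$ in the intended application where $0\in\interior P$ (the paper's proof makes the same implicit normalization).
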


\begin{proof}
By assumption $\Pi:=\{u\in\Phi:y_1\in M_u\}\neq\emptyset$. The set of reflections $\{r_u:u\in\Pi\}$ generates the stabilizer $\Stab(y_1)$ of $y_1$ (see e.g.  \cite[Theorem 12.6]{bor}). Since $P$ is a polytope there exists $x$ which determines a supporting hyperplane of $P$ at $y_1$ such that $\varphi_x(y_1)=1$ and $\varphi_x(y_j)<1$ for $j=2,\dots,m$. If we denote by $x'$ the average
$$
x'=\frac{1}{\vert \Stab(y_1) \vert}\sum_{w\in \Stab(y_1)} w.x,
$$
then $\varphi_{x'}(y_1)=1$, $\varphi_{x'}(y_j)<1$ for $j=2,\dots,m$, and $x'\in\cap_{u\in\Pi}M_u$. To show that $x'\in C$ we need to verify that $\langle x',u\rangle\geq 0 $ for all $u\in\Phi$. If $u$ is not in $\Pi$ then $y_1$ is not in $M_u$, therefore $\varphi_{x'}(y_1)=1$ and $\varphi_{x'}(r_u(y_1))<1$ which implies that $\varphi_{x'}(y_1-r_u(y_1))>0$. On the other hand, since $y_1$ is in the positive chamber $C$ it follows that $y_1-r_u(y_1)=d.u$ for $d>0$. Hence $\varphi_{x'}(d.u)>0$ which is equivalent to $\langle x',u\rangle>0$.

The $x'$ can be perturbed so that $H_{x'}$ is a supporting hyperplane of $P$ at $y_1$ and $x'$ is in the interior of the fundamental chamber.
\end{proof}

\begin{proof}\textit{(of Theorem  \ref{abelian})}. We apply the previous lemmas to the Weyl group action on $\t$. It is easy to check that the condition of  Theorem \ref{abelianfaces} can be stated using the more general notation of the previous lemmas as follows:
$$F_y(P^\circ)\cap C\subseteq W_y:=\bigcap_{u\in\Phi:y\in M_u}M_u$$
for all nonzero $y$ in $C=\t_+$. Here, for example, $M_{h_{\alpha}}=\ker(\alpha)$, with $h_\alpha$ as in (\ref{weights}). We can multiply $y$ by a  positive scalar and assume that it is in $\bd P$. Then by Theorem \ref{supportdual} 
\begin{align*}
F_y(P^\circ)&=\{x\in V:H_y\mbox{  supports  } P^\circ\mbox{  at  }x\}\\
&=\{x\in V:H_x\mbox{  supports  } P\mbox{  at  }y\}.
\end{align*}
By Lemma \ref{lemregext}, if the extreme points of $P$ are all regular and $y\in W_y=\bigcap_{u\in\Phi:y\in M_u}M_u$, then $x\in W_y$ holds when $H_x$ supports $P$ at $y$, i.e. $x$ is in $F_y(P^\circ)$.

If there is a non-regular extreme point of $P$ then by invariance of $P$ under the reflection group $\W$ we can choose a non-regular extreme point $y_1$ of $P$ in $C$. Lemma \ref{lemnonregext} implies that there is a supporting hyperplane $H_{x'}$ of $P$ at $y_1$ such that $x'$ is in the interior of $C$. Then $x'\in F_y(P^\circ)\cap C$ and $x'$ does not belong to $W_y$, so that the condition of Theorem \ref{abelianfaces} is not satisfied and there is a face of $B$ which is not abelian.
\end{proof}

Let $K$ be a semi-simple compact connected group, let $B$ be an $\Ad$-invariant convex body in $\k$ containing $0$, such that $B\cap\t=P^\circ$ is a polytope. Let $g_B$ be the associated $\Ad$-invariant norm and endow $K$ with the corresponding Finsler length structure. 

\begin{thm}\label{geodpolytope}
The extreme points of $P$ are regular if and only if all short curves $\gamma$ in $K$ have commuting logarithmic derivatives.
\end{thm}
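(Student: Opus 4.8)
The plan is to read this statement as the composition of two biconditionals that are already in hand, so that essentially no new argument is required. The first is Theorem~\ref{abelian}, which is the convex-geometric heart of the matter: under the standing hypothesis $B\cap\t=P^\circ$, it asserts that every face of the unit ball $B$ is abelian if and only if all extreme points of the polytope $P$ are regular. The second is Proposition~\ref{abelianfaces1}, which is the dynamical translation: every face of $B$ is abelian if and only if, for every piecewise $C^1$ short curve $\gamma$ in $K$, the logarithmic derivatives $x_t=\gamma_t^{-1}\dot{\gamma}_t$ commute for all $t$. Since the Finsler length structure on $K$ is by definition induced by left (or right) translation of the $\Ad$-invariant norm $g_B$, both statements refer to the \emph{same} unit ball $B$, and chaining the equivalences produces exactly the claim.

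Concretely, I would first invoke Theorem~\ref{abelian} to replace ``the extreme points of $P$ are regular'' by ``all faces of $B$ are abelian,'' and then invoke Proposition~\ref{abelianfaces1} to replace the latter by ``all short curves in $K$ have commuting logarithmic derivatives.'' Each arrow is reversible, so both implications of the theorem follow simultaneously, with no need to treat the two directions separately.

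There is no genuine obstacle here; the only points to check are bookkeeping. First, the hypothesis $B\cap\t=P^\circ$ is precisely the one under which Theorem~\ref{abelian} is stated, so the two cited results concern the same $B$ and the same $P$. Second, that ``commuting logarithmic derivatives'' in the statement matches the conclusion of Proposition~\ref{abelianfaces1}: by $\Ad$-invariance of the norm the left and right logarithmic derivatives differ by an inner automorphism, and commutativity of the family $\{x_t\}$ is insensitive to this conjugation, so there is no ambiguity. If I wished to expose the content behind the nontrivial direction rather than cite it, I would recall that a non-regular extreme point of $P$ yields, via Theorem~\ref{abelianfaces} together with Lemma~\ref{lemnonregext}, a non-abelian face $F_w(B)$; choosing $x,y\in F_w(B)$ with $[x,y]\neq 0$ and taking the curve solving $\gamma_t^{-1}\dot{\gamma}_t=tx+(b-t)y$, one gets a short path (by Theorem~\ref{cuasi}) whose endpoint speeds do not commute. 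This is exactly the mechanism already packaged inside Proposition~\ref{abelianfaces1}, which is why invoking that proposition suffices.
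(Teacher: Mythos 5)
Your proposal is correct and coincides with the paper's own proof, which is exactly the two-line composition of Theorem~\ref{abelian} (regular extreme points of $P$ $\Leftrightarrow$ abelian faces of $B$) with Proposition~\ref{abelianfaces1} (abelian faces $\Leftrightarrow$ commuting logarithmic derivatives of short curves). The extra bookkeeping remarks you add are harmless and consistent with the paper.
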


\begin{proof}
By Theorem \ref{abelian} the extreme points of $P$ are regular if and only if $B$ has abelian faces, and by Proposition \ref{abelianfaces1} this holds if and only if all short curves have commuting logarithmic derivatives.
\end{proof}

We now specialize Theorem \ref{geodpolytope} to compact semi-simple groups of Hamiltonian diffeomorphisms.

\begin{thm}\label{polytopecommuting}
Let $K\curvearrowright M$ be an almost effective Hamiltonian action with moment map $\mu:M\to\k^*\simeq\k$ and endow $K$ with the pullback metric of Section \ref{subsectionhoferham}. Let $\mu(M)\cap\t^+=\conv\{x_1,\dots,x_n\}$ be Kirwan's polytope given by Theorem \ref{kirwan}, and let 
$$P=\conv\{w.x-w'.x':x,x'\in \{x_1,\dots,x_n\},\quad w,w'\in\W\}$$
be the Hofer norm polytope derived from it. Then all short curves in $K$ have commuting Hamiltonians if and only if all the extreme points of $P$ are regular.
\end{thm}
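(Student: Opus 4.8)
The plan is to read Theorem~\ref{polytopecommuting} as the symplectic dictionary entry for the Lie-theoretic Theorem~\ref{geodpolytope}; essentially all of the content already lives there, and the only work is to identify the metric and to translate \emph{commuting Hamiltonians} into \emph{commuting logarithmic derivatives}. First I would record that the pullback metric of Section~\ref{subsectionhoferham} is exactly the $\Ad$-invariant Hofer norm $\|\cdot\|_{\mu(M)}$ of (\ref{hofer1}), whose unit ball is $B=(\conv(\mu(M))-\conv(\mu(M)))^\circ$ by Remark~\ref{ballh}. Since the action is almost effective, $\|\cdot\|_{\mu(M)}$ is a genuine norm (Remark~\ref{fullsiiae}), so $B$ is an $\Ad$-invariant convex body in $\k$ containing $0$, and by (\ref{convep}) one has $B\cap\t=P^\circ$, where $P$ is precisely the Hofer norm polytope of Definition~\ref{hoferpolytopes} associated to Kirwan's polytope $\mu(M)\cap\t^+=\conv\{x_1,\dots,x_n\}$ (Theorem~\ref{kirwan}). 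Thus the hypotheses of Theorems~\ref{abelian} and~\ref{geodpolytope} are in force.

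Next I would invoke Theorem~\ref{geodpolytope}: the extreme points of $P$ are regular if and only if every short curve in $K$ has commuting logarithmic derivatives. It then remains to match this with the statement about Hamiltonians. For a short curve $\gamma$ with right logarithmic derivative $x_t=\dot{\gamma}_t\gamma_t^{-1}$, its family of Hamiltonians is $H_t=\mu^{x_t}$, and the Poisson bracket identity $\{\mu^x,\mu^y\}=\mu^{[x,y]}$ of Section~\ref{hamactions} yields $\{H_s,H_t\}=\mu^{[x_s,x_t]}$. Because the action is almost effective, the inclusion $z\mapsto[\mu^z]\in C^{\infty}(M)/\R\bone$ of (\ref{liealgaction}) is injective (Remark~\ref{fullsiiae}); hence $\{H_s,H_t\}=0$, i.e.\ $\mu^{[x_s,x_t]}$ is a constant function, if and only if $[x_s,x_t]=0$. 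So the family $\{H_t\}$ Poisson-commutes precisely when the right logarithmic derivatives $\{x_t\}$ commute.

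The one point that needs care is the passage between the left logarithmic derivatives $\gamma_t^{-1}\dot{\gamma}_t$ appearing in Theorem~\ref{geodpolytope} (through Proposition~\ref{abelianfaces1}) and the right ones entering the Hamiltonians. Here I would use that the first Hofer norm is symmetric (indeed $\|-x\|_{\mu(M)}=\|x\|_{\mu(M)}$ directly from (\ref{hofer1})), so the distance is symmetric and inversion $\gamma\mapsto\gamma^{-1}$ is an isometry carrying short curves to short curves; since the left logarithmic derivative of $\gamma^{-1}$ is $-x_t$, the requirement that all short curves have commuting left logarithmic derivatives is equivalent to the requirement that all short curves have commuting right ones. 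Combining this with the two previous paragraphs gives the equivalence asserted in the theorem. I expect no genuine obstacle beyond correct bookkeeping: the substantive geometry is already packaged in Theorem~\ref{geodpolytope} (and ultimately in Theorem~\ref{abelian} on the regularity of extreme points and in the Lewis-type face description), and the delicate detail is simply to ensure that almost-effectiveness is used correctly, so that the vanishing of $[\mu^{z}]$ in $C^{\infty}(M)/\R\bone$ genuinely forces $z=0$.
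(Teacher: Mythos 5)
Your proposal is correct and follows essentially the same route as the paper: reduce to Theorem \ref{geodpolytope} via the identification $H_t=\mu^{x_t}$ from Proposition \ref{actiondiffeo} and the Poisson bracket identity $\{\mu^{x_s},\mu^{x_t}\}=\mu^{[x_s,x_t]}$, using almost-effectiveness for injectivity. Your extra care with the left/right logarithmic derivative issue (via inversion being an isometry) is a valid way to settle a point the paper disposes of by its general remark in Section \ref{l} that all the geodesic results hold for either logarithmic derivative by $\Ad$-invariance.
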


\begin{proof}
By Proposition \ref{actiondiffeo} a curve $\gamma$ has Hamiltonians $\mu^{x_t}$, where $x_t=\dot{\gamma}_t\gamma_t^{-1}$ is the right logarithmic derivative. Recalling that $\{\mu^{x_s},\mu^{x_t}\}=\mu^{[x_s,x_t]}$ the theorem follows.
\end{proof}

Note that the same result Holds if we endow $K\to\Ham(M,\omega)$ with the second Hofer norm by taking the second Hofer norm polytope 
$$
P'=\conv(\{w.x:x\in \{x_1,\dots,x_n,-x_1,\dots,-x_n\},\quad w\in\W\})
$$
of Definition \ref{hoferpolytopes}, and also if we consider the one-sided Finlser Hofer norm with its polytope 
$$
P^+=\conv\{w.x:x\in \{x_1,\dots,x_n\},\,w\in\W\}.
$$

\begin{ex}
An example of a group $K$ with non-commuting Hamiltonians is $\SU(4)$ acting on the singular (co)adjoint orbit containing $x=i\diag(3,-1,-1,-1)$. The Hofer norm polytope is the convex hull of the permutations of the matrix $i\diag(4,0,0,-4)$. Its extreme points are the permutations of the matrix given by $i\diag(4,0,0,-4)$, which are all singular. We can give a short informal explanation for this which is similar to the proof of Proposition \ref{speedweylchamber}. If the maximum of $\varphi_x$ is at $x^+=i\diag(3,-1,-1,-1)$ and the minimum is at $x^-=i\diag(-1,-1,-1,3)$ then $x^+$ block diagonalizes $x$ and $x^-$ block diagonalizes $x$, so that 
$$
x=i(\lambda_1P_{\C e_1}\oplus A\oplus  \lambda_4P_{\C e_4}),
$$
with $A$ a selfadjoint  operator on $\C e_2\oplus\C e_3$ such that its spectrum $spec(A)$ satisfies $\lambda_1\geq spec(A)\geq \lambda_4$.   
\end{ex}

\subsection{Properties determined by Kirwan's polytope and product actions}

In Section \ref{ssabelianfacepolytope} we characterized Hofer norms with abelian faces using the regularity of the norm polytope (Theorem \ref{abelian}), and before that in Section \ref{subsectionspeedweyl} we studied the case of faces generating maximal abelian cones. In this section we show how conditions on Kirwan's polytope can characterize these properties, and then to finish the paper we study how these properties behave if we take products of Hamiltonian actions. 

\smallskip

We are here in the context of finite reflection groups of Section \ref{ssabelianfacepolytope}, in particular see Definition \ref{frl}. We recall from  \cite[Section 4]{eaton} or \cite[Lemma 2.9]{mozz} the following generalization of the rearrangement inequality 
$$
\sum_{i=1}^nx_{\pi(i)}y_i\leq\sum_{i=1}^nx_iy_i
$$
for $x_1\leq\dots\leq x_n$, $y_1\leq,\dots\leq y_n$ and certain permutations $\pi$.

\begin{lem}\label{rearrengementineq}
Suppose $x,y\in V$, then
$$\sup_{w\in\W}\langle x,w.y\rangle=\langle x,y\rangle$$
if and only if $x$ and $y$ belong to the same Weyl chamber. In this case
$$\langle x,w.y\rangle=\langle x,y\rangle$$
if and only if there exists $w'\in\Stab(x)$ such that $w.y=w'.y$. That is, the set of maximizers of $\varphi_x$ in $\W.y$ is exactly $\Stab(x).y$.
\end{lem}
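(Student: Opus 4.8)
The plan is to first reduce to the case $x\in C$, the closed fundamental chamber (Definition \ref{frl}), and then to isolate a single \emph{core fact} from which both assertions follow. Since $\W$ acts isometrically and permutes the chambers, replacing the pair $(x,y)$ by $(a.x,a.y)$ for a suitable $a\in\W$ with $a.x\in C$ preserves $\langle x,y\rangle$, the quantity $\sup_{w}\langle x,w.y\rangle$, the relation ``$x$ and $y$ lie in a common closed chamber'', and (after conjugating $w$ by $a$) the condition of the second assertion; so I would assume throughout that $x\in C$. I also fix the orbit representative $y^+$ determined by $\{y^+\}=\W.y\cap C$.

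The first ingredient is a monotonicity statement: for $x\in C$ one has $\sup_{w\in\W}\langle x,w.y\rangle=\langle x,y^+\rangle$. To prove it, let $z\in\W.y$ maximize $\langle x,\cdot\rangle$ over the finite orbit. For every $u\in\Phi$ the element $r_u.z$ also lies in the orbit, so $\langle x,z\rangle\ge\langle x,r_u.z\rangle=\langle x,z\rangle-2\langle z,u\rangle\langle x,u\rangle$, which forces $\langle x,u\rangle\langle z,u\rangle\ge0$. As $x\in C$ gives $\langle x,u\rangle\ge0$, we obtain $\langle z,u\rangle\ge0$ for every $u\in\Phi$ with $\langle x,u\rangle>0$. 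When $x$ is regular this holds for all $u\in\Phi$, so $z\in C$ and hence $z=y^+$; the general case follows because both sides are continuous in $x$ (a maximum of finitely many linear functionals on the left, a linear functional on the right) and the regular points are dense in $C$.

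The heart of the matter, and the step I expect to be the main obstacle, is the core fact: for $x\in C$, an element $z\in\W.y$ satisfies $\langle x,z\rangle=\langle x,y^+\rangle$ if and only if $z\in\Stab(x).y^+$. The ``if'' direction is immediate since $\Stab(x)$ fixes $x$. For ``only if'', I would use the parabolic subgroup $\W_x=\Stab(x)$, which is itself a finite reflection group generated by $\{r_u:u\in\Phi,\ \langle x,u\rangle=0\}$ (the stabilizer theorem, \cite[Theorem 12.6]{bor}), with fundamental chamber $C_x=\{v:\langle v,u\rangle\ge0\text{ for all }u\in\Phi\text{ with }\langle x,u\rangle=0\}$. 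Choose $s\in\W_x$ with $s.z\in C_x$. The delicate point is to check that $s.z$ also meets the remaining walls correctly: for $u\in\Phi$ with $\langle x,u\rangle>0$ one has $s^{-1}u\in\Phi$ and $\langle x,s^{-1}u\rangle=\langle x,u\rangle>0$ (because $s$ fixes $x$, permutes $\pm\Phi$, and $x\in C$ rules out $s^{-1}u\in-\Phi$), whence $\langle s.z,u\rangle=\langle z,s^{-1}u\rangle\ge0$ by the sign condition from the previous paragraph. Combined with $s.z\in C_x$ this yields $\langle s.z,u\rangle\ge0$ for all $u\in\Phi$, i.e. $s.z\in C$; since $s.z\in\W.y$ this forces $s.z=y^+$, hence $z=s^{-1}.y^+\in\Stab(x).y^+$.

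Finally I would assemble both assertions, keeping $x\in C$. The closed chambers containing $x$ are exactly the $s.C$ with $s\in\Stab(x)$ (indeed $x\in s.C\iff s^{-1}x\in C\iff s^{-1}x=x$, by uniqueness of the orbit representative in $C$). For the first claim: if $x,y$ share a chamber then $y=s.y^+$ with $s\in\Stab(x)$, so $\langle x,y\rangle=\langle x,y^+\rangle=\sup_w\langle x,w.y\rangle$ by monotonicity; conversely, if the supremum equals $\langle x,y\rangle$ then $y$ is a maximizer, the core fact gives $y=s.y^+$ with $s\in\Stab(x)$, and then $x,y\in s.C$. For the second claim, assume $x,y$ share a chamber so that $\langle x,y\rangle$ is the maximum; then $y\in\Stab(x).y^+$, whence $\Stab(x).y=\Stab(x).y^+$, and the core fact says $\langle x,w.y\rangle=\langle x,y\rangle$ exactly when $w.y\in\Stab(x).y^+=\Stab(x).y$, that is, when $w.y=w'.y$ for some $w'\in\Stab(x)$.
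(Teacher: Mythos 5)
Your proof is correct, but note that the paper does not prove this lemma at all: it is recalled as a known generalization of the rearrangement inequality, with references to \cite[Section 4]{eaton} and \cite[Lemma 2.9]{mozz}. So what you have supplied is a self-contained argument for a statement the authors only cite. The argument itself is sound and follows the standard pattern from the theory of finite reflection groups: the reduction to $x\in C$ is legitimate (the conjugation $w\mapsto awa^{-1}$ transfers the second assertion correctly); the ``sign condition'' $\langle x,u\rangle\langle z,u\rangle\ge 0$ for a maximizer $z$, obtained by comparing $z$ with $r_u.z$, is exactly the right first step and immediately settles the regular case, with the singular case following by the continuity/density argument you give. The only genuinely delicate point is your ``core fact,'' and you handle it correctly: the identification of $\Stab(x)$ as the parabolic subgroup generated by the reflections in the walls through $x$ (which the paper itself invokes, via \cite[Theorem 12.6]{bor}, in the proof of Lemma \ref{lemnonregext}) lets you move a maximizer $z$ into $C_x$ without disturbing the inequalities $\langle \cdot,u\rangle\ge 0$ for the roots $u$ with $\langle x,u\rangle>0$, since $s^{-1}.u$ stays in $\Phi$ precisely because $s$ fixes $x$ and $x\in C$; this forces $s.z=y^{+}$ and hence $z\in\Stab(x).y^{+}$. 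The assembly of the two assertions from the core fact, using that the closed chambers containing $x$ are exactly $s.C$ for $s\in\Stab(x)$, is also correct. In short: a complete and correct proof of a result the paper leaves to the literature.
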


\begin{defn}
Let $E\subseteq V$ be a convex set with $x\in E$. The normal cone to $E$ at $x$ is defined as
$$N(x,E):=\{y\in V:\langle y,z-x\rangle\leq 0 \mbox{  for all  }z\in E \}.$$
It is a closed convex cone.
\end{defn}

We will also need the following simple fact about a polytope and its normal cones.

\begin{lem}\label{normalcones}
Let $P\subseteq V$ be a convex polytope. If $A\subseteq\ext(P)$ and 
$$\bigcup_{y\in A}N(y,P)=V$$
then $A=\ext(P)$.
\end{lem}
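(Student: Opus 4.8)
The plan is to argue by contraposition: I will show that if $A$ is a \emph{proper} subset of $\ext(P)$, then the union $\bigcup_{y\in A}N(y,P)$ cannot be all of $V$. The bridge between the hypothesis and linear functionals is the elementary reformulation of the normal cone. Unwinding the definition, $z\in N(y,P)$ holds precisely when $\langle z,w\rangle\le\langle z,y\rangle$ for every $w\in P$, that is, when $y\in\argmax_P\varphi_z$. Thus covering $V$ by the cones $N(y,P)$, $y\in A$, is the same as demanding that every linear functional $\varphi_z$ attain its maximum over $P$ at some point of $A$.

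Now suppose $v\in\ext(P)\setminus A$. The heart of the proof is to produce a single direction $z$ for which $v$ is the \emph{unique} maximizer of $\varphi_z$ on $P$. Once this is done we are finished: no $y\in A$ (all of which differ from $v$) can then maximize $\varphi_z$, so $z\notin N(y,P)$ for every $y\in A$, and the union misses $z$, contradicting the hypothesis. To find such a $z$ I use that $P$ is a polytope, so $\ext(P)$ is finite and $\conv(\ext(P)\setminus\{v\})$ is compact; since $v$ is an extreme point it does not lie in this set. By the separating hyperplane theorem there is $z\in V$ with $\langle z,v\rangle>\langle z,w'\rangle$ for all $w'\in\conv(\ext(P)\setminus\{v\})$; in particular $\langle z,y\rangle<\langle z,v\rangle$ for every vertex $y\ne v$.

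It remains to upgrade this to uniqueness of the maximizer over all of $P$, not merely over the vertices. The set $\argmax_P\varphi_z$ is a face of $P$, hence itself a polytope whose vertices are among those vertices of $P$ lying on the supporting hyperplane $\{w:\langle z,w\rangle=\max_P\varphi_z\}$. By construction the only such vertex is $v$, so this face reduces to the single point $\{v\}$, i.e. $v$ is the strict maximizer of $\varphi_z$ on $P$. This closes the argument and yields $A=\ext(P)$.

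I expect the only delicate point to be this final reduction from ``$v$ beats every other vertex'' to ``$v$ is the strict maximizer over the whole polytope'': it is exactly the statement that for a polytope every extreme point is exposed, and it is precisely here that the finiteness of $\ext(P)$ (equivalently, that $P$ is a genuine polytope rather than an arbitrary compact convex body) is indispensable.
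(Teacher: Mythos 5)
Your proof is correct. The paper states this lemma without proof (it is introduced as a ``simple fact''), and your argument supplies a complete justification: the contrapositive reduction to finding, for a vertex $v\notin A$, a functional $\varphi_z$ whose unique maximizer on $P$ is $v$, and the construction of $z$ by strictly separating $v$ from the compact convex set $\conv(\ext(P)\setminus\{v\})$, is exactly the standard route. Your closing observation is also on point: the step from ``$v$ beats every other vertex'' to ``$v$ is the strict maximizer on $P$'' uses $P=\conv(\ext(P))$ with $\ext(P)$ finite, i.e.\ that every extreme point of a polytope is exposed, which is where polytopality (as opposed to a general convex body) genuinely enters.
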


Note that if $P$ is a convex polytope that contains $0$ in its interior, then for $y\in\ext(P)$ we have $\R_+F_y(P^\circ)=N(y,P)$.

\subsubsection{Conditions on Kirwan's polytope}

If a Hamiltonian action has the same Hofer norm polytope as the action on a \textit{regular} (co)adjoint orbit, then it has the same geodesics; therefore they are characterized by Theorem \ref{speedchamber}. We next characterize the Hofer norm polytopes which are derived from regular coadjoint orbits.

\begin{defn}
Given a Weyl chamber $\t_+$ let $-\t_+=w^*.\t_+$ be the opposite Weyl chamber (there is a  unique such $w^*\in\W$). We say that  $y\in\t_+$ is  \textit{symmetric} if $w^*.y=-y$.
\end{defn}

\begin{prop}\label{extremoshofercadjunta}
A $\W$-invariant symmetric polytope $P$ is the Hofer norm polytope of $\|\cdot\|_{\O}$ for a coadjoint orbit $\O$ if and only if $\ext(P)=\W.y$ for a symmetric $y\in\t_+$. 

A $\W$-invariant symmetric polytope $P$ is the Hofer norm polytope of $\|\cdot\|_{\O}$ with $\O$ a regular coadjoint orbit if and only if $\ext(P)=\W.y$ for a symmetric regular $y\in\t_+$.
\end{prop}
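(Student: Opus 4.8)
The plan is to reduce both equivalences to a single computation of the vertices of the difference-polytope attached to one orbit. Fix $\O=\O_\lambda$ and let $x_0=\delta(\lambda)$ be its unique representative in $\t_+$, so that $\O\cap\t_+=\{x_0\}$ and, by Definition \ref{hoferpolytopes}, the associated Hofer norm polytope is
\[
P_{x_0}=\conv\{w.x_0-w'.x_0 : w,w'\in\W\}.
\]
The central claim I would establish is that $\ext(P_{x_0})=\W.y$, where $y:=x_0-w^*.x_0$. To see this, take $z$ in the open chamber $\t_+$ (a regular dominant element). Since $\langle z,w.x_0-w'.x_0\rangle=\langle z,w.x_0\rangle-\langle z,w'.x_0\rangle$, I maximize the two terms separately: by Lemma \ref{rearrengementineq} the supremum $\sup_w\langle z,w.x_0\rangle$ is attained at the dominant representative $x_0$, and dually $\inf_{w'}\langle z,w'.x_0\rangle$ is attained at the anti-dominant representative $w^*.x_0$. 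Hence the vertex of $P_{x_0}$ exposed by $z$ is $x_0-w^*.x_0=y$. By $\W$-equivariance, the vertex exposed by a regular $z$ lying in $w.\t_+$ is $w.y$, so the vertices exposed by regular functionals are exactly $\W.y$; and since every vertex of the full-dimensional polytope $P_{x_0}$ has a full-dimensional normal cone whose interior meets the dense set of regular elements, every vertex is exposed by some regular $z$. This gives $\ext(P_{x_0})=\W.y$.

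Next I would record the two properties of $y$ matching the definition of symmetric. Because $w^*$ is the longest Weyl element it is an involution, $(w^*)^2=\id$, so
\[
w^*.y=w^*.x_0-(w^*)^2.x_0=w^*.x_0-x_0=-y,
\]
whence $y$ is symmetric; moreover $y=x_0+(-w^*.x_0)$ is a sum of two elements of the cone $\t_+$ (as $w^*.x_0\in-\t_+$), so $y\in\t_+$. This proves the forward implication of the first statement. For the regular case I would note that if $x_0$ lies in the open chamber then so does $-w^*.x_0$, and a sum of two interior points of a cone is interior; thus $y$ is regular, giving the forward implication of the second statement.

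For the converse I would invert the relation $y=x_0-w^*.x_0$. Given a symmetric $y\in\t_+$, set $x_0:=\tfrac12 y\in\t_+$; using $w^*.y=-y$ one computes
\[
x_0-w^*.x_0=\tfrac12 y-\tfrac12(w^*.y)=\tfrac12 y+\tfrac12 y=y.
\]
By the central claim the Hofer norm polytope of the orbit $\O$ through $x_0$ has $\ext=\W.y$, and since a polytope is the convex hull of its vertices this polytope equals $\conv(\W.y)=P$. When $y$ is in addition regular, $x_0=\tfrac12 y$ is regular, so $\O$ is a regular coadjoint orbit, yielding the converse of the second statement. Full-dimensionality of $P$ forces $y\neq0$ and $x_0$ to be nonzero on each simple summand, so the orbit is full and $\|\cdot\|_\O$ is a genuine norm.

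I expect the main obstacle to be the central claim, specifically the clean separation of the maximization over the two copies of the orbit (which relies on Lemma \ref{rearrengementineq}) together with the verification that the vertices exposed by regular functionals already exhaust $\ext(P_{x_0})$ via the normal-cone/density argument. Once that is in place the remaining content is the elementary bookkeeping $(w^*)^2=\id$, $w^*.y=-y$, and $x_0=\tfrac12 y$.
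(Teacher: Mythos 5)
Your proposal is correct and follows essentially the same route as the paper: use Lemma \ref{rearrengementineq} to show that a regular dominant functional exposes the vertex $x_0-w^*.x_0$ of the difference polytope, conclude $\ext(P)=\W.(x_0-w^*.x_0)$ by covering $\t$ with the (translated) normal cones, and invert via $x_0=\tfrac12 y$ using $w^*.y=-y$. The only cosmetic difference is that you replace the paper's appeal to Lemma \ref{normalcones} with a direct density-of-regular-elements argument, and you spell out the involutivity of $w^*$ and the symmetry of $y$, which the paper leaves implicit.
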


\begin{proof}
For $x\in\t_+$ let $\O_x$ be a coadjoint orbit. The Hofer norm polytope of $\|\cdot\|_{\O_x}$ is given by
$$P=\conv\{w.x-w'.x:w,w'\in\W\}.$$
If we take $y=x-w^*x\in\t_+$ we claim that $P$ has an extreme point at $y\in\t^+$ with normal cone at this point which equals $\cup_{w\in\Stab(y)}w.\t_+$. To see this note that by Lemma \ref{rearrengementineq} for $z\in\t_+$ and $w\in\W$
$$\varphi_z(x)\geq\varphi_z(w.x),$$
and
$$\varphi_z(-w^*.x)=\varphi_{-z}(w^*.x)\geq\varphi_{-z}(ww^*.x)=\varphi_{z}(-ww^*.x).$$
Therefore 
$$\varphi_z(x-w^*.x)\geq\varphi_z(w.x-w'.x),$$
for $w,w'\in\W$ and we conclude that the normal cone to $P$ at $y$ includes $\t_+$. 

Since this argument is Weyl group invariant, for $w\in\W$ the normal cone to $P$ at $w.y$ includes $w.\t_+$. In the case of singular $y$ there can be repetitions: if $\Stab(y)$ is not trivial then the normal cone to $P$ at $y$ includes $\cup_{w\in\Stab(y)}w.\t_+$.  We have $\cup_{w\in\W}w.\t_+=\t$, hence by Lemma \ref{normalcones} $\W.y$ are all the extreme points of $P$ and the normal cone to $P$ at $w'.y$ is $w'.\cup_{w\in\Stab(y)}w.\t_+$.

If $y$ is symmetric then we can take $\O_{{(1/2)}y}$ and the Hofer norm polytope of $\|\cdot\|_{\O_{{(1/2)}y}}$ satisfies $\ext(P)=\W.y$.

The proof of the second assertion is similar and we omit it.
\end{proof}

\begin{thm}\label{kirwant+}
Let $E\subseteq \k$ be an $\Ad$-invariant set such that $E\cap\t_+=\conv\{x_1,\dots,x_n\}$. The Hofer norm polytope derived from $E$ has extreme points $\W.y$ for regular $y$ if there is an $x_i$, say $x_1$, such that
\begin{itemize}
\item The point $x_1-w^*.x_1\in\t_+$ is regular.
\item For $x\in\t_+$ we have $\varphi_x(x_1)\geq\varphi_x(x_j)$ for $j\in\{2,\dots,n\}$, that is, $\t_+$ is contained in the normal cone of $\conv\{x_1,\dots,x_n\}\subseteq\t$ at $x_1$.
\end{itemize}
\end{thm}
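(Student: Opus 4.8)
The plan is to follow the blueprint of the proof of Proposition \ref{extremoshofercadjunta}, but now tracking a single distinguished vertex rather than the full orbit of one coadjoint point. Set $y=x_1-w^*.x_1$; by the first hypothesis $y\in\t_+$ is regular, and since $w^*$ (the unique element with $w^*.\t_+=-\t_+$) is an involution, one has $w^*.y=-y$, so $y$ is symmetric and the polytope $P$ falls under the scope of Proposition \ref{extremoshofercadjunta} once we identify $\ext(P)$. I will show that $y$ is a vertex of $P$ whose normal cone contains the fundamental chamber $\t_+$, and then propagate this statement by the $\W$-invariance of $P$.

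The core computation is to prove $\t_+\subseteq N(y,P)$, i.e. that for every $z\in\t_+$ the functional $\varphi_z$ attains its maximum over the generating set $\{w.x_i-w'.x_j\}$ at $y$. I split $\varphi_z(w.x_i-w'.x_j)=\varphi_z(w.x_i)-\varphi_z(w'.x_j)$ and bound each piece separately. For the first piece, Lemma \ref{rearrengementineq} gives $\varphi_z(w.x_i)\le\varphi_z(x_i)$ because $z$ and $x_i$ lie in $\t_+$, and then the second hypothesis yields $\varphi_z(x_i)\le\varphi_z(x_1)$. For the second piece I write $-\varphi_z(w'.x_j)=\langle-z,w'.x_j\rangle$ and apply Lemma \ref{rearrengementineq} in the opposite chamber: since $-z\in-\t_+$ and $x_j\in\t_+$, the orbit representative of $x_j$ aligned with $-z$ is $w^*.x_j$, so $\sup_{w'}\langle-z,w'.x_j\rangle=\langle-z,w^*.x_j\rangle=\varphi_u(x_j)$, where $u:=-w^*.z=(w^*)^{-1}(-z)\in\t_+$. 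The second hypothesis applies once more, now to $u\in\t_+$, giving $\varphi_u(x_j)\le\varphi_u(x_1)=-\varphi_z(w^*.x_1)$. Adding the two bounds produces $\varphi_z(w.x_i-w'.x_j)\le\varphi_z(x_1-w^*.x_1)=\varphi_z(y)$, and by convexity the same holds over all of $P$, so $z\in N(y,P)$.

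With $\t_+\subseteq N(y,P)$ established, $N(y,P)$ is full dimensional, hence $y$ is a vertex of $P$; by $\W$-invariance each $w.y$ is a vertex with $w.\t_+\subseteq N(w.y,P)$. Regularity of $y$ forces $\Stab(y)$ to be trivial, so the $\vert\W\vert$ points of $\W.y$ are distinct, and since the chambers $\{w.\t_+\}_{w\in\W}$ tile $\t$ we get $\bigcup_{w\in\W}N(w.y,P)=\t$. Lemma \ref{normalcones}, applied with $A=\W.y\subseteq\ext(P)$, then yields $\ext(P)=\W.y$ with $y$ regular, which is exactly the assertion.

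The main obstacle is the two-sided chamber bookkeeping in the core computation. One must recognize that the ``positive'' part $\varphi_z(w.x_i)$ is controlled by $x_1$ directly through $z\in\t_+$, whereas the ``negative'' part $-\varphi_z(w'.x_j)$ is controlled by $w^*.x_1$ and requires feeding the second hypothesis not the original $z$ but the reflected chamber vector $u=-w^*.z\in\t_+$. Correctly identifying the representative $w^*.x_j$ and using the involution property of $w^*$ to rewrite $\langle-z,w^*.x_j\rangle=\varphi_u(x_j)$ is the delicate point; the remaining vertex/normal-cone tiling argument is already the one used for Proposition \ref{extremoshofercadjunta}.
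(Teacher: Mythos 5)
Your proof is correct and follows essentially the same route as the paper's: both establish $\t_+\subseteq N(x_1-w^*.x_1,P)$ by bounding the positive part via the rearrangement inequality and hypothesis (2) applied to $z\in\t_+$, and the negative part via the same two facts applied in the opposite chamber (your substitution $u=-w^*.z\in\t_+$ is just an explicit rewriting of the paper's chain $\varphi_{-x}(w^*.x_1)\geq\varphi_{-x}(w^*.x_j)\geq\varphi_{-x}(ww^*.x_j)$), before concluding with Weyl-invariance, the tiling of $\t$ by chambers, and Lemma \ref{normalcones}. The additional observation that $w^*.y=-y$ is a harmless aside not needed for the argument.
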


\begin{proof}
The Hofer norm polytope is given by 
$$P=\conv\{w.x-w'.x':x,x'\in \{x_1,\dots,x_n\},\quad w,w'\in\W\}.$$
We are going to prove that $\ext(P)=\W.(x_1-w^*.x_1)$. Since $x_1-w^*.x_1$ is regular $\W.(x_1-w^*.x_1)$ has $|\W|$ points.  We claim that $P$ has an extreme point at $x_1-w^*.x_1\in\t^+$ with a normal cone at this point which equals $\t_+$. To see this note that by the second assumption in the statement of the theorem and by Lemma \ref{rearrengementineq} for $x\in\t_+$, $j\in\{1,\dots,n\}$ and $w\in\W$
$$\varphi_x(x_1)\geq\varphi_x(x_j)\geq\varphi_x(w.x_j),$$
and
$$\varphi_x(-w^*.x_1)=\varphi_{-x}(w^*.x_1)\geq\varphi_{-x}(w^*.x_j)\geq\varphi_{-x}(ww^*.x_j)=\varphi_{x}(-ww^*.x_j).$$
Therefore 
$$\varphi_x(x_1-w^*.x_1)\geq\varphi_x(w.x-w'.x'),$$
for $x,x'\in \{x_1,\dots,x_n\}$ and $w,w'\in\W$ and we conclude that the normal cone to $P$ at $x_1-w^*.x_1$ includes $\t_+$. Since this argument is Weyl group invariant, for $w\in\W$ the normal cone to $P$ at $w.(x_1-w^*.x_1)$ includes $w.\t_+$. We have $\cup_{w\in\W}w.\t_+=\t$, hence by Lemma \ref{normalcones} the orbit $\W.(x_1-w^*.x_1)$ are all the extreme points of $P$ and the normal cone to $P$ at $w.(x_1-w^*.x_1)$ is $w.\t_+$.
\end{proof}

\begin{ex}
An example of this is the action of $\SU(3)$ on the singular (co)adjoint orbit containing $\lambda=i\diag(2,-1,-1)$. The Hofer norm polytope is the convex hull of $0$ and the permutations of $i\diag(3,0,-3)$. Its extreme points are the permutations of $i\diag(3,0,-3)$. This is the same Hofer norm polytope as the one derived from the (co)adjoint action on the regular (co)adjoint orbit containing $\frac{i}{2}\diag(-3,0,3)$. Several other examples can be computed from the cases of $\SU(3)$ acting on products of $\mathbb{P}^2$ studied in \cite{ms}. In fact, it is easy to see from all the figures in \cite{ms} that all the Kirwan polytopes listed there lead by Theorem \ref{kirwant+} to Hofer norm polytopes with extreme points $\W.y$ for regular $y$.  
\end{ex}

\begin{cor}\label{cororeg}
Let $\O_x$ for $x\in\t_+$ be a coadjoint orbit. The Hofer norm polytope derived from $\O$ has extreme points $\W.y$ for regular $y$ if and only if $x-w^*.x\in\t_+$ is regular.
\end{cor}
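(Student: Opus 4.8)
The plan is to reduce everything to the explicit description of $\ext(P)$ already obtained in the proof of Proposition \ref{extremoshofercadjunta}. For a coadjoint orbit $\O=\O_x$ with $x\in\t_+$ we have $\O\cap\t_+=\{x\}$, so in the notation of Definition \ref{hoferpolytopes} and Theorem \ref{kirwant+} this is the degenerate case $n=1$ with $x_1=x$, and the Hofer norm polytope is $P=\conv\{w.x-w'.x:w,w'\in\W\}$.

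First I would establish the \emph{if} direction directly from Theorem \ref{kirwant+}. With $n=1$ the second hypothesis of that theorem (that $\t_+$ lie in the normal cone of $\conv\{x_1,\dots,x_n\}$ at $x_1$) is vacuous, since $\conv\{x_1\}=\{x\}$; the first hypothesis is precisely that $x-w^*.x\in\t_+$ be regular. Hence, whenever $x-w^*.x$ is regular, Theorem \ref{kirwant+} yields $\ext(P)=\W.(x-w^*.x)$ with $x-w^*.x$ regular, which is exactly the asserted conclusion.

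For the converse I would invoke the computation carried out inside the proof of Proposition \ref{extremoshofercadjunta}, which shows that for \emph{any} $x\in\t_+$ (regular or not) the extreme points of $P$ are precisely $\W.(x-w^*.x)$, with the representative $x-w^*.x$ lying in $\t_+$. Since $\t_+$ is a fundamental domain for the $\W$-action — each orbit meets it in a single point — and since regularity is a $\W$-invariant condition, the hypothesis $\ext(P)=\W.y$ for a regular $y$ forces the unique $\t_+$-representative of that orbit, namely $x-w^*.x$, to coincide with $y$, and hence to be regular. This yields the \emph{only if} direction and completes the equivalence.

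The argument carries essentially no technical obstacle: the whole content is packaged in the identity $\ext(P)=\W.(x-w^*.x)$ from Proposition \ref{extremoshofercadjunta}, together with the elementary facts that regularity is Weyl-invariant and that $\t_+$ meets each Weyl orbit exactly once. The only point deserving a little care is to observe that both $y$ and $x-w^*.x$ may be taken in $\t_+$, so that the equality of orbits $\W.y=\W.(x-w^*.x)$ upgrades to the equality $y=x-w^*.x$ of representatives; this is exactly where the fundamental-domain property of $\t_+$ is used.
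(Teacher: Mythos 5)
Your proof is correct and follows the route the paper intends: the paper states Corollary \ref{cororeg} without proof as an immediate consequence of Theorem \ref{kirwant+} (the case $n=1$, where the normal-cone hypothesis is vacuous) together with the identity $\ext(P)=\W.(x-w^*.x)$ established inside the proof of Proposition \ref{extremoshofercadjunta}. Your additional remarks on $\W$-invariance of regularity and on $\t_+$ meeting each orbit once are exactly the (elementary) points needed to close the converse.
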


\begin{ex}
An example of the previous corollary is the case of (co)adjoint orbits of $\SU(n)$. Let $y$ be an extreme point of $P$. The set of $x$ such that $\varphi_x$ has a unique maximum at $y$ is an open cone so we can assume that all eigenvalues of $x$ are different, and without loss of generality we assume that they are ordered increasingly: $x_1<\dots <x_n$. Hence if the eigenvalues of the (co)adjoint orbit are $\lambda_1\le \dots\le \lambda_n$ the extreme point of the Hofer norm polytope which maximizes $\varphi_x$ is 
$$
i\diag(\lambda_1,\dots,\lambda_n)-i\diag(\lambda_n,\dots,\lambda_1)=i\diag(\lambda_1-\lambda_n,\lambda_2-\lambda_{n-1},\dots,\lambda_n-\lambda_1).
$$
This is the same maximum as the one that would be obtained from looking at the Hofer norm polytope derived from the (co)adjoint orbit with eigenvalues $x_1=\frac{i}{2}(\lambda_1-\lambda_n)<\dots<x_n=\frac{i}{2}(\lambda_n-\lambda_1)$. If these eigenvalues are all distinct then the Hofer norm polytope is equal to the Hofer norm polytope of a regular (co)adjoint orbit.
\end{ex}

\begin{rem}
For the one-sided Hofer norm polytope the condition for being derived from a regular coadjoint orbit, is that there exists a regular $x_i\in P^+$, say $x_1$ such that for $x\in\t_+$ we have $\varphi_x(x_1)\geq\varphi_x(x_j)$ for $j\in\{2,\dots,n\}$, where
$$
P^+=\conv\{w.x:x\in \{x_1,\dots,x_n\},\,w\in\W\}.
$$
\end{rem}

The extreme points of the Hofer norm polytope are Weyl group invariant, so we can partition them into orbits
$$\ext(P)=\W.y_1\sqcup\dots\sqcup\W.y_m$$
for $y_1,\dots,y_m\in\t_+$. If $y_1,\dots,y_m$ are regular then by Theorem \ref{abelian} the unit ball $B$ has abelian faces. We next give a sharper characterization of these faces.

\begin{prop}
If $\ext(P)=\W.y_1\sqcup\dots\sqcup\W.y_m$ for regular $y_1,\dots,y_m\in\t_+$, then for $i\in\{1,\dots,m\}$
\begin{align*}
F_{y_i}(B)&=\{x\in\t_+:\varphi_{x}(y_i)=1\mbox{  and  }\varphi_{x}(y_i)\leq 1\mbox{  for  }i\neq j\}.
\end{align*}
\end{prop}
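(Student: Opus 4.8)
The plan is to reduce the computation of the face $F_{y_i}(B)$, which a priori lives in all of $\k$, to a computation inside the Cartan subalgebra $\t$, and then to identify it with the vertex constraints defining the polytope $P$. First I would invoke Lewis' theorem in the form of Theorem \ref{lewisfaces}: since $y_i\in\t_+$, we have $F_{y_i}(B)=\Ad_{Z(y_i)}\bigl(F_{y_i}(B\cap\t)\cap\t_+\bigr)$. Regularity of $y_i$ is exactly what is needed here: a regular element has $Z(y_i)=T$, the maximal torus, and $\Ad_T$ fixes $\t$ pointwise, so the outer conjugation is trivial and we obtain the clean identity $F_{y_i}(B)=F_{y_i}(B\cap\t)\cap\t_+=F_{y_i}(P^\circ)\cap\t_+$, using $B\cap\t=P^\circ$. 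In particular this already shows $F_{y_i}(B)\subseteq\t_+$, consistent with Theorem \ref{abelian}, which guarantees that $B$ has abelian faces precisely because all extreme points of $P$ are regular.

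Next I would compute $F_{y_i}(P^\circ)=\argmax_{P^\circ}(\varphi_{y_i})$ by polar duality. By Theorem \ref{supportdual}(1) we have $h_{P^\circ}=g_{P^{\circ\circ}}=g_P$ (bipolarity, Remark \ref{polar}), and since $y_i$ is an extreme point of $P$ it lies on $\bd P$, so $g_P(y_i)=1$; hence $\max_{P^\circ}\varphi_{y_i}=1$ and $F_{y_i}(P^\circ)=\{x\in P^\circ:\langle x,y_i\rangle=1\}$. Writing $P=\conv(\ext P)=\conv\{w.y_k:w\in\W,\ k=1,\dots,m\}$ and applying Remark \ref{polar}(6), the body $P^\circ$ is cut out by the inequalities $\langle x,w.y_k\rangle\le 1$ over all $w\in\W$ and all $k$. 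Intersecting with $\t_+$ and using $\varphi_x(y_i)=\langle x,y_i\rangle=\varphi_{y_i}(x)$ gives
$$
F_{y_i}(B)=\{x\in\t_+:\varphi_x(y_i)=1,\ \varphi_x(w.y_k)\le 1\ \ \forall\,w\in\W,\ k\}.
$$

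The final step — and the one producing the economical statement of the proposition — is to collapse the full family of Weyl-orbit constraints to the chamber representatives $y_1,\dots,y_m$. For $x\in\t_+$ and each $y_k\in\t_+$, both vectors lie in the same closed Weyl chamber, so Lemma \ref{rearrengementineq} gives $\sup_{w\in\W}\langle x,w.y_k\rangle=\langle x,y_k\rangle$; thus the conditions $\langle x,w.y_k\rangle\le 1$ for all $w$ are equivalent to the single condition $\varphi_x(y_k)\le 1$. Substituting, and noting that the case $k=i$ is automatic given $\varphi_x(y_i)=1$, yields
$$
F_{y_i}(B)=\{x\in\t_+:\varphi_x(y_i)=1\ \text{ and }\ \varphi_x(y_j)\le 1\ \text{ for } j\neq i\},
$$
which is the asserted formula (the second condition being read as $\varphi_x(y_j)\le1$). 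I expect the only genuine subtlety to be this reduction via the rearrangement inequality, together with the observation that regularity trivializes the $\Ad_{Z(y_i)}$-action in Lewis' theorem; everything else is a direct unwinding of the polar-duality dictionary already assembled in the paper.
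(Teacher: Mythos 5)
Your proof is correct and follows essentially the same route as the paper: reduce to the Cartan subalgebra via Lewis' theorem, then identify the face of $P^\circ$ by polar duality and collapse the Weyl-orbit constraints with Lemma \ref{rearrengementineq}. The only cosmetic differences are that you apply Theorem \ref{lewisfaces} directly (using regularity of $y_i$ to trivialize $\Ad_{Z(y_i)}$) where the paper cites Theorem \ref{abelianfaces}, and you unfold $P^\circ$ by its defining inequalities rather than the support-hyperplane swap of Theorem \ref{supportdual}; you also correctly read the statement's index typo as $\varphi_x(y_j)\le 1$ for $j\neq i$.
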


\begin{proof}
From Theorem \ref{abelianfaces} we know that if the faces of $B$ are abelian then 
$$F_x(B)=F_x(B\cap\t)\cap\t_+$$
for a Weyl chamber $\t_+$ given by a choice of torus and positive simple roots such that $x\in\t_+$. We take for simplicity $y_i=y_1$, note that
\begin{align*}
F_{y_1}(B)&=F_{y_1}(B\cap\t)\cap\t_+=F_{y_1}(P^\circ)\cap\t_+\\
&=\{x\in\bd P^\circ:H_{y_1}\mbox{  supports  }P^\circ\mbox{  at  }x\}\\
&=\{x\in\bd P^\circ:H_{x}\mbox{  supports  }P\mbox{  at  }y_1\}\\
&=\{x\in\t_+:\varphi_{x}(y_1)=1\mbox{  and  }\varphi_{x}(y_j)\leq 1\mbox{  for  }j=2,\dots,n\}.
\end{align*}
Note that $H_{x}$ supports $P$ at $y_1$ is equivalent to $\varphi_{x}(y_1)=1$ and  $\varphi_{x}(w.y_j)\leq 1$ for $w\in\W$ and $j=1,\dots,n$. This in turn is equivalent by Lemma \ref{rearrengementineq} to $\varphi_{x}(y_1)=1$ and  $\varphi_{x}(y_j)\leq 1$ for $w\in\W$ and $j=2,\dots,n$ since $x\in\t_+$. This establishes the last equality. The third and fourth equalities follow from previous results on polar duality.
\end{proof}

\begin{rem}
We now have an alternative proof of Proposition \ref{speedweylchamber}: the extreme points of the Hofer norm polytope are $\W.y$ for a regular $y\in\t_+$, hence $F_{y}(B)=\{x\in\t_+:\varphi_{x}(y)=1\}$. The cone generated by this face is $\t_+$.
\end{rem}

Next we obtain conditions in terms of Kirwan's polytope which imply that $y_1,\dots,y_m\in\t_+$ are regular. This follows from the previous discussion therefore we omit the proof.

\begin{thm}
Let $E\subseteq \k$ be an $\Ad$-invariant set such that $E\cap\t_+=\conv\{x_1,\dots,x_n\}$. The Hofer norm polytope derived from $E$ has regular extreme points if  the extreme points $y$ of 
$$A=\conv\{x_i-w^*.x_j:i,j=1,\dots,n\}$$
such that the normal cone $N(y,A)$ intersects the interior of $\t_+$ are all regular.
\end{thm}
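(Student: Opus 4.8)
The plan is to reduce the assertion to the behaviour of the Hofer norm polytope $P$ over the fundamental chamber and then to transfer regularity from $A$ to $P$ through the support-function/normal-cone dictionary, exactly along the lines of the proof of Theorem~\ref{kirwant+}. Recall that
$$P=\conv\{w.x-w'.x':x,x'\in\{x_1,\dots,x_n\},\ w,w'\in\W\},\qquad A=\conv\{x_i-w^*.x_j:i,j=1,\dots,n\},$$
and observe first that $A\subseteq P$, since every generator $x_i-w^*.x_j$ of $A$ is the generator of $P$ obtained by taking $w=\id$ and $w'=w^*$. Because $B\cap\t=P^\circ$ is a convex body with $0$ in its interior, its bipolar $P$ is a full-dimensional $\W$-invariant polytope in $\t$. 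Since regularity of an element of $\t$ is a $\W$-invariant notion (an element is regular iff it lies on no wall $\ker\alpha$, and $\W$ permutes the walls), it suffices to prove that every extreme point $y$ of $P$ with $N(y,P)\cap\interior(\t_+)\neq\emptyset$ is regular. Indeed the normal cone at a vertex of a full-dimensional polytope is full-dimensional, hence meets the interior of some chamber $w.\t_+$; then $w^{-1}.y$ is an extreme point with $N(w^{-1}.y,P)=w^{-1}.N(y,P)$ meeting $\interior(\t_+)$, so that every $\W$-orbit of extreme points of $P$ has a representative of the required type.

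Fix such an extreme point $y$, with $N(y,P)\cap\interior(\t_+)\neq\emptyset$. First I would pick $x\in\interior\bigl(N(y,P)\bigr)\cap\interior(\t_+)$; this intersection is nonempty because $\interior(N(y,P))$ is dense in the full-dimensional cone $N(y,P)$ while $\interior(\t_+)$ is open. For this $x$ the vertex $y$ is the \emph{unique} maximiser of $\varphi_x$ over $P$. The crux of the argument is then the chamber identity
$$\max_{P}\varphi_x=\max_{i,j}\varphi_x\bigl(x_i-w^*.x_j\bigr)=\max_{A}\varphi_x,\qquad x\in\interior(\t_+),$$
which is the same rearrangement computation used in Theorem~\ref{kirwant+}. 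Concretely, maximising $\varphi_x(w.x_i)$ over $w\in\W$ yields $\varphi_x(x_i)$ by Lemma~\ref{rearrengementineq} (since $x,x_i\in\t_+$), whereas minimising $\varphi_x(w'.x_j)$ over $w'\in\W$ yields $\varphi_x(w^*.x_j)$, the minimiser being the element of $\W.x_j$ lying in the chamber $-\t_+$ of $-x$, namely $w^*.x_j$. Because the two optimisations involve independent Weyl elements they decouple, and optimising over $i,j$ produces the displayed equalities; moreover the maximiser over $P$ is attained at a generator $x_{i_0}-w^*.x_{j_0}\in A$.

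It then closes at once. As $y$ is the unique maximiser of $\varphi_x$ over $P$ and the generator $x_{i_0}-w^*.x_{j_0}\in A\subseteq P$ also attains $\max_P\varphi_x$, we get $y=x_{i_0}-w^*.x_{j_0}\in A$; and since $A\subseteq P$, this $y$ is a fortiori the unique maximiser of $\varphi_x$ over $A$, hence an extreme point of $A$ with $x\in N(y,A)$. Thus $N(y,A)\cap\interior(\t_+)\neq\emptyset$, and the hypothesis forces $y$ to be regular. Together with the first paragraph this shows that every extreme point of $P$ is regular, which is the claim; equivalently, writing $\ext(P)=\W.y_1\sqcup\dots\sqcup\W.y_m$ with $y_i\in\t_+$, each $y_i$ is regular.

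The single genuinely substantive step I expect to be the chamber identity $\max_P\varphi_x=\max_A\varphi_x$, resting on Lemma~\ref{rearrengementineq}; the remaining arguments are normal-cone bookkeeping built on Theorem~\ref{supportdual} and Lemma~\ref{normalcones}. The point requiring the most care is the passage from ``$N(y,P)$ meets $\interior(\t_+)$'' to a choice of $x$ in the \emph{interior} of the normal cone lying in $\interior(\t_+)$, which is what guarantees that $y$ is a true unique-maximiser vertex of both $P$ and $A$ and hence lands among the extreme points of $A$ controlled by the hypothesis.
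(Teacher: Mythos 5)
Your proof is correct, and it takes the route the paper itself points to: the paper omits the proof of this theorem, stating only that it ``follows from the previous discussion,'' and your argument is exactly that discussion made explicit --- the rearrangement computation of Theorem \ref{kirwant+} via Lemma \ref{rearrengementineq} showing $\max_P\varphi_x=\max_A\varphi_x$ on $\interior(\t_+)$, combined with the normal-cone reduction to the fundamental chamber in the spirit of Lemma \ref{normalcones}. The details you flag as delicate (choosing $x$ in $\interior(N(y,P))\cap\interior(\t_+)$ by density, and the $\W$-equivariant reduction to vertices visible from $\t_+$) are handled correctly.
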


\subsubsection{Products of actions}

We now study products of Hamiltonian actions where the image of the moment map is given by (\ref{sumaimagmoment}). A property of regularity on one of the factors implies the same property in the product, in the following two cases:

\medskip

Consider the case of the Hofer norm $\|\cdot\|_{\O_1+\dots+\O_n}$ for coadjoint orbits $\O_1,\dots,\O_n$. This norm arises by (\ref{sumaimagmoment}) from the canonical symplectic action of $K$ on $\O_1\times\dots\times\O_n$.

\begin{prop}\label{sumaextremounico}
Let $P_1,\dots,P_n$ be $\W$-invariant convex polytopes in $\t$ such that $\ext(P_i)=\W.y_i$ for $y_i\in\t_+$ and $i=1,\dots,n$. Then $\ext(P_1+\dots+P_n)=\W.(y_1+\dots+y_n)$. Hence, if  there is a $y_i$ that is regular, then the extreme points of $P_1+\dots+P_n$ are the Weyl group orbit of a regular element.
\end{prop}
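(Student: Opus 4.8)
The plan is to isolate one candidate orbit and confirm, via the normal-cone criterion of Lemma \ref{normalcones}, that it exhausts the vertices of the Minkowski sum. Write $P=P_1+\dots+P_n$ and $y=y_1+\dots+y_n\in\t_+$; observe that $P$ is again a $\W$-invariant convex polytope, since a Minkowski sum of polytopes is a polytope and a sum of $\W$-invariant sets is $\W$-invariant. The guiding principle is the one already used in Theorem \ref{kirwant+} and Proposition \ref{extremoshofercadjunta}: the positive chamber $\t_+$ ought to sit inside the normal cone of $P$ at $y$, and its Weyl translates then cover $\t$.

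First I would show $\t_+\subseteq N(y,P)$ and that $y$ is genuinely a vertex. Fix $x\in\t_+$. Since $\ext(P_i)=\W.y_i$ and a linear functional on a polytope is maximized at a vertex, Lemma \ref{rearrengementineq} gives $\max_{z\in P_i}\varphi_x(z)=\max_{w\in\W}\langle x,w.y_i\rangle=\langle x,y_i\rangle$, so $y_i$ maximizes $\varphi_x$ over $P_i$ and hence $x\in N(y_i,P_i)$. Because $\varphi_x(z_1+\dots+z_n)=\sum_i\varphi_x(z_i)$ for $z_i\in P_i$, the point $y$ maximizes $\varphi_x$ over $P$, i.e. $x\in N(y,P)$; thus $\t_+\subseteq N(y,P)$. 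To upgrade this to vertexhood, take $x$ in the interior of $\t_+$, so that $x$ is regular and $\Stab(x)$ is trivial; the equality clause of Lemma \ref{rearrengementineq} then makes $y_i$ the \emph{unique} maximizer of $\varphi_x$ over $P_i$, whence $y$ is the unique maximizer of $\varphi_x$ over $P$. A linear functional with a unique maximizer exposes a vertex, so $y\in\ext(P)$.

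Next I would propagate this around the orbit and apply Lemma \ref{normalcones}. Since $P$ is $\W$-invariant and the $\W$-action is isometric, normal cones transform equivariantly: $N(w.y,P)=w.N(y,P)\supseteq w.\t_+$ for every $w\in\W$, and each $w.y$ is a vertex for the same reason as $y$. Because the Weyl chambers cover $\t$, namely $\bigcup_{w\in\W}w.\t_+=\t$, we obtain $\bigcup_{w\in\W}N(w.y,P)=\t$. Taking $A=\W.y\subseteq\ext(P)$ in Lemma \ref{normalcones} forces $\ext(P)=\W.y=\W.(y_1+\dots+y_n)$, which is the first assertion.

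Finally, for the ``hence'': each $y_i\in\t_+$ satisfies $\alpha(y_i)\ge 0$ for every positive root $\alpha$, and if some $y_i$ is regular then $\alpha(y_i)>0$ for all positive $\alpha$; consequently $\alpha(y)=\sum_i\alpha(y_i)>0$ for all positive roots, so $y$ is regular and $\ext(P)$ is the Weyl orbit of a regular element. The only step demanding real care is the passage from ``$N(y,P)$ is large'' to ``$y$ is a vertex'': rather than argue through full-dimensionality of cones, I prefer to exhibit a single regular $x$ having $y$ as its unique maximizer over $P$, for which the triviality of $\Stab(x)$ in the uniqueness clause of Lemma \ref{rearrengementineq} is exactly what is needed. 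This formulation also handles the singular $y_i$ transparently, where the orbits $\W.y_i$ carry repetitions but the argument, being phrased entirely through normal cones and maximizers, is unaffected.
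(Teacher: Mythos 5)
Your proof is correct and follows essentially the same route as the paper's: both establish $\t_+\subseteq N(y,P)$ via Lemma \ref{rearrengementineq}, exhibit a regular $x$ in the interior of $\t_+$ whose unique maximizer over each $P_i$ is $y_i$ to get vertexhood, propagate by Weyl-equivariance, and conclude with Lemma \ref{normalcones}. The only cosmetic difference is that the paper works with interior points of $\t_+$ first and then invokes closedness of the normal cone, whereas you handle the inequality and the uniqueness clauses separately.
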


\begin{proof}
Let $x$ be a point in the interior of $\t_+$. Then, by Lemma \ref{rearrengementineq} $\varphi_x(y_i)>\varphi_x(w.y_i)$ for $i=1,\dots,n$ and $w\in\W\setminus\Stab(y_i)$. Hence $\varphi_x$ has a unique maximum in $P_i$ at $y_i$. This implies that $\varphi_x$ has a unique maximum in $P_1+\dots+P_n$ at $y_1+\dots+y_n$, i.e. $y_1+\dots+y_n$ is an extreme point of $P_1+\dots+P_n$ and $x\in N(P_1+\dots+P_n,y_1+\dots+y_n)$. Since the normal cones are closed $\t_+\subseteq N(P_1+\dots+P_n,y_1+\dots+y_n)$ and since this argument is Weyl group invariant 
$$w.\t_+\subseteq N(P_1+\dots+P_n,w.(y_1+\dots+y_n))$$
for $w\in\W$. By Lemma \ref{normalcones} all the extreme points are $\W.(y_1+\dots+y_n)$. If there is a $j\in\{1,\dots,n\}$ such that $y_j$ is regular, then this point is in the interior of the cone $\t_+$ so that $y_1+\dots+y_n$ is also in the interior, i.e. is regular.
\end{proof}

From Proposition \ref{extremoshofercadjunta} we get the following

\begin{cor}\label{speedweylchamber2}
Let $\O_1,\dots,\O_n$ be (co)adjoint orbits and let $\|\cdot\|_{\O_1+\dots+\O_n}$ be the $\Ad$-invariant Hofer norm defined by $\O_1+\dots+\O_n$. If at least one coadjoint orbit is regular, then the Hofer norm polytope derived from $\|\cdot\|_{\O_1+\dots+\O_n}$ has extreme points equal to $\W.y$ for a symmetric regular $y\in\t_+$, i.e. it is the Hofer norm polytope derived from a regular coadjoint orbit.
\end{cor}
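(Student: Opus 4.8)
The plan is to assemble the corollary from the structural results already established for products of actions and for Hofer norm polytopes of coadjoint orbits. Write each orbit as $\O_i=\O_{x_i}$ with $x_i\in\t_+$, and let $P_i$ denote the Hofer norm polytope of $\|\cdot\|_{\O_i}$. By the first part of Proposition \ref{extremoshofercadjunta}, each $P_i$ is a $\W$-invariant symmetric polytope with $\ext(P_i)=\W.y_i$ for a symmetric $y_i\in\t_+$ (explicitly $y_i=x_i-w^*.x_i$, as in its proof). Since the canonical action on $\O_1\times\dots\times\O_n$ realizes $\|\cdot\|_{\O_1+\dots+\O_n}$ through the moment map identity (\ref{sumaimagmoment}), Proposition \ref{sumahoferpoly} identifies the Hofer norm polytope of $\|\cdot\|_{\O_1+\dots+\O_n}$ with the Minkowski sum $P:=P_1+\dots+P_n$.

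First I would compute the extreme points of $P$. As every $P_i$ satisfies $\ext(P_i)=\W.y_i$ with $y_i\in\t_+$, Proposition \ref{sumaextremounico} gives at once $\ext(P)=\W.y$ with $y=y_1+\dots+y_n$. It then remains to verify the two properties required of $y$. Symmetry is immediate, since $w^*$ acts linearly and fixes nothing essential in the sum: $w^*.y=\sum_i w^*.y_i=-\sum_i y_i=-y$. Moreover $P$, being a Minkowski sum of $\W$-invariant centrally symmetric polytopes, is itself $\W$-invariant and centrally symmetric, so it is eligible for the characterization of Proposition \ref{extremoshofercadjunta}.

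Next I would feed in the regularity hypothesis. If some factor $\O_j$ is a regular coadjoint orbit, then the second statement of Proposition \ref{extremoshofercadjunta} forces the corresponding $y_j\in\t_+$ to be regular. The final clause of Proposition \ref{sumaextremounico} propagates this to the sum: with at least one regular summand, $y=y_1+\dots+y_n$ lies in the interior of $\t_+$, hence is regular. Thus $P$ is a $\W$-invariant symmetric polytope with $\ext(P)=\W.y$ for a symmetric regular $y\in\t_+$.

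Finally I would invoke the ``if'' direction of the second part of Proposition \ref{extremoshofercadjunta}, which turns exactly this data into the conclusion that $P$ is the Hofer norm polytope of a regular coadjoint orbit, finishing the proof. I do not expect a serious obstacle: the mathematical content is genuinely carried by Propositions \ref{sumahoferpoly}, \ref{sumaextremounico} and \ref{extremoshofercadjunta}, and the only points needing care are the bookkeeping checks that each $y_i$ lands in $\t_+$ (so that Proposition \ref{sumaextremounico} applies) and that both central symmetry and $\W$-invariance survive the Minkowski sum (so that the coadjoint-orbit characterization applies to $P$).
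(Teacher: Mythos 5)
Your proposal is correct and follows essentially the same route as the paper: the paper derives this corollary directly by combining Proposition \ref{sumahoferpoly} (the product polytope is the Minkowski sum), Proposition \ref{sumaextremounico} (the sum has extreme points $\W.(y_1+\dots+y_n)$, regular if one summand is), and both directions of Proposition \ref{extremoshofercadjunta}. The bookkeeping you supply (symmetry of the sum under $w^*$ and regularity of $y_j$ for the regular factor) is exactly what is implicit in the paper's one-line deduction.
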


\begin{rem}
By Corollary \ref{arghofer} a cone $C\subseteq\k$ generated by a face has the same norming functional, so the set $\{\varphi_x:x\in C\}$ has a common maximizer $x^+\in \O_1+\dots+\O_n$ and a common minimizer $x^-\in\O_1+\dots+\O_n$. Let us write $x^-=x^-_1+\dots +x^-_n$ and $x^+=x^+_1+\dots +x^+_n$ with $x^-_i,x^+_i\in \O_i$ for $i=1,\dots,n$. Then by Proposition \ref{interconosmaxmin} 
$$C_{x^-,x^+}(\O_1+\dots+\O_n)=\bigcap_{i=1,\dots,n}C_{x^-_i,x^+_i}(\O_i).$$
If $\O_1$ is regular then by Theorem \ref{speedweylchamber} $C_{x^-_1,x^+_1}(\O_1)$ is contained in a Weyl chamber (given by a choice of torus and positive simple roots). 

Conversely, if $\t_+$ is a Weyl chamber (given by a choice of torus and positive simple roots) we choose $x^+_i$ to be the intersection of $\O_i$ with $\t_+$ and $x^-_i$ to be the intersection of $\O_i$ with $-\t_+$. By Proposition \ref{maximumorbit} the Weyl chamber $\t_+$ is contained in $C_{x^-_i,x^+_i}(\O_i)$ for $i=2,\dots,n$ and by Proposition \ref{speedweylchamber} $C_{x^-_1,x^+_1}(\O_1)=\t_+$. Hence if we write $x^-=x^-_1+\dots +x^-_n$ and $x^+=x^+_1+\dots +x^+_n$ we get by Proposition \ref{interconosmaxmin} 
$$C_{x^-,x^+}(\O_1+\dots+\O_n)=\bigcap_{i=1,\dots,n}C_{x^-_i,x^+_i}(\O_i)=\t_+,$$
which is a set with the same norming functionals. We conclude again that the Weyl chambers (given by a choice of torus and positive simple roots) are the cones generated by maximal faces.
\end{rem}

We now turn to $\W$-invariant polytopes such that its extreme points are more than one $\W$-orbit.

\begin{prop}\label{sumaextremoregular}
Let $P_1,\dots,P_n$ be $\W$-invariant convex polytopes in $\t$. If the extreme points of one of the polytopes are all regular then the extreme points of $P_1+\dots+P_n$ are all regular.
\end{prop}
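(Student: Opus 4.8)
The plan is to show that every extreme point of $P := P_1 + \dots + P_n$ is regular, handling one vertex at a time. Assume without loss of generality that all vertices of $P_1$ are regular, and fix an extreme point $z$ of $P$. The key idea is to probe $z$ with a \emph{regular} linear direction: since the regular elements are dense in $\t$ and the normal cone $N(z,P)$ is full dimensional (because $z$ is a vertex), I can choose a regular $x\in\interior N(z,P)$. Such an $x$ singles out $z$ as the unique maximizer of $\varphi_x$ on $P$, and the whole argument consists of tracking how this maximizer splits across the Minkowski summands.

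First I would reduce to the case $x\in\interior\t_+$. The regular $x$ lies in the interior of a unique chamber $w.\t_+$, and since each $P_i$ is $\W$-invariant (hence so is $P$), replacing the whole configuration by its $w^{-1}$-image moves $x$ into $\interior\t_+$ while sending $z$ to another vertex of $P$; as regularity is $\W$-invariant, nothing is lost. Next, because the supporting face of a Minkowski sum in a direction $x$ is the sum of the supporting faces, $F_x(P) = \sum_i F_x(P_i)$, and $F_x(P) = \{z\}$ forces each $F_x(P_i)$ to be a single vertex $z_i$ with $z = z_1 + \dots + z_n$; thus each $z_i$ is the unique $\varphi_x$-maximizer on $P_i$. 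Writing the vertices of the $\W$-invariant polytope $P_i$ as a union of orbits $\W.v$ with $v\in\t_+$, Lemma \ref{rearrengementineq} shows that $\varphi_x$ restricted to each such orbit is maximized at its representative $v\in\t_+$ (since $x$ and $v$ share the chamber $\t_+$); hence the global maximizer $z_i$ lies in $\t_+$ for every $i$. Finally, $z_1$ is a vertex of $P_1$ and therefore regular by hypothesis, so $z_1\in\t_+$ together with regularity gives $z_1\in\interior\t_+$; using that $\t_+$ is a convex cone, so that $\interior\t_+ + \t_+\subseteq\interior\t_+$, I conclude $z = z_1 + (z_2+\dots+z_n)\in\interior\t_+$, i.e. $z$ is regular.

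I expect the main obstacle to be the bookkeeping around normal cones and the Minkowski decomposition: making precise that a regular direction in the interior of $N(z,P)$ yields, for each summand, a \emph{unique} maximizer $z_i$ landing in the common chamber $\t_+$, and that regularity of a single summand's vertex then propagates to the sum. The supporting convex-geometry facts $F_x(P)=\sum_i F_x(P_i)$ and $\interior\t_+ + \t_+\subseteq\interior\t_+$ are routine but must be invoked cleanly. Alternatively, one can phrase the entire argument through Lemma \ref{normalcones}, identifying each maximal normal cone $N(z_i,P_i)$ with a Weyl chamber via Lemma \ref{rearrengementineq}; this is essentially the mechanism already exploited in Proposition \ref{sumaextremounico} and Theorem \ref{kirwant+}.
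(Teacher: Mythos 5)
Your argument is correct and is essentially the paper's own proof: both pick a regular $x$ in the (full-dimensional) normal cone at the vertex, split the maximizer across the Minkowski summands, use Lemma \ref{rearrengementineq} to place all the summand maximizers in the chamber $\t_+$ containing $x$, and conclude from the regularity of the $P_1$-vertex that the sum lies in $\interior\t_+$. The only cosmetic difference is your explicit WLOG reduction to $\t_+$ and the spelled-out identity $F_x(P)=\sum_i F_x(P_i)$, which the paper leaves implicit.
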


\begin{proof}
Let $y$ be an extreme point of $P_1+\dots+P_n$. There exists an $x\in \t$ such that $\varphi_x$ attains its unique maximum in $P_1+\dots+P_n$ at $y$. Since the normal cone to the polytope $P_1+\dots+P_n$ at $y$ is open we can chose a regular $x$, therefore $\Stab(x)$ is trivial. This regular $x$ is in a unique Weyl chamber which we denote by $\t_+$. We have $y=y_1+\dots +y_n$, where $y_1,\dots,y_n$ are points where $\varphi_x$ attains its unique maximums in $P_1,\dots,P_n$ respectively. Since for $i=1,\dots,n$ we have $\W.y_i\subseteq P_i$ and $\varphi_x$ attains a maximum at $y_i$ Lemma \ref{rearrengementineq} implies that $y_1,\dots,y_n$ are in $\t_+$. Since one of the $y_i$ is in the interior of $\t_+$ so is their sum $y=y_1+\dots +y_n$, hence $y$ is regular. 
\end{proof}

To finish this paper, from Proposition \ref{sumaextremoregular} and Theorem \ref{polytopecommuting} we obtain the following structural property of geodesics (with different metrics) in the group $K$:

\begin{thm}\label{sumahoferpolyregular}
Let Let $K\curvearrowright M_1,\dots,K\curvearrowright M_n$ be Hamiltonian almost effective actions of a compact semi-simple group $K$ and let $K\curvearrowright M_1\times\dots\times M_n$ be the product action. Endow $K$ with the pullback metrics of Section \ref{subsectionhoferham}. Assume that each short curve in $K$ with the metric derived from the action on one factor has commuting Hamiltonians: then all short curves in $K$ with the metric derived from the product action have commuting Hamiltonians.
\end{thm}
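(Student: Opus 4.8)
The plan is to reduce the statement to the combinatorics of Hofer norm polytopes already established, chaining together three earlier results. First I would translate the hypothesis into a statement about polytopes: by Theorem \ref{polytopecommuting}, for each factor the property that all short curves (for the metric induced by $K\curvearrowright M_i$) have commuting Hamiltonians is equivalent to all extreme points of the corresponding Hofer norm polytope $P_i$ being regular. Thus the hypothesis guarantees that for at least one index $i_0$, every extreme point of $P_{i_0}$ is regular.

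Next I would identify the Hofer norm polytope of the product action. By Proposition \ref{sumahoferpoly}, the product action $K\curvearrowright M_1\times\dots\times M_n$ has Hofer norm polytope equal to the Minkowski sum $P=P_1+\dots+P_n$. At this point all the geometric content is encoded in the relationship between the summands and the sum, and no further symplectic input is needed.

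The key step is then to apply Proposition \ref{sumaextremoregular}: since at least one summand, namely $P_{i_0}$, has all its extreme points regular, the Minkowski sum $P_1+\dots+P_n$ has all its extreme points regular as well. Finally, I would invoke Theorem \ref{polytopecommuting} once more, now for the product action, to conclude from the regularity of the extreme points of $P$ that every short curve for the product metric has commuting Hamiltonians.

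I do not expect any serious obstacle here, since the hard analytic and combinatorial work has been front-loaded into Proposition \ref{sumaextremoregular} and Theorem \ref{polytopecommuting}. The only point requiring care is the bookkeeping of which factor supplies the regular polytope, together with the verification that the equivalence in Theorem \ref{polytopecommuting} is being applied consistently both to the single factor $M_{i_0}$ and to the full product $M_1\times\dots\times M_n$; the argument is otherwise a direct concatenation of the cited results.
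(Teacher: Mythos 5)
Your proposal is correct and follows exactly the route the paper takes: the paper derives the theorem precisely from Proposition \ref{sumaextremoregular} together with Theorem \ref{polytopecommuting}, with Proposition \ref{sumahoferpoly} supplying (as you note) the identification of the product's Hofer norm polytope as the Minkowski sum $P_1+\dots+P_n$. Nothing is missing from your chain of reductions.
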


\begin{rem}
Another proof of the previous theorem can be given as follows. If the extreme point of one of the Hofer norm polytopes, say $P_1$, are regular, then all the maximal faces of the sphere of the norm $\|\cdot\|_{\mu_1(M)}$ are abelian. The cones of the form $C_{x^-_1,x^+_1}(\mu_1(M))$ for $x^-_1,x^+_1\in \mu_1(M)$ are contained in cones generated by faces and are therefore abelian. The cone generated by faces of the sphere of the norm $\|\cdot\|_{\mu(M)}$ are contained in $C_{x^-,x^+}(\mu(M))$ for $x^-,x^+\in \mu(M)$. Also 
$$C_{x^-,x^+}(\mu(M))=\bigcap_{i=1,\dots,n}C_{x^-_i,x^+_i}(\mu_i(M))$$
for $x^-_i,x^+_i\in \mu_i(M)$ and $i=1,\dots,n$ by Proposition \ref{interconosmaxmin}. Since the first set of the intersection is abelian the result follows. 
\end{rem}

\section*{Acknowledgments}

This research was supported by Instituto Argentino de Matem\'atica (CONICET) and Universidad de Buenos Aires.

\noindent
\end{document}